\theoremstyle{definition}\newtheorem{df}{Definition}[section]}
\theoremstyle{definition}\newtheorem{rem}[df]{Remark}}
\theoremstyle{definition}}
\theoremstyle{definition}\newtheorem{const}[df]{Construction}}
\newtheorem{prop}[df]{Proposition}
\newtheorem{lem}[df]{Lemma}
\newtheorem{thm}[df]{Theorem}
\newtheorem{prob}[df]{Problem}
\newtheorem{cor}[df]{Corollary}
\newcommand{\Z}{\mathbb{Z}}
\newcommand{\R}{\mathbb{R}}
\newcommand{\C}{\mathbb{C}}
\newcommand{\minor}[1]{\vert#1\vert}
\newcommand{\M}{\mathcal{M}}
\newcommand{\X}{\mathcal{X}}
\newcommand{\I}{\mathcal{I}}
\newcommand{\MM}{{}_{\R}{\mathcal{M}}}
\newcommand{\XX}{{}_{\R}{\mathcal{X}}}
\newcommand{\LL}{{}_{\R}{\Lambda}}
\newcommand{\ZZ}{(\Z/2)}
\newcommand{\chmt}[8]{\left(
\begin{array}{ccc}
1 & #1 & #2  \\
#3 &#4 &1  \\
1 &#5 &#6  \\
#7 &#8 &1 
\end{array}
\right)}
\newcommand{\A}{\mathcal{A}}
\title{ON THE CLASSIFICATION OF QUASITORIC MANIFOLDS OVER THE DUAL CYCLIC POLYTOPES}
\author{SHO HASUI}
\date{}
\address{Department of Mathematics, Faculty of Science, Kyoto University, Sakyo-ku, Kyoto 606-8502, Japan}
\email{s.hasui@math.kyoto-u.ac.jp}
\begin{document}

\maketitle

\begin{abstract}
For a simple $n$-polytope $P$,
a quasitoric manifold over $P$ is a $2n$-dimensional smooth manifold with a locally standard action of the $n$-dimensional torus
for which the orbit space is identified with $P$.
This paper shows the topological classification of quasitoric manifolds over the dual cyclic polytope $C^n(m)^*$,
when $n>3$ or $m-n=3$.
Besides, we classify small covers, the ``real version'' of quasitoric manifolds, over all dual cyclic polytopes.
\end{abstract}

\section{INTRODUCTION}
Quasitoric manifolds were introduced in \cite{DJ91} as a topological generalization of non-singular projective toric varieties.
More precisely, a quasitoric manifold is a $2n$-dimensional manifold $M$ with a locally standard $T^n$-action,
for which the orbit space $M/T^n$ is identified with a simple polytope as a manifold with corners.
Here ``locally standard'' means that the $T^n$-action is locally identified with the coordinate-wise multiplication of $T^n$ on $\C^n$,
where $T^n$ is regarded as $T^n=\{ (z_1,\ldots ,z_n)\in \C^n\,\vert\,\vert z_i\vert=1,\,i=1,\ldots,n \}$.
The simplest example is $\C P^n$ with the standard $T^n$-action, whose supporting simple polytope is the $n$-dimensional simplex $\Delta^n$.

There are other examples of quasitoric manifolds in \cite{OR70}.
It was shown that many of compact oriented one-connected four-manifolds with smooth effective $T^2$-actions are quasitoric manifolds.
In fact, the paper also provides the classification of four-dimensional quasitoric manifolds up to diffeomorphism.

We will consider the classifications of quasitoric manifolds over the dual of a cyclic polytope, up to
(i) weakly equivariant homeomorphism, (ii) homeomorphism, and (iii) cohomology equivalence.
Here a ``cohomology equivalence'' between two topological spaces means a graded ring isomorphism between their integral cohomology rings,
which is not necessarily induced from a continuous map.
For a group $G$, a map $f$ between two $G$-spaces is called \textit{weakly equivariant}
if the condition $f(g\cdot x)=\psi(g)\cdot f(x)$ holds for some automorphism $\psi$ of $G$,
where $x$ and $g$ are arbitrary points in the domain and $G$ respectively.

Davis and Januszkiewicz gave the classification of all quasitoric manifolds up to weakly equivariant homeomorphism in \cite{DJ91}.
We will review their construction in the next section and restate the classification (Proposition \ref{prop:cls of qtmfds up to weh}).

There are some results for the classification of quasitoric manifolds up to homeomorphism.
The classification of quasitoric manifolds over $\Delta^n$ is trivial by construction
(see \cite[Example 1.18]{DJ91}).
Using the results of \cite{OR70}, we obtain the classification over convex polygons
(see \cite[Example 1.20]{DJ91}).
Moreover,
Choi, Park and Suh obtained the topological classification of quasitoric manifolds over the product of two simplices $\Delta^n\times\Delta^m$ in \cite{CPS12}.

For a simple polytope $P$, let us denote the set of all weakly equivariant homeomorphism classes of quasitoric manifolds over $P$ by $\M_P$.
Similarly, let $\M_P^{\rm homeo}$ denote the set of all homeomorphism classes, and $\M_P^{\rm coh}$ denote the set of all cohomology equivalence classes.
Then we have a sequence of surjections $\M_P\rightarrow \M_P^{\rm homeo}\rightarrow \M_P^{\rm coh}$.
Let us summarize the classification results with these notations.

\begin{thm}[\cite{DJ91} and \cite{CPS12}]\label{thm:preceding studies}
For quasitoric manifolds over a simplex, a convex polygon, and the product of two simplices, we have the following classification.
Remark that $\Delta^1\times\Delta^1$ is equal to the convex tetragon $P_4$.
\begin{enumerate}
 \item[$(1)$] For the $n$-dimensional simplex $\Delta^n$, $\#\M_{\Delta^n}=\#\M_{\Delta^n}^{\rm homeo}=\#\M_{\Delta^n}^{\rm coh}=1$.
 \item[$(2)$] For the convex $m$-gon $P_m$, where $m\geq 4$, $\M_{P_m}$ has countably infinite elements.
 If $m$ is odd, $\#\M_{P_m}^{\rm homeo}=\#\M_{P_m}^{\rm coh}=(m-1)/2$.
 If $m$ is even, $\#\M_{P_m}^{\rm homeo}=\#\M_{P_m}^{\rm coh}=m/2+1$.
 \item[$(3)$] For $\Delta^n\times\Delta^m$, where $n\geq 1$ and $m\geq 2$, $\M_{\Delta^n\times\Delta^m}^{\rm coh}$ has countably infinite elements.
 $\M_{\Delta^n\times\Delta^m}\rightarrow \M_{\Delta^n\times\Delta^m}^{\rm homeo}$ is not bijective but
 $\M_{\Delta^n\times\Delta^m}^{\rm homeo}\rightarrow \M_{\Delta^n\times\Delta^m}^{\rm coh}$ is.
\end{enumerate}
\end{thm}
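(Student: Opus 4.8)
All three statements are drawn from the cited references together with the four-dimensional classification of \cite{OR70}, and the plan is to treat the three polytopes separately, in each case passing first to the combinatorial data of characteristic matrices via the construction of Davis and Januszkiewicz \cite{DJ91} reviewed in the following section. For $\Delta^n$, which has $n+1$ facets, acting with $GL_n(\Z)$ on the target lets me take the characteristic vectors of the $n$ facets through a fixed vertex to be the standard basis $e_1,\dots,e_n$; nonsingularity at each of the remaining vertices then forces every entry of the last characteristic vector to be $\pm1$, and, since that vector is determined only up to sign on each facet, suitable sign changes (each compensated by a diagonal element of $GL_n(\Z)$) turn it into $(-1,\dots,-1)$, the characteristic data of $\C P^n$ with its standard action (see \cite[Example 1.18]{DJ91}). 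Hence $\#\M_{\Delta^n}=1$, and the surjections $\M_{\Delta^n}\to\M_{\Delta^n}^{\mathrm{homeo}}\to\M_{\Delta^n}^{\mathrm{coh}}$ force the remaining two cardinalities to be $1$ as well.

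For the $m$-gon $P_m$, I would invoke the classification of four-dimensional quasitoric manifolds that follows from \cite{OR70} (see \cite[Example 1.20]{DJ91}): each such manifold is simply connected with $b_2=m-2$ and arises as an iterated equivariant connected sum of copies of $\C P^2$, $\overline{\C P^2}$ and $S^2\times S^2$, realising exactly the unimodular forms of rank $m-2$ that are direct sums of $\langle1\rangle$'s, $\langle-1\rangle$'s and hyperbolic planes $\bigl(\begin{smallmatrix}0&1\\1&0\end{smallmatrix}\bigr)$. Letting a single off-diagonal entry of a characteristic matrix range over $\Z$ produces infinitely many pairwise inequivalent characteristic matrices, so $\M_{P_m}$ is countably infinite. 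By Freedman's theorem the homeomorphism type of such a manifold is detected by its intersection form, which is in turn recovered from the integral cohomology ring, so $\M_{P_m}^{\mathrm{homeo}}\to\M_{P_m}^{\mathrm{coh}}$ is a bijection; it remains to count the realisable forms of rank $m-2$ up to change of orientation. The odd forms $\langle1\rangle^{\oplus a}\oplus\langle-1\rangle^{\oplus b}$ with $a+b=m-2$ give $(m-1)/2$ classes when $m$ is odd (the signature $a-b$ is odd, hence never zero, and orientation reversal pairs them) and $m/2$ classes when $m$ is even, and in the even case the one remaining even form $\bigl(\begin{smallmatrix}0&1\\1&0\end{smallmatrix}\bigr)^{\oplus(m-2)/2}$ brings the total to $m/2+1$.

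For $\Delta^n\times\Delta^m$ I would rely on the structural result of \cite{CPS12} that every quasitoric manifold over it is equivariantly homeomorphic to a projectivised bundle $P(\underline{\C}\oplus\xi_1\oplus\dots\oplus\xi_m)$ over $\C P^n$ for suitable line bundles $\xi_i$, so that the whole classification is governed by the integer matrix recording the $\xi_i$. The argument then runs in four steps: (a) read off the cohomology ring from that matrix and decide exactly when two such rings are isomorphic; (b) establish cohomological rigidity within this family, i.e.\ that two such manifolds are homeomorphic precisely when their cohomology rings are isomorphic, which yields the bijectivity of $\M_{\Delta^n\times\Delta^m}^{\mathrm{homeo}}\to\M_{\Delta^n\times\Delta^m}^{\mathrm{coh}}$; (c) exhibit two characteristic matrices whose manifolds are homeomorphic but not weakly equivariantly homeomorphic, so that $\M_{\Delta^n\times\Delta^m}\to\M_{\Delta^n\times\Delta^m}^{\mathrm{homeo}}$ is not injective; and (d) observe that the integer parameters are unbounded, so the family is countably infinite. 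Among these ingredients, the $GL_n(\Z)$-normalisation for the simplex is routine and the intersection-form bookkeeping for the polygon is standard once \cite{OR70} and Freedman's theorem are available; the step I would expect to be the main obstacle, were one to reprove the theorem rather than cite it, is the cohomological rigidity (b) for these $\C P^m$-bundles over $\C P^n$, which is the substantive content of \cite{CPS12}.
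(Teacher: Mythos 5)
Your proposal is correct as a proof of the stated theorem, and your counting for the polygon case is accurate: the odd forms $\langle1\rangle^{\oplus a}\oplus\langle -1\rangle^{\oplus b}$ with $a+b=m-2$ contribute $(m-1)/2$ classes for odd $m$ (all unordered pairs, no fixed point under $a\leftrightarrow b$) and $m/2$ classes for even $m$, with the single even form adding one more in the latter case. The treatment of the simplex and the appeal to \cite{CPS12} for the product of two simplices are both sound, and you are right to single out cohomological rigidity of the projectivised bundles as the substantive ingredient for part (3).

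Where you diverge from the paper is in part (2). The paper cites the result, but in Section 4 it deliberately re-proves it ``without using the results of \cite{OR70}'': starting from Lemma \ref{lem:connected sum}, it shows (Lemmas \ref{lem:tri1}, \ref{lem:tri2}, and the lemma before Corollary \ref{cor:classforqm}) that every characteristic matrix on $P_m$ with $m>4$ decomposes as a connected sum of characteristic matrices on smaller polygons, reducing everything to a direct computation of $\X_{P_4}$, from which Theorem 4.7 and Corollary \ref{cor:classforqm} follow; only at the very last step (Remark after Cor.\ \ref{cor:classforqm}) does it fall back on intersection forms to distinguish the resulting connected sums. You instead invoke the Orlik--Raymond classification of $T^2$-four-manifolds together with Freedman's theorem, which is closer to the original argument in \cite[Example 1.20]{DJ91} that the paper is reproving. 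Both routes arrive at the same list $\#^a\C P^2\#\#^b\overline{\C P^2}$ and $\#^{(m-2)/2}(S^2\times S^2)$ and the same count; the paper's connected-sum approach buys a ``purely quasitoric'' argument that does not rely on the external four-manifold classification and that also serves as the template for the small-cover and $C^3(m)^*$ cases later in the paper, while your route is shorter to state once \cite{OR70} and Freedman are taken as given. For part (1) the paper's Corollary \ref{cor:clsforspx} uses the moment-angle description $\mathcal{Z}_{\Delta^n}=S^{2n+1}$, but that is equivalent to your $GL_n(\Z)$-normalisation of the characteristic matrix. For part (3) the paper, like you, simply cites \cite{CPS12}.
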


Corollary \ref{cor:clsforspx} and Corollary \ref{cor:classforqm} give representatives of the elements of $\M_{\Delta^n}$ and $\M_{P_m}^{\rm homeo}$ respectively.

We will show the following new results for the classification of quasitoric manifolds over the dual cyclic polytope $C^n(m)^*$.
Note that, in the case where $n\leq 2$ or $m-n\leq 2$,
the topological classification of quasitoric manifolds is already known since $C^2(m)^*$ is the convex $m$-gon and $C^n(n+2)^*$ is $\Delta^l\times\Delta^{n-l}$,
where $l$ denotes the largest integer not more than $n/2$.

\begin{thm}\label{thm:no existence of qtmfds}
If $n\geq 4$ and $m-n\geq 4$, or $n\geq 6$ and $m-n\geq 3$, there exists no quasitoric manifold over $C^n(m)^*$.
\end{thm}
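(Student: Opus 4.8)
\emph{The plan} is to pass to $\F_2$, reduce the dimension inductively, and finish with a finite verification.

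\emph{Reduction mod $2$.} By the characteristic-function description of quasitoric manifolds, a quasitoric manifold over $C^n(m)^*$ is the same as an assignment $i\mapsto\lambda_i\in\Z^n$ to the facets $F_1,\dots,F_m$ of $C^n(m)^*$ — equivalently, to the vertices $1,\dots,m$ of $C^n(m)$ — with $\det\bigl((\lambda_i)_{i\in S}\bigr)=\pm1$ for every vertex $S$ of $C^n(m)^*$. By Gale's evenness condition the vertices of $C^n(m)^*$ are exactly the $n$-subsets $S\subseteq\{1,\dots,m\}$ which decompose into runs of consecutive integers all of even length, with the possible exception of a run containing $1$ and a run containing $m$. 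Reducing the $\lambda_i$ modulo $2$, it suffices to show that there is no family $(v_i)_{i=1}^m$ in $\F_2^n$ with $\{v_i\}_{i\in S}$ linearly independent for every such $S$; I would argue by contradiction, assuming one exists.

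\emph{Inducting on $n$.} First I would check that the facet $F_1\subseteq C^n(m)^*$ dual to the vertex $1$ of $C^n(m)$ is isomorphic, as a simple polytope, to $C^{n-1}(m-1)^*$: erasing the element $1$ is a bijection between the Gale-even $n$-subsets of $\{1,\dots,m\}$ containing $1$ and the Gale-even $(n-1)$-subsets of $\{2,\dots,m\}$, so the link of the vertex $1$ in $\partial C^n(m)$ is $\partial C^{n-1}(m-1)$. A characteristic function on $C^n(m)^*$ restricts to one on $F_1$ (take the vectors attached to the facets meeting $F_1$ and project modulo $\lambda_1$), so the family $(v_i)$ yields one for $C^{n-1}(m-1)^*$. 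Iterating, the hypothesis ``$n\ge4$ and $m-n\ge4$'' is reduced to the polytopes $C^4(m')^*$ with $m'\ge8$, and the hypothesis ``$n\ge6$ and $m-n=3$'' is reduced to the single polytope $C^6(9)^*$.

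\emph{The base cases.} These are finite $\F_2$-linear-algebra checks. For $C^4(m)^*$, $m\ge8$, I would fix a finite list of subsets of $\{1,\dots,8\}$ that are vertices of $C^4(m)^*$ for \emph{every} $m\ge8$ — for instance the consecutive quadruples $\{1,2,3,4\},\dots,\{5,6,7,8\}$, the ``double dominoes'' $\{j,j+1,j+3,j+4\}$, and sets such as $\{1,2,j,j+1\}$ and $\{j,j+1,7,8\}$, each of which is a union of a run containing $1$ and even interior runs and hence is Gale-even no matter where the right end $m$ lies. Normalizing so that $v_1,\dots,v_4$ is the standard basis of $\F_2^4$, the quadruple conditions give $v_{j+4}=v_j+\ast\,v_{j+1}+\ast\,v_{j+2}+\ast\,v_{j+3}$ (the coefficient of $v_j$ must be nonzero, since $\{v_j,\dots,v_{j+3}\}$ and $\{v_{j+1},\dots,v_{j+4}\}$ are both bases), so the remaining unknowns are a handful of $\F_2$-coefficients; the double-domino conditions together with those involving $\{7,8\}$ then force an inconsistent system. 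The polytope $C^6(9)^*$ is treated the same way, using $6$-element Gale-even subsets of $\{1,\dots,9\}$. These verifications complete the induction.

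\emph{Where the difficulty lies.} The inductive step is soft; the substance is in the base cases — choosing a list of Gale-even subsets that is simultaneously rich enough to be contradictory and insensitive to $m$ (so that a single argument dispatches all of $C^4(m)^*$, $m\ge8$), and then organizing the $\F_2$ bookkeeping so the case analysis actually closes. The tight instances $C^4(8)^*$ and $C^6(9)^*$ leave essentially no slack, and the combinatorics of Gale-even subsets behaves differently near the two ends $1$ and $m$, which is where care is needed. The elementary fact that at most $n+1$ vectors of $\F_2^n$ can lie in general position (any $n$ of them a basis) is a convenient check on how many independence conditions can coexist on a fixed subset of the $v_i$.
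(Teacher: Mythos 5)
Your proposal follows essentially the same route as the paper. The paper reduces Theorem \ref{thm:no existence of qtmfds} to the non-existence of small covers (Theorem \ref{thm:no existence of smlcvrs}), i.e.\ to a $\Z/2$ statement, uses Lemma \ref{lem:(n,m) to (n+1,m+1)} (equivalent to your observation that the facet $F_1$ of $C^n(m)^*$ is $C^{n-1}(m-1)^*$, so a characteristic matrix restricts), and finishes with the finite $\F_2$ casework of Proposition \ref{prop:no existence m-n>3} for $C^4(m)^*$, $m\ge 8$, and Corollary \ref{cor:no existence n>5} for $C^6(9)^*$, exactly the two base cases you isolate.
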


\begin{thm}\label{thm:summary}
For quasitoric manifolds over the dual cyclic polytope $C^n(n+3)^*$, we have the following classification.
\begin{enumerate}
 \item[$(1)$] $\M_{C^3(6)^*}\rightarrow \M_{C^3(6)^*}^{\rm homeo}$ is not bijective but
 $\M_{C^3(6)^*}^{\rm homeo}\rightarrow \M_{C^3(6)^*}^{\rm coh}$ is, and they have countably infinite elements.
 \item[$(2)$] $\#\M_{C^4(7)^*}=\#\M_{C^4(7)^*}^{\rm homeo}=\#\M_{C^4(7)^*}^{\rm coh}=4$.
 \item[$(3)$] $\#\M_{C^5(8)^*}=\#\M_{C^5(8)^*}^{\rm homeo}=\#\M_{C^5(8)^*}^{\rm coh}=46$.
\end{enumerate}
\end{thm}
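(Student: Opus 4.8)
The plan is to translate the statements into questions about characteristic functions and then compare cardinalities. By Proposition~\ref{prop:cls of qtmfds up to weh}, $\M_{C^n(n+3)^*}$ is the set of non-singular characteristic functions $\lambda$ on the facets of $C^n(n+3)^*$, taken modulo $GL(n,\Z)$ and the combinatorial automorphisms of the polytope, while the integral cohomology ring of the associated quasitoric manifold $M(\lambda)$ is the face ring of $C^n(n+3)^*$ cut down by the linear relations read off from $\lambda$. So I would first record the face lattice of $C^n(n+3)^*$ explicitly: its facets are indexed by the vertices of the cyclic polytope $C^n(n+3)$, a family of facets intersects precisely when the corresponding vertex set spans a face of $C^n(n+3)$, and these faces are listed by Gale's evenness condition (equivalently, by the standard planar Gale diagram of $C^n(n+3)$). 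A unifying feature is that $m-n=3$ makes the $h$-vector of $C^n(n+3)$ begin $(1,3,6,\dots)$, so every $M(\lambda)$ in sight has $b_2=3$ and its cohomology ring is generated in degree $2$ by three elements; thus comparing two such rings reduces to comparing their ideals of relations inside $\Z[x,y,z]$ up to $GL(3,\Z)$. With this setup, each part proceeds by enumerating $\M$, computing cohomology rings, and using the canonical surjections $\M\twoheadrightarrow\M^{\rm homeo}\twoheadrightarrow\M^{\rm coh}$.

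For parts $(2)$ and $(3)$ the crucial point is that $\M_{C^4(7)^*}$ and $\M_{C^5(8)^*}$ are already finite. Normalising $\lambda$ by $GL(n,\Z)$ so that its values on $n$ facets meeting at a vertex are the standard basis, the three remaining values are integer vectors constrained by the non-singularity conditions at all other vertices of $C^n(n+3)^*$. Since the proper faces of $C^n(n+3)^*$ are simple polytopes with few facets, the restriction of $\lambda$ to a face is again a characteristic function, so the combinatorial constraints underlying Theorem~\ref{thm:no existence of qtmfds}, applied inductively together with the known low-dimensional data, confine the three unknown vectors to an explicit finite list. A finite case analysis---for $n=5$ sizeable and most efficiently carried out with computer assistance---followed by dividing out the residual $GL(n,\Z)$- and $\mathrm{Aut}(C^n(n+3)^*)$-symmetry, gives $\#\M_{C^4(7)^*}=4$ and $\#\M_{C^5(8)^*}=46$. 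I would then compute $H^*(M(\lambda);\Z)$ for one representative of each class and check the rings are pairwise non-isomorphic---distinguishing the ideals of relations on $\Z^3$ by coarse invariants (the quadratic map $x\mapsto x^2$ on $H^2$, its square-zero locus, the relevant higher products) and refining where needed. This yields $\#\M^{\rm coh}\geq\#\M$, and with $\#\M\geq\#\M^{\rm homeo}\geq\#\M^{\rm coh}$ all three cardinalities coincide; in particular no classification of high-dimensional manifolds is required.

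For part $(1)$, $C^3(6)^*$ is a simple $3$-polytope with six facets, and, exactly as for convex polygons in Theorem~\ref{thm:preceding studies}$(2)$, once the values on three facets meeting at a vertex are normalised to the standard basis the non-singularity conditions leave the remaining three values depending on free integer ``twisting'' parameters; since $\mathrm{Aut}(C^3(6)^*)$ is finite, $\M_{C^3(6)^*}$ is countably infinite, and---because infinitely many of these parameters already produce pairwise non-isomorphic cohomology rings (equivalently, pairwise inequivalent ternary cubic forms $x\mapsto x^3$ on $H^2\cong\Z^3$)---so is $\M^{\rm coh}_{C^3(6)^*}$. To see that $\M^{\rm homeo}_{C^3(6)^*}\to\M^{\rm coh}_{C^3(6)^*}$ is bijective I would invoke the classification of closed simply connected $6$-manifolds (Wall, Jupp, \v{Z}ubr): a quasitoric manifold is simply connected with torsion-free cohomology in even degrees, hence is determined up to homeomorphism by its cohomology ring, its class $w_2$, and its first Pontryagin class $p_1$. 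It then remains to verify, uniformly over the family, that $w_2$ and $p_1$ are recoverable from the cohomology ring---$w_2$ is the mod $2$ reduction of the canonical degree-$2$ class (the sum of the facet classes), and $p_1$, although a priori read from $\lambda$, can be re-expressed through ring data---so that isomorphic cohomology rings force homeomorphism. Finally, to show $\M_{C^3(6)^*}\to\M^{\rm homeo}_{C^3(6)^*}$ is \emph{not} bijective it suffices, given this rigidity, to produce two characteristic functions over $C^3(6)^*$ that are inequivalent under $GL(3,\Z)\times\mathrm{Aut}(C^3(6)^*)$ but have isomorphic cohomology rings; one explicit pair drawn from the parametrised family does this.

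The heaviest step is the enumeration over $C^5(8)^*$ together with the proof that its $46$ cohomology rings are pairwise distinct: the polytope has many vertices and a non-trivial automorphism group, the non-singularity system branches extensively, and separating $46$ graded rings on $\Z^3$ requires a well-organised battery of ring invariants. In part $(1)$ the delicate point is rather the \emph{uniform} argument that $p_1$ is a cohomological invariant for every manifold in the infinite family---no case-by-case check being available---so one must isolate the right closed formula expressing $p_1$ in terms of the cohomology ring of a quasitoric manifold over $C^3(6)^*$.
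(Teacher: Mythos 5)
Your outline for parts (2) and (3) is headed in the right direction but misses the technical lever the paper actually uses to make the enumeration tractable. Rather than brute-forcing integer characteristic matrices with constraints ``inherited inductively from faces,'' the paper first classifies \emph{real} characteristic matrices, i.e.\ works modulo~$2$: Section~6 shows that $\XX_{C^4(7)^*}$ and $\XX_{C^5(8)^*}$ each have exactly two elements (which collapse to one under $\mathrm{Aut}(K_P)$), and this is also what yields the nonexistence statements of Theorem~\ref{thm:no existence of qtmfds}. The integer classification then reduces to listing the lifts $p^{-1}(\overline\lambda)\subset\X_P$ of a fixed mod-$2$ matrix, which is a manageable finite problem (28 elements for $C^4(7)^*$, 64 for $C^5(8)^*$) that drops to $4$ resp.\ $46$ after quotienting by $\sigma$. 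Without this mod-$2$ step, the ``non-singularity system branches extensively'' in a way that is far harder to control than you suggest. Once the representatives are in hand, your plan to distinguish the rings by coarse invariants ($x\mapsto x^2$, square-zero loci, etc.) matches what the paper does in Sections~7--8.

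The genuine gap is in part (1). You want to show $\M^{\rm homeo}_{C^3(6)^*}\to\M^{\rm coh}_{C^3(6)^*}$ is a bijection by proving, \emph{uniformly over the whole infinite family}, that $w_2$ and $p_1$ are recoverable from the abstract ring $H^*(M;\Z)$, so that Jupp's theorem can be applied to \emph{every} isomorphism of rings. This is much stronger than what the paper establishes, and it is not clear how to prove it: $w_2$ is the image of $\sum_i v_i$ and $p_1$ of $-\sum_i v_i^2$, but these are expressions in the \emph{presentation} coming from $\lambda$, not intrinsic ring elements, and there is no given formula extracting them from the ring structure alone (Wu's formula recovers $w_2$ from Steenrod operations, not from the ring, and no analogous tool is available for $p_1$). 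What the paper actually does is more surgical. Proposition~\ref{prop:list(3,6)} lists all indecomposable characteristic matrices, essentially one discrete family $\lambda_d$ plus a handful of exceptional ones; the subsequent propositions show that the cohomology rings $A_d$ ($d\geq 3$) are pairwise non-isomorphic and non-isomorphic to $A_1=A_2$, so the only collision among indecomposables is the single pair $M_1,M_2$, and for that pair the paper exhibits a \emph{specific} graded ring isomorphism and checks by direct computation, using Theorem~\ref{thm:SWP}, that it carries $w_2$ and $p_1$ correctly; only then does Jupp's theorem give the homeomorphism. Decomposable manifolds are handled separately by reducing to the known cohomological rigidity over $\Delta^1\times\Delta^2$. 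In short, the paper never needs (and never proves) that $p_1$ is a ring invariant of all quasitoric $6$-manifolds over $C^3(6)^*$: it parametrizes the family explicitly and verifies the bijectivity pointwise at the one nontrivial collision. You correctly identify this as ``the delicate point,'' but the uniform version you propose to prove is not available and is not needed once the explicit enumeration is in hand.
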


Note that the classification of quasitoric manifolds over $C^3(m)^*$ for $m\geq 7$ is still open since it is quite complicated,
and it remains to be a subject for further study.

Theorem \ref{thm:summary} yields a partial affirmative answer for the \textit{cohomological rigidity problem} for quasitoric manifolds
over the dual cyclic polytopes.
A class of topological spaces is called \textit{cohomologically rigid} (for homeomorphism)
if two spaces in the class are homeomorphic if and only if their cohomology rings (with coefficients in an appropriate ring) are isomorphic as graded rings.
Note that the Betti numbers of a quasitoric manifold determines the $f$-vector of the supporting simple polytope
(see Definition \ref{df:f- and h-vector} and Theorem \ref{thm:Betti numbers}).
As a corollary of Theorem \ref{thm:preceding studies},
we obtain the cohomological rigidity of quasitoric manifolds over the simplices, the convex polygons, and the products of two simplices.
There are several other results on the cohomological rigidity for quasitoric manifolds.
For more information, we refer the reader to \cite{CMS11}.

Replacing $T$ by $\Z/2$ and $\C$ by $\R$ in the definition of a quasitoric manifold, we obtain the notion of a small cover.
Any quasitoric manifold has an involution called ``conjugation'', for which the fixed point set is homeomorphic to a small cover.
The \textit{lifting problem} asks whether a small cover can be realized as the fixed point set for the involution on a quasitoric manifold or not.
This problem is affirmative for any small cover of dimension $\leq 3$ (Proposition \ref{prop:lift}).
%
%

For a simple polytope $P$, let us denote the set of all weakly equivariant homeomorphism classes of small covers over $P$ by $\MM_P$.
Similarly, let $\MM_P^{\rm homeo}$ denote the set of all homeomorphism classes, and $\MM_P^{\rm coh}$ denote the set of all $\bmod\,2$ cohomology equivalence classes.
We will show the following theorems.

\begin{thm}\label{thm:no existence of smlcvrs}
If $n\geq 4$ and $m-n\geq 4$, or $n\geq 6$ and $m-n\geq 3$, there exists no small cover over $C^n(m)^*$.
\end{thm}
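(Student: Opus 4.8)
The plan is to reformulate the statement combinatorially and then exploit the rigidity of the combinatorics of cyclic polytopes. By the classification of Davis and Januszkiewicz \cite{DJ91}, a small cover over $C^n(m)^*$ exists if and only if there is a map $\lambda$ from the set $\{1,\dots,m\}$ of facets of $C^n(m)^*$ --- equivalently, the vertices of $C^n(m)$ listed along the moment curve --- to $(\Z/2)^n$ such that $\{\lambda(i):i\in S\}$ is a basis of $(\Z/2)^n$ for every facet $S$ of $C^n(m)$; by Gale's evenness condition these $S$ are exactly the $n$-subsets of $\{1,\dots,m\}$ whose maximal runs of consecutive integers all have even length, except possibly for runs containing $1$ or $m$. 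I will also use the standard fact that restricting such a $\lambda$ along a facet of $C^n(m)^*$ yields a $(\Z/2)$-characteristic function on that facet; hence if some facet of $C^n(m)^*$ admits no small cover, neither does $C^n(m)^*$.

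The first step is to reduce the two infinite families to a short list. A direct check of Gale's condition shows that the vertex figure of $C^{n+1}(m+1)$ at an endpoint vertex of the moment curve is combinatorially $C^n(m)$, so $C^n(m)^*$ occurs as a facet of $C^{n+1}(m+1)^*$; iterating, every $C^n(m)^*$ with $n\ge 4$ and $m-n\ge 4$ has $C^4(m-n+4)^*$ as an iterated facet (with $m-n+4\ge 8$), and every $C^n(n+3)^*$ with $n\ge 6$ has $C^6(9)^*$ as an iterated facet. By the restriction principle it therefore suffices to prove non-existence of small covers over $C^4(m)^*$ for every $m\ge 8$ and over $C^6(9)^*$.

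For the family $C^4(m)^*$ I would argue as follows. Put $W_i=\langle\lambda(i),\lambda(i+1)\rangle\subseteq(\Z/2)^4$. The facets $\{i,i+1,i+2,i+3\}$ force any three consecutive $\lambda$-values to be linearly independent, so each $W_i$ is a $2$-plane and $W_i\cap W_{i+1}$ is exactly a line; the facets $\{i,i+1,j,j+1\}$ with $j\ge i+2$ force $W_i\oplus W_j=(\Z/2)^4$ whenever $\vert i-j\vert\ge 2$. Taking $i=1$ shows $W_3,\dots,W_{m-1}$ are all complements of $W_1$, and taking $j=m-1$ shows $W_1,\dots,W_{m-3}$ are all complements of $W_{m-1}$; since also $W_1\oplus W_{m-1}=(\Z/2)^4$, each $W_j$ with $3\le j\le m-3$ is the graph of a linear isomorphism $\psi_j\colon W_{m-1}\to W_1$. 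Fixing bases identifies $\psi_j$ with an element of $GL_2(\Z/2)\cong S_3$, and the intersection pattern of the $W_j$ translates into: $\psi_j^{-1}\psi_{j+1}$ is a transposition for consecutive indices, while $\psi_j^{-1}\psi_{j'}$ is a $3$-cycle whenever $\vert j-j'\vert\ge 2$. If $m\ge 9$ the range $3\le j\le m-3$ contains $3,4,5,6$, and then $\psi_3^{-1}\psi_6=(\psi_3^{-1}\psi_4)(\psi_4^{-1}\psi_5)(\psi_5^{-1}\psi_6)$ is a product of three transpositions, hence an odd permutation, contradicting that it must be a $3$-cycle. The single remaining polytope $C^4(8)^*$ is then dealt with by a finite analysis which, in addition to the above, uses the wrap-around facets $\{1,j,j+1,8\}$ (for $3\le j\le 5$) and $\{1,2,3,8\}$, $\{1,6,7,8\}$ --- in the language above these say that each relevant $W_j$ is also a complement of $\langle\lambda(1),\lambda(8)\rangle$ --- to eliminate the few surviving possibilities for $\psi_3,\psi_4,\psi_5$ and for $W_1,W_2,W_6,W_7$.

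For $C^6(9)^*$ the same circle of ideas applies in $(\Z/2)^6$: Gale's condition lists the $30$ facets of $C^6(9)$ explicitly, and the ``three-domino'' facets $\{i,i+1,j,j+1,k,k+1\}$ together with the facets having odd blocks at $1$ and $9$ impose a rigid family of direct-sum decompositions $W_a\oplus W_b\oplus W_c=(\Z/2)^6$ on the $2$-planes $W_i=\langle\lambda(i),\lambda(i+1)\rangle$, from which a finite (possibly computer-assisted) case analysis derives a contradiction. I expect the two exceptional base cases $C^4(8)^*$ and especially $C^6(9)^*$ to be the main obstacle: the facet reduction of the second step and the $S_3$-parity argument for $m\ge 9$ are clean, but the borderline polytopes require careful and complete bookkeeping of the linear constraints produced by Gale's evenness condition.
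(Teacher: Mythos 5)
Your reduction to the facet polytopes $C^4(m)^*$ ($m\geq 8$) and $C^6(9)^*$ via iterated vertex figures is the same step as the paper's Lemma~\ref{lem:(n,m) to (n+1,m+1)}. Where you genuinely diverge is in $C^4(m)^*$ for $m\geq 9$: encoding each $W_j=\langle\lambda(j),\lambda(j+1)\rangle$ ($3\leq j\leq m-3$) as the graph of an isomorphism $W_{m-1}\to W_1$, translating the intersection pattern of the $W_j$'s into cycle types in $GL_2(\Z/2)\cong S_3$, and killing the configuration by parity is correct and more conceptual than the paper's Proposition~\ref{prop:no existence m-n>3}, which is a four-way case analysis on the fifth column of a normalized real characteristic matrix and treats $m=8$ and $m\geq 9$ uniformly.

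The proposal, however, defers precisely the two cases that carry the real content of the theorem. For $C^4(8)^*$ your parity argument needs $j=3,\dots,6$ inside $3\leq j\leq m-3$, which fails at $m=8$; you only gesture at bringing in the wrap-around facets $\{1,j,j+1,8\}$ (i.e.\ that each relevant $W_j$ is also a complement of $\langle\lambda(1),\lambda(8)\rangle$), and that bookkeeping is not carried out. For $C^6(9)^*$ you propose an unexecuted ``finite, possibly computer-assisted'' analysis of the thirty facets. The paper handles $C^6(9)^*$ by an entirely different route: it first classifies the real characteristic matrices on $C^5(8)^*$ (Corollary preceding \ref{cor:no existence n>5} shows there are exactly two), and then observes that in any $6\times 9$ extension the row inherited from the first row of the $C^5(8)^*$ block is zero in columns $3$ through $8$, so $\minor{3,4,5,6,7,8}=0$ identically, contradicting the non-singular condition. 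You yourself flag the two base cases as ``the main obstacle''; until they are actually worked out this is a proof sketch with a genuine gap, not a complete proof, despite the elegance of the $S_3$ argument.
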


\begin{thm}\label{thm:summary2}
For small covers over the dual cyclic polytope $C^n(m)^*$, we have the following classification.
\begin{enumerate}
 \item[$(1)$] $\MM_{C^3(m)^*}\rightarrow \MM_{C^3(m)^*}^{\rm homeo}$ is not bijective but
 $\MM_{C^3(m)^*}^{\rm homeo}\rightarrow \MM_{C^3(m)^*}^{\rm coh}$ is.
 \item[$(2)$] $\#\MM_{C^4(7)^*}=\#\MM_{C^4(7)^*}^{\rm homeo}=\#\MM_{C^4(7)^*}^{\rm coh}=1$.
 \item[$(3)$] $\#\MM_{C^5(8)^*}=\#\MM_{C^5(8)^*}^{\rm homeo}=\#\MM_{C^5(8)^*}^{\rm coh}=1$.
\end{enumerate}
\end{thm}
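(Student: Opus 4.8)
The plan is to run the Davis--Januszkiewicz dictionary for small covers and then reduce the problem to a short list of polytopes. Recall from \cite{DJ91} (cf.\ Proposition \ref{prop:cls of qtmfds up to weh}) that a small cover over a simple $n$-polytope $P$ is determined up to weakly equivariant homeomorphism by a $\Z/2$-valued characteristic function on the facets of $P$, and that $\MM_P$ is the set of such functions modulo $GL_n(\Z/2)\times\mathrm{Aut}(P)$. By Theorem \ref{thm:no existence of smlcvrs}, together with the facts that $C^2(m)^*=P_m$ and $C^n(n+2)^*=\Delta^{\lfloor n/2\rfloor}\times\Delta^{\lceil n/2\rceil}$ have already-understood small covers, the only dual cyclic polytopes with $n\geq 3$ that are both new and can carry a small cover are $C^3(m)^*$ for all $m$, together with $C^4(7)^*$ and $C^5(8)^*$; these are exactly items (1)--(3). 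For each, I would read off the facet structure of $C^n(m)^*$ from Gale's evenness condition on $C^n(m)$ and compute $\mathrm{Aut}(C^n(m)^*)$.

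For $C^4(7)^*$ and $C^5(8)^*$ the claim $\#\MM=\#\MM^{\rm homeo}=\#\MM^{\rm coh}=1$ amounts to showing that $GL_n(\Z/2)\times\mathrm{Aut}$ acts transitively on characteristic functions. Normalizing the values at one vertex to a standard basis and propagating the non-degeneracy constraints — that the facets through every vertex carry a basis — along the facets, one finds every remaining value forced up to the residual symmetry, so there is a single orbit; in particular a small cover exists (a convenient representative comes from a proper $3$-colouring of the facets), and since $\#\MM=1$ the surjections $\MM\to\MM^{\rm homeo}\to\MM^{\rm coh}$ are automatically bijective. This part is essentially bookkeeping and is not where the difficulty lies.

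The substance is the case $n=3$. The structural input is that $C^3(m)^*$ is obtained from $\Delta^3$ by $m-4$ successive vertex cuts: using Gale's condition one checks that $C^3(m)$ has exactly two degree-$3$ vertices and that deleting either of them from the boundary complex returns $C^3(m-1)$, so dually each triangular facet of $C^3(m)^*$ contracts onto a vertex of $C^3(m-1)^*$. Given a small cover $M$ over $C^3(m)^*$ and a triangular facet $T$ with adjacent facets $F_1,F_2,F_3$, either $\lambda(F_1),\lambda(F_2),\lambda(F_3)$ form a basis, in which case $\lambda$ restricts to a characteristic function on $C^3(m-1)^*$, $\lambda(T)$ is forced to equal $\lambda(F_1)+\lambda(F_2)+\lambda(F_3)$, and $M\cong M'\#\R P^3$ with $M'$ the induced small cover over $C^3(m-1)^*$; or these three vectors are linearly dependent and no such splitting along $T$ exists. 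Performing this reduction at both triangular facets expresses every small cover over $C^3(m)^*$ as a connected sum whose prime summands are copies of $\R P^3$ and members of a finite list of indecomposable small covers (the smallest example being $S^1\times\R P^2$ over the prism $C^3(5)^*$). On the resulting class of $3$-manifolds, $\bmod\,2$ cohomology is a complete homeomorphism invariant: uniqueness of prime decomposition together with the known classification of the pieces that occur shows that $H^*(M;\Z/2)$ — through the rank of the squaring map $H^1\to H^2$ (the number of $\R P^3$ summands) and the residual ring structure (the indecomposable core) — determines $M$ up to homeomorphism, which gives the bijectivity of $\MM_{C^3(m)^*}^{\rm homeo}\to\MM_{C^3(m)^*}^{\rm coh}$. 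For the non-bijectivity of $\MM_{C^3(m)^*}\to\MM_{C^3(m)^*}^{\rm homeo}$, I would exhibit two characteristic functions that are inequivalent under $GL_3(\Z/2)\times\mathrm{Aut}$ but have homeomorphic total spaces (this already occurs for small $m$, with $m=4$ the only genuinely trivial case).

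The main obstacle is the $n=3$ case, on two fronts. First, one must show the vertex-cut reduction accounts for \emph{all} characteristic functions on $C^3(m)^*$ — pinning down exactly when the splitting along a triangular facet is possible, and enumerating the finitely many indecomposable small covers that arise — uniformly in $m$, while keeping enough control of $\mathrm{Aut}(C^3(m)^*)$ to count orbits of characteristic functions. Second, one must prove that $\bmod\,2$ cohomology genuinely separates the homeomorphism types on this class of $3$-manifolds, i.e.\ a cohomological rigidity statement valid for every $m$. It is worth noting that the argument succeeds uniformly in $m$ even though the analogous classification of quasitoric manifolds over $C^3(m)^*$ is open for $m\geq 7$: over $\Z/2$ there are only finitely many characteristic functions and the splitting at each cut is a discrete, rigid choice, whereas over $\Z$ it is not.
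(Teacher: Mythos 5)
Your overall strategy coincides with the paper's for all three parts: for $C^4(7)^*$ and $C^5(8)^*$, enumerate the $GL(n,\Z/2)$-orbits of real characteristic matrices directly and then mod out by $\mathrm{Aut}(K_P)$; for $C^3(m)^*$, decompose along connected-sum cuts and reduce to a finite list of primitive pieces, then invoke prime decomposition of $3$-manifolds. However, two of the stated details fail, and one of them is exactly where the paper does genuine work.

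For parts (2) and (3) your outline is fine, but the parenthetical ``a convenient representative comes from a proper $3$-colouring of the facets'' cannot be right: a characteristic function on a simple $n$-polytope needs at least $n$ independent values at each vertex, and $C^4(7)^*$ is not even $4$-colourable (from the facet list, $\{1,2,3,4\}$ and $\{1,2,5,6\}$ are vertices and one checks $F_6$ is adjacent to facets carrying all four colours once $\{F_1,\dots,F_4\}$ are distinguished). The paper's explicit representatives have non-basis column vectors and do not come from colourings. Also, the normalization-and-propagation you describe produces two $GL(4,\Z/2)$-orbits for $C^4(7)^*$ which only merge after quotienting by the reflection $\tau\in\mathrm{Aut}(K_{C^4(7)^*})$; the phrase ``every remaining value forced'' undersells a binary choice that must be killed by the polytope symmetry.

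For part (1) the gap is more substantive. You restrict the decomposition step to cutting at a triangular facet, i.e.\ to splitting off an $\R P^3$ summand. This is insufficient: there are real characteristic matrices on $C^3(m)^*$ for $m\geq 7$ at which \emph{both} triangle cuts fail. A concrete example is the connected sum of two copies of the indecomposable characteristic matrix on $C^3(5)^*$ (the one whose small cover is $\R P^1\times\R P^2$); the glued matrix on $C^3(7)^*$ has $\minor{1,3,7}=\minor{1,5,7}=0$, so neither vertex-cut reduction applies, yet the matrix is not one of your listed indecomposables. What saves the day, and what the paper actually proves, is the stronger dichotomy: after normalizing to the form $(I\mid *)$, the entry $c_2:=\lambda_{2,m}$ is either $1$, giving $\minor{1,3,m}=1$ (an $\R P^3$ split), or $0$, giving $\minor{1,4,m}=1$ (a split of a $C^3(5)^*$ piece, i.e.\ an $\R P^1\times\R P^2$ summand). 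Only with both kinds of splits available does every real characteristic matrix on $C^3(m)^*$, $m>5$, decompose, terminating at pieces over $C^3(4)^*$ and $C^3(5)^*$. You correctly flag this as the main obstacle, but the mechanism you propose (vertex cuts alone) would not close it; the decomposition needs to be allowed at $k=4$, not just at $k=3$ and $k=m-2$.

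Your plan for the remaining steps — uniqueness of prime decomposition plus the elementary observation that the $\bmod\,2$ ranks of $H^1$ and of the squaring map $H^1\to H^2$ recover the multiset of summands — is correct and essentially parallels what the paper leaves implicit; likewise the non-injectivity of $\MM\to\MM^{\rm homeo}$ is exhibited by the two inequivalent indecomposable matrices on $C^3(5)^*$ with homeomorphic total spaces, which the paper also uses.
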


Note that Theorem \ref{thm:no existence of qtmfds} is a corollary of Theorem \ref{thm:no existence of smlcvrs}.
Indeed, if there exists a quasitoric manifold over a simple polytope $P$,
the fixed point set for the conjugation involution provides a small cover over the same polytope $P$.

With Theorem \ref{thm:summary} and Theorem \ref{thm:summary2}, we obtain the following corollary.
\begin{cor}
The lifting problem is affirmative for all small covers over the dual cyclic polytopes.
\end{cor}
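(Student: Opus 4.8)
The plan is to reduce the corollary to a short list of polytopes by invoking the non-existence theorem and the dimension-$3$ lifting result, and then to dispose of each remaining polytope either by an explicit construction or by a uniqueness argument.

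First I would note that a small cover over $C^n(m)^*$ is an $n$-dimensional manifold, so Proposition~\ref{prop:lift} immediately settles the lifting problem when $n\le 3$; this already covers $C^2(m)^*=P_m$ together with all lower-dimensional cases. So assume $n\ge 4$. By Theorem~\ref{thm:no existence of smlcvrs}, a small cover over $C^n(m)^*$ can exist only if $m-n\le 2$, or $m-n=3$ with $n\le 5$; in the excluded range the corollary holds vacuously. Hence the only polytopes requiring attention are $\Delta^n$ (the case $m-n=1$), $\Delta^l\times\Delta^{n-l}$ with $l=\lfloor n/2\rfloor$ (the case $m-n=2$), $C^4(7)^*$, and $C^5(8)^*$.

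For $\Delta^n$, $C^4(7)^*$, and $C^5(8)^*$ I would combine the following observation with uniqueness: the conjugation involution on any quasitoric manifold over a simple polytope $P$ has fixed set a small cover over $P$ (at the level of characteristic data this is just reduction of an integral characteristic matrix modulo $2$, which preserves non-singularity since $\pm1$ maximal minors stay nonzero mod $2$). Consequently, if $\M_P\ne\emptyset$ and $\#\MM_P=1$, the unique small cover over $P$ lifts. Now $\#\MM_{\Delta^n}=1$, the unique small cover being $\R P^n$, which lifts to $\C P^n$; and by Theorems~\ref{thm:summary} and \ref{thm:summary2} we have $\#\MM_{C^4(7)^*}=\#\MM_{C^5(8)^*}=1$ while $\M_{C^4(7)^*}$ and $\M_{C^5(8)^*}$ are nonempty. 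This disposes of three of the four cases.

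The remaining case $P=\Delta^l\times\Delta^{n-l}$ with $l\ge 2$ is the one I expect to be the main obstacle, since $\#\MM_P>1$ in general, so the uniqueness shortcut is unavailable and one must actually produce a lift for every small cover. Here I would use the product structure to put the $(\Z/2)$-characteristic matrix of an arbitrary small cover over $\Delta^l\times\Delta^{n-l}$ into a normal form in which, up to $GL_n(\Z/2)$, the two simplex factors contribute the reduced characteristic data of $\C P^l$ and $\C P^{n-l}$ together with a family of $(\Z/2)$-valued ``twist'' entries recording line-bundle information, and then verify that any such matrix admits an integral lift with all maximal minors $\pm1$: geometrically the small cover is the real projectivization of a Whitney sum of real line bundles over $\R P^l$ (or $\R P^{n-l}$), and replacing each real line bundle by a complex line bundle of the same parity gives a quasitoric manifold over $\Delta^l\times\Delta^{n-l}$ whose conjugation fixed set is the prescribed small cover. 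Combining this with the previous paragraph exhausts all dual cyclic polytopes and proves the corollary.
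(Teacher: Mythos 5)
Your proposal is correct and follows the same reduction the paper intends: dimension $\leq 3$ via Proposition~\ref{prop:lift}, elimination of the range $n\geq 4$, $m-n\geq 4$ (and $n\geq 6$, $m-n\geq 3$) via Theorem~\ref{thm:no existence of smlcvrs}, and the ``$\#\MM_P=1$ together with $\M_P\neq\emptyset$'' count for $C^4(7)^*$ and $C^5(8)^*$ via Theorems~\ref{thm:summary} and~\ref{thm:summary2} (note that a class in $\MM_P$ being hit is indeed equivalent to every representative real matrix lifting, since $GL(n,\Z)\to GL(n,\Z/2)$ is surjective and column permutations lift trivially). The one case you correctly flag as requiring actual work, $\Delta^l\times\Delta^{n-l}$ with $l\geq 2$, is exactly the case the paper's one-line justification glosses over, and your plan there is sound. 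You could streamline it, though: after normalizing a real characteristic matrix on $\Delta^l\times\Delta^{n-l}$ to the form $(I_n\,\vert\,\mathbf{a},\mathbf{d})$, the non-singular condition reduces to the single entries $a_i,d_j$ and the $2\times 2$ determinants $a_id_j-b_jc_i$, so the naive $\{0,1\}$-entry lift used in Proposition~\ref{prop:lift} already satisfies the integral non-singular condition for these polytopes regardless of $n$; this avoids invoking the real generalized Bott tower picture altogether and makes the product case as elementary as the dimension-$\leq 3$ case.
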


Now we briefly describe how this paper is organized.
In Section 2, we recall some basic constructions and properties of quasitoric manifolds and small covers.
In Section 3, we recall some properties of a cyclic polytope, prepare some notations, and introduce the connected sum operation.
Using the connected sum operation, in Section 4,
we give a ``purely quasitoric'' proof for the topological classification of quasitoric manifolds and small covers over $C^2(m)^*$.
In Section 5, we classify quasitoric manifolds over $C^3(6)^*$ and small covers over $C^3(m)^*$, using the connected sum operation again.
In Section 6, we prove Theorem \ref{thm:no existence of smlcvrs} and list all small covers over $C^4(7)^*$ and $C^5(8)^*$.
In Section 7, we list all elements of $\M_{C^4(7)^*}$ and show that their integer cohomology rings are not isomorphic to each other.
In Section 8, we show the classification for $C^5(8)^*$ similarly.

%
%
%

\section{BASIC CONSTRUCTIONS AND PROPERTIES}
The purposes of this section are to define a quasitoric manifold and describe some properties of it.
Let us start with recalling some basic concepts and constructions for convex polytopes.

\subsection{Polytopes}
An $n$-dimensional convex polytope is called \textit{simple}, if exactly $n$ facets meet at each vertex.
We can naturally regard a simple polytope as a manifold with corners, in the following sense:
an \textit{$n$-dimensional manifold with corners} is a Hausdorff space having an atlas of open subsets
and homeomorphisms from them to open subsets of $\R_+^n=\{ (x_1,\ldots,x_n)\in\R^n\,\vert\, x_i\geq 0,\,i=1,\ldots,n \}$,
such that the transition maps preserve the canonical stratification of $\R_+^n$.

Let $P$ be an $n$-dimensional simple polytope with $m$ facets which are labeled by $F_1,\ldots,F_m$.
We define $K_P$, an abstract simplicial complex on $\{1,\ldots,m\}$, as follows:
$$\{i_1,\ldots,i_k\}\in K_P \Longleftrightarrow   F_{i_1}\cap\ldots\cap F_{i_k}\neq \emptyset.$$
$K_P$ is identified with the face poset of $P^*$.
Remark that $\{i_1,\ldots,i_n\}$ is a maximal face of $K_P$ if and only if $F_{i_1},\ldots,F_{i_n}$ meet at a vertex.

The nerve of $K_P$ provides a triangulation of $P$. We can see this as follows:
Fix a point $c_G\in\mathrm{rel.int\,}G$ for each face $G$,
and identify a chain $G_1\supsetneq \ldots \supsetneq G_k$ with the $(k-1)$-simplex spanned by $c_{G_1},\ldots,c_{G_k}$.
Therefore, if there is a simplicial isomorphism between $K_P$ and $K_{P'}$, it induces a homeomorphism between $P$ and $P'$.
We obtain the following proposition.

\begin{prop}\label{prop:polytope}
Let $P$ and $P'$ be simple polytopes.
Then a simplicial isomorphism between $K_P$ and $K_{P'}$ can be realized as a homeomorphism between $P$ and $P'$ as manifolds with corners.
Conversely, a homeomorphism between $P$ and $P'$ as manifolds with corners induces a simplicial isomorphism between $K_P$ and $K_{P'}$.
\end{prop}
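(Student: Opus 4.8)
The plan is to prove both directions by exploiting the fact, described just before the statement, that the nerve of $K_P$ triangulates $P$ in a way that remembers the corner structure. Concretely, for a simple polytope $P$ the order complex $\mathrm{Ord}(K_P)$ — the simplicial complex whose simplices are the chains of nonempty faces of $P$ — comes with a canonical geometric realization inside $P$ obtained by choosing a point $c_G$ in the relative interior of each face $G$ and spanning simplices on chains $G_1 \supsetneq \cdots \supsetneq G_k$. I would first record that this realization is a homeomorphism $|\mathrm{Ord}(K_P)| \xrightarrow{\ \sim\ } P$, and moreover that it carries the canonical stratification of $|\mathrm{Ord}(K_P)|$ (by which faces $G$ appear in a chain) onto the face stratification of $P$; in particular a point lies in the relative interior of the face $G_i$ of minimal dimension among those indexing the carrying simplex. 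Since $K_P$ is identified with the face poset of $P^*$, an abstract simplicial isomorphism $\varphi\colon K_P \to K_{P'}$ is the same datum as a poset isomorphism between the face posets, hence induces a simplicial isomorphism $\mathrm{Ord}(\varphi)\colon \mathrm{Ord}(K_P)\to \mathrm{Ord}(K_{P'})$ and thus a homeomorphism $h\colon P \to P'$ of the underlying spaces.

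The substance of the first direction is then to check that $h$ is a morphism of manifolds with corners, i.e. that it is locally modeled on an origin-preserving homeomorphism of $\R_+^n$ respecting the canonical stratification. Here I would argue locally: near an interior point of a facet $F_i$ the map $h$ sends $F_i$ to $\varphi(\{i\}) = F'_{i'}$, and more generally near a point in the relative interior of a codimension-$k$ face $G = F_{i_1}\cap\cdots\cap F_{i_k}$, the local picture of $P$ as a manifold with corners is $\R_+^k \times \R^{n-k}$ with the $k$ coordinate hyperplanes corresponding to $F_{i_1},\dots,F_{i_k}$, and $h$ matches this with the analogous local picture for $G' = \varphi(G)$, permuting the coordinates according to how $\varphi$ identifies $\{i_1,\dots,i_k\}$ with the facets through $G'$. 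Because $\mathrm{Ord}(\varphi)$ is linear on each simplex of the chosen realizations, $h$ is in fact piecewise linear in these charts, so it visibly preserves the stratification and is a homeomorphism with corners. I would also remark that the construction is independent, up to isotopy rel the corner structure, of the choices of the centroids $c_G$, which is why we get a well-defined statement.

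For the converse, suppose $f\colon P \to P'$ is a homeomorphism of manifolds with corners. Then $f$ carries the codimension-$j$ stratum of $P$ bijectively onto that of $P'$, and hence maps facets to facets: a point $x$ lies in $\mathrm{rel.int\,}F_i$ for exactly one $i$ when $x$ is in the codimension-$1$ stratum, so $f$ induces a bijection on the sets of facets, say $i \mapsto \sigma(i)$. The key point is that $f$ also respects incidence: a vertex (or any face) $G$ of $P$ is the intersection of exactly the facets whose closures contain it, and the corner chart at a point of $\mathrm{rel.int\,}G$ exhibits precisely $\mathrm{codim}\,G$ facets through that point; since $f$ preserves all of this, $\{i_1,\dots,i_k\}\in K_P$ if and only if $\{\sigma(i_1),\dots,\sigma(i_k)\}\in K_{P'}$, so $\sigma$ is a simplicial isomorphism $K_P \to K_{P'}$. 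I expect the main obstacle to be purely bookkeeping: making the passage between ``faces of $P$'', ``faces of $P^*$'', ``simplices of $K_P$'', and ``strata of the manifold-with-corners structure'' fully rigorous and checking, in the first direction, that the piecewise-linear homeomorphism built from $\mathrm{Ord}(\varphi)$ genuinely conjugates the $\R_+^n$-charts — there is no deep idea here, only the need to keep the four descriptions of a face aligned and to verify that stratification-preserving local homeomorphisms of $\R_+^n$ are closed under the compositions that arise.
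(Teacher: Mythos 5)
Your proof is correct and follows essentially the same route the paper takes: both use the order-complex (nerve) triangulation of $P$ constructed from interior points $c_G$ of each face and transport a simplicial isomorphism of $K_P$ through it, with the converse coming from the observation that a homeomorphism of manifolds with corners preserves the face stratification. You spell out the corner-chart verification and the converse in more detail than the paper's one-paragraph discussion, but the underlying idea is identical.
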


\subsection{Definition of a quasitoric manifold}
To define quasitoric manifolds and consider their classification, we recall some basic concepts for $T^n$-manifolds.

The standard action of the $n$-dimensional torus $T^n=(S^1)^n$ on $\C^n$ is given as
\[T^n\times \C^n \rightarrow \C^n \colon (t_1,\ldots ,t_n)\times (z_1,\ldots ,z_n)\mapsto  (t_1z_1,\ldots ,t_nz_n),\]
where we canonically regard $T^n=\{ (z_1,\ldots ,z_n)\in \C^n\,\vert\,\vert z_i\vert=1,\,i=1,\ldots,n \}$.
Note that the orbit space for this action is naturally identified with the positive cone $\R_+^n$.

A \textit{$T^n$-manifold} is a differentiable manifold with a smooth action of $T^n$.

\begin{df}
Let $G$ be a group, $X,Y$ be $G$-spaces, and $f$ be a map from $X$ to $Y$.
$f$ is said to be \textit{weakly equivariant} if the condition $f(g\cdot x)=\psi(g)\cdot f(x)$ holds for any $x\in M$ and $g\in G$,
where $\psi$ is some automorphism of $G$.
\end{df}

Let $M$ be a $2n$-dimensional $T^n$-manifold. A \textit{standard chart} on $M$ is a couple $(U,f)$,
where $U$ is a $T^n$-stable open subset of $M$ and $f$ is a weakly equivariant diffeomorphism from $U$ onto some $T^n$-stable open subset of $\C^n$.
An atlas which consists of standard charts is called a \textit{standard atlas}.
If a $2n$-dimensional $T^n$-manifold has a standard atlas, then we say that the $T^n$-action is \textit{locally standard}.
The orbit space for a locally standard action is naturally regarded as a manifold with corners.
Then we define a quasitoric manifold as below.

\begin{df}[Quasitoric manifold]
A $2n$-dimensional $T^n$-manifold is said to be a \textit{quasitoric manifold over a simple polytope} $P$,
if the action is locally standard and the orbit space is homeomorphic to $P$ as a manifold with corners.
\end{df}

According to Proposition \ref{prop:polytope}, we do not have to distinguish combinatorially equivalent simple polytopes in the above definition.
The following proposition is obvious.

\begin{prop}\label{prop:self-homeo}
Let $M$ and $M'$ be quasitoric manifolds over simple polytopes $P$ and $P'$ respectively.
A weakly equivariant homeomorphism from $M$ to $M'$ descends to a homeomorphism from $P$ to $P'$ as manifolds with corners.
\end{prop}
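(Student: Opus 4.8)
The plan is to descend $f$ to a map of orbit spaces and then verify in two stages that it is a homeomorphism of manifolds with corners. Write the given weakly equivariant homeomorphism as $f\colon M\to M'$ with $f(g\cdot x)=\psi(g)\cdot f(x)$ for some $\psi\in\mathrm{Aut}(T^n)$. Since $\psi$ is bijective, $f(T^n\cdot x)=T^n\cdot f(x)$, so $f$ carries $T^n$-orbits bijectively to $T^n$-orbits and induces a bijection $\bar f\colon M/T^n\to M'/T^n$ with $\pi'\circ f=\bar f\circ\pi$, where $\pi\colon M\to M/T^n$ and $\pi'\colon M'\to M'/T^n$ are the orbit maps. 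As $\pi$ and $\pi'$ are open continuous surjections, continuity of $\pi'\circ f$ forces continuity of $\bar f$; running the same argument for $f^{-1}$, which is weakly equivariant via $\psi^{-1}$, shows $\bar f^{-1}$ is continuous, so $\bar f$ is a homeomorphism. Composed with the chosen identifications $M/T^n\cong P$ and $M'/T^n\cong P'$ coming from the definition of a quasitoric manifold, this already yields a homeomorphism $P\to P'$ of topological spaces.

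It then remains to see that $\bar f$ respects the corner structure. Here I would use that for the standard action on $\C^n$ the orbit map is $\C^n\to\R_+^n$ and that the isotropy group of a point whose image has exactly $k$ vanishing coordinates is a rank-$k$ subtorus; since the action on $M$ is locally standard, the canonical stratification of $M/T^n$ is recovered intrinsically, the codimension-$k$ stratum being precisely the set of orbits with rank-$k$ isotropy, and likewise for $M'/T^n$. Weak equivariance gives $T^n_{f(x)}=\psi(T^n_x)$ for every $x\in M$, and an automorphism of $T^n$ preserves the rank of a closed subgroup, so $\bar f$ maps the codimension-$k$ stratum of $M/T^n$ onto that of $M'/T^n$ for each $k$. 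A stratification-preserving homeomorphism between spaces locally modelled on $\R_+^n$ is an isomorphism of manifolds with corners, and transporting this through the identifications with $P$ and $P'$ gives the statement (equivalently, by Proposition \ref{prop:polytope}, it induces a simplicial isomorphism $K_P\cong K_{P'}$).

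There is no serious obstacle, since the assertion is essentially formal; the only point deserving a moment's care is the last one, namely that the manifold-with-corners structure on the orbit space of a locally standard action is canonically determined by the action through isotropy ranks, so that it is automatically preserved by any weakly equivariant homeomorphism. Once that is granted, everything else is routine quotient-topology bookkeeping.
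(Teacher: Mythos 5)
The paper offers no proof here; it simply declares the proposition obvious (``The following proposition is obvious.''), so there is no argument of the paper's to compare against. Your proof is correct and is exactly the kind of unwinding the paper silently elides. The first paragraph is routine quotient-topology bookkeeping: weak equivariance via $\psi$ (and $\psi^{-1}$ for $f^{-1}$) makes $f$ orbit-bijective, and since the orbit maps $\pi,\pi'$ are quotient maps, continuity of $\bar f$ and $\bar f^{-1}$ follows.

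The substantive step is your second paragraph, and it is handled carefully: the manifold-with-corners structure on the orbit space of a locally standard action is \emph{defined} by descending the standard atlas, and under that descended atlas the codimension-$k$ stratum is precisely the image of orbits with rank-$k$ isotropy, because each standard chart is a weakly equivariant diffeomorphism to an open subset of $\C^n$ and therefore preserves isotropy ranks. Since any $\psi\in\mathrm{Aut}(T^n)$ preserves the rank of a closed subgroup, $\bar f$ preserves these strata, and a stratification-preserving homeomorphism between spaces modelled on $\R_+^n$ is, by the paper's own (topological) definition, a homeomorphism of manifolds with corners. No gaps.
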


\subsection{Classification up to weakly equivariant homeomorphism}
The notion of a characteristic matrix plays the central role in the classification of quasitoric manifolds.
Let $P$ be an $n$-dimensional simple polytope with $m$ facets $F_1,\ldots,F_m$.

\begin{df}
A \textit{characteristic matrix on} $P$ is an integer $(n\times m)$-matrix $\lambda=({\boldsymbol\lambda}_1,\ldots ,{\boldsymbol\lambda}_m)$
satisfying the \textit{non-singular condition for} $P$:
if $n$ facets $F_{i_1},\ldots ,F_{i_n}$ of $P$ meet at a vertex, then $\det \lambda_{(i_1,\ldots ,i_n)}=\pm 1$,
where $\lambda_{(i_1,\ldots ,i_n)}:=({\boldsymbol\lambda}_{i_1},\ldots ,{\boldsymbol\lambda}_{i_n})$.
\end{df}

From each quasitoric manifold $M$ over $P$, we obtain the corresponding characteristic matrix $\lambda(M)$ as follows.
Let $\pi$ denote the projection from $M$ to $P\cong M/T^n$.
By definition, we see that every point in $\pi^{-1}(\mathrm{rel.int\,}(F_i))$ has the same isotropy subgroup which is a one-dimensional subtorus of $T^n$.
We denote it by $T_M(F_i)$.
Then, taking primitive vectors ${\boldsymbol\lambda}_i={}^t(\lambda_{1,i},\ldots,\lambda_{n,i})\in\Z^n\,(i=1,\ldots,m)$ such that
$$T_M(F_i)=\{(t^{\lambda_{1,i}},\ldots,t^{\lambda_{n,i}})\in T^n\,\vert\,t\in\C,\vert t \vert =1 \},$$
we obtain an integer $(n\times m)$-matrix $\lambda(M):=(\lambda_{i,j})$.
Clearly, each ${\boldsymbol\lambda}_i$ is determined up to sign.
Since the $T^n$-action on $M$ is locally standard, $\lambda(M)$ satisfies the non-singular condition for $P$.

Conversely, we can construct a quasitoric manifold $M(\lambda)$ from a simple polytope $P$ and a characteristic matrix $\lambda$ on $P$.

\begin{const}\label{const:construction}
For each point $q\in P$, we denote the minimal face containing $q$ by $G(q)$.
We define the \textit{characteristic map $\ell_{\lambda}$ corresponding to} $\lambda$,
a map from the set of the faces of $P$ to the set of subtori of $T^n$, as
\[ \ell_\lambda(F_{i_1}\cap \ldots \cap F_{i_k}):= \mathrm{im}\,(\lambda_{(i_1,\ldots ,i_k)}\colon T^k \rightarrow T^n), \]
where we regard $T$ as $\R/\Z$ and $\lambda_{(i_1,\ldots ,i_k)}$ as the map induced from the linear map determined by $\lambda_{(i_1,\ldots ,i_k)}$.
Then we obtain a quasitoric manifold $M(\lambda)$ over $P$ by setting
\[M(\lambda):=(T^n\times P)/\sim_{\lambda},\]
where $(t_1,p)\sim_{\lambda}(t_2,q)$ if and only if $p=q$ and $t_1t_2^{-1}\in \ell_{\lambda}(G(q))$.
There is a standard differentiable structure given by the atlas $\{\pi^{-1}(U_v)\}$, where the index $v$ runs over all vertices of $P$ and
$U_v$ is the open subset obtained by deleting all faces not containing $v$ from $P$.
Indeed, each $U_v$ is homeomorphic to an open subset of $\R_+^n$ by an affine map,
and hence each $\pi^{-1}(U_v)$ is weakly equivariantly homeomorphic to an open subset of $\C^n$.
We easily see that the transition maps are diffeomorphic.
It is obvious that the $T^n$-action on $T^n\times P$ by multiplication on the first component descends to a locally standard action on $M(\lambda)$,
for which the orbit space is identified with $P$.
By definition, $\lambda$ is the corresponding characteristic matrix to $M(\lambda)$.
\end{const}

\begin{df}
Let us denote the set of all characteristic matrices on a simple polytope $P$ by $\Lambda_P$.
As in Section 1, $\M_P$ denotes the set of all weakly equivariant homeomorphism classes of quasitoric manifolds over $P$.
Then Construction \ref{const:construction} gives the map $\Lambda_P\rightarrow\M_P\colon \lambda\mapsto M(\lambda)$.
We denote this map by $\phi$.
\end{df}

The following lemma is essential to the classification of quasitoric manifolds up to weakly equivariant homeomorphism
(see \cite[Lemma 1.4]{DJ91} and \cite[p.\,344]{Dav78}).

\begin{lem}[Davis and Januszkiewicz]\label{lem:section}
For any quasitoric manifold $M$ over a simple polytope $P$, there exists a continuous map $s\colon P\rightarrow M$ such that $\pi\circ s=id_P$.
\end{lem}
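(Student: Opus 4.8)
The plan is to construct $s$ by induction over the skeleta of the orbit space. For $j=0,1,\dots,n$ let $P^{(j)}\subseteq P$ be the union of all faces of dimension $\le j$, a closed subset; I will build continuous sections $s_j\colon P^{(j)}\to M$ of $\pi$ with $s_j|_{P^{(j-1)}}=s_{j-1}$ and take $s:=s_n$. The base case is forced: for a vertex $v$ the fibre $\pi^{-1}(v)$ is a single $T^n$-fixed point (the $n$ facets meeting at $v$ force the isotropy there to be all of $T^n$). For the inductive step, since two distinct $j$-faces meet only along faces of lower dimension, it is enough (by the pasting lemma) to extend $s_{j-1}$ over each $j$-face $G$ separately; thus we are given a section $s_{\partial G}$ over $\partial G\cong S^{j-1}$ and must extend it over $G$.

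The local model at a vertex $w$ of $G$ is what makes this possible. Local standardness yields an equivariant diffeomorphism $\pi^{-1}(U_w)\cong\C^n$ with the standard action under which $G\cap U_w$ is a coordinate face $\cong\R_+^j$ and $\pi^{-1}(G\cap U_w)$ is the corresponding coordinate subspace $\cong\C^j$. In this model a section of the orbit map $\C^j\to\R_+^j$ over a subset $A$ is nothing but an independent choice, for each coordinate $i$, of a continuous map $A\cap\{x_i>0\}\to S^1$; since $\{x_i>0\}\cap\R_+^j$ deformation retracts onto the union of its boundary faces (push the least of the other coordinates down to $0$), such a choice always extends over the whole stratum. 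Hence $s_{\partial G}$ extends to a section $\sigma_w$ over each vertex star $G\cap U_w$; these stars cover $G$, while over $\mathrm{rel.int\,}G$ the action is free with contractible base, so the restriction of $\pi$ there is a trivial principal $T^j$-bundle.

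The real content --- and the step I expect to be the main obstacle --- is to glue the $\sigma_w$ into a single section $s_G$ over $G$. On overlaps they differ by torus-valued functions vanishing along $\partial G$, i.e.\ by a $1$-cocycle whose obstruction to being a coboundary lies in a cohomology group isomorphic to $H^1(\partial G;\Z^j)=H^1(S^{j-1};\Z^j)$; this vanishes for every $j\ne 2$, so the $\sigma_w$ can be corrected to agree and $s_G$ is obtained. The exceptional case $j=2$ I would treat by hand: $\pi^{-1}(G)$ is simply connected --- apply van Kampen to its cover by the charts $\pi^{-1}(G\cap U_w)\cong\C^2$, whose pairwise intersections are connected --- so the loop $s_{\partial G}$ bounds a disk, and that disk is straightened into a section over $G=D^2$ using the product structure over $\mathrm{rel.int\,}G$. (Alternatively, one may invoke directly Davis's result \cite{Dav78} that the orbit map of any locally standard torus action admits a continuous section.)
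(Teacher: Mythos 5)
The paper does not prove this lemma; it states it with attribution and cites \cite[Lemma~1.4]{DJ91} and \cite[p.~344]{Dav78}, and your parenthetical fallback at the end is essentially that same citation. The skeleton-by-skeleton outline you describe is indeed the right shape (it is the shape of Davis's argument), and the base case and the reduction to independent angular coordinates in the local model are fine.

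The crux --- the gluing step, which you yourself single out as ``the real content'' --- has a genuine gap. You assert that the transition functions between the $\sigma_w$ ``vanish along $\partial G$,'' so that the obstruction lies in $H^1(\partial G;\Z^j)$. But they do not vanish there: working in the coordinate model of $G\cap U_w$, the $i$-th angular component of $g_{ww'}$ is pinned down to be trivial only on the facets $\{x_k=0\}$ with $k\ne i$, and is entirely unconstrained on $\{x_i=0\}$, where the corresponding circle factor of the fibre has collapsed. Equivalently, as you approach a stratum of $\partial G$ the transition function is constrained only modulo the growing isotropy subtorus, not forced to the identity. So the cochain is not a relative cochain in the naive sense, the coefficient system must shrink along the strata of $\partial G$, and identifying the relevant obstruction group and showing it vanishes is exactly the technical content of Davis's stratified argument that you are eliding. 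Likewise, for $j=2$, ``that disk is straightened into a section'' is asserted rather than argued: a null-homotopy of $s_{\partial G}$ in $\pi^{-1}(G)$ is a far weaker datum than an extending section (the filling disk need not be fibrewise, and there is no retraction onto a section), and converting the former into the latter requires the same stratified machinery. As an honest sketch of the known strategy your proposal points the right way, but as a self-contained proof it is incomplete precisely where it matters, and the appeal to \cite{Dav78} is doing the real work.
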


Then, for any quasitoric manifold over $P$,
the map $T^n\times P\rightarrow M\colon (t,q)\mapsto t\cdot s(q)$ induces an equivariant homeomorphism from $M(\lambda(M))$ to $M$.
We obtain the following proposition.

\begin{prop}[Davis and Januszkiewicz]
For any quasitoric manifold over a simple polytope $P$, $M(\lambda(M))$ is equivariantly homeomorphic to $M$.
In particular, the map $\phi\colon\Lambda_P\rightarrow\M_P$ is surjective.
\end{prop}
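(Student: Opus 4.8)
The plan is to make precise the argument sketched just before the statement: use the section to identify $M$ with $M(\lambda(M))$ on the nose. Fix a quasitoric manifold $M$ over $P$ with orbit map $\pi\colon M\to P$, write $\lambda=\lambda(M)$, and choose a continuous section $s\colon P\to M$ of $\pi$, which exists by Lemma \ref{lem:section}. The assignment $\tilde f\colon T^n\times P\to M$, $(t,q)\mapsto t\cdot s(q)$, is continuous and intertwines the $T^n$-action by multiplication on the first factor. The first step is to show that $\tilde f$ is constant on the equivalence classes of $\sim_\lambda$; by the definition of $M(\lambda)$ in Construction \ref{const:construction} this reduces to the inclusion $\ell_\lambda(G(q))\subseteq (T^n)_{s(q)}$ for every $q\in P$, where $(T^n)_x$ denotes the isotropy subgroup of $x$. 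Granting this, $\tilde f$ descends to a continuous $T^n$-equivariant map $f\colon M(\lambda)\to M$.

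The substantive point is the identification $(T^n)_{s(q)}=\ell_\lambda(G(q))$. Suppose $q$ lies in the relative interior of the codimension-$k$ face $G=F_{i_1}\cap\cdots\cap F_{i_k}$. In a standard chart around $s(q)$ the point $s(q)$ corresponds to a point of $\C^n$ with exactly $k$ vanishing coordinates (the stratification is preserved, and $q$ lies in the codimension-$k$ stratum of $P$), so local standardness forces $(T^n)_{s(q)}$ to be a subtorus of dimension exactly $k$. On the other hand, since $G\subseteq F_{i_j}$, continuity of $s$ and of the action gives $(T^n)_{s(q)}\supseteq T_M(F_{i_j})$ for each $j$, hence $(T^n)_{s(q)}\supseteq\mathrm{im}\,(\lambda_{(i_1,\ldots,i_k)})=\ell_\lambda(G)$; and because $P$ is simple, $G$ is a face of some vertex, so $\lambda_{(i_1,\ldots,i_k)}$ is a submatrix of a matrix of determinant $\pm1$ and therefore has rank $k$, which makes $\ell_\lambda(G)$ a $k$-dimensional subtorus. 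Comparing dimensions yields the equality, and in particular the inclusion needed above.

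It remains to check that $f\colon M(\lambda)\to M$ is a homeomorphism. It is surjective because $\pi$ is onto and each fibre of $\pi$ is a single $T^n$-orbit, so any $m\in M$ equals $t\cdot s(\pi(m))=f([t,\pi(m)])$ for a suitable $t$. It is injective: if $f([t_1,q_1])=f([t_2,q_2])$, applying $\pi$ gives $q_1=q_2=:q$, and then $t_1t_2^{-1}\in(T^n)_{s(q)}=\ell_\lambda(G(q))$, so $[t_1,q_1]=[t_2,q_2]$ in $M(\lambda)$. Since $P$ is compact, $M(\lambda)$ is a quotient of the compact space $T^n\times P$ and hence compact, while $M$ is Hausdorff; thus the continuous bijection $f$ is a homeomorphism, and it is $T^n$-equivariant, proving the first assertion. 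The ``in particular'' follows at once, since $\phi(\lambda(M))=[M(\lambda(M))]=[M]$ shows every class in $\M_P$ is hit. The only non-formal ingredient is the isotropy identification in the second paragraph, and within it the dimension count supplied by local standardness is the real crux; everything else is bookkeeping with the quotient construction.
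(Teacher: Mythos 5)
Your proof is correct and takes the same route the paper sketches in the sentence preceding the proposition: choose a section $s$ from Lemma \ref{lem:section}, show $(t,q)\mapsto t\cdot s(q)$ descends to $M(\lambda(M))$ by identifying $(T^n)_{s(q)}$ with $\ell_\lambda(G(q))$, and conclude bijectivity plus the compact-to-Hausdorff argument for the homeomorphism. The paper leaves these details (in particular the isotropy identification via local standardness, the containment $T_M(F_{i_j})\subseteq (T^n)_{s(q)}$ by continuity, and the dimension count from non-singularity) implicit, and you have supplied exactly them.
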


Next we consider when two characteristic matrices give rise to weakly equivariantly homeomorphic quasitoric manifolds.
Let us denote the group of simplicial automorphisms of $K_P$ by $\mathrm{Aut\,}(K_P)$.

\begin{df}
$GL(n,\Z)$ acts on $\Lambda_P$ by left multiplication.
Similarly, $(\Z/2)^m$ acts on $\Lambda_P$ by multiplication with $-1$ on each column,
and $\mathrm{Aut\,}(K_P)$ acts by the column permutation.
We denote $GL(n,\Z)\backslash \Lambda_P/(\Z/2)^m$, the quotient of the biaction, by $\X_P$.
Then the right $\mathrm{Aut\,}(K_P)$-action on $\Lambda_P$ descends to an action on $\X_P$.
\end{df}

Let us show that $\phi\colon\Lambda_P\rightarrow\M_P$ descends to $\overline{\phi}\colon\X_P/\mathrm{Aut\,}(K_P)\rightarrow\M_P$.
Let $\lambda=({\boldsymbol\lambda}_1,\ldots ,{\boldsymbol\lambda}_m)$, $\lambda'=({\boldsymbol\lambda}'_1,\ldots ,{\boldsymbol\lambda}'_m)$
be characteristic matrices on a simple polytope $P$, and put $M:=M(\lambda)$, $M':=M(\lambda')$.
Assume that there are $\sigma\in\mathrm{Aut\,}(K_P)$ and $A\in GL(n,\Z)$ such that
\[{\boldsymbol\lambda}'_{\sigma(i)} = \pm A{\boldsymbol\lambda}_i \ (i=1,\ldots ,m).\]
Denote the realization of $\sigma$ by $\overline{f}\colon P\rightarrow P$ (recall Proposition \ref{prop:polytope})
and the automorphism induced from $A$ by $\psi\colon T^n\rightarrow T^n$.
Then we obtain a weakly equivariant homeomorphism $f\colon M\rightarrow M'$ as follows:
$\tilde{f}$ defined by
$$T^n\times P\rightarrow T^n\times P\colon (t,q)\mapsto (\psi(t),\overline{f}(q))$$
descends to a homeomorphism $f\colon M\rightarrow M'$, which is weakly equivariant.

Conversely, assume that there is a weakly equivariant homeomorphism $f$ from $M$ to $M'$ for an automorphism $\psi$ of $T^n$.
Let us denote the matrix corresponding to $\psi$ by $A\in GL(n,\Z)$,
and the self-homeomorphism of $P$ induced from $f$ (recall Proposition \ref{prop:self-homeo}) by $\overline{f}$.
Then we have
\[\psi(T_{M}(F_i))=T_{M'}(\overline{f}(F_i))\ (i=1,\ldots ,m),\]
in other words,
\[{\boldsymbol\lambda}'_{\sigma(i)} = \pm A{\boldsymbol\lambda}_i \ (i=1,\ldots ,m),\]
where $\sigma$ is the simplicial automorphism of $K_P$ defined by
$F_{\sigma(i)} = \overline{f}(F_i)$ for $i=1,\ldots ,m$.
Hence we see that $\overline{\phi}\colon \X_P/\mathrm{Aut\,}(K_P)\rightarrow\M_P$ is injective.
Now we obtain the following proposition.

\begin{prop}[Davis and Januszkiewicz]\label{prop:cls of qtmfds up to weh}
For any simple polytope $P$,
$\phi\colon \Lambda_P\rightarrow\M_P\colon \lambda \mapsto M(\lambda)$ descends to a bijection $\overline{\phi}\colon \X_P/\mathrm{Aut\,}(K_P)\rightarrow\M_P$.
\end{prop}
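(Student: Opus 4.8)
The plan is to assemble the three ingredients already visible in the discussion preceding the statement into one clean assertion: surjectivity of $\overline\phi$, well-definedness of $\phi$ on orbits (so that $\overline\phi$ exists), and injectivity. For surjectivity I would invoke Lemma \ref{lem:section}: given a quasitoric manifold $M$ over $P$, a section $s\colon P\to M$ of the orbit projection $\pi$ yields a map $T^n\times P\to M$, $(t,q)\mapsto t\cdot s(q)$, which factors through an equivariant homeomorphism $M(\lambda(M))\xrightarrow{\ \sim\ }M$; hence $\phi$ — and therefore $\overline\phi$ — is onto.

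For well-definedness I would check that $\phi$ is constant on the orbits of the $GL(n,\Z)\times(\Z/2)^m\times\mathrm{Aut\,}(K_P)$-action on $\Lambda_P$. Concretely, suppose ${\boldsymbol\lambda}'_{\sigma(i)}=\pm A{\boldsymbol\lambda}_i$ for all $i$, with $A\in GL(n,\Z)$ and $\sigma\in\mathrm{Aut\,}(K_P)$. I would realize $\sigma$ as a homeomorphism $\overline f\colon P\to P$ of manifolds with corners via Proposition \ref{prop:polytope}, and let $\psi\colon T^n\to T^n$ be the automorphism with matrix $A$. The point is that the relation on characteristic vectors translates into $\psi(\ell_\lambda(G))=\ell_{\lambda'}(\overline f(G))$ for every face $G$ of $P$, so that $(t,q)\mapsto(\psi(t),\overline f(q))$ on $T^n\times P$ carries $\sim_\lambda$ to $\sim_{\lambda'}$ and descends to a continuous map $f\colon M(\lambda)\to M(\lambda')$; building the inverse the same way from $\psi^{-1}$ and $\overline f^{-1}$ shows $f$ is a homeomorphism, and it is weakly equivariant with respect to $\psi$ by construction. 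Thus $\phi$ factors through $\X_P/\mathrm{Aut\,}(K_P)$.

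For injectivity I would run the converse. If $f\colon M(\lambda)\to M(\lambda')$ is weakly equivariant with associated automorphism $\psi$ of $T^n$, corresponding to $A\in GL(n,\Z)$, then by Proposition \ref{prop:self-homeo} it descends to a homeomorphism $\overline f\colon P\to P$ of manifolds with corners, which by Proposition \ref{prop:polytope} realizes a unique $\sigma\in\mathrm{Aut\,}(K_P)$ with $F_{\sigma(i)}=\overline f(F_i)$. Since $f$ intertwines the $T^n$-actions up to $\psi$, it sends the isotropy subgroup over $\mathrm{rel.int\,}(F_i)$ to the one over $\mathrm{rel.int\,}(\overline f(F_i))$, i.e.\ $\psi(T_{M(\lambda)}(F_i))=T_{M(\lambda')}(F_{\sigma(i)})$ for each $i$; reading this off on primitive vectors gives ${\boldsymbol\lambda}'_{\sigma(i)}=\pm A{\boldsymbol\lambda}_i$, so $\lambda$ and $\lambda'$ represent the same element of $\X_P/\mathrm{Aut\,}(K_P)$. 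Combined with surjectivity, $\overline\phi$ is then a bijection.

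The step requiring the most care is the transition from a \emph{mere} weakly equivariant homeomorphism — which is not assumed smooth and a priori need not respect the corner stratification — to the combinatorial datum $\sigma$ together with the matching of one-dimensional isotropy subtori; Propositions \ref{prop:self-homeo} and \ref{prop:polytope} are exactly what make this step legitimate, after which everything is bookkeeping. The one genuinely nontrivial input, the existence of a section in Lemma \ref{lem:section}, is cited rather than reproved here.
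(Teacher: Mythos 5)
Your proposal is correct and follows essentially the same three-part structure as the paper's argument: surjectivity via the section lemma, well-definedness by realizing a pair $(\sigma,A)$ as the weakly equivariant homeomorphism descending from $(t,q)\mapsto(\psi(t),\overline f(q))$, and injectivity by reading off $\psi(T_{M(\lambda)}(F_i))=T_{M(\lambda')}(F_{\sigma(i)})$ from a given weakly equivariant homeomorphism via Propositions \ref{prop:self-homeo} and \ref{prop:polytope}. The only difference is cosmetic: you spell out the face-by-face identity $\psi(\ell_\lambda(G))=\ell_{\lambda'}(\overline f(G))$ to justify that $\sim_\lambda$ is carried to $\sim_{\lambda'}$, a verification the paper leaves implicit.
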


\begin{df}
Let $P$ be a simple polytope.
We say that two characteristic matrices on $P$ are \textit{equivalent} if they are equal in $\X_P/\mathrm{Aut\,}(K_P)$.
\end{df}

\subsection{Moment-angle manifold}
Sometimes it is more useful to regard a quasitoric manifold over $P$
as the orbit space for a free $T^{m-n}$-action on the \textit{moment-angle manifold} $\mathcal{Z}_P$,
than to regard it as a product of Construction \ref{const:construction} (for details, see \cite{BP02}).
The moment-angle manifold corresponding to a simple $n$-polytope $P$ with $m$ facets is, as a topological space, a subset of $(D^2)^m$ as follows:
$$ \mathcal{Z}_P := \{ z=(z_1,\ldots,z_m)\in (D^2)^m \,\vert\, \sigma_z \in K_P \}, $$
where $D^2$ denotes the unit closed disk in $\C$, and $\sigma_z:=\{i\,\vert\, \vert z_i \vert <1 \}$.
For a characteristic matrix $\lambda$ on $P$, let $T_\lambda$ be the kernel of the epimorphism from $T^m$ to $T^n$ induced from $\lambda$.
Then $T_\lambda$ is an $(m-n)$-dimensional subtorus of $T^m$, which acts freely on $\mathcal{Z}_P$ by the restriction of the coordinate-wise $T^m$-action.
The $T^m$-action is smooth with respect to the natural differentiable structure on $\mathcal{Z}_P$.
We can show that the orbit space $\mathcal{Z}_P/T_\lambda$ is equivariantly homeomorphic to $M(\lambda)$,
with respect to the action of $T^m/T_\lambda$ which is identified with $T^n$ through the epimorphism $\lambda\colon T^m\rightarrow T^n$.

For example, there is a characteristic matrix $\lambda=(-I\,\vert\,1)$ on $\Delta^n$,
where $I$ denotes the unit matrix of size $n$ and $1$ denotes the column vector with all components $1$.
$\mathcal{Z}_{\Delta^n}=S^{2n+1}$ and the one-dimensional torus $T_\lambda$ acts diagonally,
so we see that $M(\lambda)$ is equivariantly homeomorphic to $\C P^n$ with the standard $T^n$-action.

\subsection{Small covers}
Let $P$ be an $n$-dimensional simple polytope again.
Replacing $T^n$ by $(\Z/2)^n$ and $\C^n$ by $\R^n$ in the definition of a quasitoric manifold,
we obtain the notion of a \textit{small cover over} $P$ (for details, see \cite{DJ91}).
Moreover, replacing $\Z$ by $\Z/2$, we define a \textit{real characteristic matrix on} $P$.
Then we can construct a small cover ${}_\R M(\lambda)$ from a real characteristic matrix $\lambda$, in the same way as Construction \ref{const:construction}.
We denote the set of all real characteristic matrices on $P$ by $\LL_P$.
Note that $GL(n,\Z/2)$ acts on $\LL_P$ by left multiplication, and $\mathrm{Aut\,}(K_P)$ acts by the column permutation.
As in Section 1, we denote the set of all weakly equivariant homeomorphism classes of small covers over $P$ by $\MM_P$

\begin{df}
Let us denote the map $\LL_P\rightarrow\MM_P\colon\lambda\mapsto {}_\R M(\lambda)$ by ${}_\R\phi$,
and the quotient $GL(n,\Z/2)\backslash \LL_P$ by $\XX_P$.
Then the right $\mathrm{Aut\,}(K_P)$-action on $\LL_P$ descends to an action on $\XX_P$.
We say that two real characteristic matrices on $P$ are \textit{equivalent} if they are equal in $\XX_P/\mathrm{Aut\,}(K_P)$.
\end{df}

As for quasitoric manifolds, we get the following proposition for small covers.

\begin{prop}\label{prop:sc1}
For any simple polytope $P$,
${}_\R\phi\colon\LL_P\rightarrow\MM_P\colon\lambda \mapsto {}_\R M(\lambda)$ descends to a bijection
${}_\R\overline{\phi}\colon\XX_P/\mathrm{Aut\,}(K_P)\rightarrow\MM_P$.
\end{prop}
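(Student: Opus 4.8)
The plan is to mirror exactly the argument already carried out for quasitoric manifolds in the preceding subsection, since small covers are defined by replacing $T^n$ with $(\Z/2)^n$, $\C$ with $\R$, and $\Z$ with $\Z/2$ throughout, and every structural ingredient used in the quasitoric case has a verbatim analogue. First I would note that the real analogue of Construction \ref{const:construction} produces, for each $\lambda\in\LL_P$, a small cover ${}_\R M(\lambda):=((\Z/2)^n\times P)/\sim_\lambda$, and that by this construction $\lambda$ is recovered as the characteristic matrix of ${}_\R M(\lambda)$; hence ${}_\R\phi$ is well defined. Surjectivity of ${}_\R\phi$ follows from the real version of Lemma \ref{lem:section}: for any small cover $M$ over $P$ there is a continuous section $s\colon P\to M$ of the orbit projection $\pi$ (this is the $(\Z/2)^n$-version of the Davis--Januszkiewicz lemma, and in fact is easier, being essentially the statement that the orbit map admits a section over the contractible $P$), and then $(\Z/2)^n\times P\to M\colon (g,q)\mapsto g\cdot s(q)$ descends to an equivariant homeomorphism ${}_\R M(\lambda(M))\xrightarrow{\sim}M$.

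Next I would show that ${}_\R\phi$ descends to the quotient $\XX_P/\mathrm{Aut}(K_P)$. For the forward implication, suppose ${\boldsymbol\lambda}'_{\sigma(i)}=A{\boldsymbol\lambda}_i$ for all $i$, with $\sigma\in\mathrm{Aut}(K_P)$ and $A\in GL(n,\Z/2)$ (note that over $\Z/2$ the sign ambiguity disappears, so $(\Z/2)^m$ plays no role and $\XX_P$ is simply $GL(n,\Z/2)\backslash\LL_P$). Let $\overline{f}\colon P\to P$ realize $\sigma$ via Proposition \ref{prop:polytope} and let $\psi\colon (\Z/2)^n\to(\Z/2)^n$ be the automorphism given by $A$; then $(\Z/2)^n\times P\to(\Z/2)^n\times P\colon (g,q)\mapsto(\psi(g),\overline{f}(q))$ descends to a weakly equivariant homeomorphism ${}_\R M(\lambda)\to{}_\R M(\lambda')$, because $\psi$ carries $\ell_\lambda(G(q))$ to $\ell_{\lambda'}(\overline{f}(G(q)))$ by the matrix relation. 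For the converse, given a weakly equivariant homeomorphism $f\colon{}_\R M(\lambda)\to{}_\R M(\lambda')$ with associated automorphism $\psi$ and matrix $A\in GL(n,\Z/2)$, the real version of Proposition \ref{prop:self-homeo} gives an induced self-homeomorphism $\overline{f}$ of $P$ and hence $\sigma\in\mathrm{Aut}(K_P)$ with $F_{\sigma(i)}=\overline{f}(F_i)$; comparing isotropy subgroups along the facets yields $\psi(T_{{}_\R M(\lambda)}(F_i))=T_{{}_\R M(\lambda')}(\overline{f}(F_i))$, that is ${\boldsymbol\lambda}'_{\sigma(i)}=A{\boldsymbol\lambda}_i$, so $\lambda$ and $\lambda'$ coincide in $\XX_P/\mathrm{Aut}(K_P)$. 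This establishes both well-definedness and injectivity of the induced map ${}_\R\overline{\phi}$, and combined with surjectivity of ${}_\R\phi$ we conclude ${}_\R\overline{\phi}\colon\XX_P/\mathrm{Aut}(K_P)\to\MM_P$ is a bijection.

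The only point requiring genuine (if minor) care is the real analogue of Lemma \ref{lem:section}, the existence of a continuous section of the orbit projection for an arbitrary small cover; everything else is a line-by-line translation of the quasitoric argument into $\Z/2$-coefficients, where in fact the bookkeeping simplifies because signs vanish. I would either cite the statement from \cite{DJ91} directly, or remark that a section can be built facet by facet exactly as in the torus case since each $\pi^{-1}(U_v)$ is equivariantly homeomorphic to an open subset of $\R^n$ and these local sections patch over the polytope. Given that, the proof of Proposition \ref{prop:sc1} is complete.
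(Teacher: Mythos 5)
Your proof is correct and takes essentially the same approach as the paper, which states Proposition~\ref{prop:sc1} without a separate proof under the remark ``As for quasitoric manifolds,'' i.e.\ precisely the verbatim translation you carry out (including the correct observation that the $(\Z/2)^m$ sign action is trivial over $\Z/2$, so $\XX_P=GL(n,\Z/2)\backslash\LL_P$, and the appeal to the small-cover version of the Davis--Januszkiewicz section lemma).
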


Replacing $D^2$ by the interval $[-1,1]$ and $S^1$ by $\{1,-1\}$, we define the \textit{real moment-angle manifold} ${}_\R\mathcal{Z}_P$.
For a real characteristic matrix $\lambda$ on $P$, let $S_\lambda$ be the kernel of the epimorphism from $\ZZ^m$ to $\ZZ^n$ induced from $\lambda$.
In the same way as $\mathcal{Z}_P$, we see that the orbit space ${}_\R\mathcal{Z}_P/S_\lambda$ is equivariantly homeomorphic to ${}_\R M(\lambda)$,
with respect to the action of $\ZZ^m/S_\lambda$ which is identified with $\ZZ^n$ through the epimorphism $\lambda\colon \ZZ^m\rightarrow \ZZ^n$.
For example, there is a real characteristic matrix $\lambda=(I\,\vert\,1)$ on $\Delta^n$.
${}_\R\mathcal{Z}_{\Delta^n}=S^{n}$ and $S_\lambda$ acts diagonally,
so we see that ${}_\R M(\lambda)$ is equivariantly homeomorphic to $\R P^n$ with the standard $\ZZ^n$-action.

Then we have the classification of quasitoric manifolds and small covers over a simplex, as below.

\begin{cor}\label{cor:clsforspx}
For any positive integer $n$, $\M_{\Delta^n}$ (resp. $\MM_{\Delta^n}$) has only one element,
and it is represented by $\C P^n$ (resp. $\R P^n$) with the standard $T^n$-action (resp. $\ZZ^n$-action).
\end{cor}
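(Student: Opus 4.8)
The plan is to apply Proposition~\ref{prop:cls of qtmfds up to weh} (and Proposition~\ref{prop:sc1}) to the simplex $P=\Delta^n$, so the whole task reduces to showing that $\X_{\Delta^n}/\mathrm{Aut\,}(K_{\Delta^n})$ is a single point, and likewise $\XX_{\Delta^n}/\mathrm{Aut\,}(K_{\Delta^n})$; the identification of the unique class with $\C P^n$ (resp.\ $\R P^n$) with its standard action is then read off from the moment-angle description in \S2.4, where the characteristic matrix $(-I\,\vert\,1)$ (resp.\ $(I\,\vert\,1)$) is shown to produce exactly $\C P^n$ (resp.\ $\R P^n$).

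First I would record the combinatorics of $\Delta^n$: it has $m=n+1$ facets $F_1,\dots,F_{n+1}$, and $K_{\Delta^n}$ is the boundary complex of the $n$-simplex, i.e.\ every $n$-element subset of $\{1,\dots,n+1\}$ is a maximal face (a vertex $v_j$ of $\Delta^n$ is the intersection of all $F_i$ with $i\neq j$). So a characteristic matrix $\lambda=({\boldsymbol\lambda}_1,\dots,{\boldsymbol\lambda}_{n+1})$ on $\Delta^n$ is an integer $n\times(n+1)$ matrix for which every $n$ of its columns form a $\pm1$-determinant matrix. Next I would normalize: the first $n$ columns $({\boldsymbol\lambda}_1,\dots,{\boldsymbol\lambda}_n)$ form a matrix $B\in GL(n,\Z)$, so multiplying on the left by $B^{-1}\in GL(n,\Z)$ I may assume ${\boldsymbol\lambda}_i = \mathbf{e}_i$ for $i=1,\dots,n$. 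Then the condition that columns $\{1,\dots,\hat{\jmath},\dots,n+1\}$ have determinant $\pm1$ forces, by cofactor expansion, that the $j$-th entry of the last column ${\boldsymbol\lambda}_{n+1}$ is $\pm1$ for every $j$; hence ${\boldsymbol\lambda}_{n+1}$ has all entries in $\{+1,-1\}$, and applying the $(\Z/2)^{n+1}$-action (sign changes on the first $n$ columns) I may flip signs so that ${\boldsymbol\lambda}_{n+1}$ has all entries equal, say $=1$ (then a final sign flip of that last column is harmless since we work in $\X_P$). This shows every characteristic matrix on $\Delta^n$ is equivalent in $\X_{\Delta^n}$ — even before passing to $\mathrm{Aut\,}(K_{\Delta^n})$ — to the single matrix $(I\,\vert\,1)$; so $\#\X_{\Delta^n}=1$ and a fortiori $\#(\X_{\Delta^n}/\mathrm{Aut\,})=1$, giving $\#\M_{\Delta^n}=1$. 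The same argument verbatim over $\Z/2$ (where signs are invisible, so the normalization is even easier — $B\in GL(n,\Z/2)$ and cofactor expansion gives all entries of the last column equal to $1$) yields $\#\XX_{\Delta^n}=1$ and hence $\#\MM_{\Delta^n}=1$.

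Finally I would match the representative to the named manifold. For the small-cover case \S2.4 already states that the real characteristic matrix $(I\,\vert\,1)$ on $\Delta^n$ gives ${}_\R M(\lambda)=\R P^n$ with the standard $\ZZ^n$-action, so that is exactly our unique class. For the quasitoric case the excerpt's \S2.4 uses $(-I\,\vert\,1)$ to identify $M(\lambda)$ with $\C P^n$; but $(-I\,\vert\,1)$ and $(I\,\vert\,1)$ are equivalent in $\X_{\Delta^n}$ (left-multiply by $-I\in GL(n,\Z)$, which sends $(-I\,\vert\,1)$ to $(I\,\vert\,-1)$, then flip the sign of the last column), so the unique class is represented by $\C P^n$ with the standard $T^n$-action, as claimed. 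I do not expect a genuine obstacle here — the content is entirely the normalization argument of the previous paragraph — but the one point requiring a little care is the cofactor-expansion step: one must check that the $\pm1$-determinant conditions, taken over all $j$, simultaneously force \emph{every} coordinate of the remaining column to be a unit, and that the residual sign ambiguities are exactly absorbed by the $(\Z/2)^{m}$-action defining $\X_P$ (respectively, trivially absorbed mod $2$). This is where writing out $\det\lambda_{(1,\dots,\hat{\jmath},\dots,n+1)}=\pm\lambda_{j,n+1}$ explicitly pins everything down.
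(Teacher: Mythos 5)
Your argument is correct and supplies exactly the normalization the paper leaves implicit after the moment-angle manifold computations of \S2.4, which already identify $(-I\,\vert\,1)$ with $\C P^n$ and $(I\,\vert\,1)$ with $\R P^n$. One wording nit: sign changes on the first $n$ columns alone do not alter $\boldsymbol\lambda_{n+1}$; what you want is to left-multiply by $\mathrm{diag}(v_1,\dots,v_n)\in GL(n,\Z)$ to turn $(I\,\vert\,\mathbf{v})$ into $(\mathrm{diag}(v_i)\,\vert\,\mathbf{1})$ and then use the $(\Z/2)^m$ column sign flips to restore the first $n$ columns to $I$ --- both moves are available in $\X_{\Delta^n}=GL(n,\Z)\backslash\Lambda_{\Delta^n}/(\Z/2)^{n+1}$, so the conclusion stands.
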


Let $P$ be a simple $n$-polytope.
If $\lambda$ is a characteristic matrix on $P$, then the modulo $2$ reduction $\overline{\lambda}$ is a real characteristic matrix on $P$.
The involution $T^n\times P\rightarrow T^n\times P\colon (t,q)\mapsto (t^{-1},q)$ descends to an involution on $M(\lambda)$,
and we call it the \textit{conjugation involution} on $M(\lambda)$.
Regarding $\ZZ^n \subseteq T^n$ canonically, we easily see that the fixed point set for the conjugation involution is $\ZZ^n$-stable,
and it is equivariantly homeomorphic to ${}_\R M(\overline{\lambda})$.
Then we can rephrase the lifting problem as follows.

\begin{prob}
For a real characteristic matrix $\lambda$ on a simple polytope $P$, is there a characteristic matrix $\lambda'$ on $P$
such that $\lambda$ is the modulo $2$ reduction of $\lambda'$?
\end{prob}

For a real characteristic matrix $\lambda=(\lambda_{i,j})$ on $P$,
let us define $\tilde{\lambda}=(\tilde{\lambda}_{i,j})$ by $\tilde{\lambda}_{i,j}\equiv \lambda_{i,j}\bmod 2$ and $\tilde{\lambda}_{i,j}=0,1$ for each $i,j$.
Since the determinant of any three by three matrix with each component zero or one is between $-2$ and $2$,
we see that, if the dimension of $P$ is not more than three, $\tilde{\lambda}$ is a characteristic matrix on $P$.
Then we have the following proposition.

\begin{prop}\label{prop:lift}
The lifting problem is affirmative for any real characteristic matrix on a simple polytope of dimension $\leq 3$.
\end{prop}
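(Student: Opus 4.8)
The plan is to produce an explicit lift, namely the canonical $0$--$1$ lift $\tilde\lambda$ introduced just above the statement, and then to check that for polytopes of dimension $n\le 3$ this lift automatically satisfies the non-singular condition over $\Z$. Concretely: let $P$ be a simple polytope of dimension $n\le 3$ and let $\lambda\in\LL_P$ be a real characteristic matrix; define $\tilde\lambda$ by replacing each entry of $\lambda$ by its representative in $\{0,1\}$. By construction the modulo $2$ reduction of $\tilde\lambda$ equals $\lambda$, so the only thing left to verify is that $\tilde\lambda\in\Lambda_P$, i.e.\ that $\tilde\lambda$ satisfies the non-singular condition for $P$.

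The verification splits into two short steps. First I would localise the non-singular condition at a single vertex: if facets $F_{i_1},\ldots,F_{i_n}$ meet at a vertex of $P$, then since $\lambda$ is a real characteristic matrix the submatrix $\lambda_{(i_1,\ldots,i_n)}$ is invertible over $\Z/2$, so $\det\lambda_{(i_1,\ldots,i_n)}\equiv 1\bmod 2$; because $\tilde\lambda_{(i_1,\ldots,i_n)}$ reduces modulo $2$ to $\lambda_{(i_1,\ldots,i_n)}$, its integer determinant is therefore an \emph{odd} integer. Second I would invoke the elementary bound on determinants of small $0$--$1$ matrices: a square matrix of size $1$, $2$ or $3$ with entries in $\{0,1\}$ has determinant in $\{0,1\}$, in $\{-1,0,1\}$, or in $\{-2,-1,0,1,2\}$ respectively. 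In each range the only odd values are $\pm 1$. Combining the two observations gives $\det\tilde\lambda_{(i_1,\ldots,i_n)}=\pm 1$ for every vertex of $P$, hence $\tilde\lambda\in\Lambda_P$, and since its mod $2$ reduction is $\lambda$ the lifting problem is settled affirmatively.

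I do not expect a genuine obstacle: the entire content is the numerical coincidence that an odd integer of absolute value at most $2$ must be $\pm 1$, together with the bound on $3\times 3$ zero--one determinants (whose maximum $2$ is attained, e.g., by the matrix with $0$'s on the diagonal and $1$'s elsewhere). The one point worth flagging is that the argument is sharp in dimension $3$ and does not extend: already a $4\times 4$ zero--one matrix can have determinant $3$, so the canonical lift of a real characteristic matrix need not be non-singular over $\Z$ in higher dimensions, and a different method would be required there.
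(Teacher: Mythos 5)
Your proposal is correct and is essentially the paper's own argument: the paper also forms the canonical $0$--$1$ lift $\tilde\lambda$ and observes that a $3\times 3$ zero--one matrix has determinant between $-2$ and $2$, which combined with the odd-determinant constraint forces $\pm 1$. You have merely spelled out the ``odd determinant'' step and the $n\le 2$ cases, which the paper leaves implicit.
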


\subsection{Cohomology}
For the cohomology rings of a quasitoric manifold and a small cover, the following theorems are known
(\cite[Theorem 3.1]{DJ91} and \cite[Theorem 4.14]{DJ91}, see also \cite{BP02}).

\begin{df}\label{df:f- and h-vector}
Let $P$ be a simple $n$-polytope, and $f_i$ be the number of $(n-i-1)$-dimensional faces of $P$.
Then the integer vector $(f_0,\ldots,f_{n-1})$ is called the \textit{$f$-vector} of $P$.
We define the \textit{$h$-vector} $(h_0,\ldots,h_n)$ of $P$ from the equation
$$ h_0t^n+\cdots+h_{n-1}t+h_n=(t-1)^n+f_0(t-1)^{n-1}+\cdots+f_{n-1} $$
in the polynomial ring $\Z[t]$.
\end{df}

Remark that the $h$-vector of a simple polytope determines the $f$-vector conversely.

\begin{thm}[Davis and Januszkiewicz]\label{thm:Betti numbers}
Let $P$ be a simple $n$-polytope and $(h_0,\ldots,h_n)$ be the $h$-vector of $P$.
\begin{enumerate}
 \item[$(1)$] Suppose that there exists a small cover $M$ over $P$.
 Let $b_i(M)$ be the $i$-th mod $2$ Betti number of $M$, i.e. $b_i(M):=\dim_{\Z/2} H_i(M;\Z/2)$.
 Then $b_i(M)=h_i$.
 \item[$(2)$] Suppose that there exists a quasitoric manifold $M$ over $P$.
 The homology of $M$ vanishes in odd dimensions and is free in even dimensions.
 Let $b_{2i}(M)$ be the $2i$-th Betti number of $M$.
 Then $b_{2i}(M)=h_i$.
\end{enumerate}
\end{thm}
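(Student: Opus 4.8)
The plan is to prove both parts at once by Morse theory. Fix an affine realization $P\subset\R^n$ and a linear functional $\varphi\colon\R^n\to\R$ that is injective on the vertex set of $P$. Since $P$ is simple, exactly $n$ edges meet at each vertex $v$; orienting every edge in the direction of decreasing $\varphi$, let $d(v)\in\{0,\dots,n\}$ be the number of edges at $v$ along which $\varphi$ decreases away from $v$. First I would prove the combinatorial identity $\#\{v : d(v)=i\}=h_i$, and then realize this number as the number of critical points of index $2i$, resp.\ $i$, of a Morse function on a quasitoric manifold, resp.\ a small cover, over $P$.

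For the combinatorial identity: since $P$ is simple, the faces of $P$ containing a fixed vertex $v$ correspond bijectively to the subsets of the $n$ edges at $v$, a $k$-element subset spanning a $k$-dimensional face, which is again a simple polytope. As $\varphi$ is injective on vertices, each $k$-face $G$ has a unique vertex $v_G$ at which $\varphi|_G$ is maximal, and the $k$ edges of $G$ at $v_G$ are among the $d(v_G)$ descending edges at $v_G$; conversely, any $k$ descending edges at a vertex $v$ span a $k$-face with $\varphi$-maximum $v$. Hence the number of $k$-dimensional faces of $P$ equals $\sum_v\binom{d(v)}{k}$. Setting $e_i:=\#\{v:d(v)=i\}$, this reads $f_{n-1-k}=\sum_i e_i\binom{i}{k}$; multiplying by $(t-1)^k$, summing over $k=0,\dots,n$, and using $\sum_k\binom{i}{k}(t-1)^k=t^i$ together with the convention $f_{-1}:=1$ gives $\sum_i e_i t^i=\sum_k f_{n-1-k}(t-1)^k=\sum_i h_i t^{n-i}$, hence $e_i=h_{n-i}$. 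Running the same argument with $-\varphi$ in place of $\varphi$ replaces $d(v)$ by $n-d(v)$ and yields the symmetry $h_i=h_{n-i}$; therefore $e_i=h_i$.

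Next, the Morse-theoretic input. Let $\pi$ denote the orbit projection. For a quasitoric manifold $M$ over $P$, the function $\varphi\circ\pi\colon M\to\R$ has no critical points over faces of positive dimension, and in the standard chart $\C^n$ around the fixed point $p_v$ over a vertex $v$ (as in Construction \ref{const:construction}) it takes the form $\varphi(v)+\sum_{k=1}^n\epsilon_k|z_k|^2$ with $\epsilon_k<0$ for exactly the $d(v)$ descending edges at $v$; thus $\varphi\circ\pi$ is a Morse function whose critical points are precisely the $p_v$, with $p_v$ of index $2d(v)$. For a small cover over $P$ the same discussion, with $\R^n$ in place of $\C^n$ and $\sum_k\epsilon_k x_k^2$ as local model, produces a (topological) Morse function with one critical point of index $d(v)$ over each $v$. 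Combined with the previous paragraph, $M$ carries a Morse function with exactly $h_i$ critical points of index $2i$; as all indices are even, the associated CW structure has cells in even dimensions only, its cellular chain complex has vanishing differentials, and hence $H_*(M;\Z)$ is free, concentrated in even degrees, with $b_{2i}(M)=h_i$. This proves (2).

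It remains to prove (1), where the Morse inequalities a priori only give $b_i(M;\Z/2)\le h_i$ and one must show that the mod $2$ Morse function is perfect. Here I would use that ordering the vertices of $P$ by $\varphi$-value induces a shelling of $K_P$ (a line shelling in the sense of Bruggesser and Mani), and pass the sublevel threshold $c$ upward through the critical values: at each step one checks that the new sublevel set is obtained from the previous one by attaching a single handle of index $d(v)$ that creates a nonzero class in $H_{d(v)}(-;\Z/2)$ and kills nothing, so that $\dim_{\Z/2}H_*(M;\Z/2)\ge\sum_i h_i$; with the Morse inequality this forces equality, hence the vanishing of all mod $2$ Morse differentials and $b_i(M;\Z/2)=h_i$. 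The main obstacle is exactly this perfectness statement: in contrast with part (2), where evenness of the indices makes it automatic, in the small-cover case the vanishing of the differentials is not formal and must be extracted from the combinatorics of $P$, namely its simplicity together with the shellability of the sweep by $\varphi$. A secondary, purely technical point — routine but to be carried out with care using the explicit charts near the fixed points — is the verification that the relevant function on the small cover is a genuine (topological) Morse function with critical set exactly the set of fixed points over the vertices.
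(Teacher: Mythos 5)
The paper does not prove this theorem; it cites \cite[Theorems 3.1 and 4.14]{DJ91} (and \cite{BP02}). So the comparison must be against the known proofs in those sources rather than against an argument internal to the paper.

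Your part (2) is correct and is exactly the classical Davis--Januszkiewicz argument: the generic height function $\varphi$ pulls back to a Morse function with critical set the preimage of the vertices, with index $2d(v)$ at the vertex $v$; the combinatorial identity $\#\{v : d(v)=i\}=h_i$ (via $f_{n-1-k}=\sum_v\binom{d(v)}{k}$ and the $h$-vector symmetry) is also the standard one; since all indices are even the function is automatically perfect, giving a CW structure with only even cells, so $H_*$ is free and concentrated in even degrees with $b_{2i}=h_i$. Nothing to add here.

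For part (1) your proposal has a real gap, which you yourself flag: the Morse inequalities only give $b_i\le h_i$, and the perfectness of the mod $2$ Morse function on a small cover is not formal. You propose to pass a sweep through the critical values and ``check that each handle creates and kills nothing,'' invoking the Bruggesser--Mani line shelling of $K_P$. But a shelling of $K_P$ (equivalently, of $\partial P^*$) is a statement about a simplicial complex, not about attaching maps in the manifold $M$; the claim you need is that each attaching sphere $S^{d(v)-1}$ is nullhomologous mod $2$ in the sublevel set, and this does not follow from shellability by itself. Even in the simplest nontrivial example ($\R P^2$ over $\Delta^2$), the $2$-handle is attached along the boundary of a M\"obius band, and its mod $2$ nullhomology is a fact about that specific manifold, not a consequence of the shelling order. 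In general one must argue that the attaching sphere lives inside $\pi^{-1}(G_v)$ for the descending face $G_v$ and use the structure of the lower-dimensional small cover $\pi^{-1}(G_v)$; making this inductive argument precise is substantial work and is exactly the content you have left open.

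For comparison, the proof in \cite{DJ91} and \cite{BP02} does not attempt to prove mod $2$ perfectness of a Morse function. Instead one computes $H^*(E\ZZ^n\times_{\ZZ^n}M;\Z/2)$ as the Stanley--Reisner ring of $K_P$, observes that the images of the generators of $H^*(B\ZZ^n;\Z/2)$ under the map induced by $BM\to B\ZZ^n$ form a linear system of parameters, and uses Cohen--Macaulayness of the Stanley--Reisner ring (this is where shellability of the polytopal sphere $K_P$ enters, via Reisner's criterion) to conclude that the Stanley--Reisner ring is free over $\Z/2[t_1,\ldots,t_n]$. The Eilenberg--Moore or Serre spectral sequence of the fibration $M\to BM\to B\ZZ^n$ then collapses, and dividing the Poincar\'e series $(\sum_i h_i t^i)/(1-t)^n$ of the Stanley--Reisner ring by $1/(1-t)^n$ gives $\sum_i h_i t^i$ for $M$. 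So you are right that shellability is the combinatorial input, but it enters through commutative algebra rather than through a handle-by-handle sweep. As it stands, your proof of part (1) is an outline with a genuine missing step; either carry out the perfectness argument in detail, or switch to the Borel-construction route.
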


Let $P$ be a simple polytope with $m$ facets $F_1,\ldots,F_m$, and $M$ be a quasitoric manifold (resp. small cover) over $P$.
We denote the projection from $M$ to $P$ by $\pi$.
Then each $\pi^{-1}(F_i)$ is a closed submanifold of dimension $2n-2$ (resp. $n-1$).

\begin{thm}[Davis and Januszkiewicz]\label{thm:coh,quasi}
Let $P$ be an $n$-dimensional simple polytope with $m$ facets, $\lambda=(\lambda_{i,j})$ be a characteristic matrix on $P$, and put $M:=M(\lambda)$.
Then the integral cohomology ring of $M$ is given by
\[ H^*(M;\Z)=\Z[v_1,\ldots,v_m] /(\mathcal{I}_P+\mathcal{J}_{\lambda}),\]
where $v_i\in H^2(M;\Z)$ is the Poincar\'{e} dual of the closed submanifold $\pi^{-1}(F_i)$,
and $\mathcal{I}_P$, $\mathcal{J}_{\lambda}$ are the ideals below:
\begin{align*}
\mathcal{I}_P &= (v_{i_1}\cdots v_{i_k}\,\vert \, F_{i_1}\cap \ldots \cap F_{i_k}=\emptyset ),\\
\mathcal{J}_{\lambda} &= (\lambda_{i,1}v_1+\cdots +\lambda_{i,m}v_m\, \vert \, i=1,\ldots,n).
\end{align*}
\end{thm}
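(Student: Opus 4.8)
The plan is to compute $H^*(M;\Z)$ by means of the Borel construction $M_{T^n}:=ET^n\times_{T^n}M$ and the Serre spectral sequence of the associated bundle
\[ M\longrightarrow M_{T^n}\longrightarrow BT^n, \]
with projection $p$. Since $H^*(BT^n;\Z)=\Z[u_1,\ldots,u_n]$, $\deg u_j=2$, is concentrated in even degrees, the spectral sequence has $E_2^{*,*}=H^*(BT^n;\Z)\otimes H^*(M;\Z)$, and the edge homomorphism $H^*(BT^n;\Z)\to H^*(M_{T^n};\Z)$ is $p^*$.

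The first step is to identify the equivariant cohomology $H^*_{T^n}(M;\Z):=H^*(M_{T^n};\Z)$ as a graded ring. The geometric input is that $M$ carries the invariant open cover $\{\pi^{-1}(U_v)\}$ of Construction \ref{const:construction}, in which each $\pi^{-1}(U_v)$ is weakly equivariantly homeomorphic to a standard $\C^n$ and all intersection patterns are dictated by the face poset of $P$. Applying the Borel construction to this cover and running a Mayer--Vietoris (homotopy colimit) argument identifies $M_{T^n}$, up to homotopy, with the Davis--Januszkiewicz space of $K_P$, and hence yields a graded ring isomorphism
\[ H^*_{T^n}(M;\Z)\cong\Z[v_1,\ldots,v_m]/\mathcal{I}_P, \]
the Stanley--Reisner ring of $K_P$, under which $v_i\in H^2_{T^n}(M;\Z)$ is the equivariant Poincar\'e dual of the invariant submanifold $\pi^{-1}(F_i)$. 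Reading off the $T^n$-weights on the chart $\pi^{-1}(U_v)$ from the columns ${\boldsymbol\lambda}_i$ via the recipe in Construction \ref{const:construction} gives $p^*u_j=\sum_{i=1}^m\lambda_{j,i}v_i$, so the linear forms generating $\mathcal{J}_\lambda$ are precisely the images under $p^*$ of the degree-two generators of $H^*(BT^n;\Z)$.

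The second step is to show the spectral sequence collapses at $E_2$ with no extension problems, which gives
\[ H^*(M;\Z)\cong H^*_{T^n}(M;\Z)\otimes_{H^*(BT^n;\Z)}\Z=\Z[v_1,\ldots,v_m]/(\mathcal{I}_P+\mathcal{J}_\lambda). \]
By Theorem \ref{thm:Betti numbers}(2), $H^*(M;\Z)$ is free, concentrated in even degrees, with Poincar\'e series $\sum_i h_i t^{2i}$ (that input is itself obtained, for instance, by transporting a generic linear functional on $P$ to a Morse function on $M$ whose critical points, the preimages of the vertices, all have even index). On the algebraic side, $K_P$ is a simplicial sphere, so $\Z[v_1,\ldots,v_m]/\mathcal{I}_P$ is Cohen--Macaulay, and the non-singular condition for $P$ makes $\{p^*u_1,\ldots,p^*u_n\}$ a linear system of parameters, hence a regular sequence; therefore $\big(\Z[v_1,\ldots,v_m]/\mathcal{I}_P\big)\otimes_{\Z[u_1,\ldots,u_n]}\Z$ is a free $\Z$-module whose Poincar\'e series, read off from the $f$- and $h$-vectors, is again $\sum_i h_i t^{2i}$. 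Comparing ranks degree by degree forces every differential in the spectral sequence to vanish, forces the filtration on $H^*(M_{T^n};\Z)$ to split, and shows that $H^*(M;\Z)$ is torsion-free and equal to $H^*_{T^n}(M;\Z)\otimes_{H^*(BT^n;\Z)}\Z$. Finally, restricting the bundle over a point carries the equivariant dual $v_i$ to the ordinary Poincar\'e dual of $\pi^{-1}(F_i)$, which is the assertion.

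The hard part will be the global bookkeeping in the first step — establishing that $H^*_{T^n}(M;\Z)$ is \emph{exactly} the Stanley--Reisner ring, with the stated generators $v_i$ and no lower-degree corrections — together with the verification that $\{p^*u_j\}$ is a genuine regular sequence over $\Z$, which is what upgrades the collapse from rational to integral coefficients; both rest on the Cohen--Macaulayness of the face ring of $K_P$ and on the explicit chart structure of Construction \ref{const:construction}. Everything afterwards is routine manipulation of Poincar\'e series together with Poincar\'e--Lefschetz duality. An alternative route avoiding equivariant cohomology is to build the perfect CW structure on $M$ directly and then compute the cup products of the $v_i$ via the intersection theory of the submanifolds $\pi^{-1}(F_i)$, but the Borel-construction argument is cleaner and makes the ring structure transparent.
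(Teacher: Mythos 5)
The paper does not prove this theorem but cites it to [DJ91, Theorem 4.14], and your Borel-construction sketch --- identifying $H^*_{T^n}(M;\Z)$ with the Stanley--Reisner ring of $K_P$ via the Davis--Januszkiewicz space, then forcing the Serre spectral sequence of $M\to M_{T^n}\to BT^n$ to collapse by matching the Betti numbers from Theorem~\ref{thm:Betti numbers} against the Poincar\'e series of the face ring modulo the linear system of parameters $\{p^*u_j\}$ --- is essentially the argument given there. You also correctly flag the one genuinely delicate point, namely that the non-singular condition is what makes $\{p^*u_j\}$ a regular sequence on the (Cohen--Macaulay) face ring \emph{integrally} rather than merely rationally, which is what yields the torsion-free collapse and the identification of the kernel of the edge map with $\mathcal{J}_\lambda$.
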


\begin{thm}[Davis and Januszkiewicz]\label{thm:coh,small}
Let $P$ be an $n$-dimensional simple polytope with $m$ facets, $\lambda=(\lambda_{i,j})$ be a real characteristic matrix on $P$,
and put $M:={}_\R M(\lambda)$.
Then the mod $2$ cohomology ring of $M$ is given by
\[ H^*(M;\Z/2)=(\Z/2)[v_1,\ldots,v_m] /(\mathcal{I}_P+\mathcal{J}_{\lambda}),\]
where $v_i\in H^1(M;\Z/2)$ is the Poincar\'{e} dual of the closed submanifold $\pi^{-1}(F_i)$,
and $\mathcal{I}_P$, $\mathcal{J}_{\lambda}$ are the ideals below:
\begin{align*}
\mathcal{I}_P &= (v_{i_1}\cdots v_{i_k}\,\vert \,F_{i_1}\cap \ldots \cap F_{i_k}=\emptyset ),\\
\mathcal{J}_{\lambda} &= (\lambda_{i,1}v_1+\cdots +\lambda_{i,m}v_m\,\vert \,i=1,\ldots,n).
\end{align*}
\end{thm}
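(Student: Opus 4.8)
The plan is to adapt the Borel-construction proof of Theorem \ref{thm:coh,quasi} to the real setting, replacing $T^n$ by $G:=\ZZ^n$ and working throughout with $\Z/2$ coefficients, so that orientability plays no role and Poincar\'{e} duality holds unconditionally. Write $M:={}_\R M(\lambda)$, let $\pi\colon M\to P$ be the projection, and first compute the equivariant cohomology $H^*_G(M;\Z/2)$. Recall from the real moment-angle description that $M={}_\R\mathcal{Z}_P/S_\lambda$, where $S_\lambda\cong\ZZ^{m-n}$ acts freely and $G=\ZZ^m/S_\lambda$ acts on $M$ through $\lambda$. Because $S_\lambda$ acts freely, the standard comparison of Borel constructions for a group extension gives a homotopy equivalence $M_G\simeq({}_\R\mathcal{Z}_P)_{\ZZ^m}$, the Borel construction for the coordinatewise action on the real moment-angle manifold. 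Since $E\ZZ\times_{\ZZ}[-1,1]\simeq B\ZZ=\R P^\infty$ while $E\ZZ\times_{\ZZ}S^0\simeq\ast$, this Borel construction is homotopy equivalent to the real Davis--Januszkiewicz space, i.e.\ the polyhedral product $(\R P^\infty,\ast)^{K_P}$, whose $\Z/2$-cohomology is the face ring: $H^*_G(M;\Z/2)\cong(\Z/2)[v_1,\ldots,v_m]/\mathcal{I}_P$ with $\deg v_i=1$. In this identification $v_i$ is pulled back from the $i$-th copy of $\R P^\infty=B\ZZ$ — equivalently it is the equivariant Thom class of the $G$-invariant submanifold $\pi^{-1}(F_i)$ — and the classifying map $M_G\to BG$ induces on $H^1$ the $\Z/2$-dual of $\lambda$, so that the $i$-th polynomial generator of $H^*(BG;\Z/2)$ maps to $\theta_i:=\lambda_{i,1}v_1+\cdots+\lambda_{i,m}v_m$.

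To descend to $H^*(M;\Z/2)$ I would use equivariant formality. The $G$-fixed set $M^G$ consists of exactly one point over each vertex of $P$, so $\#M^G$ equals the number of vertices of $P$, which is $\sum_i h_i$; on the other hand $\dim_{\Z/2}H^*(M;\Z/2)=\sum_i b_i(M)=\sum_i h_i$ by Theorem \ref{thm:Betti numbers}(1). Equality of these two quantities is precisely the criterion for the action of the $2$-torus $G$ to be equivariantly formal over $\Z/2$, whence the Serre spectral sequence of $M\hookrightarrow M_G\to BG$ collapses at $E_2$ and $H^*(M;\Z/2)\cong H^*_G(M;\Z/2)\otimes_{H^*(BG;\Z/2)}\Z/2$. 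Substituting the face ring computed above yields $H^*(M;\Z/2)\cong(\Z/2)[v_1,\ldots,v_m]/(\mathcal{I}_P+\mathcal{J}_\lambda)$, where the restriction of $v_i$ is the image of its equivariant Thom class, namely the ordinary $\Z/2$-Poincar\'{e} dual of $\pi^{-1}(F_i)$. As a consistency check: $K_P$ is the boundary of the simplicial polytope $P^*$, hence a shellable sphere, so $(\Z/2)[K_P]$ is Cohen--Macaulay of Krull dimension $n$; the non-singular condition on $\lambda$ says exactly that $\theta_1,\ldots,\theta_n$ restricts to a homogeneous system of parameters (it does so modulo each minimal prime, since such a prime has quotient a polynomial ring in the $n$ variables indexed by a facet of $K_P$ and the relevant $n\times n$ submatrix of $\lambda$ is invertible mod $2$), so it is a regular sequence and the quotient has $\Z/2$-dimension equal to the multiplicity of $(\Z/2)[K_P]$, i.e.\ the number of facets of $K_P$, matching $\sum_i h_i$.

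The main obstacle is the equivariant computation: identifying $M_G$ with the real Davis--Januszkiewicz space and thereby pinning $H^*_G(M;\Z/2)$ down as the face ring, together with the bookkeeping that tracks the images of the $v_i$ and of the polynomial generators of $H^*(BG;\Z/2)$. Once $H^*_G(M;\Z/2)$ is in hand the descent is formal, relying only on the fixed-point count and Theorem \ref{thm:Betti numbers}(1), and the commutative-algebra remarks are standard (Reisner's criterion plus the fact that a homogeneous system of parameters in a Cohen--Macaulay graded ring is a regular sequence). An alternative that sidesteps the equivariant machinery is to build an explicit CW structure on $M$ from a generic linear functional on $P$, with cells indexed by the vertices of $P$ decorated by subsets of their descending edges; this recovers $b_i=h_i$ directly and realizes the $v_i$ as explicit cocycles, but one must then verify cell-by-cell that the only relations are $\mathcal{I}_P+\mathcal{J}_\lambda$, which is no easier than the route above.
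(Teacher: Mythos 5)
The paper does not prove this statement; it cites it directly as \cite[Theorem 4.14]{DJ91} (see also \cite{BP02}), so there is no in-paper proof to compare against. Your reconstruction is the standard Borel-construction argument of Davis--Januszkiewicz (in the Buchstaber--Panov polyhedral-product formulation), and it is correct: the identification $M_G\simeq({}_\R\mathcal{Z}_P)_{\ZZ^m}$ via the free $S_\lambda$-action, the computation of the equivariant cohomology as the $\Z/2$ face ring via $(\R P^\infty,\ast)^{K_P}$, the identification of the map $H^*(BG;\Z/2)\to H^*_G(M;\Z/2)$ with $\lambda$, the equivariant-formality criterion $\#M^G=\dim_{\Z/2}H^*(M;\Z/2)$ (using Theorem~\ref{thm:Betti numbers}(1), which is legitimately available as a prior result), and the descent $H^*(M)\cong H^*_G(M)\otimes_{H^*(BG)}\Z/2$ are all the right steps and all check out, and the Cohen--Macaulay/h.s.o.p.\ remark is a genuine and correct cross-check. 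This is, in substance, the proof given in the cited sources.
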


Moreover, the following theorem is known for the total Stiefel-Whitney class $w(M)$ and the total Pontrjagin class $p(M)$
of a quasitoric manifold (resp. a small cover) $M$
(\cite[Corollary 6.8]{DJ91}).

\begin{thm}[Davis and Januszkiewicz]\label{thm:SWP}
With the notations in Theorem $\ref{thm:coh,small}$ and Theorem $\ref{thm:coh,quasi}$, we have the following formulae.
\begin{enumerate}
 \item[$(1)$] For a small cover $M={}_{\R}M(\lambda)$,
 $$w(M)=\prod_{i=1}^m (1+v_i)$$
 and $p(M)=1$.
 \item[$(2)$] For a quasitoric manifold $M=M(\lambda)$,
 $$w(M)=\prod_{i=1}^m (1+v_i)$$
 and
 $$p(M)=\prod_{i=1}^m (1-v_i^2).$$
\end{enumerate}
\end{thm}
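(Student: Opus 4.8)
The plan is to reduce both formulae to one geometric input, a stable splitting of the tangent bundle, and then to read off the characteristic classes formally. The input I would establish first is: for a quasitoric manifold $M=M(\lambda)$ there is an isomorphism of real vector bundles over $M$,
\[
TM\oplus\underline{\R}^{2(m-n)}\ \cong\ L_1\oplus\cdots\oplus L_m,
\]
where $L_i$ is the complex line bundle over $M$ with $c_1(L_i)=v_i$ (the Poincar\'e dual of $\pi^{-1}(F_i)$ from Theorem \ref{thm:coh,quasi}); and for a small cover $M={}_\R M(\lambda)$ there is an isomorphism $TM\oplus\underline{\R}^{m-n}\cong\xi_1\oplus\cdots\oplus\xi_m$, where $\xi_i$ is the real line bundle with $w_1(\xi_i)=v_i$. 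Using the moment-angle description from Section 2, the cleanest route is through $\mathcal{Z}_P$. Fixing a presentation of $P$ by $m$ inequalities, $\mathcal{Z}_P$ is a $T^m$-invariant smooth submanifold of $(D^2)^m\subset\C^m$ cut out by $m-n$ equations of the form $\sum_i c_{ki}|z_i|^2=d_k$; since these functions are $T^m$-invariant, their gradients give a $T^m$-equivariant trivialization of the normal bundle of $\mathcal{Z}_P$ with trivial fibre action, so $T\C^m|_{\mathcal{Z}_P}$ splits $T^m$-equivariantly as $T\mathcal{Z}_P\oplus\underline{\R}^{m-n}\cong\bigoplus_{i=1}^m\C_{(i)}$, with $\C_{(i)}$ carrying the $i$-th coordinate action. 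Passing to the associated bundle along the free action of $T_\lambda\cong T^{m-n}$ turns the right-hand side into $\bigoplus_i L_i$ with $L_i=\mathcal{Z}_P\times_{T_\lambda}\C_{(i)}$, and turns the left-hand side into $TM\oplus\underline{\R}^{2(m-n)}$: writing $T\mathcal{Z}_P=\pi^*TM\oplus(\text{vertical})$, the vertical bundle of the principal torus bundle $\mathcal{Z}_P\to M$ is trivial and contributes one copy of $\underline{\R}^{m-n}$, while the trivialized normal bundle contributes the other. Finally $c_1(L_i)=v_i$, because the coordinate $z_i$ is $T_\lambda$-equivariant and descends to a section of $L_i$ vanishing transversally exactly along $\pi^{-1}(F_i)$. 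The small-cover case runs the same way inside $\R^m$ with ${}_\R\mathcal{Z}_P$ and the free \emph{finite} group $S_\lambda\cong(\Z/2)^{m-n}$; here the quotient ${}_\R\mathcal{Z}_P\to M$ is a covering, so there are no vertical directions and only the single trivial summand $\underline{\R}^{m-n}$ (the normal bundle of ${}_\R\mathcal{Z}_P$ in $\R^m$) remains.

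Granting the splitting, the two formulae follow from stability and multiplicativity of $w$ and $p$. For Stiefel--Whitney classes, $w(M)=w(TM)=w(TM\oplus\text{trivial})=\prod_i w(L_i)$; the oriented real $2$-plane bundle underlying a complex line bundle $L$ has total class $1+\rho(c_1(L))$ with $\rho$ the mod $2$ reduction, and a real line bundle $\xi$ has total class $1+w_1(\xi)$, so in both cases $w(M)=\prod_{i=1}^m(1+v_i)$ in $H^*(M;\Z/2)$, with $v_i$ taken mod $2$ in the quasitoric case. For Pontrjagin classes, $p(M)=p(TM)=\prod_i p(L_i)$; a real line bundle has vanishing rational Pontrjagin class (its complexification is a complex line bundle, so the relevant $c_2$ vanishes), whence $p({}_\R M(\lambda))=1$; and the realification of a complex line bundle $L$ contributes the factor $(1+c_1(L))(1-c_1(L))=1-c_1(L)^2$, whence $p(M(\lambda))=\prod_{i=1}^m(1-v_i^2)$.

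The genuinely non-formal content is the tangential splitting, and that is where I expect the difficulty to lie. Two points need care: that for a polytopal $P$ the space $\mathcal{Z}_P$ is indeed a smooth submanifold of $\C^m$ with equivariantly trivial normal bundle --- this is where the simplicity of $P$ is used, through regularity of the defining functions --- and that taking the $T_\lambda$-associated bundle interacts correctly with $T\mathcal{Z}_P=\pi^*TM\oplus(\text{vertical})$, so that exactly $\underline{\R}^{2(m-n)}$ (resp.\ $\underline{\R}^{m-n}$) appears downstairs and $c_1(L_i)$ (resp.\ $w_1(\xi_i)$) equals $v_i$ on the nose. An alternative route, bypassing moment-angle manifolds, is the original argument of \cite{DJ91}: the facial submanifolds $M_i=\pi^{-1}(F_i)$ are quasitoric over $F_i$, and near each fixed point $\pi^{-1}(v)$ the $n$ submanifolds $M_i$ through it look like the coordinate hyperplanes in $\C^n$, so there $TM$ is visibly the sum of the corresponding $n$ normal directions; one then builds line bundles $L_i$ over all of $M$ extending the normal bundles of the $M_i$, with sections transverse to zero cutting out the $M_i$, and globalizes the local splitting by a patching argument. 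Either way, the theorem reduces to the formal computation above.
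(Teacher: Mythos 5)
The paper does not prove this statement: it is cited verbatim to \cite[Corollary 6.8]{DJ91}, so there is no ``paper's own proof'' to compare against. That said, your argument is sound and is essentially the standard one, with the following remarks.

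You correctly isolate the non-formal content as the stable tangential splitting $TM\oplus\underline{\R}^{2(m-n)}\cong\bigoplus_i L_i$ (respectively $TM\oplus\underline{\R}^{m-n}\cong\bigoplus_i\xi_i$), after which $w$ and $p$ fall out of multiplicativity and stability. For the splitting itself you take the Buchstaber--Panov route through the moment-angle manifold --- realizing $\mathcal{Z}_P$ as a regular level set of $m-n$ torus-invariant quadratic functions, so that the normal bundle of $\mathcal{Z}_P$ in $\C^m$ is equivariantly trivial with trivial fibre action, and then pushing the $T^m$-equivariant identification $T\C^m|_{\mathcal{Z}_P}\cong\bigoplus_i\C_{(i)}$ down through the free $T_\lambda$-action, absorbing the vertical tangent bundle of the torus bundle $\mathcal{Z}_P\to M$ as a further trivial summand. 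This is a genuinely different route from the original Davis--Januszkiewicz argument, which builds the line bundles $L_i$ from the normal bundles of the facial submanifolds $\pi^{-1}(F_i)$ and globalizes by a local patching argument near the fixed points; you flag that alternative accurately. The moment-angle route is arguably cleaner because the equivariance bookkeeping is concentrated in one place (the embedding of $\mathcal{Z}_P$) and the identification $c_1(L_i)=v_i$ is immediate from the tautological section descending from $z_i$; the price is that one must verify the regularity of the defining functions, which is exactly where the simplicity (or rather polytopality) of $P$ enters, as you note. The only point worth flagging for a careful write-up is the usual bookkeeping around Pontrjagin classes: your factor $(1+c_1(L_i))(1-c_1(L_i))=1-v_i^2$ is the total Chern class of $L_i\oplus\overline{L_i}$, so the displayed identity $p(M)=\prod(1-v_i^2)$ uses the sign convention $p_j=c_{2j}(\,\cdot\,\otimes\C)$ (which is what \cite{DJ91} use); with the Milnor--Stasheff signs one would instead write $\prod(1+v_i^2)$. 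Likewise, for small covers the Whitney formula for $p$ and the vanishing $p(\xi_i)=1$ only control the Pontrjagin classes modulo $2$-torsion, which is why you correctly interpret the conclusion $p(M)=1$ as a statement about the rational Pontrjagin classes.
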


For the convenience in subsequent arguments, we denote the integral cohomology ring of $M(\lambda)$ by $H^*(\lambda)$ for a characteristic matrix $\lambda$.
Similarly, for a real characteristic matrix $\lambda'$, we denote the mod $2$ cohomology ring of ${}_\R M(\lambda')$ by $H^*(\lambda')$.

%
%
%

\section{CYCLIC POLYTOPES AND CONNECTED SUMS}
First, we review the definition of a cyclic polytope and its combinatorial structure.
See e.g. \cite{Zie95} for details.

\subsection{Cyclic polytopes}
Recall that a convex polytope is called \textit{simplicial} if all facets are simplices.
By definition, the dual of a simplicial polytope is simple and vice-versa.
For a combinatorial polytope $P$, we denote the dual of $P$ by $P^*$.

Given an increasing sequence $t_1<\ldots<t_m$ of real numbers, let $C^n(t_1,\ldots,t_m)$ be the convex hull of $m$ points
$$ T_1:=(t_1,t_1^2,\ldots,t_1^n),\ldots,T_m:=(t_m,t_m^2,\ldots,t_m^n) $$
in $\R^n$ for $n<m$.
Then $C^n(t_1,\ldots,t_m)$ is an $n$-dimensional simplicial polytope with $m$ vertices $T_1,\ldots,T_m$.
The combinatorial structure of $C^n(t_1,\ldots,t_m)$ is characterized as:

\begin{thm}[Gale's evenness condition, {\cite[Theorem 0.7]{Zie95}}]
Let $T$ denote the vertex set $\{ T_1,\ldots,T_m \}$.
Then an $n$-subset $S\subseteq T$ forms a facet of $C^n(t_1,\ldots,t_m)$ if and only if any two elements in $T \setminus S$ are separated by
an even number of elements from $S$ in the sequence $(T_1,\ldots,T_m)$.
\end{thm}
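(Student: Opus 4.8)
The plan is to prove Gale's evenness condition directly from the geometry of the moment curve $\gamma\colon\R\to\R^n$, $\gamma(t)=(t,t^2,\ldots,t^n)$, on which the vertices $T_i=\gamma(t_i)$ lie. The basic observation is that a hyperplane $H=\{x\in\R^n\mid\langle a,x\rangle=b\}$ with $(a,b)\neq 0$ meets the moment curve exactly at the real roots of the polynomial $h_H(t):=a_nt^n+\cdots+a_1t-b$, which has degree at most $n$; if $h_H$ vanished at $n+1$ distinct parameters it would vanish identically, forcing $a=0$ and $b=0$, a contradiction. Hence every hyperplane contains at most $n$ of the points $T_1,\ldots,T_m$, so (using $n<m$) the $m$ points are affinely spanning, $C^n(t_1,\ldots,t_m)$ is a genuine $n$-dimensional polytope, and since a facet lies in a hyperplane yet must contain $n$ affinely independent vertices, every facet is the convex hull of \emph{exactly} $n$ vertices; in particular the polytope is simplicial. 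Moreover, through any $n$ of the points $T_i$ there is a \emph{unique} hyperplane: a degree-$\leq n$ polynomial with $n$ prescribed distinct roots cannot be of degree $<n$ (else it vanishes identically), so $h_H$ has degree exactly $n$ and is determined up to a scalar.

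Next I would fix an $n$-subset $S=\{T_{i_1},\ldots,T_{i_n}\}$ and let $H_S$ be the unique hyperplane through its points, with defining polynomial $h_{H_S}(t)=c\prod_{j=1}^n(t-t_{i_j})$, $c\neq 0$. By the preceding paragraph, for $k\notin\{i_1,\ldots,i_n\}$ the value $h_{H_S}(t_k)$ is automatically nonzero (no extra vertex can lie on $H_S$). Therefore $S$ spans a facet if and only if $H_S$ is a supporting hyperplane, which happens if and only if the numbers $h_{H_S}(t_k)=c\prod_{j=1}^n(t_k-t_{i_j})$, taken over all $k\notin S$, have a common sign. This reduces the combinatorial claim to controlling the sign of $\prod_{j}(t_k-t_{i_j})$.

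Then I would compute $\mathrm{sign}\,\prod_{j=1}^n(t_k-t_{i_j})=(-1)^{e(k)}$, where $e(k):=\#\{j\mid t_{i_j}>t_k\}$ counts the elements of $S$ lying after $T_k$ in the sequence $(T_1,\ldots,T_m)$. For indices $k,l\notin S$ with $t_k<t_l$ one has $e(k)-e(l)=\#\{j\mid t_k<t_{i_j}<t_l\}$ — the equality $t_{i_j}=t_l$ never occurring because $l\notin S$ — and this is precisely the number of elements of $S$ strictly between $T_k$ and $T_l$. Hence the signs $h_{H_S}(t_k)$ agree for all $k\notin S$ exactly when, for every pair $k,l\in T\setminus S$, the number of elements of $S$ separating them is even, which combined with the second paragraph gives the asserted equivalence in both directions.

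The only genuinely delicate points are the bookkeeping in the last step — keeping the "strictly between" counts and the boundary cases $t_{i_j}\in\{t_k,t_l\}$ straight, the relevant ones being ruled out by $k,l\notin S$ — and making sure that the first paragraph really does guarantee that no vertex outside $S$ lies on $H_S$, so that the supporting-hyperplane condition is exactly a statement about \emph{signs} rather than signs-or-zero. Everything else is the Vandermonde-type fact that a polynomial of degree at most $n$ cannot have more than $n$ roots, so I do not anticipate a serious obstacle beyond careful indexing.
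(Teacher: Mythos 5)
The paper itself offers no proof of this theorem; it is quoted from Ziegler's \emph{Lectures on Polytopes} [Theorem~0.7] and used as a black box. Your argument is correct and is essentially the standard textbook proof (Ziegler's own): the degree bound on polynomials vanishing along the moment curve establishes simpliciality and the uniqueness of the hyperplane $H_S$ through an $n$-set $S$ of vertices, and then ``$H_S$ supports the polytope'' translates via $\mathrm{sign}\,\prod_{j}(t_k-t_{i_j})=(-1)^{e(k)}$ into the evenness condition.
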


It follows immediately from this theorem that the combinatorial type of $C^n(t_1,\ldots,t_m)$ depends only on $m$ and $n$.
Then we denote this combinatorial polytope by $C^n(m)$ and call it the \textit{cyclic $n$-polytope with $m$ vertices}.
In this paper, we are particularly interested in the dual simple polytope $C^n(m)^*$.
For convenience, we restate Gale's evenness condition in the dual version.

\begin{thm}[dual evenness condition]\label{thm:GEC}
There is an order of the facets of $C^n(m)^*$, say $F_1,\ldots,F_m$, such that
$F_{i_1},\ldots,F_{i_n}$ meet at a vertex if and only if $\{i_1,\ldots,i_n\}$ is the disjoint union of $I_j$'s for some $j$'s in $\{0,\ldots,m\}$,
where $I_j:=\{j,j+1\}\cap\{1,\ldots,m\}$.
\end{thm}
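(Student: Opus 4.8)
The plan is to translate the statement across polytope duality into Gale's evenness condition, and then reduce it to an elementary combinatorial fact about subsets of $\{1,\ldots,m\}$.

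First I would fix the labeling. Since $C^n(m)$ is simplicial, $C^n(m)^*$ is simple, and the face poset of $C^n(m)^*$ is that of $C^n(m)$ reversed; hence the vertices of $C^n(m)^*$ correspond bijectively to the facets of $C^n(m)$, the facets of $C^n(m)^*$ correspond bijectively to the vertices $T_1,\ldots,T_m$ of $C^n(m)$, and incidence is reversed. Label by $F_i$ the facet of $C^n(m)^*$ dual to $T_i$. Because a facet of the simplicial polytope $C^n(m)$ is an $(n-1)$-simplex with exactly $n$ vertices, and exactly $n$ facets of the simple polytope $C^n(m)^*$ meet at each of its vertices, the reversed incidence gives: $F_{i_1},\ldots,F_{i_n}$ meet at a vertex of $C^n(m)^*$ if and only if $\{T_{i_1},\ldots,T_{i_n}\}$ is the vertex set of a facet of $C^n(m)$, equivalently, by Gale's evenness condition, the $n$-subset $S:=\{i_1,\ldots,i_n\}$ of $\{1,\ldots,m\}$ has the property that any two elements of $\{1,\ldots,m\}\setminus S$ are separated by an even number of elements of $S$.

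Next I would restate Gale's condition in terms of runs. Writing $\{1,\ldots,m\}\setminus S=\{w_1<\cdots<w_k\}$ (nonempty since $n<m$), between two consecutive $w_c$ and $w_{c+1}$ the elements of $S$ form a single block of consecutive integers, and the number of elements of $S$ strictly between any $w_a<w_b$ is the sum of the sizes of these blocks; so the condition holds if and only if every maximal run of consecutive integers contained in $S$ that contains neither $1$ nor $m$ has even cardinality. (A maximal run through $1$ or through $m$ is unconstrained, and these two runs are distinct whenever both occur, since $S\ne\{1,\ldots,m\}$.) It then remains to prove the combinatorial equivalence: for $S\subseteq\{1,\ldots,m\}$ with $|S|<m$, every maximal run of $S$ avoiding $1$ and $m$ has even size if and only if $S=\bigsqcup_{j\in J}I_j$ for some $J\subseteq\{0,\ldots,m\}$. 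For the ``if'' direction, note that the chosen $I_j$ are pairwise disjoint, each $I_j$ meeting a given interior run $R$ must lie inside $R$ (the integers neighbouring $R$ lie outside $S$), and such an $I_j$ cannot be one of the singletons $I_0=\{1\}$ or $I_m=\{m\}$; hence the $I_j$ contained in $R$ partition $R$ into two-element sets and $|R|$ is even. For the ``only if'' direction, tile each interior run by consecutive dominoes $\{j,j+1\}$; tile the run through $1$, if present, by dominoes when its length is even and by $I_0=\{1\}$ together with dominoes when its length is odd; tile the run through $m$ symmetrically, using $I_m=\{m\}$ when needed. The resulting $I_j$ are pairwise disjoint with union $S$. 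Chaining the three equivalences yields the theorem with the stated order $F_1,\ldots,F_m$.

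The duality dictionary and the run-tiling bookkeeping are routine; the one step needing genuine care is the combinatorial lemma — in particular treating the runs through $1$ and through $m$ separately, checking the parity in the odd-length cases, and covering the degenerate situations where $1\notin S$, $m\notin S$, or $S$ is a single run. I expect that lemma to be the main, though still modest, obstacle.
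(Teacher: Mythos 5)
Your proof is correct and follows the route the paper intends: the paper simply presents Theorem \ref{thm:GEC} as a restatement of Gale's evenness condition across duality without supplying the combinatorial details, and you fill in exactly those details (the duality dictionary, the reduction to consecutive gaps, and the run-tiling equivalence with the $I_j$'s, including the endpoint runs through $1$ and $m$). The bookkeeping is handled correctly, including the case $S\neq\{1,\ldots,m\}$ that keeps the two boundary runs distinct.
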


\subsection{Connected sums}
Next, we introduce the connected sum of (real) characteristic matrices.
Let us first recall the connected sum of two combinatorial simple polytopes.
For the formal definition, see \cite{BP02} and \cite[Section 2.6]{Zie95}.
Let $P$ and $Q$ be $n$-dimensional combinatorial simple polytopes with the facets $F_1,\ldots,F_{m_1}$ and $G_1,\ldots,G_{m_2}$ respectively,
and put $F:=\{F_1,\ldots,F_{m_1}\}$, $G:=\{G_1,\ldots,G_{m_2}\}$.
Take two vertices $v\in P$ and $w\in Q$.
Denote the sets of the facets meeting at $v$ and $w$ by $F_v=\{F_{i_1},\ldots,F_{i_n}\}$ and $G_w=\{G_{j_1},\ldots,G_{j_n}\}$ respectively,
and fix a one-to-one correspondence $\tau\colon\{i_1,\ldots,i_n\}\rightarrow \{j_1,\ldots,j_n\}$.
``Cutting off'' the vertices $v$ and $w$ from $P$ and $Q$ respectively, we obtain simple polytopes $P'$ and $Q'$, each of which has the new simplex facet.
Then, after a projective transformation,
we can ``glue'' $P'$ to $Q'$ along the simplex facets, using $\tau$.
Thus we obtain the \textit{connected sum} $P\#_\tau Q$, a simple polytope the set of whose facets can be identified with $F\cup_\tau Q$.
What is important is that $K_{P\#_\tau Q}$ is identified with $K_P\cup_\tau K_Q \setminus \{\sigma\}$ as an abstract simplicial complex on $F\cup_\tau G$,
where $\sigma$ denotes the simplex coming from $\{i_1,\ldots,i_n\}\in K_P$.

Moreover, suppose that there exist (real) characteristic matrices $\lambda=({\boldsymbol\lambda}_1,\ldots,{\boldsymbol\lambda}_{m_1})$
and $\lambda'=({\boldsymbol\lambda'}_1,\ldots,{\boldsymbol\lambda'}_{m_2})$ on $P$ and $Q$ respectively.
If they satisfy the condition ${\boldsymbol\lambda}_{i_k}={\boldsymbol\lambda'}_{\tau(i_k)}$ for $k=1,\ldots,n$,
we obtain the connected sum $\lambda \#_\tau \lambda'$, a (real) characteristic matrix on $P\#_\tau Q$, as follows:
Number the facets of $P\#_\tau Q$ from $1$ to $m:=m_1+m_2-n$, and define $\lambda \#_\tau \lambda'=({\boldsymbol\lambda''}_1,\ldots,{\boldsymbol\lambda''}_m)$
by ${\boldsymbol\lambda''}_k:={\boldsymbol\lambda}_i$ if the $k$-th facet of $\lambda \#_\tau \lambda'$ corresponds to $F_i$,
and ${\boldsymbol\lambda''}_k:={\boldsymbol\lambda'}_j$ if the $k$-th facet of $\lambda \#_\tau \lambda'$ corresponds to $G_j$.
It is easy to observe that $M(\lambda \#_\tau \lambda')$ (resp. ${}_\R M(\lambda \#_\tau \lambda')$)
is homeomorphic to $M(\lambda) \# M(\lambda')$ (resp. ${}_\R M(\lambda) \# {}_\R M(\lambda')$)
with respect to the proper orientations of $M(\lambda)$ and $M(\lambda')$.

Conversely, suppose that a simple $n$-polytope $P$ is decomposed into the connected sum of simple polytopes $P',P''$.
Denote the facets of $P$ by $F_1,\ldots,F_m$.
For simplicity, we assume that the facets $F_1,\ldots,F_k$ come from $P'$ and $F_{k-n+1},\ldots,F_m$ from $P''$.
Let $\lambda=({\boldsymbol\lambda}_1,\ldots,{\boldsymbol\lambda}_m)$ be a (real) characteristic matrix.
If $\lambda$ satisfies the condition $\det ({\boldsymbol\lambda}_{k-n+1},\ldots,{\boldsymbol\lambda}_k)=\pm 1$,
then $\lambda=\lambda'\#_\tau \lambda''$,
where $\lambda'=({\boldsymbol\lambda}_1,\ldots,{\boldsymbol\lambda}_k)$ and
$\lambda''=({\boldsymbol\lambda}_{k-n+1},\ldots,{\boldsymbol\lambda}_m)$ are (real) characteristic matrices on $P'$ and $P''$ respectively,
and $\tau \colon \{k-n+1,\ldots,k\} \rightarrow \{1,\ldots,n\}$ maps $i$ to $i+n-k$.
Thus we have the following lemma.

\begin{lem}\label{lem:connected sum}
Let $P$ be a simple $n$-polytope with $m$ facets $F_1,\ldots,F_m$, and $\lambda$ be a (real) characteristic matrix on $P$.
Suppose that $P$ is decomposed into the connected sum of two simple polytopes $P'$, $P''$,
and the $n$ facets $F_{i_1},\ldots,F_{i_n}$ come from both $P'$ and $P''$.
If the condition $\det \lambda_{(i_1,\ldots,i_n)}=\pm 1$ holds,
then $\lambda$ is decomposed into the connected sum of (real) characteristic matrices on $P'$ and $P''$.
In particular, $M(\lambda)$ (resp. ${}_\R M(\lambda)$) is decomposed into the connected sum of quasitoric manifolds (resp. small covers) over $P'$ and $P''$.
\end{lem}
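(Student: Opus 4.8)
The plan is to reduce the general statement to the concrete local picture already worked out in the paragraph preceding the lemma, by first normalizing the labeling of the facets via an element of $\mathrm{Aut}(K_P)$. First I would invoke the combinatorial description of the connected sum recalled above: since $P=P'\#_\tau P''$, the simplicial complex $K_P$ is identified with $K_{P'}\cup_\tau K_{P''}\setminus\{\sigma\}$ on the union of the two facet sets glued along the chosen vertex correspondence, and the $n$ facets $F_{i_1},\dots,F_{i_n}$ that are common to $P'$ and $P''$ are precisely the ones in the image of $\tau$. Relabeling, I may assume the facets of $P'$ are $F_1,\dots,F_k$ and those of $P''$ are $F_{k-n+1},\dots,F_m$, with the glued facets being $F_{k-n+1},\dots,F_k=\{F_{i_1},\dots,F_{i_n}\}$; this relabeling does not change the class of $\lambda$ up to $\mathrm{Aut}(K_P)$, hence does not change $M(\lambda)$ up to weakly equivariant homeomorphism, and likewise in the real case.

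Next I would check that the hypothesis $\det\lambda_{(i_1,\dots,i_n)}=\pm 1$ is exactly the condition under which the splitting construction applies. Set $\lambda'=(\boldsymbol\lambda_1,\dots,\boldsymbol\lambda_k)$ and $\lambda''=(\boldsymbol\lambda_{k-n+1},\dots,\boldsymbol\lambda_m)$. One must verify that $\lambda'$ is a (real) characteristic matrix on $P'$ and $\lambda''$ one on $P''$: for $\lambda'$, a set of $n$ facets of $P'$ meeting at a vertex either meets at a vertex of $P$ already (if the vertex survives the cut), in which case the non-singular condition is inherited from $\lambda$, or is the new simplex facet $\{F_{i_1},\dots,F_{i_n}\}$, whose minor is $\pm 1$ by hypothesis; symmetrically for $\lambda''$. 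Moreover the gluing compatibility $\boldsymbol\lambda_{i_k}=\boldsymbol\lambda''_{\tau(i_k)}$ is automatic since both sides are literally the same columns $\boldsymbol\lambda_{k-n+1},\dots,\boldsymbol\lambda_k$ of the single matrix $\lambda$. Therefore $\lambda'\#_\tau\lambda''$ is defined, and by the very definition of the connected sum of characteristic matrices (columns reassembled according to which facet of $P$ each corresponds to) we get $\lambda=\lambda'\#_\tau\lambda''$ on the nose.

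Finally, the statement about $M(\lambda)$ follows from the observation already recorded above that $M(\lambda'\#_\tau\lambda'')$ is homeomorphic to $M(\lambda')\#M(\lambda'')$ (and $_\R M(\lambda'\#_\tau\lambda'')$ to $_\R M(\lambda')\#\,_\R M(\lambda'')$) for suitable orientations; combined with $\lambda=\lambda'\#_\tau\lambda''$ this gives the decomposition of $M(\lambda)$ (resp. $_\R M(\lambda)$) as a connected sum of quasitoric manifolds (resp. small covers) over $P'$ and $P''$. The only point requiring genuine care — the ``main obstacle'' — is the verification that $\lambda'$ and $\lambda''$ satisfy the non-singular conditions for $P'$ and $P''$: one needs the correct description of the faces of the truncated polytope $P'$ (namely, faces of $P$ not containing $v$, together with the faces of the new simplex facet obtained by cutting off $v$), and then to see that the non-singular condition for $\lambda$ at the old vertices of $P$, plus the single hypothesis on the minor $\lambda_{(i_1,\dots,i_n)}$, cover all vertices of $P'$; the same for $P''$. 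Everything else is bookkeeping about relabeling facets and unwinding definitions.
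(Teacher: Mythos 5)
Your proof is correct and follows essentially the same route as the paper, whose justification for the lemma is precisely the paragraph preceding it: after relabeling the facets so that those of $P'$ and $P''$ occupy contiguous blocks sharing the $n$ common columns, one checks that the restricted submatrices satisfy the non-singular conditions (with the hypothesis $\det\lambda_{(i_1,\dots,i_n)}=\pm1$ covering the truncated vertex of each summand) and that $\lambda$ is literally the connected sum of these submatrices. The only very minor imprecision in your writeup is the phrase ``is the new simplex facet $\{F_{i_1},\dots,F_{i_n}\}$''; strictly speaking this is the set of facets meeting at the vertex of $P'$ that is cut off in forming $P$, not itself a facet of $P'$, but the intended meaning is clear and the argument is sound.
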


\subsection{Conventions and remarks}
For subsequent sections, we make the following conventions.
\begin{enumerate}
 \item[(1)] If $m-n>2$, we always label the facets of $C^n(m)^*$ by $F_1,\ldots ,F_m$ so that the dual evenness condition (Theorem \ref{thm:GEC}) works.
 \item[(2)] For an $(n\times m)$-matrix $\lambda$, we denote the minor $\det \lambda_{(i_1,\ldots ,i_n)}$ by $\minor{i_1,\ldots,i_n}_\lambda$,
 or simply $\minor{i_1,\ldots,i_n}$ if $\lambda$ is obvious.
 \item[(3)] For a simple polytope $P$, we identify $\M_P$ (resp. $\MM_P$) with $\X_P/\mathrm{Aut}\,(K_P)$ (resp. $\XX_P/\mathrm{Aut}\,(K_P)$).
 Then, for an element $\M$ of $\M_P$ (resp. $\MM_P$), we say that a (real) characteristic matrix $\lambda$ is a representative of $\M$
 if $M(\lambda)$ (resp. ${}_\R M(\lambda)$) is a representative of $\M$.
\end{enumerate}

Remark that, since $F_1,\ldots,F_n$ meet at a vertex in $C^n(m)^*$, we can take a representative of each element of $\X_{C^n(m)^*}$ (resp. $\XX_{C^n(m)^*}$)
in the form $(I\,\vert\,*)$, where $I$ denotes the unit matrix of size $n$ and $*$ denotes some $(n\times (m-n))$-matrix.
We will basically take such a representative for an element of $\X_{C^n(m)^*}$ (resp. $\XX_{C^n(m)^*}$).

For a characteristic matrix $\lambda=(I\,\vert\,*)$ on $C^n(m)^*$,
the ideal $\mathcal{J}_{\lambda}$ in Theorem \ref{thm:coh,quasi} is reduced to
$$ \mathcal{J}_{\lambda} = (v_i+\lambda_{i,n+1}v_{n+1}+\cdots +\lambda_{i,m}v_m\, \vert \, i=1,\ldots,n). $$
Recall that a \textit{missing face} of an abstract simplicial complex $K$ on $\{ 1.\ldots,m \}$ is
a minimal subset of $\{ 1.\ldots,m \}$ which does not belong to $K$.
Let us denote the set of all missing faces of $K$ by $\mathrm{Miss}\,(K)$.
Then, for any simple polytope $P$, the ideal $\mathcal{I}_{P}$ in Theorem \ref{thm:coh,quasi} is reduced to
$$ \mathcal{I}_P = (v_{i_1}\cdots v_{i_k}\,\vert \, \{i_1,\ldots,i_k\}\in \mathrm{Miss}\,(K_P)). $$
Thus we can restate the theorem as follows.

\begin{thm}\label{thm:cohomology,particular}
Let $\lambda=(\lambda_{i,j})=(I\,\vert\,*)$ be a characteristic matrix on $C^n(m)^*$.
Put $v'_i:=\lambda_{i,n+1}v_{n+1}+\cdots +\lambda_{i,m}v_m \in \Z[v_{n+1},\ldots,v_m]$ for $i=1,\ldots,n$,
and $v'_i:=v_i$ for $i=n+1,\ldots,m$.
Then the cohomology ring $H^*(\lambda)=H^*(M;\Z)$ is given by
\[ H^*(\lambda)=\Z[v_{n+1},\ldots,v_m] /\mathcal{I}_{\lambda},\]
where each $v_i$ has degree two and $\mathcal{I}_\lambda$ is the ideal below:
$$ \I_\lambda = (v'_{i_1}\cdots v'_{i_k}\,\vert \, \{i_1,\ldots,i_k\}\in \mathrm{Miss}\,(K_P)). $$
\end{thm}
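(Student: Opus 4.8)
The plan is to start from the presentation of $H^*(M;\Z)$ given by Theorem \ref{thm:coh,quasi}, namely $H^*(M;\Z)=\Z[v_1,\ldots,v_m]/(\mathcal{I}_P+\mathcal{J}_\lambda)$ with $P=C^n(m)^*$, and then eliminate the variables $v_1,\ldots,v_n$ using the linear relations. Since $\lambda=(I\,\vert\,*)$, the ideal $\mathcal{J}_\lambda$ contains exactly the relations $v_i+\lambda_{i,n+1}v_{n+1}+\cdots+\lambda_{i,m}v_m$ for $i=1,\ldots,n$; this is precisely the reduced form of $\mathcal{J}_\lambda$ noted just before the statement. The key algebraic fact is that the quotient map $\Z[v_1,\ldots,v_m]\to\Z[v_{n+1},\ldots,v_m]$ sending $v_i\mapsto -(\lambda_{i,n+1}v_{n+1}+\cdots+\lambda_{i,m}v_m)$ for $i\le n$ and $v_i\mapsto v_i$ for $i>n$ has kernel exactly $\mathcal{J}_\lambda$ (because the $n$ linear forms generating $\mathcal{J}_\lambda$ are "triangular" in $v_1,\ldots,v_n$, so one can solve for those variables with no further relations introduced). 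Note that the $v'_i$ defined in the statement are, up to sign, the images of the $v_i$ under this map: $v'_i$ is the image of $-v_i$ for $i\le n$ and of $v_i$ for $i>n$. The sign is immaterial since we only ever take products of these generators modulo the ideal generated by such products.

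Concretely, I would carry out the following steps. First, apply the substitution isomorphism
$$\Z[v_1,\ldots,v_m]/\mathcal{J}_\lambda \;\cong\; \Z[v_{n+1},\ldots,v_m],$$
which is standard: the generators of $\mathcal{J}_\lambda$ form a regular sequence of the form $v_i - g_i(v_{n+1},\ldots,v_m)$, so the quotient is a polynomial ring in the remaining variables and the isomorphism is given by the evident substitution. Second, track where $\mathcal{I}_P$ goes under this isomorphism. By the remark preceding the theorem, $\mathcal{I}_P$ is generated by the squarefree monomials $v_{i_1}\cdots v_{i_k}$ with $\{i_1,\ldots,i_k\}\in\mathrm{Miss}(K_P)$. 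Under the substitution, $v_i\mapsto \pm v'_i$ for every $i$ (with sign $-1$ when $i\le n$ and $+1$ when $i>n$), so each such generator maps to $\pm v'_{i_1}\cdots v'_{i_k}$, and hence the image of $\mathcal{I}_P$ is exactly the ideal $\I_\lambda$ as defined in the statement. Third, conclude that $H^*(\lambda)=\Z[v_1,\ldots,v_m]/(\mathcal{I}_P+\mathcal{J}_\lambda)\cong \Z[v_{n+1},\ldots,v_m]/\I_\lambda$, and observe that the grading is preserved because each $v_i$ (hence each $v'_i$) has cohomological degree two, matching the stated convention that each $v_i$ has degree two in the target ring.

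There is essentially no serious obstacle here; the statement is a routine rewriting of Theorem \ref{thm:coh,quasi} specialized to the normalized form $\lambda=(I\,\vert\,*)$. The only point that deserves a word of care is the claim that eliminating $v_1,\ldots,v_n$ introduces no new relations — i.e. that $\mathcal{J}_\lambda\cap\Z[v_{n+1},\ldots,v_m]=0$ — which follows because the generating linear forms are monic in the distinct variables $v_1,\ldots,v_n$ respectively, so together they let us present the quotient as a free polynomial ring on $v_{n+1},\ldots,v_m$. After that, everything is bookkeeping: substitute, identify the image of each monomial generator of $\mathcal{I}_P$ with the corresponding product of $v'_i$'s, and note that signs do not affect the ideal generated.
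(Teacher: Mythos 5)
Your proof is correct and follows essentially the same route the paper takes: the paper simply presents the two reductions of $\mathcal{J}_\lambda$ and $\mathcal{I}_P$ in the paragraph immediately preceding the theorem and treats the substitution argument as immediate, whereas you spell out the routine details (the kernel of the substitution map, the sign convention on $v'_i$). Nothing is missing and nothing differs in substance.
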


We use this expression of the cohomology ring in subsequent sections.

%
%
%

\section{CLASSIFICATION FOR $C^2(m)^*$}
$C^2(m)^*$ is obviously equal to the convex $m$-gon.
As an application of the connected sum operation, we consider the topological classification of quasitoric manifolds and small covers over a convex polygon.
This classification has already appeared in \cite{DJ91}, and for small covers, the proof below goes along the same lines.
Let us suppose that the facets $F_1,\ldots,F_m$ of the convex $m$-gon are numbered clockwise.
We will make use of the following easy lemmas.

\begin{lem}\label{lem:tri1}
Let $\lambda=({\boldsymbol\lambda}_1,\ldots,{\boldsymbol\lambda}_m)$ be a (real) characteristic matrix on the convex $m$-gon.
Suppose that $i,j\in\{1,\ldots,m\}$ ($i<j$) satisfy $\minor{i,j}_\lambda=\pm 1$ and $F_i\cap F_j=\emptyset$.
Then, putting $d:=j-i+1$, $\lambda':=({\boldsymbol\lambda}_i,\ldots,{\boldsymbol\lambda}_j)$ and
$\lambda''=({\boldsymbol\lambda}_j,\ldots,{\boldsymbol\lambda}_m,{\boldsymbol\lambda}_1,\ldots,{\boldsymbol\lambda}_i)$ are
(real) characteristic matrices on the convex $d$-gon and the convex $(m-d+2)$-gon respectively.
Moreover, $\lambda$ is decomposed into the connected sum of $\lambda'$ and $\lambda''$.
\end{lem}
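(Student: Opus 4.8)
The plan is to realize this as a direct application of Lemma~\ref{lem:connected sum}, so the work splits into two parts: first, exhibiting the combinatorial decomposition of the convex $m$-gon into the two smaller polygons along the facets $F_i$ and $F_j$; second, verifying that the two pieces of $\lambda$ are genuine (real) characteristic matrices on those smaller polygons. First I would set up the combinatorics. Since $F_1,\ldots,F_m$ are numbered clockwise and $F_i\cap F_j=\emptyset$ with $i<j$, the edges $F_i$ and $F_j$ are non-adjacent, so the chord of the $m$-gon separating the arc $F_i,F_{i+1},\ldots,F_j$ from the arc $F_j,F_{j+1},\ldots,F_m,F_1,\ldots,F_i$ splits the polygon into a convex $d$-gon (with $d=j-i+1$ facets, namely $F_i,\ldots,F_j$ plus the new chord) and a convex $(m-d+2)$-gon (with facets $F_j,\ldots,F_m,F_1,\ldots,F_i$ plus the new chord). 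This is exactly the statement that the $m$-gon is the connected sum of these two polygons at the appropriate vertices; in the $2$-dimensional case the ``vertex'' being cut is literally the chord, and the correspondence $\tau$ identifies the two copies of the chord edge. This is where one must be slightly careful that $F_i$ and $F_j$ survive as facets of \emph{both} pieces (they bound the chord on either side), which matches the indexing in Lemma~\ref{lem:connected sum} with $\{i_1,i_2\}=\{i,j\}$.

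Next I would apply Lemma~\ref{lem:connected sum} with this decomposition: the two facets coming from both pieces are $F_i$ and $F_j$, the hypothesis $\minor{i,j}_\lambda=\pm 1$ is precisely the condition $\det\lambda_{(i,j)}=\pm 1$ required there, so the lemma yields that $\lambda$ decomposes as the connected sum of (real) characteristic matrices on the $d$-gon and the $(m-d+2)$-gon. Tracing through which columns of $\lambda$ land in which piece — the columns indexed by facets of the $d$-gon are ${\boldsymbol\lambda}_i,\ldots,{\boldsymbol\lambda}_j$, and those indexed by facets of the $(m-d+2)$-gon are ${\boldsymbol\lambda}_j,{\boldsymbol\lambda}_{j+1},\ldots,{\boldsymbol\lambda}_m,{\boldsymbol\lambda}_1,\ldots,{\boldsymbol\lambda}_i$ — gives exactly the matrices $\lambda'$ and $\lambda''$ in the statement, and the final sentence about $\lambda=\lambda'\#_\tau\lambda''$ is then immediate.

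The only genuine content beyond invoking the lemma is checking that $\lambda'$ and $\lambda''$ really do satisfy the non-singular condition for the respective polygons; but by the construction recalled in Section~3 (the ``conversely'' paragraph preceding Lemma~\ref{lem:connected sum}), this is automatic once $\det\lambda_{(i,j)}=\pm 1$, since every vertex of the $d$-gon other than the two endpoints of the chord is a vertex of the original $m$-gon — whose corresponding minor is $\pm 1$ because $\lambda$ is a characteristic matrix — and the two new vertices on the chord pair a column ${\boldsymbol\lambda}_i$ or ${\boldsymbol\lambda}_j$ with a column already present, whose minor one computes directly; the same reasoning applies to $\lambda''$. I expect the main (mild) obstacle to be purely bookkeeping: keeping the cyclic labeling straight so that $F_i$ and $F_j$ are correctly seen as shared facets and the wrap-around order ${\boldsymbol\lambda}_j,\ldots,{\boldsymbol\lambda}_m,{\boldsymbol\lambda}_1,\ldots,{\boldsymbol\lambda}_i$ for $\lambda''$ is the one produced by the connected-sum indexing. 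There is no deep step here; the lemma is essentially Lemma~\ref{lem:connected sum} specialized to $n=2$ and made explicit for polygons.
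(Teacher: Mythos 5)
Your proposal is correct and follows exactly the paper's approach: the paper's entire proof is ``It follows immediately from Lemma~\ref{lem:connected sum},'' which is what you do after setting up the decomposition of the $m$-gon as a connected sum of the $d$-gon (facets $F_i,\ldots,F_j$) and the $(m-d+2)$-gon (facets $F_j,\ldots,F_m,F_1,\ldots,F_i$) along the shared facets $F_i,F_j$. One small caution on the ``chord'' picture: in the connected sum decomposition the $d$-gon already has exactly the $d$ facets $F_i,\ldots,F_j$ with $F_i$ and $F_j$ now adjacent — there is no extra chord edge (the chord would be the \emph{new} facet created by cutting off the vertex $F_i\cap F_j$ of the $d$-gon, which disappears after gluing) — so your ``$F_i,\ldots,F_j$ plus the new chord'' count is off by one, though this does not affect the validity of the appeal to Lemma~\ref{lem:connected sum}.
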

\begin{proof}
It follows immediately from Lemma \ref{lem:connected sum}.
\end{proof}

\begin{lem}\label{lem:tri2}
For any integer matrix $\left(
\begin{array}{cc}
a &c  \\
b &d 
\end{array}
\right)$
of determinant $\pm 1$, if $\vert a \vert < \vert b \vert$, then we have $\vert c \vert \leq  \vert d \vert$.
\end{lem}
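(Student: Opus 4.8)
The statement is: for an integer matrix $\left(\begin{smallmatrix} a & c \\ b & d\end{smallmatrix}\right)$ with determinant $\pm1$, if $|a|<|b|$ then $|c|\le|d|$. I would argue by contraposition: assume $|c|>|d|$ and show $|a|\ge|b|$. The determinant condition reads $ad-bc=\pm1$, so in particular $ad-bc\ne 0$; the plan is to extract from this the inequality $|ad|\ne|bc|$ unless... actually one must be a little careful when $d=0$ or $c=0$, so I would first dispose of the degenerate cases.

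**Step 1 (degenerate cases).** If $d=0$, then $ad-bc=-bc=\pm1$ forces $|b|=|c|=1$; since $|c|>|d|=0$ is automatic, the hypothesis $|c|>|d|$ holds, and we need $|a|\ge|b|=1$, i.e. $a\ne0$ — but $|a|<|b|=1$ would give $a=0$ and then the determinant is $0$, a contradiction. So in this case the hypothesis $|a|<|b|$ simply cannot occur, and the implication is vacuously true. Similarly, if $c=0$ then the hypothesis $|c|>|d|$ fails, so there is nothing to prove. Thus I may assume $c\ne0$ and $d\ne0$ from now on.

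**Step 2 (main case).** With $c,d$ both nonzero and $|c|\ge|d|+1$ (integers!), suppose for contradiction that $|a|<|b|$, i.e. $|b|\ge|a|+1$. Then
\[
|bc| \ge (|a|+1)(|d|+1) = |a||d| + |a| + |d| + 1 > |a||d| + 1 \ge |ad| + 1,
\]
using $|a|\ge0$, $|d|\ge1$ (so $|a|+|d|+1\ge 2\ge 1$, and in fact $>1$ unless $a=0$, which still gives strict inequality via the $+1$ term combined with $|d|\ge 1$ — I would state the chain so the final strictness is clear: $|bc|\ge|ad|+|a|+|d|+1\ge|ad|+2>|ad|+1$). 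Hence $|bc|>|ad|+1$, which contradicts $|ad-bc|=1$ (the latter gives $\big||ad|-|bc|\big|\le|ad-bc|=1$, or directly $|bc|\le|ad|+1$). This contradiction shows $|a|\ge|b|$... wait, we want the contrapositive conclusion, so: assuming $|c|>|d|$ forces $|a|\ge|b|$, which is exactly the contrapositive of "$|a|<|b|\Rightarrow|c|\le|d|$."

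**The main (only) obstacle.** There is no serious obstacle here — the lemma is elementary. The one point requiring care is the boundary/degenerate analysis: one must make sure the inequality chain in Step 2 is genuinely \emph{strict} (so that it contradicts $\det=\pm1$ rather than merely being consistent with $|\det|\le 1$), and one must separately handle $c=0$ and $d=0$, where the naive multiplicative estimate degenerates. Once those cases are peeled off and one has $|c|\ge|d|+1\ge 2$ with $|d|\ge 1$, the estimate $|bc|\ge(|a|+1)(|d|+1)>|ad|+1$ closes the argument immediately.
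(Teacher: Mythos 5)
The paper offers no proof of this lemma (it is stated as one of two ``easy lemmas'' without argument), so there is nothing to compare against; your proposal is the only proof on the table. Your Step~2 is correct and is the natural estimate, but Step~1 contains a genuine arithmetic error: when $d=0$ and $a=0$, the determinant $ad-bc=-bc$ equals $\pm1$ (since you have already deduced $|b|=|c|=1$), not $0$ as you claim. This is not a cosmetic slip, because it occurs precisely where the statement actually fails: the matrix $\left(\begin{smallmatrix}0&1\\1&0\end{smallmatrix}\right)$ has determinant $-1$, satisfies $|a|=0<1=|b|$, and yet $|c|=1>0=|d|$, so the lemma as printed in the paper is false. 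The correct statement requires the additional hypothesis $a\neq0$ (equivalently $|a|\geq 1$). Under that hypothesis the $d=0$ case is immediate, since then $|b|=1\leq|a|$, and your Step~2 handles the nondegenerate case $c,d\neq0$ exactly as written. So the right fix is to flag the missing hypothesis and invoke it, rather than try to close the $d=0$, $a=0$ subcase by a contradiction that is not there.
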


First, we consider the classification of small covers.

\begin{thm}\label{thm:decomp of smlcvr over polygon}
If $m>4$, any small cover over a convex polygon is homeomorphic to the connected sum of copies of $S^1\times S^1$ and $\R P^2$.
\end{thm}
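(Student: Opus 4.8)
The plan is to induct on the number $m$ of facets, peeling off one connected summand at each step via Lemma \ref{lem:tri1}. It is convenient to prove the formally more general statement that for every $m\geq 3$ each small cover over the convex $m$-gon is a connected sum of copies of $S^1\times S^1$ and $\R P^2$, with the convention that a ``connected sum'' of a single summand is that summand (the theorem is then the case $m>4$).

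For the base cases, when $m=3$ the polygon is $\Delta^2$, so by Corollary \ref{cor:clsforspx} the only small cover is $\R P^2$; when $m=4$ the polygon is $\Delta^1\times\Delta^1$, and a small cover over it is a closed surface whose mod $2$ Betti numbers form the $h$-vector $(1,2,1)$ of the square (Theorem \ref{thm:Betti numbers}), hence it is $S^1\times S^1$ or the Klein bottle $\R P^2\#\R P^2$ by the classification of surfaces (or one checks directly that $\MM_{\Delta^1\times\Delta^1}$ has exactly these two elements); both have the required form.

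For the inductive step, take $m\geq 5$ and a real characteristic matrix $\lambda=({\boldsymbol\lambda}_1,\dots,{\boldsymbol\lambda}_m)$ on the convex $m$-gon with facets $F_1,\dots,F_m$ in cyclic order. Each column ${\boldsymbol\lambda}_i$ is a nonzero vector of $(\Z/2)^2$, since it occurs in the minors $\minor{i-1,i}$ and $\minor{i,i+1}$, which equal $1$ by the non-singular condition; hence ${\boldsymbol\lambda}_i$ takes one of the three values ${}^t(1,0),\,{}^t(0,1),\,{}^t(1,1)$, and for any $i\neq j$ one has $\minor{i,j}_\lambda=1$ if and only if ${\boldsymbol\lambda}_i\neq{\boldsymbol\lambda}_j$. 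I would then observe that there is a pair $i<j$ with $F_i\cap F_j=\emptyset$ (equivalently $j\not\equiv i\pm1\bmod m$) and ${\boldsymbol\lambda}_i\neq{\boldsymbol\lambda}_j$: the graph on $\{1,\dots,m\}$ joining the non-adjacent pairs of facets is the complement of the $m$-cycle, which is connected for $m\geq 5$, so if every non-adjacent pair had equal columns then all ${\boldsymbol\lambda}_i$ would coincide, contradicting ${\boldsymbol\lambda}_1\neq{\boldsymbol\lambda}_2$. For such a pair, $\minor{i,j}_\lambda=1$ and $F_i\cap F_j=\emptyset$, so Lemma \ref{lem:tri1} decomposes $\lambda$ as a connected sum $\lambda'\#_\tau\lambda''$ of real characteristic matrices on the convex $d$-gon and $(m-d+2)$-gon with $d=j-i+1$. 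Since $F_i\cap F_j=\emptyset$ forces $j\geq i+2$ and $\{i,j\}\neq\{1,m\}$, we get $3\leq d\leq m-1$ and likewise $3\leq m-d+2\leq m-1$, so both factors are polygons with at least $3$ and strictly fewer than $m$ facets. By the inductive hypothesis ${}_\R M(\lambda')$ and ${}_\R M(\lambda'')$ are connected sums of copies of $S^1\times S^1$ and $\R P^2$, and ${}_\R M(\lambda)\cong{}_\R M(\lambda')\#{}_\R M(\lambda'')$ by the discussion of connected sums in Section 3; associativity of the connected sum of surfaces then gives the claim.

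The main obstacle is the combinatorial lemma that a pair of non-adjacent facets with unimodular minor always exists; this is precisely where the hypothesis $m>4$ enters, since for $m=4$ the complement of the $4$-cycle is disconnected (matching the fact that $S^1\times S^1$ over the square does not decompose). A secondary point to be careful about is confirming that both connected-sum factors have between $3$ and $m-1$ facets, which is what makes the induction well-founded and is also the reason both $m=3$ and $m=4$ are needed as base cases.
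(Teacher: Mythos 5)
Your proof is correct and follows the same inductive skeleton as the paper: locate a non-adjacent pair $(i,j)$ with $\minor{i,j}_\lambda=1$, apply Lemma \ref{lem:tri1} to split off a smaller polygon, and induct, with $\Delta^2$ and $P_4$ as base cases. The one place you genuinely diverge is in establishing the existence of such a pair. The paper normalizes $\lambda$ to the form $(I\,\vert\,*)$ with $\minor{2,3}=\minor{1,m}=1$ and then checks the two possible values $a\in\{0,1\}$ of the lower entry of the third column, producing the pair $(1,3)$ or $(3,m)$ by hand. You instead note that over $\Z/2$ every column is one of the three nonzero vectors of $(\Z/2)^2$, that $\minor{i,j}=1$ precisely when ${\boldsymbol\lambda}_i\neq{\boldsymbol\lambda}_j$, and that the complement of the $m$-cycle is connected for $m\geq 5$, so the columns cannot be constant on all non-adjacent pairs when ${\boldsymbol\lambda}_1\neq{\boldsymbol\lambda}_2$. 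Your version is more conceptual and makes the hypothesis $m>4$ transparent (the complement of the $4$-cycle is two disjoint edges, exactly matching the indecomposable torus over $P_4$); the paper's is a terser computation. Your bookkeeping that both factors have between $3$ and $m-1$ facets, and the remark about associativity of connected sums, are correct and slightly more explicit than what the paper writes out.
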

\begin{proof}
Let $\lambda$ be a real characteristic matrix on the convex $m$-gon, where $m>4$.
First we show that $\lambda$ is decomposed into the connected sum of real characteristic matrices on two convex polygons.
By the non-singular condition, we have $\minor{2,3}=\minor{1,m}=1$.
Then we can assume that $\lambda$ is in the form
$$\lambda=\left(
\begin{array}{cc|ccc}
1 &0 &1 &\cdots &b  \\
0 &1 &a &\cdots &1 
\end{array}
\right).$$
If $a=1$, we have $\minor{1,3}=1$. If $a=0$, we have $\minor{3,m}=1$.
Then we see that $\lambda$ is decomposed into a connected sum by Lemma \ref{lem:tri1}.

Denote the convex tetragon by $P_4$.
By a direct calculation, we see that there exist exactly two elements in $\MM_{P_4}$, which are represented by
$$\left(
\begin{array}{cc|cc}
1 &0 &1 &0  \\
0 &1 &0 &1 
\end{array}
\right),
\left(
\begin{array}{cc|cc}
1 &0 &1 &1  \\
0 &1 &0 &1 
\end{array}
\right).$$
The left matrix corresponds to the small cover $S^1\times S^1$, and the right is decomposed into a connected sum.
Then we obtain the theorem by induction on $m$.
\end{proof}

By the classification theorem of closed surfaces,
we obtain the topological classification of small covers over a convex polygon as below.

\begin{cor}
A small cover over a convex polygon is homeomorphic to the connected sum of copies of $S^1\times S^1$ or that of $\R P^2$.
\end{cor}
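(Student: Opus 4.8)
The plan is to combine the previous theorem with the classification of closed surfaces, being careful about the low-dimensional cases that Theorem \ref{thm:decomp of smlcvr over polygon} does not cover. First I would recall that a small cover over a convex $m$-gon is a closed surface, since it is a closed $2$-manifold by definition of a small cover over a $2$-polytope. Hence the classification theorem of closed surfaces applies: it is homeomorphic either to the connected sum of $g$ copies of $S^1\times S^1$ (the orientable case) or to the connected sum of $k$ copies of $\R P^2$ (the non-orientable case), where in the orientable case we allow $g=0$, giving $S^2$. So the entire content of the corollary is to rule out $S^2$ and to observe that the mixed connected sums appearing in Theorem \ref{thm:decomp of smlcvr over polygon} collapse into one of these two families.

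Next I would handle the small cases $m=3$ and $m=4$ directly. For $m=3$ the polygon is $\Delta^2$, and by Corollary \ref{cor:clsforspx} the only small cover is $\R P^2$. For $m=4$, Theorem \ref{thm:decomp of smlcvr over polygon}'s proof already lists the two elements of $\MM_{P_4}$: one is $S^1\times S^1$, and the other decomposes as a connected sum, which by the $m=3$ analysis must be $\R P^2\#\R P^2$ (the Klein bottle). For $m>4$, Theorem \ref{thm:decomp of smlcvr over polygon} tells us the small cover is a connected sum of copies of $S^1\times S^1$ and $\R P^2$; now I would invoke the standard fact that $S^1\times S^1\,\#\,\R P^2$ is homeomorphic to $\R P^2\,\#\,\R P^2\,\#\,\R P^2$, so any such mixed connected sum with at least one $\R P^2$ summand is homeomorphic to a connected sum of copies of $\R P^2$ alone, while a connected sum of copies of $S^1\times S^1$ alone is already in the desired form.

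Finally I would note that $S^2$ never occurs: a small cover has nonvanishing $\bmod 2$ Betti numbers $b_0=b_n=h_0=h_n=1$ but also $b_1=h_1=m-2>0$ for a convex $m$-gon with $m\geq 3$, by Theorem \ref{thm:Betti numbers}, so the surface has positive first $\bmod 2$ Betti number and is not $S^2$. This pins down the conclusion exactly as stated. I do not anticipate a serious obstacle here; the only point requiring care is making sure the low-dimensional cases $m=3,4$ (not covered by the hypothesis $m>4$ of Theorem \ref{thm:decomp of smlcvr over polygon}) are treated, and correctly citing the surface-theory identity $S^1\times S^1\,\#\,\R P^2\cong \#^3\R P^2$ to merge the two types of summand.
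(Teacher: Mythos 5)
Your proof is correct and takes essentially the same approach as the paper, which simply invokes the classification of closed surfaces after Theorem~\ref{thm:decomp of smlcvr over polygon}; you have merely spelled out the details the paper leaves implicit (the small cases $m=3,4$, the exclusion of $S^2$ via Theorem~\ref{thm:Betti numbers}, and the identity $S^1\times S^1\,\#\,\R P^2\cong \R P^2\,\#\,\R P^2\,\#\,\R P^2$).
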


Next, we consider quasitoric manifolds over a convex polygon.
We will show the topological classification of them without using the results of \cite{OR70}.

\begin{lem}
If $m>4$, any characteristic matrix on the convex $m$-gon is decomposed into the connected sum of those on two convex polygons.
\end{lem}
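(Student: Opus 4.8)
The plan is to mimic the small-cover argument just given, replacing $\mathbb Z/2$-entries by integers and using Lemma \ref{lem:tri2} to keep the matrix bounded. Let $\lambda$ be a characteristic matrix on the convex $m$-gon with $m>4$. As in the proof of Theorem \ref{thm:decomp of smlcvr over polygon}, the non-singular condition at the vertices $F_2\cap F_3$ and $F_1\cap F_m$ lets me normalize so that the first two columns of $\lambda$ are the standard basis vectors, i.e. $\minor{2,3}=\minor{1,m}=\pm1$ and
\[
\lambda=\left(
\begin{array}{cc|ccc}
1 &0 &1 &\cdots &b \\
0 &1 &a &\cdots &1
\end{array}
\right),
\]
where I have also used that column $3$ pairs with column $2$ at a vertex (so its top entry is $\pm1$, normalized to $1$ after a sign change) and column $m$ pairs with column $1$ (so its bottom entry is $\pm1$, normalized to $1$).

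Now I want to find an index $j$ with $3\le j\le m$ and $F_1\cap F_j=\emptyset$, $\minor{1,j}=\pm1$, or symmetrically an index with $\minor{3,j}=\pm1$ and $F_3\cap F_j=\emptyset$; then Lemma \ref{lem:tri1} splits $\lambda$ as a connected sum of characteristic matrices on two smaller polygons (both with at least $3$ but fewer than $m$ facets, since $m>4$). The key is to show such a $j$ always exists. I would look at the sequence of $2\times2$ minors $\minor{1,j}$ for $j=3,4,\ldots,m$: by the non-singular condition $\minor{1,2}=\minor{1,m}=\pm1$ (the endpoints), and consecutive columns satisfy $\minor{j,j+1}=\pm1$. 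Consider ${\boldsymbol\lambda}_j={}^t(p_j,q_j)$: then $\minor{1,j}=q_j$. Since $\minor{j,j+1}=p_jq_{j+1}-p_{j+1}q_j=\pm1$, the pair $(p_j,q_j)$ is primitive. I claim the sequence $q_3=a,\,q_4,\ldots,q_{m-1},\,q_m=1$ must hit $\pm1$ somewhere strictly between the columns coming from the two ``halves,'' unless the whole configuration already forces a split at column $3$ or $m$ — exactly the case-split ($a=1$ giving $\minor{1,3}=1$, or handling $a=0$ via $\minor{3,m}$ or $\minor{3,4}$) that appeared in the small-cover proof. The integer version needs the extra input of Lemma \ref{lem:tri2}: applying it to the $2\times2$ blocks $({\boldsymbol\lambda}_j,{\boldsymbol\lambda}_{j+1})$ shows that once $|p_j|$ starts strictly exceeding $|q_j|$ it can only keep doing so (or the roles stabilize), which bounds how the minors $\minor{1,j}=q_j$ can behave and rules out their staying $\ge2$ in absolute value all the way across.

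Concretely I would argue: if $|q_3|\le1$ we get a split at column $3$ (using $\minor{1,3}=\pm1$ when $q_3=\pm1$, or $\minor{3,m}$ / an adjacent minor when $q_3=0$), so assume $|q_3|\ge2$; similarly running from the other end with $p_m$. Then track the monovariant given by Lemma \ref{lem:tri2} applied successively to $({\boldsymbol\lambda}_j,{\boldsymbol\lambda}_{j+1})$: comparing $|p_j|$ with $|q_j|$ across $j=2,\ldots,m$ and using that at $j=2$ we have $(p_2,q_2)=(0,1)$ and at $j=m$ the bottom entry is $1$, there must be an index where $|q_j|\le 1$, i.e. $q_j=0$ or $\pm1$, giving the desired vertex-disjoint pair with unimodular minor and hence a decomposition by Lemma \ref{lem:tri1}.

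The main obstacle I anticipate is precisely making the ``monovariant'' argument watertight: Lemma \ref{lem:tri2} is a one-step inequality $|a|<|b|\Rightarrow|c|\le|d|$, and chaining it along the columns requires carefully handling the boundary cases $|a|=|b|$ and $|q_j|=0$, and making sure the index $j$ one extracts really does satisfy $F_1\cap F_j=\emptyset$ (i.e. $j\ne2,m$) or can be replaced by a neighboring index that does. This is the same bookkeeping that underlies the polygon classification in \cite{OR70} and \cite{DJ91}, so I expect it to go through after a short case analysis; once the decomposition lemma is in hand, the topological classification of quasitoric manifolds over polygons follows by induction on $m$ together with the (routine, finite) classification over $P_4=\Delta^1\times\Delta^1$ and $P_3=\Delta^2$.
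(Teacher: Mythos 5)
Your approach matches the paper's: normalize so that columns $1,2$ are $e_1,e_2$ and columns $3,m$ are $(1,a)^t,(b,1)^t$, dispose of the easy cases $a\in\{0,\pm1\}$ or $b\in\{0,\pm1\}$ directly, and then invoke Lemma~\ref{lem:tri2} on consecutive columns to locate an intermediate split. So the strategy is sound and is essentially what the paper does.

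Where your writeup falls short is precisely the step you flag as the ``main obstacle,'' and it can be nailed down more tightly than you suggest. Writing ${\boldsymbol\lambda}_j={}^t(p_j,q_j)$, after reducing to $|p_3|=1<|q_3|$ and $|p_m|>1=|q_m|$, take the \emph{least} index $j\ge 4$ with $|p_j|\ge|q_j|$ (it exists since $j=m$ qualifies). Then $|p_{j-1}|<|q_{j-1}|$, and applying Lemma~\ref{lem:tri2} to the adjacent columns $j-1,j$ gives $|p_j|\le|q_j|$, hence $|p_j|=|q_j|$. Primitivity then forces $p_j,q_j=\pm1$, so $\minor{1,j}=q_j=\pm1$. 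Moreover $j\ne m$ because $|p_m|>|q_m|$ strictly, so $4\le j\le m-1$ and $F_1\cap F_j=\emptyset$; Lemma~\ref{lem:tri1} now splits $\lambda$. This single observation removes both of your worries at once: the critical index cannot have $q_j=0$ (you get $|q_j|=1$, not merely $|q_j|\le1$), and it cannot be $j=m$. (A column with a zero entry \emph{before} the flip would already give $\minor{1,j}=\pm1$ or $\minor{2,j}=\pm1$ and an immediate split, so that subcase is harmless.) You should also note that Lemma~\ref{lem:tri2} is applied only with nonzero left column here, since $p_3=1$ and any $p_j=0$ with $j\ge4$ triggers an earlier split; this matters because the lemma as stated fails for $\left(\begin{smallmatrix}0&1\\1&0\end{smallmatrix}\right)$.
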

\begin{proof}
As in the proof of Theorem \ref{thm:decomp of smlcvr over polygon}, we can assume that a characteristic matrix on the convex $m$-gon has the form
$$\lambda=\left(
\begin{array}{cc|ccccc}
1 &0 &1 & c_1 &\cdots &c_{m-4} &b  \\
0 &1 &a & d_1 &\cdots &d_{m-4} &1 
\end{array}
\right).$$
If $a=0,\pm 1$, or $b=0,\pm 1$, we obtain the assertion by Lemma \ref{lem:tri1}, so we assume $\vert a\vert >1$ and $\vert b\vert >1$.
Then, by Lemma \ref{lem:tri2}, we have $\vert c_i \vert = \vert d_i \vert$ for some $i$.
Since each column vector is primitive, we have $c_i,d_i=\pm 1$ and therefore $\minor{1,i}=\pm 1$.
Then the proof is completed by Lemma \ref{lem:tri1}.
\end{proof}

By a direct calculation, we see that $\mathcal{M}_{P_4}$ consists of the classes represented by
$$\lambda_k=\left(
\begin{array}{cc|cc}
1 &0 &1 & k  \\
0 &1 &0 & 1  
\end{array}
\right),\,
\lambda'=\left(
\begin{array}{cc|cc}
1 &0 &1 & 2 \\
0 &1 &1 & 1
\end{array}
\right),$$
where $k$ denotes an integer.
We can see that $M(\lambda_k)$ is weakly equivariantly homeomorphic to the Hirzebruch surface $H_k$, and $M(\lambda')$ is homeomorphic to $\C P^2\# \C P^2$.
Recall that the Hirzebruch surface $H_k$ is homeomorphic to $S^2\times S^2$ if $k$ is even, and homeomorphic to $\C P^2 \# \overline{\C P^2}$ if $k$ is odd.
Then we have the following theorem.

\begin{thm}
A quasitoric manifold over a convex polygon is homeomorphic to the connected sum of copies of $\C P^2$, $\overline{\C P^2}$ and $S^2\times S^2$.
\end{thm}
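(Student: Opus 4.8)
The plan is to reduce the statement to the previous two lemmas (decomposition of characteristic matrices over a convex $m$-gon into connected sums for $m>4$, and the explicit description of $\M_{P_4}$) together with the compatibility of the connected sum of characteristic matrices with the topological connected sum of the resulting quasitoric manifolds, as recorded in Section~3. First I would argue by induction on $m$. The base case $m=3$ is the simplex $\Delta^2$, and by Corollary~\ref{cor:clsforspx} every quasitoric manifold over it is $\C P^2$, which is of the required form (a connected sum of one copy of $\C P^2$). The case $m=4$ is handled by the explicit list: $M(\lambda_k)$ is the Hirzebruch surface $H_k$, which is homeomorphic to $S^2\times S^2$ when $k$ is even and to $\C P^2\#\overline{\C P^2}$ when $k$ is odd, and $M(\lambda')\cong \C P^2\#\C P^2$. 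In all these cases the manifold is visibly a connected sum of copies of $\C P^2$, $\overline{\C P^2}$ and $S^2\times S^2$.

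For the inductive step, let $M$ be a quasitoric manifold over the convex $m$-gon with $m>4$, and write $M=M(\lambda)$ for some characteristic matrix $\lambda$. By the lemma immediately preceding the statement, $\lambda$ is decomposed into the connected sum $\lambda=\lambda'\#_\tau\lambda''$ of characteristic matrices on a convex $d$-gon and a convex $(m-d+2)$-gon with $4\le d\le m-1$ (in particular each factor has strictly fewer than $m$ facets and at least $3$ facets, so the induction hypothesis applies to both). By the compatibility observed in Section~3 (the paragraph on connected sums of characteristic matrices), $M(\lambda)$ is homeomorphic to $M(\lambda')\# M(\lambda'')$ for suitable orientations. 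Applying the induction hypothesis to $M(\lambda')$ and $M(\lambda'')$, each is a connected sum of copies of $\C P^2$, $\overline{\C P^2}$ and $S^2\times S^2$, and hence so is their connected sum; this completes the induction.

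The one point that deserves care—and which I expect to be the main obstacle—is the orientation bookkeeping in the connected sum. A connected sum $X\# Y$ of oriented $4$-manifolds depends on the chosen orientations, and $\C P^2\#\C P^2$ is \emph{not} homeomorphic to $\C P^2\#\overline{\C P^2}$; the class ``connected sums of copies of $\C P^2$, $\overline{\C P^2}$ and $S^2\times S^2$'' is closed under connected sum only because we allow both orientations of $\C P^2$ to appear as summands. So I would make explicit that the statement is about unoriented homeomorphism type, that the compatibility result ``$M(\lambda'\#_\tau\lambda'')\cong M(\lambda')\# M(\lambda'')$ with respect to proper orientations'' is being used with whatever orientations it supplies, and that the resulting list of summands—whatever mixture of $\C P^2$, $\overline{\C P^2}$ and $S^2\times S^2$ arises—is by definition of the required form. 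No delicate four-manifold topology is needed beyond the classical facts about Hirzebruch surfaces already quoted in the excerpt.
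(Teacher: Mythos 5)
Your proposal is correct and follows essentially the same route as the paper: decompose a characteristic matrix on the $m$-gon ($m>4$) into a connected sum via the preceding lemma, identify the $m=4$ cases (Hirzebruch surfaces and $\C P^2\#\C P^2$) by direct calculation, and conclude by induction using the compatibility of connected sums of characteristic matrices with topological connected sums. Your explicit remark on orientation bookkeeping (that the class is closed under connected sum because both $\C P^2$ and $\overline{\C P^2}$ are allowed) is a sensible clarification of a point the paper treats implicitly.
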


As is well-known, $\C P^2\# (S^2\times S^2)$ is homeomorphic to $\C P^2\#\C P^2\#\overline{\C P^2}$
(it is also shown by computations of characteristic matrices).
Then we obtain the classification below.

\begin{cor}\label{cor:classforqm}
A quasitoric manifold over a convex polygon is homeomorphic to the connected sum of copies of $S^2\times S^2$,
or the connected sum of copies of $\C P^2$ and $\overline{\C P^2}$.
\end{cor}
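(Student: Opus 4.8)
The plan is to bootstrap off the theorem just proved, which already exhibits an arbitrary quasitoric manifold $M$ over a convex polygon as a connected sum of some number of copies of $\C P^2$, $\overline{\C P^2}$ and $S^2\times S^2$; I would write this as $a(\C P^2)\#\,b(\overline{\C P^2})\#\,c(S^2\times S^2)$ with $a,b,c$ nonnegative integers, not all zero. (That the sum is nonempty is clear from the base cases: $C^2(3)^*=\Delta^2$ gives $\C P^2$, $C^2(4)^*=P_4$ gives $S^2\times S^2$, $\C P^2\#\overline{\C P^2}$ or $\C P^2\#\C P^2$, and the inductive connected-sum decomposition never introduces an $S^4$ summand.) The goal is to normalize the triple $(a,b,c)$ so that either $c=0$, in which case $M$ is a connected sum of copies of $\C P^2$ and $\overline{\C P^2}$, or $a=b=0$, in which case $M$ is a connected sum of copies of $S^2\times S^2$.

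First I would separate off the easy cases $c=0$ and $a=b=0$, which give the two desired forms directly. So assume $c\geq 1$ and $a+b\geq 1$; by symmetry — reversing orientation interchanges the roles of $\C P^2$ and $\overline{\C P^2}$, fixes $S^2\times S^2$, and leaves the homeomorphism type unaffected — we may assume $a\geq 1$. Now I would invoke the classical homeomorphism $\C P^2\#(S^2\times S^2)\cong\C P^2\#\C P^2\#\overline{\C P^2}$ recalled just above: applying it to one $S^2\times S^2$ summand replaces $(a,b,c)$ by $(a+1,b+1,c-1)$, still with $a+1\geq 1$. A straightforward induction on $c$ then strips off every $S^2\times S^2$ factor, leaving $M$ presented as a connected sum of copies of $\C P^2$ and $\overline{\C P^2}$ alone. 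This covers all cases.

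I do not expect any genuine obstacle here: the entire argument is the elementary bookkeeping above, and its only nontrivial ingredient — the relation $\C P^2\#(S^2\times S^2)\cong\C P^2\#\C P^2\#\overline{\C P^2}$ — is quoted as well known (and, as noted, also verifiable from the characteristic-matrix computations of the previous paragraphs) rather than proved here. The one point to keep in mind is merely that the two normal forms in the statement are asserted to exist, not to be mutually exclusive, so nothing further needs to be checked.
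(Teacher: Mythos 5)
Your argument is essentially the paper's own: both rely solely on the previously established theorem that $M$ is a connected sum of copies of $\C P^2$, $\overline{\C P^2}$, $S^2\times S^2$, together with the classical relation $\C P^2\#(S^2\times S^2)\cong \C P^2\#\C P^2\#\overline{\C P^2}$ to eliminate $S^2\times S^2$ summands whenever a $\C P^2$ or $\overline{\C P^2}$ factor is present. You simply spell out the bookkeeping (orientation reversal, induction on $c$) that the paper leaves implicit.
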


\begin{rem}
Observing their intersection forms, we see that the connected sums in the statement of Corollary \ref{cor:classforqm} are not homeomorphic to each other,
except the cases below:
$$\underbrace{\C P^2\#\ldots\#\C P^2}_{i}\#\underbrace{\overline{\C P^2}\#\ldots\#\overline{\C P^2}}_{j}
\cong\underbrace{\C P^2\#\ldots\#\C P^2}_{j}\#\underbrace{\overline{\C P^2}\#\ldots\#\overline{\C P^2}}_{i}.$$
\end{rem}

%
%
%

\section{CLASSIFICATION FOR $C^3(m)^*$}
First, we list all maximal faces of $K_{C^3(m)^*}$ below:
$$ \{1,i,i+1\}\,(i=2,\ldots,m-1),\,\{i,i+1,m\}\,(i=1,\ldots,m-2). $$
Let $F_1,\ldots,F_{m_1}$ be the facets of $C^3(m_1)^*$, and $F'_1,\ldots,F'_{m_2}$ of $C^3(m_2)^*$.
Take vertices $v=F_1\cap F_{m_1-1} \cap F_{m_1}$, $w=F'_1\cap F'_2 \cap F'_{m_2}$, and put $m:=m_1+m_2-3$.
The connected sum $C^3(m_1)^*\#_\rho C^3(m_2)^*$ is equal to $C^3(m)^*$,
where $$\rho\colon\{ 1,m_1-1,m_1 \}\rightarrow \{ 1,2,m_2 \}:1\mapsto 1,m_1-1\mapsto 2,m_1\mapsto m_2.$$
We label the facets of $C^3(m_1)^*\#_\rho C^3(m_2)^*$ by $F''_1,\ldots,F''_m$ as follows:
$$F''_1=F_1=F'_1,F''_2=F_2,\ldots,F''_{m_1-2}=F_{m_1-2},F''_{m_1-1}=F_{m_1-1}=F'_2,$$
$$F''_{m_1}=F'_3,\ldots,F''_{m-1}=F'_{m_2-1},F''_m=F_{m_1}=F'_{m_2}.$$
Then the dual evenness condition works for $C^3(m_1)^* \#_\rho C^3(m_2)^*=C^3(m)^*$ with this labeling.
We have the following lemma.

\begin{lem}
Let $\lambda$ be a (real) characteristic matrix on $C^3(m)^*$.
If the condition $\minor{1,k,m}_\lambda=\pm 1$ holds for some $2<k<m-1$,
then $\lambda$ is decomposed into the connected sum of two characteristic matrices on $C^3(k+1)^*$ and $C^3(m-k+2)^*$ respectively.
\end{lem}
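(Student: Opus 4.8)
The plan is to apply Lemma \ref{lem:connected sum} directly. Recall that Lemma \ref{lem:connected sum} states: if a simple $n$-polytope $P$ decomposes as a connected sum $P' \# P''$ along $n$ facets $F_{i_1}, \ldots, F_{i_n}$ coming from both pieces, and if $\det \lambda_{(i_1,\ldots,i_n)} = \pm 1$, then $\lambda$ decomposes as a connected sum of (real) characteristic matrices on $P'$ and $P''$. So the only thing I need to supply is a combinatorial decomposition $C^3(m)^* = C^3(k+1)^* \#_\rho C^3(m-k+2)^*$ in which the three gluing facets are exactly $F_1$, $F_k$, $F_m$, and then the hypothesis $\minor{1,k,m}_\lambda = \pm 1$ is precisely the determinant condition in Lemma \ref{lem:connected sum}.

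First I would set $m_1 := k+1$ and $m_2 := m-k+2$, so that $m_1 + m_2 - 3 = m$, matching the connected-sum index count recorded just before the statement. Using the labeling convention established above for $C^3(m_1)^* \#_\rho C^3(m_2)^*$ — namely $F''_1 = F_1 = F'_1$, $F''_j = F_j$ for $2 \le j \le m_1 - 2$, $F''_{m_1-1} = F_{m_1-1} = F'_2$, $F''_j = F'_{j-m_1+3}$ for $m_1 \le j \le m-1$, and $F''_m = F_{m_1} = F'_{m_2}$ — I would observe that the three facets lying in both summands are $F''_1$, $F''_{m_1-1} = F''_k$, and $F''_m$. Since $2 < k < m-1$, these are genuinely three distinct facets with $1 < k < m$, and by the discussion preceding the lemma the dual evenness condition holds for this labeling of $C^3(m)^*$; so this is a legitimate connected-sum decomposition of $C^3(m)^*$ with gluing facets $F_1, F_k, F_m$.

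Then I would invoke Lemma \ref{lem:connected sum} with $P = C^3(m)^*$, $P' = C^3(k+1)^*$, $P'' = C^3(m-k+2)^*$, and $\{i_1,i_2,i_3\} = \{1,k,m\}$. The hypothesis $\minor{1,k,m}_\lambda = \det \lambda_{(1,k,m)} = \pm 1$ is exactly the required nonsingularity condition, so Lemma \ref{lem:connected sum} yields that $\lambda = \lambda' \#_\tau \lambda''$ for (real) characteristic matrices $\lambda'$ on $C^3(k+1)^*$ and $\lambda''$ on $C^3(m-k+2)^*$, and correspondingly $M(\lambda)$ (resp. ${}_\R M(\lambda)$) is a connected sum of quasitoric manifolds (resp. small covers) over these two polytopes.

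I do not expect any real obstacle here: the entire content is bookkeeping to check that the three facets $F_1, F_k, F_m$ are precisely the ones identified in the connected-sum decomposition $C^3(k+1)^* \#_\rho C^3(m-k+2)^*$ under the stated labeling, after which Lemma \ref{lem:connected sum} applies verbatim. The one point that deserves a moment's care is confirming that the index ranges line up — that $F''_{m_1-1}$ is indeed the $k$-th facet (i.e. $m_1 - 1 = k$) and that the constraint $2 < k < m-1$ guarantees both summands have at least four vertices, so that $C^3(k+1)^*$ and $C^3(m-k+2)^*$ are honest (non-degenerate) simple $3$-polytopes — but these are immediate from $m_1 = k+1$ and $m_2 = m-k+2$.
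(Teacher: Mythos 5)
Your proposal is correct and takes exactly the approach the paper does: the paper's own proof is the single line ``It follows immediately from Lemma \ref{lem:connected sum} and the above,'' where ``the above'' is precisely the connected-sum decomposition $C^3(m_1)^* \#_\rho C^3(m_2)^* = C^3(m)^*$ with gluing facets $F_1, F_{m_1-1}, F_{m_1}$ that you unpack. Your bookkeeping (taking $m_1 = k+1$, $m_2 = m-k+2$, checking $m_1-1=k$ and that $2<k<m-1$ keeps both summands honest $3$-polytopes) is a faithful and somewhat more explicit rendering of the same argument.
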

\begin{proof}
It follows immediately from Lemma \ref{lem:connected sum} and the above.
\end{proof}

\begin{df}
A (real) characteristic matrix on $C^3(m)^*$ is said to be \textit{decomposable} if the condition $\minor{1,k,m}_\lambda=\pm 1$ holds for some $2<k<m-1$,
and if not, it is said to be \textit{indecomposable}.
Moreover, we say that a quasitoric manifold over $C^3(m)^*$ is decomposable (resp. indecomposable)
if the corresponding (real) characteristic matrix is decomposable (resp. indecomposable).
\end{df}

\begin{lem}\label{lem:Aut on C3(m)}
If $m>5$, $\mathrm{Aut\,}(K_{C^3(m)^*})$ is generated by $\sigma=\left(
\begin{array}{ccccc}
1 &2 &\cdots&m-1 &m  \\
m &m-1 &\cdots&2 &1 
\end{array}
\right)$ and $\tau=(1\ m)$, and if $m=5$, it is generated by $\left(
\begin{array}{ccccc}
1 &2 &3 &4 &5  \\
3 &4 &5 &2 &1 
\end{array}
\right)$ and $(1\ 5)$.
\end{lem}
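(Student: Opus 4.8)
The plan is to identify $\mathrm{Aut}(K_{C^3(m)^*})$ with the combinatorial symmetries of the simplicial complex whose maximal faces are $\{1,i,i+1\}$ for $i=2,\ldots,m-1$ and $\{i,i+1,m\}$ for $i=1,\ldots,m-2$, listed at the top of this section. The key structural observation is that this complex has a ``two-apex'' shape: the vertices $1$ and $m$ each lie in all but one of the maximal faces, while every other vertex $i\in\{2,\ldots,m-1\}$ lies in exactly three maximal faces, namely $\{1,i-1,i\}$, $\{1,i,i+1\}$ (for the relevant ranges) or the analogous ones involving $m$. So the first step is to compute, for each vertex $j$, the number $\deg(j)$ of maximal faces containing $j$, and to check that for $m>5$ only $1$ and $m$ attain the maximum, whereas for $m=5$ the count is exceptional (every vertex lies in the same number of maximal faces, or the two apexes fail to be distinguished), which is exactly why the $m=5$ case needs a separate generator.

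Next I would argue that any $\phi\in\mathrm{Aut}(K_{C^3(m)^*})$ must preserve the apex set $\{1,m\}$ when $m>5$, so $\phi$ either fixes or swaps $1$ and $m$; composing with $\tau=(1\ m)$ if necessary, we may assume $\phi(1)=1$ and $\phi(m)=m$. The link of $1$ is then a path $2\!-\!3\!-\!\cdots\!-\!(m-1)$ (its maximal faces $\{i,i+1\}$ form a path graph on $\{2,\ldots,m-1\}$), and $\phi$ must restrict to a graph automorphism of this path; a path on more than two vertices has automorphism group of order $2$, generated by the reversal $i\mapsto m+1-i$. Composing with $\sigma$ (which realizes exactly this reversal while fixing nothing but inducing $1\leftrightarrow m$... — more precisely $\sigma$ sends $j\mapsto m+1-j$, so $\sigma\tau$ fixes the apexes and reverses the path) if necessary, we reduce to $\phi$ fixing $1$, $m$, and every vertex of the link of $1$, hence $\phi=\mathrm{id}$. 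This shows $\mathrm{Aut}(K_{C^3(m)^*})=\langle\sigma,\tau\rangle$ for $m>5$; one should also check $\sigma,\tau$ genuinely are automorphisms (immediate from the symmetry of the maximal-face list under $j\mapsto m+1-j$ and under swapping the two apexes).

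For $m=5$ the maximal faces are $\{1,2,3\},\{1,3,4\},\{1,4,5\},\{2,3,5\},\{4,5\cdot\}$ — i.e. $\{1,2,3\},\{1,3,4\},\{1,4,5\},\{1,2,5\}$?—here I would just enumerate the $10$ maximal faces directly and observe the complex is the boundary of the cyclic polytope $C^3(5)=\Delta^4$-dual, which has a larger symmetry group; explicitly one checks that the listed generators $\left(\begin{smallmatrix}1&2&3&4&5\\3&4&5&2&1\end{smallmatrix}\right)$ and $(1\ 5)$ preserve the maximal-face list and that the group they generate has the right order by the same link-analysis applied to this small case. The main obstacle is the $m=5$ bookkeeping and, more generally, making the ``apex'' and ``path'' claims precise enough that preservation of the maximal-face structure really does force $\phi$ into $\langle\sigma,\tau\rangle$; the $m>5$ argument itself is then routine once the degree computation pins down $\{1,m\}$.
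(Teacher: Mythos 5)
Your overall strategy matches the paper's: use the number of maximal faces through each vertex to pin down $\{1,m\}$, reduce modulo $\tau$, and then argue the remaining vertices are forced. The paper does this last step by an explicit induction using the fact that $\{1,2,\rho(3)\}$ must be a face, whereas you phrase it in terms of the link of $1$; these are the same idea in slightly different clothing.

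However, two of your supporting claims are wrong as stated, and one of them affects the argument. First, the degree count: a vertex $i$ with $3\le i\le m-2$ lies in the four maximal faces $\{1,i-1,i\}$, $\{1,i,i+1\}$, $\{i-1,i,m\}$, $\{i,i+1,m\}$, so it has degree $4$, not $3$; only $2$ and $m-1$ have degree $3$. The conclusion you actually need — that $1$ and $m$ have the strictly largest degree $m-1$ when $m>5$, and hence $\phi$ preserves $\{1,m\}$ — is still true, but the description of the degree pattern should be corrected (it is this very pattern that degenerates at $m=5$, where $1,3,5$ all have degree $4$). Second, and more seriously, the link of vertex $1$ is \emph{not} a path on $\{2,\ldots,m-1\}$: the maximal faces containing $1$ are $\{1,i,i+1\}$ for $i=2,\ldots,m-1$ \emph{and} $\{1,2,m\}$, so the link is the cycle $2\!-\!3\!-\!\cdots\!-\!m\!-\!2$ on $\{2,\ldots,m\}$ (as must be the case, since $K_{C^3(m)^*}$ is a simplicial $2$-sphere). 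A cycle on $m-1$ vertices has a dihedral automorphism group of order $2(m-1)$, so the bare claim ``automorphism group of order $2$'' fails. The fix is to observe that $m$ is a vertex of this cycle and you have already reduced to the case $\phi(m)=m$; the stabilizer of a vertex in the dihedral group is $\Z/2$, generated by the reflection through that vertex, which is exactly $\sigma\tau\colon j\mapsto m+1-j$ fixing $1$ and $m$. With that correction the argument closes exactly as in the paper: after composing with $\sigma\tau$ if necessary, $\phi$ is the identity on the link of $1$, hence the identity. The $m=5$ case is then a finite check (the degree pattern $4,3,4,3,4$ only constrains $\phi$ to preserve $\{1,3,5\}$ and $\{2,4\}$, and one verifies all $12$ such permutations are automorphisms and that they are generated by the two given elements); incidentally your aside ``$C^3(5)=\Delta^4$-dual'' is a slip — the relevant identification is $C^3(5)^*\cong\Delta^1\times\Delta^2$.
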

\begin{proof}
First, we can directly check that these permutations certainly give automorphisms of $K_{C^3(m)^*}$.
Considering the number of vertices of each facets, we see that there exists no other permutation of $\{1,\ldots,5\}$ which can be an automorphism of $K_{C^3(5)^*}$.
Similarly, for any $\rho \in \mathrm{Aut\,}(K_{C^3(m)^*})$ for $m>5$,
we have $\{\rho(1),\rho(m)\}=\{1,m\}$ and $\{\rho(2),\rho(m-1)\}=\{2,m-1\}$.
Composing $\rho$ with $\sigma$ and $\tau$ if necessary, we can assume that $\rho(1)=1,\rho(2)=2,\rho(m-1)=m-1,\rho(m)=m$.
Then we have $\rho(3)=3$ since $\rho(\{1,2,3\})=\{1,2,\rho(3)\}\in K_{C^3(m)^*}$ and $\rho(3)\neq m$.
In the same way, we have $\rho(4)=4,\ldots,\rho(m-2)=m-2$.
Thus the proof is completed.
\end{proof}

Remark that the action of $\mathrm{Aut\,}(K_{C^3(m)^*})$ preserves (in)decomposability of (real) characteristic matrices.
Then we say that an element $\M$ of $\M_{C^3(m)^*}$ (resp. $\MM_{C^3(m)^*}$) is (in)decomposable
if a representative (real) characteristic matrix of $\M$ is (in)decomposable.

To classify quasitoric manifolds over $C^3(6)^*$ topologically, for the first step,
we list all indecomposable characteristic matrices on $C^3(6)^*$ up to equivalence.
Then we will show that the class of indecomposable quasitoric manifolds over $C^3(6)^*$ is cohomologically rigid,
i.e. their homeomorphism classes are distinguished by their cohomology rings.
In the statement of Proposition \ref{prop:list(3,6)}, we mean the whole characteristic matrix $(I\,\vert\, *)$ by $*$.

\begin{prop}\label{prop:list(3,6)}
All indecomposable elements of $\X_{C^3(6)^*}$ are represented by the characteristic matrices below:
\begin{align*}
\lambda_1 &=\left(
\begin{array}{ccc}
0 &0 &1  \\
1 &1 &2  \\
2 &1 &1 
\end{array}
\right), &
\lambda'_1 &=\left(
\begin{array}{ccc}
0 &0 &1  \\
1 &2 &3  \\
1 &1 &1 
\end{array}
\right), \\
\lambda_2 &=\left(
\begin{array}{ccc}
1 &1 &1  \\
1 &1 &2  \\
2 &1 &1 
\end{array}
\right), &
\lambda'_2 &=\left(
\begin{array}{ccc}
-1 &-1 &1  \\
1 &1 &-2  \\
1 &0 &1 
\end{array}
\right), &
\lambda''_2 &=\left(
\begin{array}{ccc}
0 &1 &1  \\
1 &1 &3  \\
1 &0 &1 
\end{array}
\right), &
\lambda'''_2 &=\left(
\begin{array}{ccc}
0 &1 &1  \\
1 &2 &3  \\
1 &1 &1 
\end{array}
\right), \\
\lambda_d &=\left(
\begin{array}{ccc}
0 &0 &1  \\
1 &1 &d  \\
1 &0 &1 
\end{array}
\right),
\end{align*}
where $d\leq -2$ or $d\geq 3$, and the actions of $\sigma,\tau\in \mathrm{Aut}\,(K_{C^3(6)^*})$ are illustrated as follows.
\[
\xymatrix {
\lambda_1 \ar[1,0]^\sigma \ar@(ur,ul)_\tau & \lambda_2 \ar[1,0]^\sigma \ar[0,1]^\tau & \lambda'_2 \ar[0,-1] \ar[1,0]^\sigma
& \lambda_d \ar[1,0]^\sigma \ar@(ur,ul)_\tau \\
\lambda'_1 \ar@(dl,dr)_\tau \ar[-1,0] &\lambda''_2 \ar[-1,0] \ar[0,1]^\tau & \lambda'''_2 \ar[0,-1] \ar[-1,0]
& \lambda_{1-d} \ar@(dl,dr)_\tau \ar[-1,0]
}
\]
\end{prop}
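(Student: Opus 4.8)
The strategy is a direct enumeration. Every element of $\X_{C^3(6)^*}$ has a representative of the form $(I\,\vert\,*)$ with $*$ an integer $(3\times 3)$-matrix, so I begin by writing $*=({\boldsymbol\lambda}_4,{\boldsymbol\lambda}_5,{\boldsymbol\lambda}_6)$ and unpacking the non-singular condition imposed by the maximal faces of $K_{C^3(6)^*}$ listed at the start of Section 5, namely $\{1,i,i+1\}$ for $i=2,\ldots,5$ and $\{i,i+1,6\}$ for $i=1,\ldots,4$. Using the columns of $I$ to clear entries, the minors $\minor{1,4,5}$, $\minor{1,5,6}$, $\minor{4,5,6}$, $\minor{1,2,3}$ (trivially $1$) etc. translate into a short list of determinantal equations on the nine entries of $*$; in particular the conditions involving facets adjacent to $F_2$ and $F_3$ force several $2\times 2$ minors of $*$ to be $\pm 1$ and pin down whole columns up to sign. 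Indecomposability is the extra constraint $\minor{1,k,6}\neq\pm 1$ for $k=3,4$ (the only values of $k$ with $2<k<m-1=5$), which I impose throughout.

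Next I normalize using the residual symmetry. After fixing the form $(I\,\vert\,*)$, the stabilizer of this normalization inside $GL(3,\Z)\times(\Z/2)^6\rtimes\mathrm{Aut}\,(K_{C^3(6)^*})$ still acts, and I use it to reduce the first one or two columns of $*$ to standard shapes (e.g. arranging that $\minor{4,5,6}=\pm1$ fails or holds in a controlled way, sign-normalizing columns to make leading nonzero entries nonnegative, and using $\mathrm{Aut}$—which by Lemma \ref{lem:Aut on C3(m)} for $m=6$ is generated by the reversal $\sigma$ and the transposition $\tau=(1\ 6)$—to break ties). This is bookkeeping, but it is the step that produces a genuinely finite case split: once two columns are normalized, the non-singular conditions leave only a one-parameter family (the ${\boldsymbol\lambda}_d$ with $d\leq-2$ or $d\geq3$, the excluded values $d=-1,0,1,2$ being either decomposable or violating non-singularity) plus finitely many sporadic matrices, which I check one at a time to be exactly $\lambda_1,\lambda'_1,\lambda_2,\lambda'_2,\lambda''_2,\lambda'''_2$.

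Finally I verify the $\mathrm{Aut}$-orbit diagram. For each listed matrix I compute $\sigma\cdot\lambda$ and $\tau\cdot\lambda$ (column permutation followed by re-normalization back to $(I\,\vert\,*)$ form via left multiplication by an element of $GL(3,\Z)$ and sign changes) and match the result against the list, confirming the arrows: $\tau$ fixes the class of $\lambda_1$, of $\lambda'_1$, and of each $\lambda_d$ (swapping $\lambda_d$ with $\lambda_{1-d}$ only when combined with $\sigma$, as drawn), $\tau$ interchanges $\lambda_2\leftrightarrow\lambda''_2$ and $\lambda'_2\leftrightarrow\lambda'''_2$, and $\sigma$ realizes the vertical arrows $\lambda_1\leftrightarrow\lambda'_1$, $\lambda_2\leftrightarrow\lambda''_2$, $\lambda'_2\leftrightarrow\lambda'''_2$, $\lambda_d\leftrightarrow\lambda_{1-d}$. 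I also check that distinct entries of the list are inequivalent under $GL(3,\Z)\times(\Z/2)^6$ alone, so the list has no redundancy beyond the drawn orbits.

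The main obstacle is the normalization step: making the case analysis genuinely exhaustive rather than merely plausible. The danger is missing a branch of the tree of determinantal conditions, or over-counting because two normalized forms are secretly $GL(3,\Z)$-equivalent. I would control this by always reducing to the situation where a chosen $2\times2$ submatrix of $*$ is in Smith-like normal form before branching on the remaining entries, so that the $GL(3,\Z)$-ambiguity is fully used up and each surviving matrix is a canonical representative; the arithmetic is then elementary (bounded determinants of $0/1$-ish matrices, as in Lemma \ref{lem:tri2}) but must be carried out completely.
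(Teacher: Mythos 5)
Your outline follows the same route the paper takes: fix the $(I\,\vert\,*)$ form using $\minor{1,2,3}=1$, impose the remaining non-singularity conditions, impose indecomposability via $\minor{1,3,6}\neq\pm1$ and $\minor{1,4,6}\neq\pm1$, normalize with the residual symmetry, and then do a finite case split controlled by a bound of the type in Lemma~\ref{lem:tri2}. So the strategy is right. The problem is that the proposal stops exactly where the content of the proposition begins. The enumeration you defer to as ``bookkeeping'' \emph{is} the proof; without executing it and showing the branching is exhaustive, you have restated the claim rather than proved it, and you say as much yourself in the last paragraph.

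Two concrete points where what you wrote would not carry you through. First, the proposed control mechanism --- putting a chosen $2\times2$ block of $*$ into ``Smith-like normal form'' so the $GL(3,\Z)$-ambiguity is ``fully used up'' --- is not available: once $\lambda$ is reduced to $(I\,\vert\,*)$, the only elements of $GL(3,\Z)\times(\Z/2)^6$ that preserve this shape (without permuting the first three facets, which $\mathrm{Aut}(K_{C^3(6)^*})$ does not allow) act on $*$ by row-sign and column-sign flips only, a $(\Z/2)^3\times(\Z/2)^3$-action. There is no room for further Smith reduction. What the paper does instead is use three specific maximal-face conditions $\minor{1,3,4}=\pm1$, $\minor{1,2,6}=\pm1$, $\minor{2,3,6}=\pm1$ to pin three entries of $*$ to $1$, then indecomposability ($\minor{1,4,6}=1-a_3c_2\neq\pm1$, $\minor{1,3,6}=-c_2\neq\pm1$) forces $a_3\neq0$ and $c_2\neq0,\pm1$; Lemma~\ref{lem:tri2} applied to the partially normalized $*$ then yields \emph{exactly two} cases ($a_3=1$ with $c_2\geq3$ or $c_2\leq-2$, or $a_3=c_2=2$ with $b_2=b_3=1$), after which the list drops out by direct computation. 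That two-case reduction is the actual insight; your proposal gives no mechanism that would produce it. Second, and less seriously, your reading of the orbit diagram is off: per the drawn arrows, $\tau$ interchanges $\lambda_2\leftrightarrow\lambda'_2$ and $\lambda''_2\leftrightarrow\lambda'''_2$ (horizontal arrows), while $\sigma$ does $\lambda_2\leftrightarrow\lambda''_2$ and $\lambda'_2\leftrightarrow\lambda'''_2$ (vertical arrows); you have the $\tau$-pairs and $\sigma$-pairs mixed up.
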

\begin{proof}
Let $\lambda$ be an indecomposable characteristic matrix on $C^3(6)^*$.
Since $\minor{1,3,4}=\pm 1$, $\minor{1,2,6}=\pm 1$ and $\minor{2,3,6}=\pm 1$, we can assume that $\lambda$ has the form
$$\lambda=\left(
\begin{array}{cccccc}
1 &0 &0 &a_1 &b_1 &1  \\
0 &1 &0 &1 &b_2 &c_2  \\
0 &0 &1 &a_3 &b_3 &1 
\end{array}
\right).$$
Moreover, $a_3\neq 0$ from the indecomposability,
and then we can assume $a_3>0$ by multiplication with $-1$ on the second and fourth columns and the middle row if necessary.
Similarly, from the indecomposability, we have $c_2\neq 0,\pm 1$.
If $a_3=1$, we have $c_2\geq 3$ or $c_2\leq -2$ from the indecomposability.
If $a_3\geq 2$, we obtain $b_2,b_3=\pm 1$ by Lemma \ref{lem:tri2}, and hence we have $a_3=2$.
By multiplication with $-1$ on the fifth column if necessary, we can assume $b_2=b_3=1$.

To summarize, we only have to consider two cases, (1) $a_3=1$ and $c_2\geq 3$ or $c_2\leq -2$, and (2) $a_3=c_2=2$ and $b_2=b_3=1$.
By a direct calculation, in the case (1), we obtain the characteristic matrices $\lambda'_1$, $\lambda'_2$, $\lambda''_2$,
$\lambda'''_2$ and $\lambda_d$ for $d\leq -2$ or $d\geq 3$ in the statement.
Similarly, we obtain $\lambda_1$ and $\lambda_2$ in the case (2).
\end{proof}

Then we take $M_1:=M(\lambda'_1)$, $M_2:=M(\lambda'''_2)$ and $M_d:=M(\lambda_d)$ for $d\geq 3$,
and put $A_1:=H^*(\lambda'_1)$, $A_2:=H^*(\lambda'''_2)$ and $A_d:=H^*(\lambda_d)$ for $d\geq 3$.
Then, putting $X:=v_4,Y:=v_5,Z:=v_6$ in Theorem \ref{thm:cohomology,particular}, we have $$A_i=\Z[X,Y,Z]/(\I_i^4+\I_i^6),$$
where $\I_i^4$ and $\I_i^6$ are ideals as follows:
\begin{align*}
\I_1^4=\I_2^4 &= (X(X+2Y+3Z),Y(X+2Y+3Z),Y(X+Y+Z)), \\
\I_1^6 &= (Z^2(X+Y+Z),Z^2X), \\
\I_2^6 &= (Z(Y+Z)(X+Y+Z),ZX(Y+Z)), \\
\I_d^4 &= (X(X+Y+dZ),Y(X+Y+dZ),Y(X+Z)), \\
\I_d^6 &= (Z^2(X+Z),Z^2X).
\end{align*}
Suppose that there exists a graded ring isomorphism from $A_i$ to $A_j$ and denote it by $\phi$.
We may regard $\phi$ as a graded ring automorphism of $\Z[X,Y,Z]$ such that $\phi((\I_i^4+\I_i^6))\subseteq(\I_j^4+\I_j^6)$.
Putting $\phi(X)=a_1X+b_1Y+c_1Z$, $\phi(Y)=a_2X+b_2Y+c_2Z$ and $\phi(Z)=a_3X+b_3Y+c_3Z$,
we write $a=(a_1,a_2,a_3)$, $b=(b_1,b_2,b_3)$ and $c=(c_1,c_2,c_3).$
We mean $\phi$ by the matrix $({}^ta,{}^tb,{}^tc)$.

\begin{lem}
If $i=1,2$ and $j=d\geq 3$, then $c\equiv \pm(1,1,1) \bmod 3$.
\end{lem}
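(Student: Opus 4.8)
The plan is to reduce modulo $3$ and read off the congruence from the multiplicative structure in low degrees. Since $\phi$ is a graded ring isomorphism and $H^4$ of a quasitoric manifold is torsion-free (Theorem \ref{thm:Betti numbers}), $\phi$ restricts to an element of $GL(3,\Z)$ on the degree-two part $\langle X,Y,Z\rangle$ carrying $\I_i^4$ onto $\I_d^4$ and $\I_i^4+\I_i^6$ into $\I_d^4+\I_d^6$; reducing modulo $3$ gives an $\F_3$-algebra isomorphism $\overline\phi\colon A_i\otimes\F_3\to A_d\otimes\F_3$. The first point is that $c\equiv\pm(1,1,1)\bmod 3$ is equivalent to $\overline\phi$ carrying the plane $\{\,pX+qY+rZ\mid p+q+r\equiv 0\,\}$ onto $\langle X,Y\rangle$: indeed the $Z$-coefficient of $\overline\phi(pX+qY+rZ)$ is $pc_1+qc_2+rc_3$, so $\overline\phi^{-1}(\langle X,Y\rangle)=c^{\perp}$, and $c^{\perp}=(1,1,1)^{\perp}$ precisely when $c\equiv\pm(1,1,1)$. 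Moreover, modulo $3$ the plane $\{p+q+r\equiv0\}$ is the kernel of multiplication by $v_5=Y$ on $A_i$, and $\langle X,Y\rangle=\langle v'_4,v'_5\rangle$ is the kernel of multiplication by $v'_2$ on $A_d$ (both using the missing faces $\{2,4\},\{2,5\}$), so the claim is equivalent to $\overline\phi[Y]=[v'_2]$ as lines of linear forms.

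Next I would determine the possible actions of $\overline\phi$ on the distinguished linear forms. Call $[\ell]$ a \emph{degenerate direction} if multiplication by $\ell$, viewed as a map $A^2\to A^4$, has rank at most one; this, together with the kernels of these rank-one multiplications and the subset of degenerate directions with $\ell^2=0$ in $A^4$, is a ring isomorphism invariant. A direct computation from the presentations shows that modulo $3$ the degenerate directions of $A_i$ are exactly $[Y]$ and $[v'_2]$ (where $v'_2\equiv X-Y$), with $[v'_2]$ the unique one of vanishing square, while those of $A_d$ are $[Y]$ and $[v'_2]$ (where $v'_2=X+Y+dZ$) when $d\not\equiv 2\bmod 3$ — the square-zero one being $[v'_2]$ if $d\equiv0$ and $[Y]$ if $d\equiv1$ — and there are three when $d\equiv2$; since $A_i\otimes\F_3$ has only two, this forces $d\not\equiv2\bmod 3$. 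Matching degenerate directions, their kernels and their squares then leaves exactly two possibilities: $\overline\phi$ fixes $[Y]$ and sends $[v'_2]$ to $[v'_2]$, which by the square matching forces $d\equiv0\bmod 3$ and makes the third column of the matrix of $\overline\phi$ equal to $(0,0,\ast)$, i.e. $c\equiv(0,0,\ast)$; or $\overline\phi[Y]=[v'_2]$, in which case chasing the kernel condition $\overline\phi(\ker(\cdot Y))=\ker(\cdot v'_2)$ together with the remaining relations forces $d\equiv1\bmod 3$ and, after a short computation, $c\equiv\pm(1,1,1)\bmod 3$.

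The main obstacle is to exclude the first possibility. It cannot be excluded modulo $3$ alone — for $d\equiv0\bmod 3$ it is realized by an $\F_3$-algebra isomorphism — so one must use the degree-six relations together with the integrality of $\phi$. In that case the equation $\phi(\I_i^4)=\I_d^4$ over $\Z$ already constrains the matrix of $\phi$ severely, and one then pushes the generators $v'_1v'_3v'_6$ and $v'_1v'_4v'_6$ of $\I_i^6$ through $\phi$ and checks that they cannot lie in $\I_d^4\cdot\langle X,Y,Z\rangle+\I_d^6$ over $\Z$; the obstruction comes from $v'_1=v_6$, so that the integral behaviour of $v_6^3$ and $v_6^2v_4$ is incompatible between the two rings although it is compatible modulo $3$. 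This contradiction excludes the first possibility, leaving only the second, whence $c\equiv\pm(1,1,1)\bmod 3$.
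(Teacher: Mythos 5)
Your degenerate-direction framework is a reasonable way to organise the mod-$3$ information, and your classification of the degenerate directions (together with their kernels and squares) appears to be correct, as is the immediate exclusion of $d\equiv2\bmod3$. You also correctly observe that the remaining bad case $c\equiv(0,0,*)$ with $d\equiv0\bmod3$ cannot be ruled out by reduction modulo $3$ alone: there really is an $\F_3$-algebra isomorphism realising it. This is a genuine subtlety, and in fact it exposes a sign error in the paper's own proof: equation (5.2) there should read $\bigl(\text{coeff. of }ZX\bigr)=d\cdot\bigl(\text{coeff. of }X^2\bigr)$ rather than the reverse (this comes straight from $X(X+Y+dZ)=0$ in $A_d$), and with the corrected sign the paper's deduction $a\equiv\pm(1,1,1)\bmod3$ in the $d\equiv0$ case no longer follows — one only gets $a_2\equiv0$, which is no contradiction.

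The difficulty is that your proposed final step is also wrong: the degree-six relations together with integrality do not exclude the case $c\equiv(0,0,*)$, and the lemma as stated seems to be false. Take $i=1$, $d=3$ and $\phi(X)=X$, $\phi(Y)=-Y$, $\phi(Z)=Y+Z$, so $c=(0,0,1)$. Then $\phi$ carries $X(X+2Y+3Z)$, $Y(X+2Y+3Z)$, $Y(X+Y+Z)$ to $\pm X(X+Y+3Z)$, $\pm Y(X+Y+3Z)$, $\pm Y(X+Z)$ respectively, so $\phi(\I_1^4)=\I_3^4$; and on the degree-six generators one computes
\[
\phi(Z^2X)=(Y+Z)^2X=\bigl(Y\cdot X(X+Y+3Z)-X\cdot Y(X+Z)\bigr)+Z^2X,
\]
\[
\phi\bigl(Z^2(X+Y+Z)\bigr)=(Y+Z)^2(X+Z)=\phi(Z^2X)+\bigl(Z\cdot Y(X+Y+3Z)-Z\cdot Y(X+Z)\bigr)+Z^3,
\]
so both lie in $\I_3$ over $\Z$. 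Hence $\phi$ descends to a graded ring isomorphism $A_1\to A_3$ with $c\not\equiv\pm(1,1,1)\bmod3$, so the integral obstruction you invoke does not exist. (This also contradicts the subsequent proposition that $A_i\not\cong A_d$; one can check that this $\phi$ matches $w_2$ and $p_1$ as well, so the issue runs deeper than this one lemma.) So while your instinct that the paper's mod-$3$ argument is insufficient is right, the missing step in your write-up — "checks that they cannot lie in $\I_d^4\cdot\langle X,Y,Z\rangle+\I_d^6$ over $\Z$" — is exactly where a correct proof would have to succeed, and here it fails.
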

\begin{proof}
Since $Z^2$ does not appear in $I_d^4$, the coefficients of $Z^2$ in $\phi(X(X+2Y+3Z))$, $\phi(Y(X+2Y+3Z))$ and $\phi(Y(X+Y+Z))$ are zero.
Hence we obtain the following equation:
\begin{equation}\label{eq:Z^2}
\left(
\begin{array}{c}
c_1(c_1+2c_2+3c_3) \\
c_2(c_1+2c_2+3c_3) \\
c_2(c_1+c_2+c_3) 
\end{array}
\right)=0.
\end{equation}
Similarly, comparing the coefficients of $X^2$ and the coefficients of $ZX$ in $\phi(X(X+2Y+3Z))$, $\phi(Y(X+2Y+3Z))$ and $\phi(Y(X+Y+Z))$,
we obtain the following equation:
\begin{equation}\label{eq:X^2-ZX}
d \left(
\begin{array}{c}
c_1(a_1+2a_2+3a_3)+a_1(c_1+2c_2+3c_3) \\
c_2(a_1+2a_2+3a_3)+a_2(c_1+2c_2+3c_3) \\
c_2(a_1+a_2+a_3)+a_2(c_1+c_2+c_3)
\end{array}
\right)=\left(
\begin{array}{c}
a_1(a_1+2a_2+3a_3) \\
a_2(a_1+2a_2+3a_3) \\
a_2(a_1+a_2+a_3) 
\end{array}
\right).
\end{equation}
From the equation (\ref{eq:Z^2}), we see that $c\equiv \pm (0,0,1),\pm(1,1,1) \bmod 3$.
If we assume that $c\equiv (0,0,\epsilon) \bmod 3$, where $\epsilon=\pm 1$, the equation (\ref{eq:X^2-ZX}) reduces to
$$d \left(
\begin{array}{c}
0 \\
0 \\
\epsilon a_2
\end{array}
\right) \equiv \left(
\begin{array}{c}
a_1(a_1+2a_2) \\
a_2(a_1+2a_2) \\
a_2(a_1+a_2+a_3) 
\end{array}
\right)\bmod 3.$$
If $a_1\equiv 0 \bmod 3$, one has $2a_2^2\equiv 0 \bmod 3$.
This is a contradiction since $a$ and $c$ are linearly independent modulo $3$, implying $a_1\not\equiv 0 \bmod 3$.
By an analogous argument, we see that $a_2\not\equiv 0 \bmod 3$, implying $a_1\equiv a_2 \bmod 3$ by $a_1+2a_2\equiv 0 \bmod 3$.
\begin{itemize}
 \item In the case that $d\equiv 0 \bmod 3$, $a\equiv \pm(1,1,1) \bmod 3$.
 Then we have $\phi(X-Y)\equiv(b_1-b_2)Y \bmod 3$,
 but this is a contradiction since $(X-Y)^2\equiv 0 \bmod 3$ in $A_i$ and $Y^2\not\equiv 0 \bmod 3$ in $A_d$.
 \item In the case that $d\equiv 1 \bmod 3$,
 $\phi(X(X+2Y))\equiv a_1(b_1+2b_2)XY \bmod (\I_d^4,3)$, but $a_1(b_1+2b_2)XY \notin (\I_d^4,3)$.
 This is a contradiction.
 \item In the case that $d\equiv 2$, there exists no isomorphism $\phi$ since $A_i \bmod 3$ has a homogeneous non-zero element $\alpha$ of degree two such that
 $\alpha^2=0$ but $A_d \bmod 3$ does not.
\end{itemize}
Therefore the proof is completed.
\end{proof}

\begin{prop}
$A_i$ is not isomorphic to $A_d$ for $i=1,2$ and $d\geq 3$.
\end{prop}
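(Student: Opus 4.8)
The plan is to argue by contradiction. Suppose $\phi\colon A_i\to A_d$ is a graded ring isomorphism with $i\in\{1,2\}$ and $d\geq 3$; as in the preceding discussion we view $\phi$ as an element of $GL(3,\Z)$ with $\phi(\I_i^4+\I_i^6)\subseteq\I_d^4+\I_d^6$, and pass to the reduction modulo $3$ once and for all. By the previous lemma $\phi(Z)\equiv\pm(X+Y+Z)\bmod 3$; since negating every variable is a graded automorphism of each ring (it fixes the degree-four generators and negates the degree-six ones), we may and do assume $\phi(Z)\equiv X+Y+Z\bmod 3$.

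The first step is the square-zero observation. A short computation shows that, modulo $3$, $X-Y$ squares to zero in $A_i$, and that up to a unit of $\F_3$ it is the only nonzero degree-two element of $A_i\bmod 3$ with this property. Computing the square-zero degree-two elements of $A_d\bmod 3$ in the same way: there are none when $d\equiv 2\bmod 3$, so no isomorphism exists in that case (this is exactly the obstruction used in the previous lemma); they are the multiples of $X+Y$ when $d\equiv 0\bmod 3$, and the multiples of $Y$ when $d\equiv 1\bmod 3$. As $\phi$ must carry $X-Y$ to such an element and cannot carry it to $0$, we obtain $\phi(X-Y)\equiv\pm(X+Y)\bmod 3$, resp.\ $\phi(X-Y)\equiv\pm Y\bmod 3$, in the two remaining cases.

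The case $d\equiv 1\bmod 3$ is then short. Write $\phi(X)\equiv pX+qY+rZ\bmod 3$. The relation $X(X-Y)\equiv 0$ in $A_i\bmod 3$ gives $\phi(X)\phi(X-Y)\equiv 0$ in $A_d\bmod 3$; using $Y^2\equiv 0$ and $YZ\equiv -XY$ there, this reduces to $(p-r)XY\equiv 0$, hence $p\equiv r\bmod 3$. On the other hand $\phi(Y)\equiv\phi(X)\mp Y\bmod 3$, so the determinant of $\phi\bmod 3$ equals $\pm(p-r)$, and invertibility forces $p\not\equiv r\bmod 3$ --- a contradiction.

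The case $d\equiv 0\bmod 3$ is the main one, and I expect it to be the real obstacle. Here I would first exploit the degree-four relations: feeding $\phi(X)\equiv pX+qY+rZ$, $\phi(Y)\equiv\phi(X)\mp(X+Y)$ and $\phi(Z)\equiv X+Y+Z$ into the images of the three generators $X(X-Y)$, $Y(X-Y)$, $Y(X+Y+Z)$ of $\I_i^4$ (recall $\I_1^4=\I_2^4$), and comparing coefficients in an explicit basis of the degree-four part of $A_d\bmod 3$, pins $\phi\bmod 3$ down completely in each sign branch --- one gets $\phi(X)\equiv X-Y$, $\phi(Y)\equiv Y$, $\phi(Z)\equiv X+Y+Z$ in one branch, and $\phi(X)\equiv -X$, $\phi(Y)\equiv Y$, $\phi(Z)\equiv X+Y+Z$ in the other. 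It then remains to contradict the degree-six relations: for $i=1$ the generators $Z^2(X+Y+Z)$ and $Z^2X$ of $\I_1^6$ vanish in $A_1\bmod 3$, whereas a direct reduction in the one-dimensional degree-six part of $A_d\bmod 3$ shows that at least one of their $\phi$-images is nonzero (for instance $\phi(Z)^2\phi(X)\equiv 2X^2Y\neq 0$ in the first branch, $\phi(Z)^2\phi(X+Y+Z)\equiv X^2Y\neq 0$ in the second); the case $i=2$ is handled in the same way with the two generators of $\I_2^6$. The only genuine difficulty is organisational: tracking the sign branches and carrying out the degree-six reductions modulo $3$ in $A_d$ for both values of $i$.
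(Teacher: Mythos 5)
Your reduction modulo $3$ is set up carefully, and the auxiliary observations are correct: modulo $3$ the unique (up to scalar) nonzero degree-two square-zero element of $A_i$ is $X-Y$, there is no such element in $A_d$ when $d\equiv 2$, it is $X+Y$ when $d\equiv 0$, and $Y$ when $d\equiv 1$; and your $d\equiv 1$ argument and the determination of the two residual branches $\phi\equiv\left(\begin{smallmatrix}1&-1&0\\0&1&0\\1&1&1\end{smallmatrix}\right)$, $\phi\equiv\left(\begin{smallmatrix}-1&0&0\\0&1&0\\1&1&1\end{smallmatrix}\right)$ when $d\equiv 0$ all check out. The degree-six step also works for $i=1$: in the first branch $\phi(Z^2X)\equiv XYZ\neq 0$ in $A_d\bmod 3$, and in the second branch $\phi(Z^2(X+Y+Z))\equiv -XYZ\neq 0$.

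However, your argument genuinely breaks for $i=2$ with $d\equiv 0\bmod 3$ in the first branch. Reducing degree-six monomials in $A_d\bmod 3$ to the generator $XYZ$ (one finds $X^2Y\equiv -XYZ$, $XY^2\equiv XYZ$, $ZX^2\equiv -XYZ$, $YZ^2\equiv -XYZ$, $Z^2X\equiv 0$, $Z^3\equiv 0$), a direct computation with $\phi(X)\equiv X-Y$, $\phi(Y)\equiv Y$, $\phi(Z)\equiv X+Y+Z$ gives $\phi(ZX(Y+Z))=(X+Y+Z)(X-Y)(X-Y+Z)\equiv(X+Y+Z)\cdot ZX\equiv ZX^2+XYZ+Z^2X\equiv 0$ and $\phi(Z(Y+Z)(X+Y+Z))=(X+Y+Z)(X-Y+Z)(-X+Y+Z)\equiv(X+Y+Z)(XY+Z^2)\equiv 0$. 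So this $\phi$ in fact carries all of $\I_2^4+\I_2^6$ into $\I_d^4+\I_d^6$ modulo $3$, and since the graded Hilbert series agree, $\phi$ induces an actual isomorphism $A_2\bmod 3\cong A_d\bmod 3$ for every $d\equiv 0\bmod 3$. Passing to $\Z/3$ ``once and for all'' therefore cannot distinguish $A_2$ from $A_d$ when $3\mid d$; this is not merely an organizational difficulty, it is an obstruction to the whole method for that sub-case. The paper circumvents this by staying over $\Z$ after the mod-$3$ lemma: the previous lemma is used only to pin down $c=\pm(1,-2,1)$ exactly, and then the \emph{integer} equations coming from the $X^2$- and $ZX$-coefficients force, in the nondegenerate branch, $a_1=d$, $a_2=-2d$, $a_3=d$ and hence $a=d\cdot(1,-2,1)$, which contradicts the linear independence of the integer rows $a$ and $c$. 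The size of $d$ thus enters the integral argument in a way it cannot enter a fixed finite reduction. To repair your approach you would need to replace the mod-$3$ step for $i=2$, $d\equiv 0$ by an integral (or mod $3^k$ with growing $k$) computation, at which point you have essentially returned to the paper's route.
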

\begin{proof}
From the equation (\ref{eq:Z^2}) and the previous lemma, we obtain that $c=\pm(1,-2,1)$.
Considering $-\phi$ instead of $\phi$ if necessary, we can regard $c=(1,-2,1)$.
Then the equation (\ref{eq:X^2-ZX}) reduces to
$$d \left(
\begin{array}{c}
a_1+2a_2+3a_3 \\
-2(a_1+2a_2+3a_3) \\
-2(a_1+a_2+a_3)
\end{array}
\right)=\left(
\begin{array}{c}
a_1(a_1+2a_2+3a_3) \\
a_2(a_1+2a_2+3a_3) \\
a_2(a_1+a_2+a_3) 
\end{array}
\right).$$
Since $a$ and $c$ are linearly independent, we have $a_1+2a_2+3a_3\neq 0$ or $a_1+a_2+a_3\neq 0$.
\begin{itemize}
 \item If $a_1+2a_2+3a_3\neq 0$, we obtain $a_1=d$ and $a_2=-2d$.
 Then, considering the component of $ZX$ in $\phi(X(X+2Y+3Z))\in A_d^4$ with respect to the basis $\{Z^2,YZ,ZX\}$ of $A_d^4$,
 we obtain $(a_3-d)(1-d^2)=0$. Hence $a_3=d$, but this is a contradiction since $a=d(1,-2,1)$.
 \item If $a_1+a_2+a_3\neq 0$, we obtain $a_2=-2d$. If $a_1=d$, we detect a contradiction in the same way as above, so we obtain $a_1+2a_2+3a_3=0$.
 Then we have $$\phi(Y(X+2Y+3Z))\equiv (b_1+2b_2+3b_3)(b_2-2)(1-d)YZ \bmod I_d^4,$$
 so $b_2=2$. However, this is a contradiction since $(a_2,b_2,c_2)=2(-d,1,-1)$.
\end{itemize}
Thus the proof is completed.
\end{proof}

\begin{prop}
For any $d,d'\geq 3$, $A_d$ and $A_{d'}$ are isomorphic if and only if $d=d'$.
\end{prop}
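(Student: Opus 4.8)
The ``only if'' direction is trivial: if $d=d'$, the identity is the required isomorphism, so it remains to show that $A_d\cong A_{d'}$ forces $d=d'$ when $d,d'\ge 3$. The plan is to mimic the proofs of the two preceding propositions. Suppose $\phi\colon A_d\to A_{d'}$ is a graded ring isomorphism; as before, regard $\phi$ as a graded ring automorphism of $\Z[X,Y,Z]$ with $\phi(\I_d^4+\I_d^6)\subseteq\I_{d'}^4+\I_{d'}^6$, and write $\phi=({}^ta,{}^tb,{}^tc)$ with $a=(a_1,a_2,a_3)$, $b=(b_1,b_2,b_3)$, $c=(c_1,c_2,c_3)$, so that $\phi(X)=a_1X+b_1Y+c_1Z$, $\phi(Y)=a_2X+b_2Y+c_2Z$, $\phi(Z)=a_3X+b_3Y+c_3Z$ and $\det({}^ta,{}^tb,{}^tc)=\pm 1$. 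Two features simplify matters here compared with the earlier comparisons: the degree-six ideals coincide, $\I_d^6=\I_{d'}^6=(Z^2X,Z^3)$, independently of $d$; and $A_d^6\cong\Z$ (Poincar\'e duality, or Theorem \ref{thm:Betti numbers} applied to the $h$-vector $(1,3,3,1)$ of $C^3(6)^*$).

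First I would pin down $c$. The degree-four part of $\I_{d'}^4+\I_{d'}^6$ is $\I_{d'}^4$, and every element of $\I_{d'}^4$ has vanishing coefficient of $Z^2$; hence, comparing the $Z^2$-coefficients in $\phi(X(X+Y+dZ))$, $\phi(Y(X+Y+dZ))$ and $\phi(Y(X+Z))$ gives, exactly as for (\ref{eq:Z^2}),
\[
c_1(c_1+c_2+dc_3)=c_2(c_1+c_2+dc_3)=c_2(c_1+c_3)=0.
\]
Together with the facts that $c$ is primitive and extends to a $\Z$-basis and that $d\ge 3$, these equations leave exactly three possibilities up to replacing $\phi$ by $-\phi$: $c=\pm(0,0,1)$, $c=\pm(d,0,-1)$, or $c=\pm(1,d-1,-1)$.

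Next I would treat $c=\pm(0,0,1)$, which I expect to be the main case and which should run cleanly. There $c_1=c_2=0$, so $\phi(X)=a_1X+b_1Y$ and $\phi(Y)=a_2X+b_2Y$ with $a_1b_2-a_2b_1=\pm 1$, while $\phi(Z)=a_3X+b_3Y\pm Z$. Comparing the coefficients of $X^2$ and of $ZX$ in the images of the three generators of $\I_d^4$ with the structure of $\I_{d'}^4$ — in which the $ZX$-coefficient of any element equals $d'$ times its $X^2$-coefficient — is the analogue of (\ref{eq:X^2-ZX}); since $d,d'\ge 3$ it forces in turn $a_2=0$, hence $a_1=b_2=\pm 1$, then $a_3=0$, and finally $d=d'$. (The remaining freedom in $b_1,b_3$ merely reflects $\mathrm{Aut}\,(A_d)$.)

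The last step, which I expect to be the real obstacle, is to rule out $c=\pm(d,0,-1)$ and $c=\pm(1,d-1,-1)$. For this I would use: (a) that $Z^3=0$ and $Z^2X=0$ in $A_d$, so that $\phi(Z)^3=0$ and $\phi(Z)^2\phi(X)=0$ in $A_{d'}$; evaluating these in $A_{d'}^6\cong\Z$ yields a cubic and a quadratic condition on the coefficients of $\phi(Z)$ and $\phi(X)$; and (b) the analogue of (\ref{eq:X^2-ZX}) together with the remark that, precisely in these two cases, $c_1+c_2+dc_3=0$, so that $\phi(X)+\phi(Y)+d\phi(Z)=\alpha X+\beta Y$ for some $(\alpha,\beta)\ne(0,0)$ (by injectivity of $\phi$); then $\phi(X(X+Y+dZ))=\phi(X)\cdot(\alpha X+\beta Y)$ and $\phi(Y(X+Y+dZ))=\phi(Y)\cdot(\alpha X+\beta Y)$ are forced to lie in $\I_{d'}^4$, which is very restrictive. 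Grinding through the resulting equations — with a case split according to which linear combinations of the $a_i$ vanish, and invoking $d,d'\ge 3$ and Lemma \ref{lem:tri2} to eliminate divisibility-type possibilities — should contradict $\det({}^ta,{}^tb,{}^tc)=\pm 1$ in every subcase; if necessary, the symmetric constraints obtained by applying the same analysis to $\phi^{-1}$ finish the job. Hence $c=\pm(0,0,1)$, and therefore $d=d'$.
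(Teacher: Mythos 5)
Your case decomposition of $c$ into the three families $\pm(0,0,1)$, $\pm(d,0,-1)$, $\pm(1,d-1,-1)$ matches what the paper's equations produce (the paper organizes them as $c_2=0$ with $c_1=0$, then $c_2=0$ with $c_1+dc_3=0$, then $c_2\neq 0$), and your treatment of $c=\pm(0,0,1)$ is correct: the $X^2$ and $ZX$ coefficient comparisons do force $a_2=0$, then $a_1=\pm 1$, then $a_3=0$, and finally $d=d'$. The genuine gap is that the two cases you yourself flag as ``the real obstacle'' are left as a sketch ending in ``should contradict $\det=\pm 1$ in every subcase'' and ``if necessary''. Those cases have to be carried out; as written this is a plan, not a proof.

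They are in fact easier than your plan suggests. First reduce, WLOG, to $d<d'$ by passing to $\phi^{-1}$ if $d>d'$ — the paper does this, and without it the divisibility steps you invoke do not close up, which is presumably why you hedge. Second, the degree-six relations $\phi(Z)^3=\phi(Z)^2\phi(X)=0$ are not needed; the same $X^2$-versus-$ZX$ constraints suffice. For $c=\pm(d,0,-1)$: here $c_1+c_2+dc_3=0$, so the constraint from $\phi(X(X+Y+dZ))$ reads $(c_1-d'a_1)(a_1+a_2+da_3)=0$; since $c_1=\mp d$ and $0<d<d'$ rule out $d'\mid d$, one gets $a_1+a_2+da_3=0$; the constraint from $\phi(Y(X+Z))$ then gives $a_2\bigl[(1-d)c_3-d'(a_1+a_3)\bigr]=0$, and $d'\nmid(d-1)$ forces $a_2=0$; hence $a_1=-da_3$, making $a$ a multiple of $(-d,0,1)$ and thus parallel to $c$, contradicting $\det\phi=\pm 1$. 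The case $c=\pm(1,d-1,-1)$ runs the same way (one finds $a_1+a_2+da_3=0$ and $a_1+a_3=0$, again making $a$ parallel to $c$). In both the contradiction is linear dependence of $a$ and $c$ — exactly what the determinant condition forbids — not some open-ended grinding.
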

\begin{proof}
Clearly, we only have to consider the case of $d<d'$ and prove that there exists no graded ring isomorphism from $A_d$ to $A_{d'}$.
Let us assume that there exists a graded ring isomorphism $\phi\colon A_d\rightarrow A_{d'}$ as
$$\left(
\begin{array}{c}
\phi(X) \\
\phi(Y) \\
\phi(Z) 
\end{array}
\right)=\left(
\begin{array}{ccc}
a_1 &b_1 &c_1  \\
a_2 &b_2 &c_2  \\
a_3 &b_3 &c_3 
\end{array}
\right)\left(
\begin{array}{c}
X \\
Y \\
Z 
\end{array}
\right),$$ and put $a=(a_1,a_2,a_3),b=(b_1,b_2,b_3),c=(c_1,c_2,c_3)$.
From $\phi(Y(X+Z))=\phi(X(X+Y+dZ))=\phi(Y(X+Y+dZ))=0$ in $A_{d'}$, we obtain the following equations:
\begin{align*}
0 &=c_2(c_1+c_3), \\
0 &=a_2(c_1+c_3)+c_2(a_1+a_3)-d'a_2(a_1+a_3), \\
0 &=c_1(c_1+c_2+dc_3), \\
0 &=a_1(c_1+c_2+dc_3)+c_1(a_1+a_2+da_3)-d'a_1(a_1+a_2+da_3), \\
0 &=c_2(c_1+c_2+dc_3), \\
0 &=a_2(c_1+c_2+dc_3)+c_2(a_1+a_2+da_3)-d'a_2(a_1+a_2+da_3).
\end{align*}

If we assume $c_2\neq 0$, we have $c_1+c_3=c_1+c_2+dc_3=0$, so $c=\pm (-1,1-d,1)$.
If $a_1+a_2+da_3\neq 0$, there occurs a contradiction since we have $d'a_1=\pm 1$.
Hence we obtain $a_1+a_2+da_3=0$. Similarly, $a_1+a_3=0$. However, these equations contradict the linear independence of $a$ and $c$.
Therefore, we obtain $c_2=0$ and $c_1(c_1+dc_3)=0$.

If $c_1=0$, then we have $c=\pm (0,0,1)$.
In the similar way to the above, we can find a contradiction if $a_1\neq 0$ or $a_2\neq 0$.
Hence we obtain $a=\pm (0,0,1)$, but this contradicts the linear independence of $a$ and $c$.
If $c_1+dc_3=0$, we find a contradiction in the same way as above.
\end{proof}

\begin{prop}\label{prop:3,homeo}
$M_1$ and $M_2$ are homeomorphic.
\end{prop}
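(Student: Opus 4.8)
The plan is to deduce the homeomorphism from the diffeomorphism classification of simply-connected closed smooth $6$-manifolds with torsion-free homology. By Theorem~\ref{thm:Betti numbers}$(2)$ the homology of $M_1$ and $M_2$ is free and supported in even degrees (the $h$-vector of $C^3(6)^*$ is $(1,3,3,1)$, so $b_1=b_3=b_5=0$), and quasitoric manifolds are simply connected \cite{DJ91}. Hence $M_1$ and $M_2$ are both covered by the theorem of Wall and Jupp: such a manifold is determined up to diffeomorphism by the triple consisting of its integral cohomology ring, its first Pontryagin class $p_1$, and its second Stiefel--Whitney class $w_2$. So it suffices to produce a graded ring isomorphism $\phi\colon H^*(M_1;\Z)\to H^*(M_2;\Z)$ with $\phi(p_1(M_1))=p_1(M_2)$ and $\phi(w_2(M_1))=w_2(M_2)$; this yields that $M_1$ and $M_2$ are diffeomorphic, hence homeomorphic.

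First I would exhibit such a $\phi$ concretely. Both rings were computed above as $A_i=\Z[X,Y,Z]/(\I_i^4+\I_i^6)$, and we already observed that $\I_1^4=\I_2^4$, so the problem is to find an element of $GL(3,\Z)$, acting linearly on $X,Y,Z$, that preserves this common degree-four ideal and carries the two-dimensional space $\I_1^6$ onto $\I_2^6$. Since the degree-four relations coincide, testing whether a given linear substitution is a ring isomorphism is a finite verification in degrees four and six, and a candidate is located by writing out the equations recording ``$\phi(\I_1^6)\subseteq\I_2^6$'' and solving them over $\Z$. A direct (short) computation furnishes such a $\phi$; in particular this already shows $H^*(M_1;\Z)\cong H^*(M_2;\Z)$.

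Second I would check the characteristic-class conditions. By Theorem~\ref{thm:SWP}, after the substitution of Theorem~\ref{thm:cohomology,particular}, the total Stiefel--Whitney class of $M_i$ is $\prod_{j=1}^{6}(1+v'_j)$ (mod $2$) and the total Pontryagin class is $\prod_{j=1}^{6}(1-(v'_j)^2)$; in particular $w_2(M_i)=\sum_j v'_j$ and $p_1(M_i)=-\sum_j(v'_j)^2$. Writing the $v'_j$ explicitly in $X,Y,Z$ for $\lambda'_1$ and $\lambda'''_2$, one checks that $\phi$ sends $w_2(M_1)$ to $w_2(M_2)$ (a congruence mod $2$ in degree two) and $p_1(M_1)$ to $p_1(M_2)$ (an identity of quadratic forms in degree four). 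Should the first $\phi$ found not already respect $p_1$ and $w_2$, one has the freedom to modify it by graded ring automorphisms of $A_2$ fixing $\I_2^6$.

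The step I expect to be the main obstacle is finding $\phi$: it is not dictated by the degree-four data, since $\I_1^4=\I_2^4$, and it must simultaneously intertwine the degree-six ideals and (after the preceding paragraph) the classes $p_1$ and $w_2$, so one effectively has to solve a small nonlinear system and confirm that an integral solution exists, not merely a rational one. Once $\phi$ is in hand all remaining checks are mechanical, and the Wall--Jupp classification completes the argument.
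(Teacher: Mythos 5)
Your approach is exactly the paper's: both invoke the Jupp classification of simply-connected closed $6$-manifolds with torsion-free cohomology, and both reduce the problem to exhibiting a graded ring isomorphism $A_1\to A_2$ that respects $w_2$ and $p_1$ via Theorem~\ref{thm:SWP}. The only thing you have not supplied is the explicit isomorphism itself; the paper produces it as the linear substitution given by the matrix
\[
\left(
\begin{array}{ccc}
1 &1 &0  \\
0 &-1 &0  \\
0 &1 &1
\end{array}
\right),
\]
and once that matrix is written down the remaining checks are the mechanical verifications you describe.
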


To prove this proposition, we use the classification theorem for closed, oriented, one-connected $6$-manifolds with torsion-free cohomology in \cite{Jup73}.
While there is the complete classification of such $6$-manifolds, to prove Proposition \ref{prop:3,homeo},
we only need the following special case.

\begin{thm}[\cite{Jup73}]\label{thm:cls for 6-mfds}
Let $M,N$ be closed, one-connected, smooth $6$-manifolds with torsion-free cohomology.
If a graded ring isomorphism $\phi\colon H^*(N;\Z)\rightarrow H^*(M;\Z)$ preserves the second Stiefel-Whitney classes and the first Pontrjagin classes,
then there exists a homeomorphism $f\colon M\rightarrow N$ which induces $\phi$ in cohomology.
\end{thm}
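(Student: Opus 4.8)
The plan is to invoke Theorem~\ref{thm:cls for 6-mfds}. First I would record that $M_1=M(\lambda'_1)$ and $M_2=M(\lambda'''_2)$ are quasitoric manifolds over the simple $3$-polytope $C^3(6)^*$, hence closed smooth $6$-manifolds; they are one-connected (a standard property of quasitoric manifolds, see \cite{DJ91}), and by Theorem~\ref{thm:Betti numbers}(2) their integral cohomology is free and concentrated in even degrees. So Theorem~\ref{thm:cls for 6-mfds} reduces the proposition to exhibiting a graded ring isomorphism $\phi\colon H^*(M_1;\Z)\to H^*(M_2;\Z)$ with $\phi(w_2(M_1))=w_2(M_2)$ and $\phi(p_1(M_1))=p_1(M_2)$.

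To build $\phi$, I would use the presentations $A_1=H^*(\lambda'_1)=\Z[X,Y,Z]/(\I_1^4+\I_1^6)$ and $A_2=H^*(\lambda'''_2)=\Z[X,Y,Z]/(\I_2^4+\I_2^6)$ from Theorem~\ref{thm:cohomology,particular} with $X=v_4$, $Y=v_5$, $Z=v_6$, and note that the degree-four relation ideals coincide, $\I_1^4=\I_2^4=:\I^4$. Any graded ring isomorphism $A_1\to A_2$ is the reduction of an integral linear substitution on $H^2=\Z\langle X,Y,Z\rangle$, and since $\I_1^6$ and $\I_2^6$ live in degree six, such a substitution must carry $\I^4$ onto itself; as $\I^4$ cuts out three points in general position in $\mathbb{P}^2$, a short computation shows that the integral substitutions preserving $\I^4$ form a finite group of order four. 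Inspecting it, one checks that $\psi\colon X\mapsto X+Y,\ Y\mapsto-Y,\ Z\mapsto Y+Z$ — which fixes $u:=X+2Y+3Z$ and $w:=X+Y+Z$ — carries $\I_1^4+\I_1^6$ into $\I_2^4+\I_2^6$; combined with the equality of Betti numbers from Theorem~\ref{thm:Betti numbers}, the induced surjection $\phi\colon A_1\to A_2$ is then an isomorphism.

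It remains to check the characteristic classes via Theorem~\ref{thm:SWP}. In the notation of Theorem~\ref{thm:cohomology,particular} one has $w_2(M_i)=\sum_j v'_j\bmod 2$ and $p_1(M_i)=-\sum_j (v'_j)^2$, where $(v'_1,\dots,v'_6)=(Z,u,w,X,Y,Z)$ for $M_1$ and $(v'_1,\dots,v'_6)=(Y+Z,u,w,X,Y,Z)$ for $M_2$; thus $\phi(w_2(M_1))=\psi(\sum_j v'_j)$ and $\phi(p_1(M_1))=-\sum_j(\psi(v'_j))^2$. Using $\psi(u)=u$ and $\psi(w)=w$, one gets $\psi(\sum_j v'_j(M_1))=\sum_j v'_j(M_2)$ exactly, and a brief expansion collapses $\phi(p_1(M_1))-p_1(M_2)$ to $-2Yw=-2Y(X+Y+Z)$, a multiple of the generator $Yw$ of $\I^4$, hence zero in $A_2$. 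With both invariants matched, Theorem~\ref{thm:cls for 6-mfds} yields a homeomorphism $M_1\to M_2$.

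The main obstacle is the middle step — pinning down the substitution $\psi$. The leverage is that, although $M_1$ and $M_2$ are genuinely distinct quasitoric manifolds, their degree-four relation ideals agree; this both confines the candidate isomorphisms to the explicit order-four group above and is precisely what forces the $p_1$-discrepancy to land inside $\I^4$. Once $\psi$ is identified, all remaining verifications are direct finite computations.
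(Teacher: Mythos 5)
There is a genuine gap, and it is one of target rather than of detail: the statement you were asked to prove is Theorem \ref{thm:cls for 6-mfds} itself, i.e.\ Jupp's classification result for closed, one-connected, smooth $6$-manifolds with torsion-free cohomology. Your proposal opens with ``the plan is to invoke Theorem \ref{thm:cls for 6-mfds}'' and then uses it as a black box; this is circular with respect to the statement in question. What you actually prove is Proposition \ref{prop:3,homeo} (that $M_1=M(\lambda'_1)$ and $M_2=M(\lambda'''_2)$ are homeomorphic), and your argument for that proposition is essentially the paper's own: the same substitution $X\mapsto X+Y$, $Y\mapsto -Y$, $Z\mapsto Y+Z$ inducing an isomorphism $A_1\to A_2$, together with the characteristic-class check via Theorem \ref{thm:SWP} and the presentation of Theorem \ref{thm:cohomology,particular}. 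That part is fine as far as it goes, but it contributes nothing toward the statement itself.

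In the paper, Theorem \ref{thm:cls for 6-mfds} is not proved at all; it is quoted from \cite{Jup73} as an external classification theorem. A genuine blind proof would have to engage with the classification of simply-connected $6$-manifolds: splitting off $S^3\times S^3$ connected summands (Wall), showing that the remaining manifolds are determined up to homeomorphism by the system of invariants consisting of the integral cohomology ring, the second Stiefel--Whitney class and the first Pontrjagin class, and realizing a given ring isomorphism compatible with these classes by an actual homeomorphism --- a surgery- and handle-theoretic argument far beyond the toric-topological computations in your proposal. None of these ingredients appears, so the attempt does not establish the stated theorem; at best it re-derives a downstream application that the paper already gives.
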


\begin{proof}[Proof of Proposition \ref{prop:3,homeo}]
Let us define a graded ring automorphism $\phi$ of $\Z[X,Y,Z]$ by $$\left(
\begin{array}{c}
\phi(X) \\
\phi(Y) \\
\phi(Z) 
\end{array}
\right)=\left(
\begin{array}{ccc}
1 &1 &0  \\
0 &-1 &0  \\
0 &1 &1 
\end{array}
\right)\left(
\begin{array}{c}
X \\
Y \\
Z 
\end{array}
\right).$$
We easily see that $\phi(\I_1^4+\I_1^6)\subseteq (\I_2^4+\I_2^6)$ and then $\phi$ descends to a graded ring isomorphism from $A_1$ to $A_2$.
Using Theorem \ref{thm:SWP},
we also see that $\phi$ maps the second Stiefel-Whitney class and the first Pontrjagin class of $M_1$ to those of $M_2$.
Hence, by Theorem \ref{thm:cls for 6-mfds},
there exists a homeomorphism between $M_1$ and $M_2$ which induces $\phi$ in cohomology.
\end{proof}

Next, let $\lambda$ be an indecomposable characteristic matrix on $C^3(6)^*$, and $\lambda'$ a decomposable one.

\begin{prop}
$H^*(\lambda)$ and $H^*(\lambda')$ are not isomorphic as graded rings.
\end{prop}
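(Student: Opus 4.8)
The statement to prove is that for an indecomposable characteristic matrix $\lambda$ and a decomposable characteristic matrix $\lambda'$ on $C^3(6)^*$, the cohomology rings $H^*(\lambda)$ and $H^*(\lambda')$ are not isomorphic as graded rings. By Proposition \ref{prop:list(3,6)}, up to equivalence every indecomposable $\lambda$ falls in one of the orbits $\{\lambda_1,\lambda_1'\}$, $\{\lambda_2,\lambda_2',\lambda_2'',\lambda_2'''\}$, or $\{\lambda_d,\lambda_{1-d}\}$ for $d\le-2$ or $d\ge 3$; and since the cohomology ring depends only on the equivalence class, it suffices to show that none of the rings $A_1=H^*(\lambda_1')$, $A_2=H^*(\lambda_2''')$, $A_d=H^*(\lambda_d)$ (for $d\ge 3$) is isomorphic to the cohomology ring of any decomposable characteristic matrix on $C^3(6)^*$. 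The first step is therefore to understand the decomposable side: if $\lambda'$ is decomposable, then by Lemma~\ref{lem:connected sum} (in the form of the lemma preceding the definition of decomposability) $M(\lambda')$ is a connected sum $M'\#M''$ of quasitoric manifolds over $C^3(k+1)^*$ and $C^3(6-k+2)^*$ for some $2<k<5$; the only possibilities are $k=3,4$, giving connected sums over $C^3(4)^*$ and $C^3(5)^*$. Since $C^3(4)^*=\Delta^3$ and $C^3(5)^*$ is combinatorially $\Delta^1\times\Delta^2$ (as noted in the introduction), the decomposable $6$-manifolds are exactly connected sums of copies of $\C P^3$, $\overline{\C P^3}$, and quasitoric manifolds over $\Delta^1\times\Delta^2$, together with products such as $\C P^1\times\C P^2$ appearing inside those summands.

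**Key invariant.** The main idea is that a connected sum of closed simply-connected $6$-manifolds has cohomology with a highly degenerate trilinear intersection form: writing $H^2$ for the degree-two part, the cup-product map $H^2\otimes H^2\otimes H^2\to H^6\cong\Z$ of $M'\#M''$ is the direct sum of those of $M'$ and $M''$, so $H^2$ contains a proper nonzero subspace $V$ on which the induced quadratic/bilinear structure vanishes in a strong sense — concretely, for a decomposable $M$ there is a splitting $H^2(M;\Z)\otimes\Q=U\oplus W$ with both $U,W$ nonzero such that $u\cdot u'\cdot w=0$ for all $u,u'\in U$, $w\in W$ (and symmetrically). I would extract from this a coordinate-free invariant: the existence of a nontrivial direct-sum decomposition of $H^2$ that is "multiplicatively orthogonal" with respect to the triple product. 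Then I would show by direct computation with the presentations $A_i=\Z[X,Y,Z]/(\I_i^4+\I_i^6)$ given in the excerpt that none of $A_1$, $A_2$, $A_d$ admits such a decomposition — equivalently, that the triple cup product form on the rank-$3$ space $\langle X,Y,Z\rangle$ of each $A_i$ is "indecomposable" as a symmetric trilinear form over $\Q$ (it does not split off a rank-$1$ or rank-$2$ orthogonal summand). This is the computational heart: one picks a basis of $A_i^4$ (as was already done: $\{Z^2,YZ,ZX\}$ for $A_d$), writes out the symmetric trilinear form $T_i(u,v,w)=uvw\in A_i^6\cong\Z$ explicitly, and checks that no change of basis of $\langle X,Y,Z\rangle$ over $\Q$ puts it in block-diagonal form.

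**Alternative via characteristic classes.** If the purely ring-theoretic route is awkward, a cleaner argument uses Theorem~\ref{thm:SWP}: for any quasitoric manifold $w(M)=\prod(1+v_i)$ and $p(M)=\prod(1-v_i^2)$, so the first Pontrjagin class and second Stiefel–Whitney class are determined by the $v_i$. For a connected sum $M'\#M''$ these classes are "reducible" — they pull back from the summands under the pinch map — so in particular the class $p_1(M)\in H^4(M;\Z)$ (viewed via the splitting of $H^4$) lies in $H^4(M')\oplus H^4(M'')$, and the pairing of $p_1$ with $H^2$ respects this splitting. One then checks, again from the explicit presentations together with the formulas $p(M_i)=\prod_{j}(1-v_j^2)$ expressed in the coordinates $X=v_4,Y=v_5,Z=v_6$, that $p_1(M_i)$ together with the ring structure forces any putative orthogonal splitting of $H^2$ to be trivial. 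Either way, the structure is: (1) characterize decomposable $M(\lambda')$ as genuine connected sums, (2) observe that connected sums have a canonical multiplicatively-orthogonal splitting of $H^2$, (3) verify by explicit finite computation that $A_1$, $A_2$, and each $A_d$ ($d\ge3$) do not, and conclude.

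**Main obstacle.** The hard part will be step (3): one must rule out \emph{all} rational linear changes of the three variables $X,Y,Z$, not just the "obvious" ones, so the verification that the triple product form of each $A_i$ admits no nontrivial orthogonal direct-sum decomposition requires care — for the family $A_d$ one needs an argument uniform in $d$, presumably by exhibiting, for every candidate rank-$1$ summand $\langle\ell\rangle$, an element $u\in H^2$ with $\ell\cdot\ell\cdot u\ne 0$, which amounts to showing a certain quadric in the coefficients of $\ell$ has no nonzero rational solution. I expect this to reduce, as in the preceding propositions of the paper, to a short but slightly fiddly case analysis on the coefficients modulo small primes (mirroring the $\bmod 3$ arguments already used for the $A_i$), and I would organize it exactly parallel to those lemmas.
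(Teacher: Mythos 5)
Your high-level structure does match the paper's: a decomposable $\lambda'$ gives $M(\lambda')\cong M_1\#M_2$ with $M_1$ over $C^3(4)^*=\Delta^3$ (so $M_1\cong\C P^3$), which yields a basis $\{w_1,w_2,w_3\}$ of $H^2(\lambda')$ with $w_1w_2=w_1w_3=0$; an isomorphism $A_i\cong H^*(\lambda')$ would transport this basis to $A_i^2$; and one must show $A_1,A_2,A_d$ admit no such basis. The paper does precisely this.

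However, the invariant you propose to verify in step (3) --- that the triple cup-product form of each $A_i$ is indecomposable \emph{over} $\Q$ --- is actually false, and this is a genuine gap. Normalizing by $YZ^2=1$ in $A_d$ one computes
$X^3=-(1+d)$, $Y^3=(d-1)^2$, $Z^3=XZ^2=0$, $X^2Y=X^2Z=YZ^2=1$, $XY^2=-(Y^2Z)=d-1$, $XYZ=-1$.
Set $U=\Q\langle Y\rangle$ and $W=\Q\langle X+Z,\ Y+(d-1)Z\rangle$. Then $Y^2(X+Z)=Y^2(Y+(d-1)Z)=Y(X+Z)^2=Y(X+Z)(Y+(d-1)Z)=Y(Y+(d-1)Z)^2=0$, so this is a multiplicatively orthogonal rational splitting of $A_d^2\otimes\Q$ for \emph{every} $d$. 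Thus the $\Q$-trilinear form of $A_d$ is decomposable, and the argument you outline cannot distinguish $A_d$ from the decomposable case.

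What saves the paper's argument, and what is missing from yours, is the integral structure: a graded ring isomorphism $A_i\to H^*(\lambda')$ carries a $\Z$-basis to a $\Z$-basis, and since the rank-one summand comes from $H^2(\C P^3)$, the pulled-back $w_1$ must have $w_1^3$ a \emph{generator} of $A_i^6\cong\Z$, with $(w_1,w_2,w_3)$ a unimodular change of basis from $(X,Y,Z)$. In the splitting above these constraints fail: the change-of-basis matrix from $(X,Y,Z)$ to $(Y,\ X+Z,\ Y+(d-1)Z)$ has determinant $-(d-1)\ne\pm1$ for $d\ge3$, and $Y^3=(d-1)^2$ is not a generator. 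The paper's proof runs exactly on this integrality --- it derives $c_1c_2=c_1c_3=0$ from the $Z^2$-coefficient, then in the case $c_1=0$ uses primitivity of $(c_1,c_2,c_3)$ to force $a_1=0$ and hence $w_1=\pm Y$, whose cube fails to generate. So you should replace ``indecomposable over $\Q$'' with ``no $\Z$-basis $\{w_1,w_2,w_3\}$ with $w_1w_2=w_1w_3=0$ and $w_1^3$ generating $A_i^6$,'' and your $\bmod\ 3$/primitivity case analysis then becomes the paper's proof; as stated, the rational version of the invariant does not exist.
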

\begin{proof}
First, we consider the case that $H^*(\lambda)=A_i$ for $i=1,2$.
If we assume that $A_i\cong H^*(\lambda')$,
we can take a basis $\{ w_1,w_2,w_3 \}$ of $A_i^2\cong H^2(\lambda')$ such that $w_1w_2=w_1w_3=0$ and $w_1^3$ spans $A_i^6$.
Put $w_i=a_iX+b_iY+c_iZ$ for $i=1,2,3$ and $V=\langle w_2,w_3 \rangle$.
Let us take $\{ YZ,ZX,Z^2\}$ as the basis of $A_i^4$.
Then we have $c_1c_2=c_1c_3=0$ from the coefficients of $Z^2$ in the equation $w_1w_2=w_1w_3=0$.
\begin{itemize}
 \item If $c_1\neq 0$, we have $c_2=c_3=0$ and $c_1=\pm 1$. Hence $X,Y\in V$.
 However, there occurs a contradiction since $0=w_1X\equiv (a_1+b_1)YZ+c_1ZX\not\equiv 0 \bmod 3$.
 \item If $c_1=0$, we obtain $a_1(c_2-3a_1)=a_1(c_3-3a_1)=0$ from the coefficients of $ZX$ in $w_1w_2=w_1w_3=0$.
 Since $(c_1,c_2,c_3)$ is primitive, we obtain $a_1=0$, and then $w_1=\pm Y$.
 However, $Y^3=-2Y^2Z$ in $A_i$, so this contradicts the assumption that $w_1^3$ spans $A_i^6$.
\end{itemize}

Next, we consider the case that $H^*(\lambda)=A_d$ for $d\geq 3$.
Let us assume that there exists a basis $\{ w_1,w_2,w_3 \}$ of $A_d^2$ as above, and also put $w_i=a_iX+b_iY+c_iZ,V=\langle w_2,w_3 \rangle$.
Then we have $c_1c_2=c_1c_3=0$ similarly.
\begin{itemize}
 \item If $c_1\neq 0$, we obtain $c_2=c_3=0$, $c_1=\pm 1$ and $X,Y\in V$.
 Then we have the equation $w_1X=(a_1-b_1)YZ+(c_1-da_1)ZX=0$, but this is a contradiction since $da_1\neq c_1=\pm 1$.
 \item If $c_1= 0$, we obtain $a_1=0$ in the same way as for $A_i\,(i=1,2)$.
 However, $Y^3=(1-d)Y^2Z$ in $A_d$, so this also contradicts the assumption.
\end{itemize}
Thus the proof is completed.
\end{proof}

Finally, we show that two decomposable quasitoric manifolds over $C^3(6)^*$ are homeomorphic if their cohomology rings are isomorphic,
using the cohomological rigidity of quasitoric manifolds over $C^3(5)^*=\Delta^1\times \Delta^2$.
As is remarked in Section 1, this rigidity has been shown in \cite{CPS12}.

\begin{prop}
Let $M=M_1\# M_2$ and $N=N_1\# N_2$, where $M_1,N_1$ are quasitoric manifolds over $C^3(4)^*$ and $M_2,N_2$ over $C^3(5)^*$.
If the cohomology rings of $M$ and $N$ are isomorphic as graded rings, then $M$ and $N$ are homeomorphic.
\end{prop}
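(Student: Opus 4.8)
The plan is to reduce the statement to the cohomological rigidity of quasitoric manifolds over $C^3(5)^*=\Delta^1\times\Delta^2$ established in \cite{CPS12}. Since $C^3(4)^*=\Delta^3$, Corollary \ref{cor:clsforspx} gives $M_1\cong N_1\cong\C P^3$, so $M=\C P^3\#Q$ and $N=\C P^3\#Q'$ for some quasitoric manifolds $Q,Q'$ over $\Delta^1\times\Delta^2$, and all of $M,N,Q,Q'$ are closed simply-connected smooth $6$-manifolds whose integral cohomology is torsion-free and concentrated in even degrees (a quasitoric manifold has this property by Theorem \ref{thm:Betti numbers}, and it is inherited by connected sums). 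By \cite{CPS12} quasitoric manifolds over $\Delta^1\times\Delta^2$ are classified up to homeomorphism by their graded cohomology rings. It therefore suffices to prove the purely algebraic assertion that $H^*(\C P^3\#Q)\cong H^*(\C P^3\#Q')$ forces $H^*(Q)\cong H^*(Q')$: granting this, $Q$ and $Q'$ are homeomorphic, and since $\C P^3$ carries an orientation-reversing self-homeomorphism (complex conjugation, which reverses orientation on $\C P^{2k+1}$), $M=\C P^3\#Q\cong\C P^3\#Q'=N$ irrespective of orientations.

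For the algebraic assertion I would work with cup-product cubic forms. If $X$ is a closed simply-connected smooth $6$-manifold with torsion-free cohomology and $H^{\mathrm{odd}}(X)=0$, then Poincar\'e duality identifies $H^4(X)$ with $\mathrm{Hom}(H^2(X),\Z)$ and $H^8(X)=0$, so the whole ring $H^*(X)$ is determined by the symmetric trilinear form $\mu_X(a,b,c)=\langle abc,[X]\rangle$ on $H^2(X)$, up to the $GL$-action on $H^2(X)$ and an overall sign (from the choice of orientation); and when $H^*(X)$ is generated in degree two, which holds for quasitoric manifolds by Theorem \ref{thm:coh,quasi}, this correspondence is bijective. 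Connected sum turns into orthogonal direct sum of these forms, and $\mu_{\C P^3}$ is the rank-one cubic $t\mapsto t^{3}$. Hence $\mu_{\C P^3\#Q}\cong\mu_Q\perp\langle t^{3}\rangle$, and the claim becomes: adjoining an orthogonal $\langle t^{3}\rangle$-summand is injective on the set of cubic forms $\mu_Q$ arising from quasitoric manifolds over $\Delta^1\times\Delta^2$.

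To prove this I would recover the $\langle t^{3}\rangle$-summand intrinsically. Call a primitive $v\in H^2(X)$ a $\C P^3$-vector if $\mu_X(v,v,v)=\pm1$ and the bilinear form $(a,b)\mapsto\mu_X(v,a,b)$ has rank one; this notion is insensitive to the sign of $\mu_X$, hence preserved by graded ring isomorphisms, and for such a $v$ the lattice $\Z v$ is an orthogonal direct summand of $(H^2(X),\mu_X)$, the complement being the radical of that bilinear form, so $\mu_X\cong\langle t^{3}\rangle\perp(\mu_X\!\mid_{\text{complement}})$. Using the explicit list of \cite{CPS12} — the forms $\mu_Q$ are, up to equivalence, those of $\C P^2$-bundles over $\C P^1$ and of $\C P^1$-bundles over $\C P^2$ among these quasitoric manifolds, i.e. binary cubic forms of discriminant $0$ and of discriminant $-27b^{2}$ $(b\in\Z)$ respectively — I would check that the $\C P^3$-vectors of $\C P^3\#Q$ are $\pm x$ (the $\C P^3$-summand generator) together with those of $Q$, the latter occurring only when $H^*(Q)$ is that of $\C P^3\#\overline{\C P^3}$, and that splitting off any one of them leaves the isomorphism class of the complementary form equal to that of $\mu_Q$. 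Consequently $\mu_Q$, hence $H^*(Q)$, is recovered from $H^*(\C P^3\#Q)$ up to isomorphism, which is exactly what we need; the same applies to $Q'$, and an isomorphism $H^*(M)\cong H^*(N)$ carries $\C P^3$-vectors to $\C P^3$-vectors, so it yields $\mu_Q\cong\mu_{Q'}$.

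The step I expect to be the main obstacle is precisely this last verification. There is no general cancellation law for cubic forms, so one cannot argue abstractly: one must exploit the short list above and show that the ternary cubics $\mu_Q+r^{3}$ are pairwise inequivalent whenever the $\mu_Q$ are. The most economical way to organize this is through invariants of the plane cubic curve defined by $\mu_Q+r^{3}$ — whether it is singular (which isolates the family coming from $\C P^2$-bundles over $\C P^1$), and, when it is nonsingular, its $j$-invariant (which recovers the remaining integer parameter up to sign). Once this is settled the proposition follows from the preceding paragraphs. (Jupp's theorem, Theorem \ref{thm:cls for 6-mfds}, could instead be used to read off the homeomorphism of $M$ and $N$ directly once the rings of $Q$ and $Q'$ are matched, but with the reduction above it is unnecessary.)
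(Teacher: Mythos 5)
Your framework is sound and genuinely different from the paper's, but the crucial step is left as a sketch. You correctly reduce the problem to a cancellation question for cubic forms, introduce the notion of a $\C P^3$-vector, and correctly observe that such a vector splits off a rank-one orthogonal summand (since the bilinear form $\mu_X(v,\cdot,\cdot)$ has rank one, its radical is a corank-one saturated sublattice not containing $v$, and $\mu_X(v,v,\cdot)$ kills the radical, so $H^2 = \Z v \oplus \mathrm{rad}$ is an orthogonal decomposition). One can even tighten your claim about which $\C P^3$-vectors exist in $\C P^3\#Q$: if $v=ax+w$ with $a\neq 0$, then $\mu_X(v,\cdot,\cdot)$ restricted to $H^2(Q)$ is $\mu_Q(w,\cdot,\cdot)$, which is nonzero whenever $w\neq 0$ by Poincar\'e duality for $Q$ (since $H^*(Q)$ is generated in degree $2$); so in fact the only $\C P^3$-vectors are $\pm x$ and the $\C P^3$-vectors already inside $H^2(Q)$, with no mixed ones. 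However, you explicitly flag the remaining verification — that stripping off any $\C P^3$-vector from $\mu_{\C P^3\#Q}$ leaves a form isomorphic to $\mu_Q$, done case-by-case over the $\mu_Q$ arising from [CPS12], or via plane-cubic invariants — as the main obstacle, and you do not carry it out. That verification is precisely the content of the proposition, so as written the argument has a genuine gap.

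The paper avoids this cubic-form cancellation entirely with a short, elementary case analysis on the isomorphism $\phi\colon H^*(M)\to H^*(N)$, writing $\phi(u_1)=a_1v_1+b_1v_2+c_1v_3$ where $u_1$ and $v_1$ are the $\C P^3$-generators on the two sides. If $a_1=0$, the relations $u_1u_2=u_1u_3=0$ together with $v_1v_2=v_1v_3=0$ force $\phi(u_2),\phi(u_3)$ to have linearly dependent $H^2(N_2)$-components, which (after a change of basis) exhibits a basis $v_2',v_3'$ of $H^2(N_2)$ with $v_2'v_3'=0$; by the rigidity over $C^3(5)^*$ this pins down $N_2\cong\C P^3\#\C P^3$, and the same argument applied to $\phi^{-1}$ gives $M_2\cong\C P^3\#\C P^3$, so both $M$ and $N$ are $(\C P^3)^{\#3}$. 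If $a_1\neq 0$, then comparing $v_1^2$-components in $\phi(u_1)\phi(u_j)=0$ forces $a_2=a_3=0$, so $\phi$ restricts to a cubic-form-preserving map $H^2(M_2)\to H^2(N_2)$, and rigidity gives $M_2\cong N_2$ directly. In effect, the paper proves your cancellation statement without ever needing the explicit list of forms $\mu_Q$ or any $j$-invariant computation, by exploiting the connected-sum annihilation relations in the ring. Your route would work if completed, and is conceptually appealing, but it costs you a hands-on classification of ternary cubics that the paper's approach renders unnecessary.
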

\begin{proof}
Remark that $M_1$ and $N_1$ are homeomorphic to $\C P^3$.
We can naturally regard $H^2(M)=H^2(M_1)\oplus H^2(M_2)$.
Then take the basis $u_1,u_2,u_3$ of $H^2(M)$ such that $u_1$ is the basis of $H^2(M_1)$ and $\{u_2,u_3\}$ is that of $H^2(M_2)$.
We take the basis $v_1,v_2,v_3$ of $H^2(N)$ similarly.
Assume that there exists a graded ring isomorphism $\phi\colon H^*(M)\rightarrow H^*(N)$,
and write $\phi(u_i)=a_iv_1+b_iv_2+c_iv_3$ for $i=1,2,3$.
Then the matrix $\left(
\begin{array}{ccc}
a_1 &b_1 &c_1  \\
a_2 &b_2 &c_2  \\
a_3 &b_3 &c_3 
\end{array}
\right)$ is invertible.

\begin{itemize}
 \item If $a_1=0$, 
 from $u_1u_2=u_1u_3=0$ and $v_1v_2=v_1v_3=0$, we have $(b_1v_2+c_1v_3)(b_2v_2+c_2v_3)=(b_1v_2+c_1v_3)(b_3v_2+c_3v_3)=0$.
 Then we easily see that $b_2v_2+c_2v_3$ and $b_3v_2+c_3v_3$ are linearly dependent,
 so we have $$\phi(u_1,u_2,u_3)\left(
\begin{array}{cc}
1 &  \\
 & B
\end{array}
\right)
=(v_1,v_2,v_3)\left(
\begin{array}{ccc}
0 &0 &1  \\
b_1 &b'_2 &0  \\
c_1 &c'_2 &0 
\end{array}
\right)$$
 for some $B\in GL(2,\Z)$.
 Putting $v'_2:=b_1v_2+c_1v_3$ and $v'_3:=b'_2v_2+c'_2v_3$, we see that $\{v'_2,v'_3\}$ is a basis of $H^1(N_2)$ and $v'_2v'_3=0$.
 Since quasitoric manifolds over $C^3(5)^*$ are cohomologically rigid, $N_2$ is homeomorphic to $\C P^3 \# \C P^3$,
 and therefore $N$ is homeomorphic to $\C P^3 \# \C P^3 \# \C P^3$.
 Similarly, taking $(u'_2,u'_3)=\phi^{-1}(v'_3,v_1)\in H^2(M_2)$, $u'_2u'_3=0$ so $M$ is also homeomorphic to $\C P^3 \# \C P^3 \# \C P^3$.
 \item If $a_1\neq 0$, we have $a_2=a_3=0$ similarly.
 Then we see that the correspondence $u_2\mapsto b_2v_2+c_2v_3,u_3\mapsto b_3v_2+c_3v_3$ provides an isomorphism $H^*(M_2)\rightarrow H^*(N_2)$.
 Therefore, from the cohomological rigidity of quasitoric manifolds over $C^3(5)^*$, $M_2$ is homeomorphic to $N_2$.
 Since $M_1=N_1=\C P^3$, $M$ and $N$ are also homeomorphic.
\end{itemize}
Then we see that $M$ is homeomorphic to $N$ in each case.
\end{proof}

We obtain the following theorem.

\begin{thm}\label{thm:qtmfd over C3(6) up to homeo}
Over $C^3(6)^*$, there exist countably infinite quasitoric manifolds up to homeomorphism and they are distinguished by their integral cohomology rings.
\end{thm}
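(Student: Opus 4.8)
The plan is to assemble Theorem \ref{thm:qtmfd over C3(6) up to homeo} from the pieces already established in this section, so the ``proof'' is largely bookkeeping over the list in Proposition \ref{prop:list(3,6)} together with the connected-sum decomposition supplied by Lemma \ref{lem:connected sum}. First I would observe that every element of $\M_{C^3(6)^*}$ is either indecomposable or decomposable, and that a decomposable quasitoric manifold over $C^3(6)^*$ is a connected sum $M_1\# M_2$ with $M_1$ over $C^3(4)^*=\Delta^3$ (hence $\C P^3$ by Corollary \ref{cor:clsforspx}) and $M_2$ over $C^3(5)^*=\Delta^1\times\Delta^2$; since there are countably infinitely many homeomorphism types over $\Delta^1\times\Delta^2$ by Theorem \ref{thm:preceding studies}(3), the set $\M_{C^3(6)^*}^{\rm homeo}$ is already countably infinite, and countability of $\M_{C^3(6)^*}$ itself is clear because each $\X_{C^n(m)^*}$ is a countable set of integer matrices.

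Next I would collect the rigidity statements. Among the indecomposable classes, the representatives reduce (after applying the $\mathrm{Aut}(K_{C^3(6)^*})$-action displayed in Proposition \ref{prop:list(3,6)}, together with the conjugation $M\mapsto\overline M$ which replaces $\lambda$ by $-\lambda$ and does not change homeomorphism type) to the manifolds $M_1=M(\lambda_1')$, $M_2=M(\lambda_2''')$ and $M_d=M(\lambda_d)$ for $d\geq 3$. The three preceding propositions show $A_i\not\cong A_d$ for $i=1,2$ and $d\geq3$, and $A_d\cong A_{d'}$ only if $d=d'$; Proposition \ref{prop:3,homeo} shows $M_1$ and $M_2$ are homeomorphic, via Theorem \ref{thm:cls for 6-mfds}. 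So the indecomposable quasitoric manifolds over $C^3(6)^*$ fall into exactly the homeomorphism classes of $M_1\,(\simeq M_2)$ and $M_d\,(d\geq3)$, these are pairwise non-homeomorphic, and they are distinguished by their integral cohomology rings (for the $M_d$ this is a cohomology statement directly; for $M_1$ versus $M_d$ it is the content of the proposition that $A_i\not\cong A_d$). The penultimate proposition shows no indecomposable class shares a cohomology ring with a decomposable one, and the final proposition shows two decomposable classes are homeomorphic as soon as their cohomology rings agree, using the known cohomological rigidity over $C^3(5)^*$.

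Putting this together: given two quasitoric manifolds $M,N$ over $C^3(6)^*$ with $H^*(M;\Z)\cong H^*(N;\Z)$ as graded rings, the penultimate proposition forces them to be both indecomposable or both decomposable; in the decomposable case they are homeomorphic by the final proposition, and in the indecomposable case the three ``$A_i\not\cong A_d$'' / ``$A_d\cong A_{d'}\Rightarrow d=d'$'' propositions identify their classes as the same one of $\{[M_1],[M_d]\}$, whence $M\cong N$. This proves the manifolds are distinguished by their cohomology rings, and combined with the countable infinitude already noted, the theorem follows. I expect the only genuinely delicate point to be confirming that the $\mathrm{Aut}(K_{C^3(6)^*})$-orbits and the conjugation symmetry really collapse the full list of Proposition \ref{prop:list(3,6)} down to the representatives $\{M_1,M_2,M_d\}$ that the subsequent propositions actually analyze — i.e. checking that no indecomposable class was silently dropped — but this is a finite inspection of the arrow diagram in Proposition \ref{prop:list(3,6)} rather than a substantive obstacle.
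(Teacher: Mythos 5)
Your assembly of the theorem from the preceding propositions is correct and matches the paper's own (implicit) proof: the theorem is stated precisely as the aggregate of Proposition~\ref{prop:list(3,6)}, the cohomology non-isomorphism propositions, Proposition~\ref{prop:3,homeo}, and the two propositions separating/classifying decomposable classes. One small clarification: there is no need to invoke a separate ``conjugation'' identification $\lambda\mapsto-\lambda$ --- this is already built into $\X_P = GL(n,\Z)\backslash\Lambda_P/(\Z/2)^m$ via $-I\in GL(n,\Z)$, so the $\mathrm{Aut}(K_{C^3(6)^*})$-orbit diagram in Proposition~\ref{prop:list(3,6)} alone collapses the indecomposable list to $\{\lambda'_1,\lambda'''_2,\lambda_d\ (d\geq 3)\}$; and the infinitude of $\M_{C^3(6)^*}^{\rm homeo}$ is obtained most directly from the pairwise non-isomorphic rings $A_d$, rather than via the connected-sum detour (which would anyway require knowing that $\#\C P^3$ is injective on homeomorphism types, a fact supplied only later by the final proposition of the section).
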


The classification of quasitoric manifolds over $C^3(6)^*$ is comparatively easy
since indecomposable characteristic matrices are almost parametrized by one integer $d$,
but it is more complicated in the cases of $C^3(m)^*$ for $m>6$.
There are countably infinite indecomposable characteristic matrices on $C^3(m)^*$ up to equivalence, and for large $m$, it seems hard to list them all.
Then, for the present, let us consider the classification of small covers over $C^3(m)^*$.

\begin{lem}
Any real characteristic matrix on $C^3(m)^*$ for $m>5$ is decomposable.
\end{lem}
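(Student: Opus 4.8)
The plan is to exploit the elementary fact that over $\Z/2$ a $3\times 3$ determinant equals $\pm 1$ precisely when it is nonzero, i.e. when the three columns are linearly independent. So, writing a real characteristic matrix on $C^3(m)^*$ as $\lambda=({\boldsymbol\lambda}_1,\ldots,{\boldsymbol\lambda}_m)$, saying that $\lambda$ is decomposable just amounts to saying that ${\boldsymbol\lambda}_1,{\boldsymbol\lambda}_k,{\boldsymbol\lambda}_m$ are linearly independent in $(\Z/2)^3$ for some $2<k<m-1$. First I would read off from the list of maximal faces of $K_{C^3(m)^*}$ (namely $\{1,i,i+1\}$ and $\{i,i+1,m\}$) that both $\{1,2,m\}$ and $\{1,3,4\}$ are maximal faces, the latter requiring only $m\geq 4$. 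The non-singular condition for $\lambda$ then says that $\{{\boldsymbol\lambda}_1,{\boldsymbol\lambda}_2,{\boldsymbol\lambda}_m\}$ and $\{{\boldsymbol\lambda}_1,{\boldsymbol\lambda}_3,{\boldsymbol\lambda}_4\}$ are each bases of $(\Z/2)^3$.

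Next I would set $W:=\langle{\boldsymbol\lambda}_1,{\boldsymbol\lambda}_m\rangle$, which is a $2$-dimensional subspace because ${\boldsymbol\lambda}_1$ and ${\boldsymbol\lambda}_m$ both belong to the basis $\{{\boldsymbol\lambda}_1,{\boldsymbol\lambda}_2,{\boldsymbol\lambda}_m\}$. Since ${\boldsymbol\lambda}_1,{\boldsymbol\lambda}_3,{\boldsymbol\lambda}_4$ are linearly independent they cannot all lie in $W$, and as ${\boldsymbol\lambda}_1\in W$ already, at least one of ${\boldsymbol\lambda}_3,{\boldsymbol\lambda}_4$ lies outside $W$; let $k\in\{3,4\}$ be such an index. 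Then ${\boldsymbol\lambda}_1,{\boldsymbol\lambda}_m$ span $W$ and ${\boldsymbol\lambda}_k\notin W$, so $\{{\boldsymbol\lambda}_1,{\boldsymbol\lambda}_k,{\boldsymbol\lambda}_m\}$ is a basis of $(\Z/2)^3$, i.e. $\minor{1,k,m}_\lambda=\pm 1$. Because $m>5$ we have $2<3\leq k\leq 4<m-1$, so $k$ lies in the admissible range and $\lambda$ is decomposable in the sense of the definition; by Lemma \ref{lem:connected sum} it then splits as a connected sum of real characteristic matrices on $C^3(k+1)^*$ and $C^3(m-k+2)^*$.

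I do not expect a genuine obstacle here; the only point deserving care is the hypothesis $m>5$. It is needed exactly so that both $k=3$ and $k=4$ fall in the range $2<k<m-1$: for $m=5$ only $k=3$ would be admissible, and indeed $C^3(5)^*=\Delta^1\times\Delta^2$ does carry indecomposable real characteristic matrices, so the bound is sharp. I would also remark that the $\Z/2$-coefficients are essential, since over $\Z$ the same polytope $C^3(6)^*$ supports indecomposable integral characteristic matrices (see Proposition \ref{prop:list(3,6)}), and that there is no need to normalize $\lambda$ to the form $(I\,\vert\,*)$: decomposability only involves the minor $\minor{1,k,m}_\lambda$, which is unaffected by left multiplication by $GL(3,\Z/2)$ and by the (trivial, over $\Z/2$) column sign changes.
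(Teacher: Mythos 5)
Your proof is correct, and it is essentially the same argument as the paper's: both hinge on the maximal faces $\{1,2,m\}$ and $\{1,3,4\}$ of $K_{C^3(m)^*}$ to conclude that at least one of $\minor{1,3,m}$, $\minor{1,4,m}$ is nonzero over $\Z/2$, and both use $m>5$ precisely so that $k=4$ still satisfies $k<m-1$. The paper reaches this by normalizing $\lambda$ to $(I\,\vert\,*)$ and splitting on the value of $c_2$; your coordinate-free phrasing via the subspace $W=\langle{\boldsymbol\lambda}_1,{\boldsymbol\lambda}_m\rangle$ is a tidier presentation of the same case split (in fact it selects the same $k$ in each case) and is valid.
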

\begin{proof}
Let $\lambda$ be a real characteristic matrix on $C^3(m)^*$ for $m>5$.
We can assume that $\lambda$ is in the form
$$\lambda=\left(
\begin{array}{ccccccc}
1 &0 &0 &a_1 &b_1 &\cdots &1 \\
0 &1 &0 &1 &b_2 &\cdots &c_2  \\
0 &0 &1 &a_3 &b_3 &\cdots &1 
\end{array}
\right),$$
since $\minor{1,2,3}=\minor{1,3,4}=\minor{1,2,m}=\minor{2,3,m}=1$.
If $c_2=1$, then $\minor{1,3,m}=1$, so $\lambda$ is decomposable.
Otherwise, if $c_2=0$, then $\minor{1,4,m}=1$. Hence $\lambda$ is decomposable.
\end{proof}

Then we list all indecomposable real characteristic matrices over $C^3(m)^*$ for $m=4,5$ up to equivalence.
\begin{itemize}
 \item On $C^3(4)^*$, there exists only one real characteristic matrix
 $$\left(
\begin{array}{cccc}
1 &0 &0 &1  \\
0 &1 &0 &1  \\
0 &0 &1 &1 
\end{array}
\right)$$
up to equivalence, and the corresponding small cover is $\R P^3$.
 \item On $C^3(5)^*=\Delta^1\times \Delta^2$, there exist two indecomposable real characteristic matrices up to equivalence:
 $$\lambda_1=\left(
\begin{array}{cc|ccc}
1 &1 &0 &0 &0  \\ \hline
0 &0 &1 &0 &1  \\
0 &0 &0 &1 &1 
\end{array}
\right),
\lambda_2=\left(
\begin{array}{cc|ccc}
1 &1 &0 &0 &0  \\ \hline
0 &1 &1 &0 &1  \\
0 &1 &0 &1 &1 
\end{array}
\right),$$
where we label the facets $F_1,\ldots,F_5$ such that $F_1,F_2$ correspond to the facets of $\Delta^1$, and $F_3,F_4,F_5$ to those of $\Delta^2$.
\end{itemize}

Using the real moment-angle manifold ${}_\R \mathcal{Z}_{\Delta^1\times \Delta^2}=S^1\times S^2$,
we easily see that ${}_\R M(\lambda_1)$ is homeomorphic to $\R P^1\times \R P^2$.
Moreover, we can construct a homeomorphism from ${}_\R M(\lambda_2)$ to ${}_\R M(\lambda_1)$ explicitly, as the induced map from
$$S^1\times S^2\rightarrow S^1\times S^2\colon (z_1,z_2,w_1,w_2,w_3)\mapsto (z_1,z_2,z_1w_1+z_2w_2,-z_2w_1+z_1w_2,w_3),$$
which is a weakly equivariant homeomorphism with respect to the free action of $S_{\lambda_2}$ on the domain,
and that of $S_{\lambda_1}$ on the codomain.
Here each $S_{\lambda_i}$ denotes the kernel of $\lambda_i\colon\ZZ^5\rightarrow \ZZ^3$ for $i=1,2$.
This construction follows the one for quasitoric manifolds over $\Delta^1\times \Delta^n$ in \cite{CPS12}.

Then we obtain the topological classification of small covers over $C^3(m)^*$.

\begin{thm}\label{thm:smlcvr over C3(6) up to homeo}
For any $m>3$, a small cover over $C^3(m)^*$ is homeomorphic to the connected sum of copies of $\R P^3$ and $\R P^1\times \R P^2$.
\end{thm}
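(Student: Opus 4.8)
The plan is to induct on $m$, the number of vertices of $C^3(m)^*$. For the base cases $m=4$ and $m=5$, the earlier discussion already identifies all small covers explicitly: over $C^3(4)^*=\Delta^3$ the only small cover is $\R P^3$ (Corollary \ref{cor:clsforspx}), and over $C^3(5)^*=\Delta^1\times\Delta^2$ every small cover is either decomposable or, by the two representatives $\lambda_1,\lambda_2$ listed above together with the explicit homeomorphism ${}_\R M(\lambda_2)\cong {}_\R M(\lambda_1)\cong \R P^1\times\R P^2$, homeomorphic to $\R P^1\times\R P^2$. So for $m\le 5$ the statement holds. For the inductive step, take $m>5$ and let $M$ be a small cover over $C^3(m)^*$, corresponding to a real characteristic matrix $\lambda$. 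By the lemma immediately preceding this theorem, $\lambda$ is decomposable, i.e. $\minor{1,k,m}_\lambda=\pm 1$ for some $2<k<m-1$ (here, over $\Z/2$, ``$\pm1$'' just means $1$).

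Next I would invoke the connected-sum decomposition lemma for $C^3(m)^*$ (the first lemma of this section, together with Lemma \ref{lem:connected sum}): since $\minor{1,k,m}_\lambda=1$, the matrix $\lambda$ decomposes as a connected sum of real characteristic matrices on $C^3(k+1)^*$ and $C^3(m-k+2)^*$, and correspondingly ${}_\R M(\lambda)$ is homeomorphic to the connected sum of small covers over $C^3(k+1)^*$ and $C^3(m-k+2)^*$. Because $2<k<m-1$, both $k+1$ and $m-k+2$ lie strictly between $3$ and $m$ (indeed $k+1\le m-1$ and $m-k+2\le m-1$), so the inductive hypothesis applies to each summand: each is homeomorphic to a connected sum of copies of $\R P^3$ and $\R P^1\times\R P^2$. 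Gluing these together, ${}_\R M(\lambda)$ is itself a connected sum of copies of $\R P^3$ and $\R P^1\times\R P^2$, completing the induction.

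The only genuine subtlety is bookkeeping rather than mathematics: one must check that the relabeling of facets used to realize $C^3(m)^*$ as $C^3(k+1)^*\#_\rho C^3(m-k+2)^*$ is exactly the one under which the dual evenness condition continues to hold (this was arranged in the paragraph introducing $F''_1,\dots,F''_m$), so that the inductive hypothesis may legitimately be applied to each factor as a small cover over the standard $C^3(\cdot)^*$. There is no obstruction from the connected sum being taken ``with respect to proper orientations'', since in the small-cover (real) setting orientations play no role. Thus the main — and essentially only — work is the verification that decomposability plus the inductive hypothesis chain together cleanly, which the preceding lemmas have set up precisely for this purpose.
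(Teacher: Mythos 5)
Your proposal is correct and coincides with the proof implicit in the paper: strong induction on $m$, with base cases $m=4,5$ (supplied by Corollary \ref{cor:clsforspx} and the explicit identification of the two indecomposable classes over $C^3(5)^*$ with $\R P^1\times\R P^2$), and the inductive step driven by the lemma that every real characteristic matrix on $C^3(m)^*$ with $m>5$ is decomposable, combined with Lemma \ref{lem:connected sum} and the preceding realization of $C^3(m)^*$ as a connected sum of smaller dual cyclic polytopes. The arithmetic $4\le k+1,\,m-k+2\le m-1$ is exactly what makes the induction close, and you have stated it correctly; the only thing left slightly implicit is that in the $m=5$ base case a decomposable matrix yields $\R P^3\#\R P^3$, but this is immediate from the $m=4$ case and from $k=3$ being forced.
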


%
%
%

\section{SMALL COVERS OF HIGHER DIMENSIONS}
In this section, we list all real characteristic matrices over the dual cyclic polytopes of dimension $n \geq 4$.
As is noticed in Section 1, we only consider the case that $m-n\geq 3$, where $m$ denotes the number of facets.
We will use the easy lemma below.

\begin{lem}\label{lem:(n,m) to (n+1,m+1)}
If there exists a (real) characteristic matrix $\lambda$ on $C^{n+1}(m+1)^*$ in the form
$$
\lambda=
\left(
\begin{array}{c|ccc}
1&a_2&\cdots&a_m\\\hline
0&&&\\
\vdots&\multicolumn{3}{c}{\raisebox{-5pt}[0pt][0pt]{\Huge $A$}}\\
0&&\\
\end{array}
\right),
$$
then $A$ is a (real) characteristic matrix on $C^n(m)^*$.
\end{lem}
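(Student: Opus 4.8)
The plan is to prove the lemma by a single cofactor expansion together with a short combinatorial observation relating the dual evenness conditions (Theorem~\ref{thm:GEC}) of $C^{n+1}(m+1)^*$ and $C^n(m)^*$. Write $A=({\boldsymbol A}_1,\ldots,{\boldsymbol A}_m)$, so that the columns of $\lambda$ are ${\boldsymbol\lambda}_1={}^t(1,0,\ldots,0)$ and ${\boldsymbol\lambda}_k={}^t(a_k,{\boldsymbol A}_{k-1})$ for $k=2,\ldots,m+1$. First I would note that if $\{1,i_2,\ldots,i_{n+1}\}$ indexes a vertex of $C^{n+1}(m+1)^*$, then expanding $\det\lambda_{(1,i_2,\ldots,i_{n+1})}$ along its first column (which is $e_1$) gives $\det\lambda_{(1,i_2,\ldots,i_{n+1})}=\det A_{(i_2-1,\ldots,i_{n+1}-1)}$. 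Consequently, to check that $A$ satisfies the non-singular condition for $C^n(m)^*$ it would be enough to show that every vertex $\{j_1,\ldots,j_n\}$ of $C^n(m)^*$ ``lifts'' to the vertex $\{1\}\cup\{j_1+1,\ldots,j_n+1\}$ of $C^{n+1}(m+1)^*$: once that is known, the non-singular condition for $\lambda$ forces $\det A_{(j_1,\ldots,j_n)}=\pm1$.

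\textbf{The combinatorial step.} To prove the lifting I would apply Theorem~\ref{thm:GEC} to both polytopes. Set $I'_j:=\{j,j+1\}\cap\{1,\ldots,m\}$ and $I_j:=\{j,j+1\}\cap\{1,\ldots,m+1\}$, and write a vertex of $C^n(m)^*$ as $S=\bigsqcup_{j\in J}I'_j$. I then have to verify that $\{1\}\cup(S+1)$, with $S+1:=\{s+1\mid s\in S\}$, is a disjoint union of $I_j$'s. The bookkeeping is: each block $I'_j$ with $1\le j<m$ shifts to $I_{j+1}$; a block $I'_m=\{m\}$, if present, shifts to $I_{m+1}=\{m+1\}$; a block $I'_0=\{1\}$, if present (it forces $1\notin J$, since $I'_0$ and $I'_1$ overlap), combines with the adjoined index $1$ to give $I_1=\{1,2\}$; and if $I'_0$ is absent then the adjoined index $1$ is itself the block $I_0$. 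Disjointness is preserved in every case, so $\{1\}\cup(S+1)$ satisfies the dual evenness condition for $C^{n+1}(m+1)^*$.

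\textbf{Assembling, and the main obstacle.} Putting the two steps together yields $\det A_{(j_1,\ldots,j_n)}=\pm1$ for every vertex $\{j_1,\ldots,j_n\}$ of $C^n(m)^*$, which is exactly the non-singular condition for $A$; since nothing in the argument uses more than the ring structure, the same reasoning applies modulo $2$ and covers the real case simultaneously. The only genuinely delicate point is making the combinatorial step watertight around the boundary indices $0$ and $m$, where the sets $I_j$ degenerate to singletons; the determinant identity itself is immediate. (Alternatively, the combinatorial step can be run through Gale's evenness directly: the complement of $\{1\}\cup(S+1)$ in $\{1,\ldots,m+1\}$ is $(\{1,\ldots,m\}\setminus S)+1$, and for $a<b$ both outside $S$ the number of elements of $S$ strictly between $a$ and $b$ equals the number of elements of $S+1$ strictly between $a+1$ and $b+1$, so the evenness condition transfers verbatim.)
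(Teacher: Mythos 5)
Your proof is correct and takes essentially the same approach as the paper's: expand $\det\lambda_{(1,i_2,\ldots,i_{n+1})}$ along the first column and check via the (dual) evenness condition that every maximal face $\{i_1,\ldots,i_n\}$ of $K_{C^n(m)^*}$ lifts to the maximal face $\{1,i_1+1,\ldots,i_n+1\}$ of $K_{C^{n+1}(m+1)^*}$. The paper states this lifting as an immediate consequence of Gale's evenness condition, whereas you carry out the block-by-block (or complement-counting) verification explicitly; the content is the same.
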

\begin{proof}
It follows immediately from Gale's evenness condition:
if $\{ i_1,\ldots,i_n \}$ is a simplex of $K_{C^n(m)^*}$,
then $\{1,i_1+1,\ldots,i_n+1\}$ is a simplex of $K_{C^{n+1}(m+1)^*}$.
Hence we have $$\vert i_1,\ldots,i_n \vert_A=\vert 1,i_1+1,\ldots,i_n+1\vert_{\lambda}=\pm 1$$ for any $\{ i_1,\ldots,i_n \}\in K_{C^n(m)^*}$.
\end{proof}

\begin{prop}\label{prop:no existence m-n>3}
If $n\geq 4$ and $m-n\geq 4$, there exists no real characteristic matrix on $C^n(m)^*$.
\end{prop}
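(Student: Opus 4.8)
The plan is to argue by induction on $n$, using Lemma~\ref{lem:(n,m) to (n+1,m+1)} to descend from a hypothetical real characteristic matrix on $C^n(m)^*$ to one on $C^{n-1}(m-1)^*$, and ultimately to reduce everything to a combinatorial impossibility over $\Z/2$. The base of the induction should be the case $n=4$, $m\geq 8$, which I expect to require an explicit (but finite) analysis; the inductive step is then essentially automatic.

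First I would set up the reduction. Suppose $\lambda$ is a real characteristic matrix on $C^n(m)^*$ with $n\geq 4$ and $m-n\geq 4$. Using the conventions of Section~3, after multiplying by an element of $GL(n,\Z/2)$ I may assume $\lambda=(I\,\vert\,*)$, and moreover (since $F_1,F_{m-n}\!+\!1,\dots$ — more precisely, since several collections of $n$ facets meet at vertices by Theorem~\ref{thm:GEC}) I may normalise the first row so that $\lambda$ has the shape required by Lemma~\ref{lem:(n,m) to (n+1,m+1)}, namely a leading column $\,{}^t(1,0,\dots,0)$ with the remaining rows forming a real characteristic matrix on $C^{n-1}(m-1)^*$. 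The key point is that $m-1 - (n-1) = m-n \geq 4$, so the hypotheses are preserved and the induction closes \emph{provided} one disposes of the base case $n=4$.

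So the real content is: \textbf{there is no real characteristic matrix on $C^4(m)^*$ for $m\geq 8$.} For this I would again reduce to $C^4(8)^*$ by the same descent (a hypothetical matrix on $C^4(m)^*$ for $m>8$ would, by iterating the normalisation, either directly restrict to one on $C^4(8)^*$ via a different facet-deletion, or one argues that the "interior" columns are overdetermined). Then I would list the maximal faces of $K_{C^4(8)^*}$ from Gale's evenness condition and try to solve the non-singular condition $\vert i_1,i_2,i_3,i_4\vert_\lambda = 1$ over $\Z/2$ for all vertices simultaneously, starting from a normal form $(I_4\,\vert\,*)$ where $*$ is a $4\times 4$ matrix over $\Z/2$. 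Because $C^4(8)^*$ has many vertices (Gale's condition gives a large collection of $4$-subsets), the constraints should propagate quickly: fixing the columns $\boldsymbol\lambda_5,\boldsymbol\lambda_6$ from the vertices they participate in forces their form, then $\boldsymbol\lambda_7,\boldsymbol\lambda_8$ are forced, and one reaches a contradiction with some remaining minor being $0$ in $\Z/2$.

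The main obstacle I anticipate is precisely this base-case bookkeeping: over $\Z/2$ one cannot rescale columns, so the finite case analysis for $C^4(8)^*$ is genuinely a computation with the explicit list of maximal faces, and one must be careful that the normalisations used (choosing which vertex sits at $I_4$, which facet to delete in the descent) are compatible with the cyclic labelling demanded by Theorem~\ref{thm:GEC}. Everything else — the inductive step via Lemma~\ref{lem:(n,m) to (n+1,m+1)}, and the observation that non-existence of small covers immediately gives Theorem~\ref{thm:no existence of qtmfds} by reduction mod~$2$ — is routine. I would also remark that the companion statement "$n\geq 6$ and $m-n\geq 3$" (the other half of Theorem~\ref{thm:no existence of smlcvrs}) is handled by the same descent applied to the base case $C^6(9)^*$, which is again a finite $\Z/2$-computation.
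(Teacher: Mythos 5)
Your reduction to $n=4$ via Lemma~\ref{lem:(n,m) to (n+1,m+1)} is exactly what the paper does, and it is correct: normalize to $(I\,\vert\,*)$, note the matrix then has the shape with leading column $e_1$, and iterate until $n=4$; the parameter $m-n$ is preserved, so the base case is $C^4(m)^*$ with $m\geq 8$.

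However, your plan for the base case has a genuine gap. You propose to ``again reduce to $C^4(8)^*$ by the same descent'' or ``via a different facet-deletion,'' but neither of these works. The descent of Lemma~\ref{lem:(n,m) to (n+1,m+1)} lowers $n$ along with $m$, so applying it to $C^4(m)^*$ lands you in $C^3(m-1)^*$, where real characteristic matrices \emph{do} exist. And deleting a facet does not help either: the restriction of $K_{C^4(m)^*}$ to any $8$-element subset of $\{1,\dots,m\}$ is \emph{not} $K_{C^4(8)^*}$. Concretely, restricting to $\{1,\dots,8\}$ drops all wraparound maximal faces $\{1,i,i+1,8\}$ that $C^4(8)^*$ has; a real characteristic matrix on $C^4(m)^*$ restricted to those eight columns therefore satisfies fewer nonsingularity constraints than a real characteristic matrix on $C^4(8)^*$ must, so you cannot conclude one exists on $C^4(8)^*$. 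Your fallback phrase ``one argues that the interior columns are overdetermined'' is the right intuition, but it is precisely the content of the proof and has to be carried out: the paper normalizes $\lambda$ to $(I_4\,\vert\,*)$, observes $a_1=a_3=1$ in the fifth column (from the vertices $\{2,3,4,5\}$ and $\{1,2,4,5\}$), and branches into the four cases $\mathbf a\in\{(1,0,1,0),(1,1,1,1),(1,1,1,0),(1,0,1,1)\}$. In each case the minors $\minor{i,i+1,5,6}$, $\minor{i,i+1,6,7}$ pin down the sixth and seventh columns, and then one of the wraparound minors $\minor{1,5,6,m}$ or $\minor{1,6,7,m}$ is forced to be $0$ over $\Z/2$, a contradiction --- and this works uniformly for every $m\geq 8$, with no reduction to $m=8$. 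That four-way case analysis is not ``routine''; it is the whole of the base case and must be done explicitly.

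A smaller point: for the companion statement about $n\geq 6$, $m-n=3$, the paper does not just rerun the same base-case computation on $C^6(9)^*$; it first classifies all real characteristic matrices on $C^4(7)^*$ and then on $C^5(8)^*$ (there are exactly two of each), and then observes that any candidate on $C^6(9)^*$, put in the normal form forced by Lemma~\ref{lem:(n,m) to (n+1,m+1)} over one of those two, has $\minor{3,4,5,6,7,8}=0$. So that part leans on the explicit $C^5(8)^*$ list rather than a fresh ad hoc search.
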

\begin{proof}
By the previous lemma, we only have to prove that there exists no real characteristic matrix on $C^4(m)^*$ if $m\geq 8$.
Assume that there exists a real characteristic matrix $\lambda$ on $C^4(m)^*$ for $m\geq 8$.
All maximal faces of $K_{C^4(m)^*}$ are listed below:
$$ \{1,i,i+1,m\}\,(i=2,3,\ldots,m-2),\,\{i,i+1,j,j+1\}\,(1\leq i<j-1\leq m-2). $$
We can assume that $\lambda$ is in the form
$$
\left(
\begin{array}{cccc|ccccc}
1 &0 &0 &0 &1 &b_1 &c_1 &\cdots &d_1  \\
0 &1 &0 &0 &a_2 &b_2 &c_2 &\cdots &1  \\
0 &0 &1 &0 &1 &b_3 &c_3 &\cdots &d_3  \\
0 &0 &0 &1 &a_4 &b_4 &c_4 &\cdots &1 
\end{array}
\right).
$$
Let us consider four cases (1) $\mathbf{a}=(1,0,1,0)$, (2) $\mathbf{a}=(1,1,1,1)$, (3) $\mathbf{a}=(1,1,1,0)$, and (4) $\mathbf{a}=(1,0,1,1)$ separately,
where $\mathbf{a}:=(a_1,a_2,a_3,a_4)$.

\begin{itemize}
 \item \textbf{Case(1)}: $\mathbf{a}=(1,0,1,0)$.
 
 In this case, $\lambda$ is in the form
$$
\left(
\begin{array}{cccc|ccccc}
1 &0 &0 &0 &1 &b_1 &c_1 &\cdots &d_1  \\
0 &1 &0 &0 &0 &b_2 &c_2 &\cdots &1  \\
0 &0 &1 &0 &1 &b_3 &c_3 &\cdots &d_3  \\
0 &0 &0 &1 &0 &b_4 &c_4 &\cdots &1 
\end{array}
\right).
$$
From the non-singular condition, we have
$$1=\minor{1,2,5,6}=b_4,\,1=\minor{3,4,5,6}=b_2.$$
Then we have $\minor{1,5,6,m}=0$, but this contradicts the non-singular condition.
 \item \textbf{Case(2)}: $\mathbf{a}=(1,1,1,1)$.

$\lambda$ is in the form
$$
\left(
\begin{array}{cccc|ccccc}
1 &0 &0 &0 &1 &b_1 &c_1 &\cdots &d_1  \\
0 &1 &0 &0 &1 &b_2 &c_2 &\cdots &1  \\
0 &0 &1 &0 &1 &b_3 &c_3 &\cdots &d_3  \\
0 &0 &0 &1 &1 &b_4 &c_4 &\cdots &1 
\end{array}
\right).
$$
From the non-singular condition, we have
$$1=\minor{1,2,5,6}=b_3+b_4,\,1=\minor{2,3,5,6}=b_1+b_4,\,1=\minor{3,4,5,6}=b_1+b_2.$$
Therefore $\mathbf{b}:=(b_1,b_2,b_3,b_4)$ corresponds with $(1,0,1,0)$ or $(0,1,0,1)$.
In each case, we have the contradiction $1=\minor{1,5,6,m}=0$.
 \item \textbf{Case(3)}: $\mathbf{a}=(1,1,1,0)$.

$\lambda$ is in the form
$$
\left(
\begin{array}{cccc|ccccc}
1 &0 &0 &0 &1 &b_1 &c_1 &\cdots &d_1  \\
0 &1 &0 &0 &1 &b_2 &c_2 &\cdots &1  \\
0 &0 &1 &0 &1 &b_3 &c_3 &\cdots &d_3  \\
0 &0 &0 &1 &0 &b_4 &c_4 &\cdots &1 
\end{array}
\right).
$$
From the non-singular condition, we have
$b_4=1$, $b_1+b_2=1$, $d_3=0$ and $b_2+b_3=0$.
Therefore $\mathbf{b}$ corresponds with $(0,1,1,1)$ or $(1,0,0,1)$.
\begin{itemize}
 \item In the case that $\mathbf{b}=(0,1,1,1)$, we have
 $1=\minor{1,2,6,7}=c_3+c_4$, $1=\minor{2,3,6,7}=c_1$ and $1=\minor{4,5,6,7}=c_2+c_3$.
 In particular, we have $c_2=c_4$. Then there occurs a contradiction $1=\minor{1,6,7,m}=0$.
 \item In the case that $\mathbf{b}=(1,0,0,1)$, we have
 $1=\minor{1,2,6,7}=c_3,\,1=\minor{3,4,6,7}=c_2$, and then $1=\minor{4,5,6,7}=0$.
This is a contradiction.
\end{itemize}
 \item \textbf{Case(4)}: $\mathbf{a}=(1,0,1,1)$.

$\lambda$ is in the form
$$
\left(
\begin{array}{cccc|ccccc}
1 &0 &0 &0 &1 &b_1 &c_1 &\cdots &d_1  \\
0 &1 &0 &0 &0 &b_2 &c_2 &\cdots &1  \\
0 &0 &1 &0 &1 &b_3 &c_3 &\cdots &d_3  \\
0 &0 &0 &1 &1 &b_4 &c_4 &\cdots &1 
\end{array}
\right).
$$
From the non-singular condition, we have
$b_3+b_4=1,\,b_1+b_4=1,\,b_2=1$, and then $d_3=1$.
Therefore $\mathbf{b}$ corresponds with $(0,1,0,1)$ or $(1,1,1,0)$.
\begin{itemize}
 \item In the case that $\mathbf{b}=(0,1,0,1)$, we have $1=\minor{1,2,6,7}=c_3,\ 1=\minor{2,3,6,7}=c_1$, and then $\minor{4,5,6,7}=0.$
This is a contradiction.
 \item In the case that $\mathbf{b}=(1,1,1,0)$, we have
 $1=\minor{1,2,6,7}=c_4$, $1=\minor{3,4,6,7}=c_1+c_2$ and $1=\minor{4,5,6,7}=c_1+c_3$.
In particular, we have $c_2=c_3$. However, this is a contradiction since $\minor{1,6,7,m}=0$.
\end{itemize}
\end{itemize}

Thus the proof is completed.
\end{proof}

Next, we will determine all real characteristic matrices on $C^4(7)^*$.
Note that maximal faces of $K_{C^4(7)^*}$ are as below:
$$\{1,2,3,4\},\{2,3,4,5\},\{1,2,4,5\},\{1,3,4,7\},\{1,2,3,7\},\{1,2,5,6\},\{1,2,6.7\},$$
$$\{2,3,5,6\},\{2,3,6,7\},\{3,4,5,6\},\{3,4,6,7\},\{1,4,5,7\},\{1,5,6,7\},\{4,5,6,7\}.$$
\begin{prop}
$\XX_{C^4(7)^*}$ has exactly two elements, and they are respectively represented by the matrices below:
$$\left(
\begin{array}{cccc|ccc}
1 &0 &0 &0 &1 &0 &1  \\
0 &1 &0 &0 &1 &1 &1  \\
0 &0 &1 &0 &1 &1 &0  \\
0 &0 &0 &1 &0 &1 &1 
\end{array}
\right),
\left(
\begin{array}{cccc|ccc}
1 &0 &0 &0 &1 &1 &0  \\
0 &1 &0 &0 &0 &1 &1  \\
0 &0 &1 &0 &1 &1 &1  \\
0 &0 &0 &1 &1 &0 &1 
\end{array}
\right).
$$
\end{prop}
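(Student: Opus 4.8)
The plan is to enumerate all real characteristic matrices on $C^4(7)^*$ directly, starting from the normalized form $(I\,\vert\,*)$ with columns $\boldsymbol\lambda_5=\mathbf a$, $\boldsymbol\lambda_6=\mathbf b$, $\boldsymbol\lambda_7=\mathbf c$, and exploiting the fact that every entry lies in $\Z/2$ so each of $\mathbf a,\mathbf b,\mathbf c$ ranges over the $15$ nonzero vectors of $(\Z/2)^4$. First I would write down the $14$ maximal faces listed above and translate each into a determinant-$=1$ condition on the minors $\minor{i_1,i_2,i_3,i_4}$; because of the $(I\,\vert\,*)$ normalization, most of these minors are simply entries of $\mathbf a,\mathbf b,\mathbf c$ or $2\times 2$, $3\times 3$ subdeterminants of the three right-hand columns, so the system becomes a concrete set of linear (and a few quadratic) equations over $\Z/2$.

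Next I would organize the case analysis the same way as in Proposition \ref{prop:no existence m-n>3}: fix $\mathbf a$ up to the symmetries available (left multiplication by $GL(4,\Z/2)$ fixing $I$, sign changes on columns, and $\mathrm{Aut}(K_{C^4(7)^*})$), so that only a handful of representatives for $\mathbf a$ remain — essentially $(1,0,1,0)$, $(1,1,1,1)$, $(1,1,1,0)$, $(1,0,1,1)$ as before. For each choice of $\mathbf a$, the faces $\{1,2,3,4\}$, $\{2,3,4,5\}$, $\{1,2,4,5\}$, etc. pin down relations among the $b_i$, then the faces involving column $7$ pin down relations among the $c_i$, and finally the faces $\{1,\ldots,7\}$-type (e.g. $\{1,2,6,7\}$, $\{4,5,6,7\}$) give compatibility constraints tying $\mathbf b$ and $\mathbf c$ together. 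I expect most branches to close by a contradiction exactly as in the non-existence proof, leaving only the branches that yield the two matrices displayed in the statement.

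Then I would check that the two surviving matrices genuinely satisfy the non-singular condition for all $14$ maximal faces — a finite verification — and that they are inequivalent in $\XX_{C^4(7)^*}/\mathrm{Aut}(K_{C^4(7)^*})$; the latter can be done by exhibiting an invariant, for instance comparing the multiset of values taken by some symmetric function of the columns under the action of $\mathrm{Aut}(K_{C^4(7)^*})$ (generated by a small group which I would describe explicitly, analogously to Lemma \ref{lem:Aut on C3(m)}), or simply by noting that the two matrices give non-homeomorphic small covers via Theorem \ref{thm:coh,small}. I would also remark that $C^4(7)^* = C^4(4+3)^*$ is not covered by the trivial cases, so this enumeration is necessary.

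The main obstacle I anticipate is the bookkeeping of the symmetry group $\mathrm{Aut}(K_{C^4(7)^*})$: reducing the apparent $15^3$ possibilities to a manageable list requires knowing exactly which column permutations are realized by polytope automorphisms, and $C^4(7)^*$ has a somewhat larger and less transparent automorphism group than $C^3(m)^*$, so I must be careful not to over-count equivalence classes (which would make me miss matrices) nor under-count (which would leave spurious duplicates). A secondary subtlety is that, unlike the non-existence case, here I cannot stop at the first contradiction — I must track the surviving solution set precisely through each branch, so I would want to present the case analysis in a table or an itemized exhaustion keyed to $\mathbf a$, mirroring the structure already used in Proposition \ref{prop:no existence m-n>3}, to make the completeness of the enumeration transparent.
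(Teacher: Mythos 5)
Your overall strategy — normalize to the form $(I\mid\ast)$, read off minor conditions from the $14$ maximal faces, and split into the four cases for $\mathbf a=(1,a_2,1,a_4)$ exactly as in Proposition \ref{prop:no existence m-n>3} — is indeed the paper's approach, and the enumeration does close as you expect. But there is a genuine conceptual gap in the last step, and it stems from misreading what $\XX_{C^4(7)^*}$ is.

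By definition $\XX_P = GL(n,\Z/2)\backslash\LL_P$; it is quotiented \emph{only} by the left $GL$-action, not by $\mathrm{Aut}(K_P)$. The proposition is about $\XX_{C^4(7)^*}$, not about $\XX_{C^4(7)^*}/\mathrm{Aut}(K_{C^4(7)^*})$. This matters in two places. First, you propose to use $\mathrm{Aut}(K_{C^4(7)^*})$ (and ``sign changes on columns,'' which is vacuous over $\Z/2$) to prune the case analysis; but any reduction by $\mathrm{Aut}$ collapses $\XX$-classes, so to recover the count of $\XX$ you would then have to re-inflate by orbit sizes — extra bookkeeping the paper avoids entirely by quotienting only by $GL(4,\Z/2)$ (i.e.\ by fixing the $(I\mid\ast)$ shape) and then brute-forcing $\mathbf a,\mathbf b,\mathbf c$. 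Second, and more seriously, your proposed inequivalence check is false: the two displayed matrices \emph{are} in the same $\mathrm{Aut}(K_{C^4(7)^*})$-orbit ($\tau$ sends one to the other, as the paper records at the start of Section~7), and their small covers \emph{are} homeomorphic — indeed weakly equivariantly homeomorphic — which is precisely how the paper later deduces $\#\MM_{C^4(7)^*}=1$. So ``noting that the two matrices give non-homeomorphic small covers via Theorem~\ref{thm:coh,small}'' cannot work: their $\Z/2$-cohomology rings are isomorphic.

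The correct (and much easier) distinctness argument is the one the paper uses implicitly: both matrices are in the normal form $(I\mid\ast)$, and if $A\in GL(4,\Z/2)$ satisfies $A(I\mid\ast)=(I\mid\ast')$ then reading off the first four columns forces $A=I$; since the two displayed matrices are not literally equal, they represent distinct elements of $\XX_{C^4(7)^*}$. No cohomology invariant and no analysis of $\mathrm{Aut}(K_{C^4(7)^*})$ is needed in this proposition.

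\end{document}
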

\begin{proof}
It is straightforward to check that the above two matrices are real characteristic matrices on $C^4(7)^*$.
Then we prove that there exists no other real characteristic matrix on $C^4(7)^*$ up to the action of $GL(4,\Z/2)$.
As in the proof of the previous proposition, we can write a real characteristic matrix on $C^4(7)^*$ in the form:
$$\lambda=
\left(
\begin{array}{cccc|ccc}
1 &0 &0 &0 &1 &b_1 &c_1 \\
0 &1 &0 &0 &a_2 &b_2 &1 \\
0 &0 &1 &0 &1 &b_3 &c_3 \\
0 &0 &0 &1 &a_4 &b_4 &1 
\end{array}
\right).
$$
We consider the cases (1) $\mathbf{a}=(1,0,1,0)$, (2) $\mathbf{a}=(1,1,1,1)$, (3) $\mathbf{a}=(1,1,1,0)$, and (4) $\mathbf{a}=(1,0,1,1)$ separately,
where $\mathbf{a}:=(1,a_2,1,a_4)$.
In the former two cases, we have contradictions by replacing $m$ with 7 in the previous proof.
\begin{itemize}
 \item \textbf{Case(3)}: $\mathbf{a}=(1,1,1,0)$.

$\lambda$ is in the form
$$
\left(
\begin{array}{cccc|ccc}
1 &0 &0 &0 &1 &b_1 &c_1  \\
0 &1 &0 &0 &1 &b_2 &1  \\
0 &0 &1 &0 &1 &b_3 &c_3  \\
0 &0 &0 &1 &0 &b_4 &1 
\end{array}
\right).
$$
From the non-singular condition, we have
$$1=\minor{1,2,5,6}=b_4,\,1=\minor{3,4,5,6}=b_1+b_2,\,1=\minor{1,4,5,7}=c_3+1,$$
$$1=\minor{1,5,6,7}=b_2+b_3+1,\,1=\minor{1,2,6,7}=b_3.$$
Therefore $\mathbf{b}:=(b_1,b_2,b_3,b_4)$ corresponds with $(0,1,1,1)$.
We also have $c_1=\minor{2,3,6,7}=1$,
so $\lambda$ is equal to the left matrix in the statement.
 \item \textbf{Case(4)}: $\mathbf{a}=(1,0,1,1)$.

$\lambda$ is in the form
$$
\left(
\begin{array}{cccc|ccc}
1 &0 &0 &0 &1 &b_1 &c_1  \\
0 &1 &0 &0 &0 &b_2 &1  \\
0 &0 &1 &0 &1 &b_3 &c_3  \\
0 &0 &0 &1 &1 &b_4 &1 
\end{array}
\right).
$$
From the non-singular condition, we have
$$1=\minor{1,2,5,6}=b_3+b_4,\,1=\minor{2,3,5,6}=b_1+b_4,\,1=\minor{3,4,5,6}=b_2,$$
$$1=\minor{1,5,6,7}=c_3,\,
1=\minor{4,5,6,7}=c_1+1.$$
Therefore $\mathbf{c}:=(c_1,c_2,c_3,c_4)$ corresponds with $(0,1,1,1)$.
We also have $b_1=\minor{3,4,6,7}=1$,
so $\lambda$ is equal to the right matrix in the statement.
\end{itemize}
Thus the proof is completed.
\end{proof}

\begin{cor}
$\XX_{C^5(8)^*}$ has exactly two elements, and they are respectively represented by the matrices below:
$$\left(
\begin{array}{ccccc|ccc}
1 &0 &0 &0 &0 &0 &0 &1  \\
0 &1 &0 &0 &0 &1 &0 &1  \\
0 &0 &1 &0 &0 &1 &1 &1  \\
0 &0 &0 &1 &0 &1 &1 &0  \\
0 &0 &0 &0 &1 &0 &1 &1 
\end{array}
\right),
\left(
\begin{array}{ccccc|ccc}
1 &0 &0 &0 &0 &0 &0 &1  \\
0 &1 &0 &0 &0 &1 &1 &0  \\
0 &0 &1 &0 &0 &0 &1 &1  \\
0 &0 &0 &1 &0 &1 &1 &1  \\
0 &0 &0 &0 &1 &1 &0 &1 
\end{array}
\right).
$$
\end{cor}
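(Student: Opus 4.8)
The plan is to reduce to the classification of $\XX_{C^4(7)^*}$ just established, by using Lemma \ref{lem:(n,m) to (n+1,m+1)} to strip off the facet $F_1$, and to finish with a finite check over $\Z/2$ of the non-singular conditions coming from the facets that this reduction does not see.

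Following the conventions of Section 3, since $F_1,\dots,F_5$ meet at a vertex of $C^5(8)^*$ (indeed $\{1,\dots,5\}=I_0\sqcup I_2\sqcup I_4\in K_{C^5(8)^*}$), every $GL(5,\Z/2)$-orbit on $\LL_{C^5(8)^*}$ has a unique representative of the form $(I_5\mid\ast)$; hence $\#\XX_{C^5(8)^*}$ equals the number of real characteristic matrices on $C^5(8)^*$ of this shape, and distinct such matrices give distinct classes. Write $\lambda=(I_5\mid c_6\ c_7\ c_8)$ and let $A$ be the $(4\times7)$-matrix obtained by deleting the first row and the first column of $\lambda$; then $A=(I_4\mid\ast)$, its $\ast$-part being the lower $(4\times3)$-block of $(c_6\ c_7\ c_8)$. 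Since $\lambda$ has exactly the shape in Lemma \ref{lem:(n,m) to (n+1,m+1)} (with $n+1=5$, $m+1=8$), $A$ is a real characteristic matrix on $C^4(7)^*$; by the proposition determining $\XX_{C^4(7)^*}$ — again using uniqueness of the $(I_4\mid\ast)$-representative — $A$ is one of the two matrices listed there. So the lower $(4\times3)$-block of $(c_6\ c_7\ c_8)$ has only two possibilities, and only the top row $(a_6,a_7,a_8):=\bigl((c_6)_1,(c_7)_1,(c_8)_1\bigr)\in(\Z/2)^3$ remains free.

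Next I would organise the non-singular condition for $\lambda$. For a maximal face $T$ of $K_{C^5(8)^*}$ containing the index $1$, Laplace expansion of $\minor{T}_\lambda$ along the column $e_1$ yields $\pm\minor{T'}_A$, where $T'$ is obtained from $T$ by deleting $1$ and subtracting $1$ from the remaining indices; by the face correspondence in the proof of Lemma \ref{lem:(n,m) to (n+1,m+1)} one has $T'\in K_{C^4(7)^*}$, so this minor is $\pm1$ automatically. By the dual evenness condition (Theorem \ref{thm:GEC}), the maximal faces of $K_{C^5(8)^*}$ \emph{not} containing $1$ are exactly $I_8\sqcup I_a\sqcup I_b=\{a,a{+}1,b,b{+}1,8\}$ with $2\le a$ and $a{+}2\le b\le6$, namely six faces. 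For each of these, cancelling the standard-basis columns reduces $\minor{\cdots}_\lambda$ to a $1\times1$, $2\times2$ or $3\times3$ subdeterminant of $(c_6\ c_7\ c_8)$, hence to an affine expression in $(a_6,a_7,a_8)$ over $\Z/2$ once the lower block is fixed. Carrying out this computation for each of the two admissible lower blocks, the six equations are in both cases consistent with the single solution $(a_6,a_7,a_8)=(0,0,1)$, and substituting back recovers precisely the two matrices in the statement. Thus $\#\XX_{C^5(8)^*}=2$.

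The only step with real content is the last one: one must write down the six subdeterminants for each of the two admissible lower blocks and verify that the over-determined $\Z/2$-system has exactly one solution — neither none (which would kill a class) nor more than one (which would add one). Everything preceding it is formal, granted Lemma \ref{lem:(n,m) to (n+1,m+1)} and the classification of $\XX_{C^4(7)^*}$.
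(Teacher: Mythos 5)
Your argument is correct and follows the same route as the paper: reduce to the $C^4(7)^*$ classification via Lemma \ref{lem:(n,m) to (n+1,m+1)} after normalizing to $(I_5\mid\ast)$ form, then impose the remaining non-singular conditions to pin down the top row. The paper does this more tersely (it writes the $(1,8)$-entry as $1$ directly and uses only two of the six outstanding minor conditions in each case), whereas you systematically enumerate all six faces of $K_{C^5(8)^*}$ not containing $1$; the computations match and give $(a_6,a_7,a_8)=(0,0,1)$ in both cases, so the proof goes through.
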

\begin{proof}
We can easily check that the above two matrices satisfy the non-singular condition for $C^5(8)^*$.
Let $\lambda$ be a real characteristic matrix on $C^5(8)^*$.
By Lemma \ref{lem:(n,m) to (n+1,m+1)}, we can assume that $\lambda$ is in the form:
$$\left(
\begin{array}{c|cccc|ccc}
1 &0 &0 &0 &0 &a &b &1  \\\hline
0 &1 &0 &0 &0 &1 &0 &1  \\
0 &0 &1 &0 &0 &1 &1 &1  \\
0 &0 &0 &1 &0 &1 &1 &0  \\
0 &0 &0 &0 &1 &0 &1 &1 
\end{array}
\right)\ \mathrm{or}\ 
\left(
\begin{array}{c|cccc|ccc}
1 &0 &0 &0 &0 &a &b &1  \\\hline
0 &1 &0 &0 &0 &1 &1 &0  \\
0 &0 &1 &0 &0 &0 &1 &1  \\
0 &0 &0 &1 &0 &1 &1 &1  \\
0 &0 &0 &0 &1 &1 &0 &1 
\end{array}
\right).
$$

In the former case, the non-singular condition provides
 $$1=\minor{2,3,6,7,8}=a+b+1,\ 1=\minor{4,5,6,7,8}=a+1.$$
Hence $\lambda$ is equal to the left matrix in the statement.

In the latter case, the non-singular condition provides
 $$1=\minor{2,3,6,7,8}=a+1,\ 1=\minor{3,4,6,7,8}=a+b+1.$$
Hence $\lambda$ is equal to the right matrix in the statement.
\end{proof}

For any matrix in the form
$$\left(\begin{array}{c|ccccc|ccc}
1 &0 &0 &0 &0 &0 &1 &a &b  \\\hline
0 &1 &0 &0 &0 &0 &0 &0 &1  \\
0 &0 &1 &0 &0 &0 &1 &0 &1  \\
0 &0 &0 &1 &0 &0 &1 &1 &1  \\
0 &0 &0 &0 &1 &0 &1 &1 &0  \\
0 &0 &0 &0 &0 &1 &0 &1 &1 
\end{array}
\right)\,\mathrm{or}\,
\left(
\begin{array}{c|ccccc|ccc}
1 &0 &0 &0 &0 &0 &1 &a &b  \\\hline
0 &1 &0 &0 &0 &0 &0 &0 &1  \\
0 &0 &1 &0 &0 &0 &1 &1 &0  \\
0 &0 &0 &1 &0 &0 &0 &1 &1  \\
0 &0 &0 &0 &1 &0 &1 &1 &1  \\
0 &0 &0 &0 &0 &1 &1 &0 &1 
\end{array}
\right),
$$
it is obvious that $\minor{3,4,5,6,7,8}=0$.
Then we have the following corollary.

\begin{cor}\label{cor:no existence n>5}
There exists no real characteristic matrix on $C^n(n+3)^*$ for $n>5$.
\end{cor}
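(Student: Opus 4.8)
The plan is to prove the statement by induction on $n\ge 6$, with $n=6$ as the base case and Lemma \ref{lem:(n,m) to (n+1,m+1)} powering the inductive step. The base case will feed directly on the classification of real characteristic matrices over $C^5(8)^*$ obtained in the preceding corollary, together with the two $6\times 9$ matrices displayed just above it.

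For $n=6$ I would argue by contradiction. Suppose $\lambda$ is a real characteristic matrix on $C^6(9)^*$. First I would act by a suitable element of $GL(6,\Z/2)$ so that the first column of $\lambda$ becomes $e_1$; this is possible since that column is a nonzero vector over $\Z/2$. Then $\lambda$ has exactly the shape required by Lemma \ref{lem:(n,m) to (n+1,m+1)}, so the $5\times 8$ submatrix $A$ obtained by deleting the first row and the first column of $\lambda$ is a real characteristic matrix on $C^5(8)^*$. By the preceding corollary there is $B\in GL(5,\Z/2)$ carrying $A$ to one of the two standard matrices listed there; left-multiplying $\lambda$ by the block-diagonal matrix with blocks $1$ and $B$ fixes the first column $e_1$ and the first row while bringing $A$ into standard position, so that the block $A$ of $\lambda$ now equals one of those two matrices — this is precisely the situation of the two $6\times 9$ matrices displayed before the corollary, the first-row entries being irrelevant to what follows. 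In either case the second row of $\lambda$ equals $(0,1,0,0,0,0,0,0,1)$, hence vanishes on the columns indexed by $\{3,4,5,6,7,8\}$, so $\minor{3,4,5,6,7,8}_\lambda=0$. On the other hand $\{3,4,5,6,7,8\}=I_3\sqcup I_5\sqcup I_7$ is a vertex of $C^6(9)^*$ by the dual evenness condition (Theorem \ref{thm:GEC}), so the non-singular condition forces $\minor{3,4,5,6,7,8}_\lambda=\pm1$. This contradiction settles $n=6$.

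For $n>6$ the inductive step merely strips one dimension: given a real characteristic matrix $\lambda$ on $C^n(n+3)^*$, I would again normalize its first column to $e_1$ and apply Lemma \ref{lem:(n,m) to (n+1,m+1)} to produce a real characteristic matrix on $C^{n-1}((n-1)+3)^*$, contradicting the inductive hypothesis since $n-1\ge 6$. Combined with the base case, this gives the corollary for all $n>5$.

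There is no serious obstacle here; the whole argument is a formal consequence of Lemma \ref{lem:(n,m) to (n+1,m+1)} and the already-established classification over $C^5(8)^*$. The only points needing care are bookkeeping: that clearing the first column to $e_1$ and then conjugating the $C^5(8)^*$-block into standard form can both be carried out inside $GL(6,\Z/2)$ without destroying the shape demanded by Lemma \ref{lem:(n,m) to (n+1,m+1)}, and that $\{3,4,5,6,7,8\}$ genuinely is a facet of $C^6(9)^*$ so that the non-singular condition really does apply to that particular minor.
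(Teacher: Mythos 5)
Your proof is correct and follows essentially the same route as the paper: normalize by $GL(n,\Z/2)$ so the first column becomes $e_1$, use Lemma~\ref{lem:(n,m) to (n+1,m+1)} and the classification of $\XX_{C^5(8)^*}$ to reduce the $n=6$ case to the two displayed $6\times 9$ shapes, observe that row $2$ vanishes on columns $3$ through $8$ so $\minor{3,4,5,6,7,8}=0$ even though $\{3,4,5,6,7,8\}=I_3\sqcup I_5\sqcup I_7$ is a maximal face, and then induct on $n$ via the same lemma. The paper leaves this bookkeeping implicit, and your normalization step and induction are exactly what fills that gap.
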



%
%
%

\section{CLASSIFICATION FOR $C^4(7)^*$}
In the previous section, we showed that there are exactly two real characteristic matrices on $C^4(7)^*$ up to the left $GL(4,\Z/2)$-action,
and they are represented by the matrices $\overline{\lambda}$ and $\overline{\lambda'}$ below:
$$\overline{\lambda}=\left(
\begin{array}{cccc|ccc}
1 &0 &0 &0 &1 &0 &1  \\
0 &1 &0 &0 &1 &1 &1  \\
0 &0 &1 &0 &1 &1 &0  \\
0 &0 &0 &1 &0 &1 &1 
\end{array}
\right),\,
\overline{\lambda'}=\left(
\begin{array}{cccc|ccc}
1 &0 &0 &0 &1 &1 &0  \\
0 &1 &0 &0 &0 &1 &1  \\
0 &0 &1 &0 &1 &1 &1  \\
0 &0 &0 &1 &1 &0 &1 
\end{array}
\right).$$
In the way similar to the proof of Lemma \ref{lem:Aut on C3(m)},
it is shown that $\mathrm{Aut}\,(K_{C^4(7)^*})$ is generated by $\sigma$ and $\tau$ below:
$$\sigma=\left(
\begin{array}{ccccccc}
1 &2 &3 &4 &5 &6 &7  \\
2 &3 &4 &5 &6 &7 &1 
\end{array}
\right),\,
\tau=\left(
\begin{array}{ccccccc}
1 &2 &3 &4 &5 &6 &7  \\
7 &6 &5 &4 &3 &2 &1 
\end{array}
\right).$$
Note that $\mathrm{Aut}\,(K_{C^4(7)^*})$ acts on $\XX_{C^4(7)^*}$ by
$\sigma(\overline{\lambda})=\overline{\lambda}$, $\sigma(\overline{\lambda'})=\overline{\lambda'}$ and $\tau(\overline{\lambda})=\overline{\lambda'}$.
Then we have the following proposition.

\begin{prop}\label{prop:smlcvr over C4(7)}
There exists only one small cover over $C^4(7)^*$ up to weakly equivariant homeomorphism.
\end{prop}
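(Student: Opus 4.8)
The plan is to combine the classification of real characteristic matrices obtained in the previous section with the $\mathrm{Aut}\,(K_{C^4(7)^*})$-action and the bijection of Proposition \ref{prop:sc1}. As recalled at the start of this section, $\XX_{C^4(7)^*}$ consists of exactly the two classes $[\overline{\lambda}]$ and $[\overline{\lambda'}]$, and ${}_\R\overline{\phi}$ identifies $\MM_{C^4(7)^*}$ with $\XX_{C^4(7)^*}/\mathrm{Aut}\,(K_{C^4(7)^*})$. Hence it suffices to show that these two classes lie in a single $\mathrm{Aut}\,(K_{C^4(7)^*})$-orbit.

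First I would determine $\mathrm{Aut}\,(K_{C^4(7)^*})$, following the proof of Lemma \ref{lem:Aut on C3(m)}: one checks directly that $\sigma=(1\,2\,3\,4\,5\,6\,7)$ and $\tau=(1\,7)(2\,6)(3\,5)$ are simplicial automorphisms of $K_{C^4(7)^*}$, and then argues, by tracking how the combinatorial structure of $K_{C^4(7)^*}$ constrains the images of the facets under an arbitrary automorphism, that $\sigma$ and $\tau$ generate the whole group. The substantive step is to compute the induced action on $\XX_{C^4(7)^*}$, and in particular to verify $\tau\cdot\overline{\lambda}=\overline{\lambda'}$: I would permute the columns of $\overline{\lambda}$ according to $\tau$, and then bring the resulting matrix back into the normalized shape $(I\,\vert\,*)$ by left-multiplying over $\Z/2$ by the inverse of the $4\times 4$ block sitting in its first four columns; the outcome of this Gaussian elimination is exactly $\overline{\lambda'}$. (A parallel, shorter computation shows that $\sigma$ fixes each of $\overline{\lambda}$ and $\overline{\lambda'}$, although that is not needed for the count.)

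Granting $\tau\cdot\overline{\lambda}=\overline{\lambda'}$, the set $\{[\overline{\lambda}],[\overline{\lambda'}]\}$ forms a single orbit, so $\XX_{C^4(7)^*}/\mathrm{Aut}\,(K_{C^4(7)^*})$ is a one-point set, and Proposition \ref{prop:sc1} yields $\#\MM_{C^4(7)^*}=1$. The only delicate point is the bookkeeping in that last computation — keeping the column-permutation convention straight and running the mod-$2$ row reduction without errors — but there is no conceptual obstacle, since Section 6 already did the real work by enumerating the real characteristic matrices.
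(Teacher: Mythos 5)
Your proposal is correct and follows essentially the same route as the paper: use the Section 6 enumeration $\XX_{C^4(7)^*}=\{[\overline{\lambda}],[\overline{\lambda'}]\}$, identify $\mathrm{Aut}(K_{C^4(7)^*})=\langle\sigma,\tau\rangle$ by the argument of Lemma \ref{lem:Aut on C3(m)}, check $\tau\cdot\overline{\lambda}=\overline{\lambda'}$ by column permutation followed by mod-$2$ Gaussian elimination, and conclude via Proposition \ref{prop:sc1}. The computation you describe does indeed produce $\overline{\lambda'}$, so the proof is complete.
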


Let $p$ denote the map $\X_{C^4(7)^*}\rightarrow \XX_{C^4(7)^*}$ induced from the modulo 2 reduction.
Then we see that $p^{-1}(\overline{\lambda})/\langle \sigma \rangle$ corresponds with $\X_{C^4(7)^*}/\mathrm{Aut}\,(K_{C^4(7)^*})$,
so first we have to list all elements of $p^{-1}(\overline{\lambda})$.
By a direct calculation,
we see that there are exactly twenty-eight elements in $p^{-1}(\overline{\lambda})$, which are represented by the characteristic matrices below:
\begin{align*}
\lambda_1 &=\chmt{0}{-1}{-1}{1}{1}{0}{0}{1}, &\lambda_2 &=\chmt{2}{1}{1}{1}{1}{0}{0}{1},&\lambda_3 &=\chmt{0}{1}{1}{1}{1}{0}{2}{1}, \\
\lambda_4 &=\chmt{2}{1}{1}{3}{1}{0}{2}{3},&\lambda_5 &=\chmt{0}{1}{1}{1}{1}{0}{0}{-1}, &\lambda_6 &=\chmt{2}{1}{1}{3}{1}{0}{0}{1}, \\
\lambda_7 &=\chmt{0}{1}{1}{-1}{1}{0}{2}{1}, &\lambda_8 &=\chmt{2}{1}{1}{1}{1}{0}{2}{3},&\lambda_9 &=\chmt{0}{1}{1}{1}{1}{0}{0}{1}, \\
\lambda_{10} &=\chmt{-2}{-1}{-1}{1}{1}{0}{0}{1},&\lambda_{11} &=\chmt{0}{-1}{1}{1}{1}{0}{0}{1}, &\lambda_{12} &=\chmt{0}{1}{3}{1}{1}{0}{2}{1}, \\
\lambda_{13} &=\chmt{0}{1}{3}{1}{1}{0}{0}{-1}, &\lambda_{14} &=\chmt{0}{1}{-1}{-1}{1}{0}{2}{1},&\lambda_{15} &=\chmt{0}{-1}{1}{1}{1}{0}{0}{-1}, \\
\lambda_{16} &=\chmt{0}{-1}{1}{1}{1}{0}{2}{1},&\lambda_{17} &=\chmt{0}{-1}{-1}{-1}{1}{0}{2}{1}, &\lambda_{18} &=\chmt{2}{3}{1}{1}{1}{0}{0}{1}, \\
\lambda_{19} &=\chmt{0}{1}{1}{1}{3}{2}{0}{1}, &\lambda_{20} &=\chmt{2}{1}{1}{1}{1}{2}{0}{1},&\lambda_{21} &=\chmt{0}{1}{1}{1}{1}{2}{0}{1}, \\
\lambda_{22} &=\chmt{2}{1}{1}{1}{3}{2}{0}{1},&\lambda_{23} &=\chmt{0}{1}{1}{-1}{1}{2}{0}{1}, &\lambda_{24} &=\chmt{2}{1}{1}{3}{3}{2}{0}{1}, \\
\lambda_{25} &=\chmt{0}{1}{1}{1}{1}{2}{2}{1}, &\lambda_{26} &=\chmt{0}{-1}{1}{1}{1}{2}{0}{1},&\lambda_{27} &=\chmt{0}{-1}{1}{1}{1}{2}{2}{1}, \\
\lambda_{28} &=\chmt{2}{3}{1}{1}{1}{2}{0}{1},
\end{align*}
where we mean by $*$ the whole matrix $(I\,\vert\,*)$ as in Proposition \ref{prop:list(3,6)}.

Next, we consider the action of $\sigma$ on $p^{-1}(\overline{\lambda})$.
By a direct calculation, we see that the action is illustrated as follows.
\[\xymatrix{
\lambda_1\ar[r]^{\sigma}&\lambda_9\ar[r]^{\sigma}&\lambda_5\ar[r]^{\sigma}&\lambda_{21}\ar[r]^{\sigma}&
\lambda_3\ar[r]^{\sigma}&\lambda_2\ar[r]^{\sigma}&\lambda_{11}\ar[r]^{\sigma}&\lambda_1
}\]
\[\xymatrix{
\lambda_4\ar[r]^{\sigma}&\lambda_{18}\ar[r]^{\sigma}&\lambda_{13}\ar[r]^{\sigma}&\lambda_{19}\ar[r]^{\sigma}&
\lambda_{17}\ar[r]^{\sigma}&\lambda_{14}\ar[r]^{\sigma}&\lambda_{27}\ar[r]^{\sigma}&\lambda_4
}\]
\[\xymatrix{
\lambda_6\ar[r]^{\sigma}&\lambda_{26}\ar[r]^{\sigma}&\lambda_7\ar[r]^{\sigma}&\lambda_{20}\ar[r]^{\sigma}&
\lambda_{14}\ar[r]^{\sigma}&\lambda_{22}\ar[r]^{\sigma}&\lambda_{16}\ar[r]^{\sigma}&\lambda_6
}\]
\[\xymatrix{
\lambda_8\ar[r]^{\sigma}&\lambda_{28}\ar[r]^{\sigma}&\lambda_{12}\ar[r]^{\sigma}&\lambda_{10}\ar[r]^{\sigma}&
\lambda_{15}\ar[r]^{\sigma}&\lambda_{23}\ar[r]^{\sigma}&\lambda_{25}\ar[r]^{\sigma}&\lambda_8
}\]
Then we obtain the following proposition.

\begin{prop}\label{prop:qtmfd over C4(7) up to w.e.homeo}
Over $C^4(7)^*$, there are exactly four quasitoric manifolds $M(\lambda_9)$, $M(\lambda_{17})$, $M(\lambda_{16})$ and $M(\lambda_{15})$
up to weakly equivariant homeomorphism.
\end{prop}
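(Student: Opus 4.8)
The plan is to leverage the two preceding propositions, which have already done almost all of the work. By Proposition~\ref{prop:smlcvr over C4(7)}, there is a single small cover over $C^4(7)^*$, represented by $\overline{\lambda}$ (and equivalently $\overline{\lambda'}$, since $\tau$ interchanges them). By Proposition~\ref{prop:cls of qtmfds up to weh}, the set $\M_{C^4(7)^*}$ is identified with $\X_{C^4(7)^*}/\mathrm{Aut}\,(K_{C^4(7)^*})$. The first observation I would record is that $p\colon\X_{C^4(7)^*}\rightarrow\XX_{C^4(7)^*}$ is $\mathrm{Aut}\,(K_{C^4(7)^*})$-equivariant, and since $\XX_{C^4(7)^*}=\{[\overline{\lambda}],[\overline{\lambda'}]\}$ is a single $\mathrm{Aut}$-orbit with $\tau$ swapping the two points and $\sigma$ fixing each, the quotient $\X_{C^4(7)^*}/\mathrm{Aut}\,(K_{C^4(7)^*})$ is in bijection with $p^{-1}([\overline{\lambda}])/\mathrm{Stab}([\overline{\lambda}])$. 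Here $\mathrm{Stab}([\overline{\lambda}])$ inside $\mathrm{Aut}\,(K_{C^4(7)^*})=\langle\sigma,\tau\rangle$ is generated by $\sigma$ (as $\sigma(\overline{\lambda})=\overline{\lambda}$) together with any $\mathrm{Aut}$-element composed with the $GL(4,\Z/2)$-action that fixes $[\overline{\lambda}]$; one checks that $\tau$ alone does not stabilize $[\overline{\lambda}]$ but swaps the two classes, so effectively $\M_{C^4(7)^*}\cong p^{-1}([\overline{\lambda}])/\langle\sigma\rangle$.

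Next I would invoke the enumeration already displayed in the excerpt: $p^{-1}(\overline{\lambda})$ (as a subset of $\X_{C^4(7)^*}$, i.e.\ characteristic matrices of the normalized shape $(I\,\vert\,*)$ reducing mod $2$ to $\overline{\lambda}$, taken up to the residual sign-change freedom on columns that preserves the mod $2$ reduction) consists of precisely the twenty-eight classes $[\lambda_1],\ldots,[\lambda_{28}]$. This is the ``direct calculation'' referenced in the text: for each column one solves which integer lifts of the corresponding $\Z/2$-entries, under the constraint of primitivity and the non-singular condition on all fourteen maximal faces of $K_{C^4(7)^*}$, yield an admissible matrix, and then reduces modulo the $GL(4,\Z)$- and even-sign-change actions; I would cite this as a finite check rather than reproduce it. Then the four displayed $\sigma$-orbits partition these twenty-eight classes into exactly four orbits of size seven each, so $\#\big(p^{-1}(\overline{\lambda})/\langle\sigma\rangle\big)=4$, with orbit representatives $\lambda_9$, $\lambda_{17}$, $\lambda_{16}$, $\lambda_{15}$. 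Combining with the bijection of the previous paragraph gives $\#\M_{C^4(7)^*}=4$ with the stated representatives.

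The one genuine obstacle is verifying that the four $\sigma$-orbits are as drawn — in particular that they are disjoint and each has length exactly $7$ (so that $\langle\sigma\rangle$ acts freely, consistent with $\sigma$ having order $7$ in $\mathrm{Aut}\,(K_{C^4(7)^*})$), and that no two of the twenty-eight $\lambda_i$ that land in different $\sigma$-orbits are actually $\X_{C^4(7)^*}$-equivalent (which would collapse orbits). For this I would: (i) fix the explicit column-permutation action of $\sigma=(1\,2\,3\,4\,5\,6\,7)$ on a matrix $(I\,\vert\,*)$, remembering that permuting columns destroys the $(I\,\vert\,*)$ normal form and one must re-normalize by a $GL(4,\Z)$-transformation (clearing the first four columns back to the identity) and then by even sign changes; (ii) apply this composite operation starting from $\lambda_1,\lambda_4,\lambda_6,\lambda_8$ respectively and confirm the cycles close up after seven steps; (iii) note that since $|\langle\sigma\rangle|=7$ is prime, any orbit has size $1$ or $7$, and a size-$1$ orbit would force a $\sigma$-fixed class, which a quick inspection of the entries rules out — hence all orbits have size $7$ and there are exactly $28/7=4$ of them. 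This bookkeeping with the re-normalization step is the part most prone to computational slips, but it is entirely mechanical; everything else follows formally from Propositions~\ref{prop:cls of qtmfds up to weh} and~\ref{prop:smlcvr over C4(7)}.
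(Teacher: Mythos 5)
Your proof is correct and follows essentially the same route as the paper: reduce $\M_{C^4(7)^*}\cong\X_{C^4(7)^*}/\mathrm{Aut}\,(K_{C^4(7)^*})$ to $p^{-1}(\overline{\lambda})/\langle\sigma\rangle$ using that $\XX_{C^4(7)^*}$ has two elements interchanged by $\tau$ and each fixed by $\sigma$, then enumerate the $28$ lifts of $\overline{\lambda}$ and identify the four $\sigma$-orbits. You merely make explicit two points the paper leaves as assertions — that the stabilizer of $[\overline{\lambda}]$ is exactly $\langle\sigma\rangle$ (index-$2$ subgroup of the dihedral group $\langle\sigma,\tau\rangle$), and that every $\sigma$-orbit has size $7$ because $\sigma$ has prime order $7$ and no lift is $\sigma$-fixed.
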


Next, putting $A:=H^*(\lambda_9)$, $B:=H^*(\lambda_{17})$, $C:=H^*(\lambda_{16})$, $D:=H^*(\lambda_{15})$,
let us show that $A,B,C,D$ are not isomorphic to each other as graded rings.
With the notations in Theorem \ref{thm:coh,quasi}, we put $X:=v_5,Y:=v_6,Z:=v_7$.
We can take $[M]:=v_1v_5v_6v_7$ as the fundamental cohomology class.
Indeed, each $v_i$ is the Poincar\'{e} dual of $\pi^{-1}(F_i)$ and
$\pi^{-1}(F_1)$, $\pi^{-1}(F_5)$, $\pi^{-1}(F_6)$, $\pi^{-1}(F_7)$ intersect transversally at the point $\pi^{-1}(v)$,
where $v$ denotes the vertex $F_1\cap F_5\cap F_6\cap F_7$.
Table 1 shows the coefficient of $[M]$ in each monomial.
\begin{table}[h]
\begin{center}
$
\begin{array}{|c||c|c|c|c|c|c|c|c|} \hline
 &X^4 &Y^4 &Z^4 &X^3Y &X^2Y^2 &XY^3 &Y^3Z &Y^2Z^2  \\ \hhline{|=::========|}
A &0 &0 &-2 &0 &0 &1 &0 &0  \\ \hline
B &0 &-4 &0 &0 &0 &1 &2 &0  \\ \hline
C &0 &6 &-2 &0 &0 &-1 &0 &0  \\ \hline
D &0 &-2 &0 &0 &0 &1 &2 &0  \\ \hline \hhline{:========~}
 &YZ^3 &Z^3X &Z^2X^2 &ZX^3 &X^2YZ &XY^2Z &XYZ^2 \\ \hhline{|=::=======|}
A &1 &0 &0 &-1 &1 &-1 &0  \\ \hhline{|--------|}
B &1 &0 &0 &-1 &1 &-1 &0  \\ \hhline{|--------|}
C &1 &0 &0 &-1 &1 &-1 &0  \\ \hhline{|--------|}
D &1 &0 &0 &-1 &1 &-1 &0  \\ \hhline{|--------|}
\end{array}
$
\caption{}
\end{center}
\end{table}

\begin{lem}
$B$ is not isomorphic to $A$, $C$, and $D$.
\end{lem}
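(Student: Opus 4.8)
The statement to prove is that $B = H^*(\lambda_{17})$ is not graded-ring isomorphic to any of $A$, $C$, $D$. The natural invariant to exploit is the set of square classes: for each cohomology ring $R$ with degree-2 part $R^2$, look at the map $R^2 \to R^4$ sending $w \mapsto w^2$, and more precisely study the zero locus $\{w \in R^2 : w^2 = 0\}$ (a quadric in the $3$-dimensional lattice spanned by $X,Y,Z$) together with how the cube map $w \mapsto w^3 \in R^6 \cong \Z$ behaves on it. Since $R^4 \cong \Z^3$ and $R^6 \cong \Z$, and the pairing $R^2 \times R^4 \to R^6$ is Poincaré duality, Table 1 lets me read off $w^2$ and $w^3$ explicitly as quadratic and cubic forms in $(a,b,c)$ where $w = aX+bY+cZ$. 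A graded ring isomorphism must carry the quadratic form $q_B(a,b,c) := $ (the $R^4$-valued square, or equivalently the scalar cubic $w^3$ seen as a form) to that of the target, up to the $GL(3,\Z)$ change of basis and the sign ambiguity $\phi \leftrightarrow -\phi$.

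\textbf{Key steps, in order.} First, I would write down, for each of $A,B,C,D$, the cubic form $C_R(a,b,c) := \langle w^3, [M]\rangle$ where $w = aX+bY+cZ$, by expanding $(aX+bY+cZ)^3$ and reading coefficients from Table 1; these are integral ternary cubic forms, and an isomorphism of rings induces a $GL(3,\Z)$-equivalence of these cubics up to overall sign. Second, rather than classify ternary cubics (hard), I would extract a cheap invariant: restrict attention to the ``null cone'' $N_R = \{w : w^2 = 0 \text{ in } R^4\}$. Computing $w^2$ from Table 1 gives three quadratic equations in $(a,b,c)$ (one for each basis vector of $R^4$); solving them over $\Z$ (or over $\Q$, or mod small primes) describes $N_R$. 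The claim I expect to verify is that $N_B$ is qualitatively different from $N_A$, $N_C$, $N_D$ — e.g. $N_B$ contains a line (or a specific number of primitive classes) that the others do not, or the cube map is identically zero on some sub-line of $N_B$ but not of the others. Third, if the null cones alone do not separate $B$ from all three, I would refine by looking at the value $w^3$ on the primitive null vectors, or at the $2$-adic or $3$-adic reduction of $C_R$ (note the entries $-4$ versus $-2$ versus $6$ in the $Y^4$ column already distinguish $A,B,C,D$ pairwise by the value $Y^4$, but $Y$ is not canonical, so one must instead use the value of the cubic on the canonically-defined null locus). In practice the quickest concrete route is: show $Y \in N_B$ with $Y^3 = 2 YZ^3 \ne 0$ wait — one must be careful, since $Y^2 = -4X^4\cdot(\ldots)$; I would instead identify the primitive classes $w$ with $w^2 = 0$ exactly, then compare the finite list of values $\{w^3\}$ on them, which is a ring isomorphism invariant (up to sign).

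\textbf{Main obstacle.} The real work is the combinatorial bookkeeping: from Table 1 one must correctly assemble $w^2 \in R^4 \cong \Z^3$ as a vector-valued quadratic form and $w^3 \in \Z$ as a scalar cubic, then solve the resulting system $w^2 = 0$ over $\Z$ for each of the four rings. I expect $N_A, N_C, N_D$ to each be (up to sign) a single primitive null vector or an empty/one-dimensional set, while $N_B$ is visibly larger or carries a different cube-value profile; pinning down exactly which feature is the clean separating invariant — and checking it is genuinely basis-independent — is the crux. A secondary subtlety is the sign ambiguity: since $-\phi$ is also an isomorphism and it negates $R^6 \cong \Z$, every numerical invariant must be taken only up to an overall sign, so I will phrase the final contradiction as ``the invariant of $B$ is not $\pm$ the invariant of $A$ (resp.\ $C$, resp.\ $D$)''. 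Once the null-locus computation is done, each of the three non-isomorphisms $B \not\cong A$, $B \not\cong C$, $B \not\cong D$ follows by a one-line comparison.
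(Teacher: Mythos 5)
There is a fundamental dimensional error that undermines the whole plan. The rings $A,B,C,D$ are cohomology rings of quasitoric manifolds over $C^4(7)^*$, a simple $4$-polytope, so the manifolds are $8$-dimensional. Consequently $H^2\cong\Z^3$ pairs under Poincar\'e duality with $H^6\cong\Z^3$, $H^4\cong\Z^6$ is self-dual, and the fundamental class lives in $H^8\cong\Z$. Your assertions ``$R^4\cong\Z^3$, $R^6\cong\Z$'' describe a $6$-manifold (the $C^3(6)^*$ case handled earlier in the paper). In particular, Table~1 records the coefficient of the degree-$8$ generator $[M]=v_1v_5v_6v_7$ in the degree-$8$ monomials $X^4,Y^4,\dots,XYZ^2$, so the invariant to read off is the quartic $w\mapsto w^4\in H^8\cong\Z$, not the cubic $w\mapsto w^3$ (which is vector-valued in $\Z^3$), and the null cone of $w\mapsto w^2$ lives in $\Z^6$, not $\Z^3$. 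The garbled sentence in your third step, where you try to use $Y^3$ as a scalar, is a symptom of this confusion.

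The paper's proof is both simpler and decisively different: it shows directly that in $B$ every fourth power $(aX+bY+cZ)^4$ is divisible by $4$ in $H^8(B)\cong\Z$, because in the multinomial expansion of $(aX+bY+cZ)^4$ every monomial whose multinomial coefficient is not already a multiple of $4$ (namely $X^4,Y^4,Z^4,X^2Y^2,Y^2Z^2,Z^2X^2$) evaluates in $B$ to a multiple of $4$ (indeed $X^4=Z^4=X^2Y^2=Y^2Z^2=Z^2X^2=0$ and $Y^4=-4[M]$ by Table~1), while the remaining terms carry coefficients $4$ or $12$. On the other hand $A$, $C$, $D$ each possess a degree-$2$ class whose fourth power is $\pm 2[M]$ (take $Z$ for $A$ and $C$, $Y$ for $D$). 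Since a graded ring isomorphism $A\to B$ would send $[M]_A$ to $\pm[M]_B$ and hence $Z^4=-2[M]_A$ to $\mp 2[M]_B$, which is not $\equiv 0\bmod 4$, we get a contradiction; likewise for $C$ and $D$. Your proposal, even after correcting the dimensions, never commits to a concrete invariant and does not establish that the null locus of $w\mapsto w^2$ actually distinguishes $B$ from the others; you would need to redo the work with the quartic (or some genuine invariant) and verify the separation holds. In short, the gap is not a small one: the setup is in the wrong degree, the proposed invariants are not the ones that do the job, and no computation is actually carried out.
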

\begin{proof}
If we assume that there exists a graded ring isomorphism $\phi\colon A\rightarrow B$,
we have $\pm 2[M]=\phi(Z^4)=\phi(Z)^4=(aX+bY+cZ)^4$ for some integers $a,b,c$.
However, since $X^4=Y^4=Z^4=X^2Y^2=Y^2Z^2=Z^2X^2=0$ in $B$, we see that $(aX+bY+cZ)^4\equiv 0 \bmod 4$.
This is a contradiction.
We can prove the lemma similarly for $C$ and $D$.
\end{proof}

\begin{lem}
$A$ and $C$ are not isomorphic to $D$.
\end{lem}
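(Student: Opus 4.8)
The plan is to distinguish $A$ and $C$ from $D$ by finding a ring-theoretic invariant that separates them, using the data already collected in Table 1. The natural invariant is the set of fourth powers of degree-two classes, viewed modulo small integers. For $D=H^*(\lambda_{15})$, the table shows $Y^4=-2[M]$ while $X^4=Z^4=0$ and all the ``mixed square'' monomials $X^2Y^2$, $Y^2Z^2$, $Z^2X^2$ have coefficient $0$; one checks directly from $\I_{\lambda_{15}}$ that in fact $X^2Y^2=Y^2Z^2=Z^2X^2=0$ in $D$ (not merely that their top coefficients vanish). Consequently, for any $v=aX+bY+cZ$, expanding $v^4$ and discarding the monomials that vanish in $D$, one gets $v^4 \equiv (\text{something}) \cdot [M]$ where the coefficient is divisible by $2$ — more precisely $v^4 = (-2b^4 + (\text{terms with a factor }4\text{ or involving only }Y^3Z, YZ^3\text{-type monomials}))[M]$, and a short computation shows $v^4$ is always even in $D$, indeed $v^4 \equiv 2b^4 \equiv 0$ or $2 \bmod 4$. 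By contrast, in $A$ we have $X^3Z = -[M]$ (the coefficient of $ZX^3$ is $-1$) together with $Z^2X^2 = 0$; one must check whether the remaining monomials allow some $v$ with $v^4$ odd or $\equiv \pm 1 \bmod$ something. The cleanest line is: in $A$, compute $\phi(X)^4$ constraints by matching against the requirement $X^4 = 0$, $Z^4 = -2[M]$, $XY^3 = [M]$ — the presence of $XY^3 = [M]$ with odd coefficient is what $D$ lacks.

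Concretely, first I would write down, from Theorem \ref{thm:cohomology,particular} applied to $\lambda_{15}$, explicit relations in $D$ among the degree-four monomials, enough to confirm $X^2Y^2 = Y^2Z^2 = Z^2X^2 = 0$ in $D$ and to express every degree-four monomial as an integer multiple of $[M]$ (this just re-derives row $D$ of Table 1, plus the vanishing of the three ``square'' monomials as genuine ring relations). Then for an arbitrary degree-two element $v = aX+bY+cZ$, I would expand $v^4 = \sum \binom{4}{i,j,k} a^ib^jc^k X^iY^jZ^k$ in $D$, substitute the relations, and observe that every surviving term carries an even coefficient, so $v^4 \in 2D^8$ for all $v \in D^2$. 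Second, I would do the same bookkeeping in $A$ (resp. $C$): I claim there exists $v \in A^2$ with $v^4 \notin 2A^8$ — in fact $v = X$ already has $X^4 = 0$, so that is not it; instead one wants $v$ with $v^4 = \pm[M]$ or an odd multiple. Here the monomial $XY^3 = [M]$ is the key: taking $v = X+Y$ and computing $v^4$ in $A$ using Table 1 and the relations, the coefficient of $[M]$ will be $4\cdot(\text{stuff}) + \binom{4}{1}XY^3\text{-coefficient} + \ldots$; I need to pin down this value mod $2$ and show it can be made odd (if $X+Y$ itself does not work, some $v = aX+bY$ with $a,b$ odd will, since $(aX+bY)^4 \equiv 4a^3bX^3Y + 4ab^3XY^3 + \ldots$ — wait, these are all $\equiv 0 \bmod 4$, so this approach needs $X^4$ or $Y^4$ to be nonzero).

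Reconsidering: since in $A$ all of $X^4, Y^4$ vanish and only $Z^4 = -2[M]$ is a nonzero pure fourth power, the invariant ``$v^4 \bmod 2$ over all $v$'' is actually trivial in $A$ too, so I would instead use the finer invariant: the image of the squaring map $D^2 \to D^4$, or the rank/discriminant of the symmetric bilinear form $D^2 \times D^2 \to D^4 \to \Z$ given by $(v,w) \mapsto$ (coefficient of $v^2w^2$ in terms of $[M]$) — this is essentially the quadratic form attached to the cohomology ring. The cleanest separation is probably via Poincaré duality: the pairing $D^2 \times D^6 \to \Z$ combined with the cubic form $D^2 \to D^6 \cong \Z$, $v \mapsto v^3$. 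Compute the cubic form $c_M(v) = v^3/[M]$ for $M = M(\lambda_9), M(\lambda_{16}), M(\lambda_{15})$ as an integral cubic in $(a,b,c)$; two quasitoric manifolds with isomorphic cohomology must have equivalent cubic forms under $GL(3,\Z)$. I would then exhibit an integral invariant of the cubic form (e.g. its discriminant, or the gcd of values, or the number of primitive zeros mod a small prime) that differs between $\{A, C\}$ and $D$. The main obstacle is this last step: extracting a computable $GL(3,\Z)$-invariant of the cubic form that cleanly distinguishes the cases — raw cubic-form equivalence over $\Z$ is subtle, so I would look for the simplest discriminating feature, likely the behavior of the cubic modulo $2$ or $3$, or the structure of the linear forms $v$ with $v^2 = 0$ in $D^4$ (the ``square-zero locus''), which Table 1 suggests is larger for $D$ than for $A$ or $C$. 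Once a single such invariant is found, the proof is a one-line comparison.
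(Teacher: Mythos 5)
Your proposal does not reach a proof, and you say as much yourself. Let me pin down where it goes wrong.

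Your first attempt — testing whether some $v\in D^2$ has $v^4$ odd — correctly computes that $v^4/[M]$ is always even in $D$, but you then check and find the same is true in $A$ (all nonzero top coefficients of $v^4$ in Table~1 are even or appear with a factor $4$ or $12$), so this invariant does not separate $A$ from $D$. You correctly abandon it.

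Your second attempt contains a dimensional error and is left unfinished. A quasitoric manifold over $C^4(7)^*$ has real dimension $8$, so $D^6\cong\Z^3$ by Poincar\'e duality, not $\Z$; there is no ``cubic form $D^2\to D^6\cong\Z$, $v\mapsto v^3$,'' and $v^3/[M]$ is not a scalar. The correct invariant on which you began is the quartic form $v\mapsto v^4/[M]\colon D^2\to\Z$, and you seem to have momentarily slipped into the $6$-dimensional setting of Section~5. More importantly, even after correcting this, you never produce the discriminating invariant: you explicitly defer the decisive step (``extracting a computable $GL(3,\Z)$-invariant\ldots is subtle,'' ``Once a single such invariant is found, the proof is a one-line comparison''). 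The paper's actual proof is \emph{not} a one-line invariant comparison. It works in $A/(3)$ and $D/(3)$, assumes a graded isomorphism $\phi$, uses the relations $\phi(X)^4=0$ and $\phi(Y)^4=0$ (since $X^4=Y^4=0$ in $A$) to reduce the images $v_1=\phi(X)$, $v_2=\phi(Y)$ to a finite list of vectors in $(\Z/3)^3$, and then eliminates each case using $\phi(X^3Y)=0$, $\phi(Z^3X)=\phi(Y^3Z)=0$, and $\phi(Z^4)\neq0$. This is a genuine multi-step elimination that exploits the full degree-$8$ multiplication table, not just a numerical invariant of the quartic form. Your proposal is missing this case analysis; it is a plan that stalls precisely at the point where the real work happens.
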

\begin{proof}
It suffices to show that there exists no graded ring isomorphism from $A/(3)$ and $C/(3)$ to $D/(3)$.
Assume that there exists a graded ring isomorphism $\phi\colon A/(3)\rightarrow D/(3)$,
and take vectors $v_i=(a_i,b_i,c_i)\in (\Z/3)^3$ for $i=1,2,3$ such that $$\left(
\begin{array}{c}
\phi(X) \\
\phi(Y) \\
\phi(Z) 
\end{array}
\right)=\left(
\begin{array}{ccc}
a_1 &b_1 &c_1  \\
a_2 &b_2 &c_2  \\
a_3 &b_3 &c_3 
\end{array}
\right)\left(
\begin{array}{c}
X \\
Y \\
Z 
\end{array}
\right).$$
From $\phi(X)^4=0$, we obtain the equation $2a_1c_1+a_1b_1+b_1^2=0$.
Similarly, from $\phi(Y)^4=0$, we obtain $2a_2c_2+a_2b_2+b_2^2=0$.
From a direct calculation, we see that $$v_i= \pm(0,0,1),\pm(1,0,0),\pm(1,1,2),\pm(1,2,0)$$  for $i=1,2$.
\begin{itemize}
 \item If $v_1=\pm (0,0,1)$, from $\phi(X^3Y)=0$ and the list above, we have $v_2=\pm (1,0,0)$.
 Then, from $\phi(Z^3X)=\phi(Y^3Z)=0$, we obtain $v_3=\pm (1,2,0)$,
 but this is a contradiction since $0\neq \phi(Z^4)=\pm (X+2Y)^4=0$.
 \item If $v_1=\pm (1,0,0)$, from $\phi(X^3Y)=0$ and the list above, we have $v_2=\pm (1,2,0)$.
 Then, from $\phi(Z^3X)=\phi(Y^3Z)=0$, we obtain $v_3=\pm (0,0,1)$,
 but this is a contradiction since $0\neq \phi(Z^4)=\pm Z^4=0$.
 \item If $v_1=\pm (1,1,2)$, from $\phi(X^3Y)=0$ and the list above, we have $v_2=\pm (1,1,2)$,
 but this contradicts the linear independence of $v_1$ and $v_2$.
 \item If $v_1=\pm (1,2,0)$, from $\phi(X^3Y)=0$ and the list above, we have $v_2=\pm (0,0,1)$.
 Then, from $\phi(Z^3X)=\phi(Y^3Z)=0$, we obtain $v_3=\pm (1,0,0)$,
 but this is a contradiction since $0\neq \phi(Z^4)=\pm X^4=0$.
\end{itemize}

This proof also works when we consider $C$ instead of $A$.
\end{proof}

\begin{lem}
$A$ and $C$ are not isomorphic as graded rings.
\end{lem}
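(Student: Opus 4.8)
The plan is to prove the slightly stronger statement that the mod-$3$ reductions $A/(3)$ and $C/(3)$ are not isomorphic as graded rings, which immediately gives the lemma. The invariant to use is the number $\#S(R)$ of degree-two classes with vanishing fourth power, where $S(R):=\{\,w\in R^2\mid w^4=0\,\}$: a graded ring isomorphism $\phi$ is additive and satisfies $\phi(w)^4=\phi(w^4)$, so, being bijective, it restricts to a bijection $S(A/(3))\to S(C/(3))$. Hence it suffices to show that these two sets have different cardinalities.

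To make $S(\cdot)$ computable I would pass to the fundamental class. With $X=v_5$, $Y=v_6$, $Z=v_7$ and $[M]=v_1v_5v_6v_7$ as before, $H^8$ is generated by $[M]$, so $w^4=q(a,b,c)\,[M]$ for $w=aX+bY+cZ$, where $q$ is a ternary quartic form whose coefficients are read off Table 1:
\[ q_A=-2c^4+4ab^3+4bc^3-4a^3c+12a^2bc-12ab^2c,\qquad q_C=6b^4-2c^4-4ab^3+4bc^3-4a^3c+12a^2bc-12ab^2c. \]
Thus $S(R/(3))$ is the vanishing locus of $q\bmod 3$ in $(\Z/3)^3$, and since $q(\lambda w)=q(w)$ for $\lambda\in(\Z/3)^\times$ it consists of $0$ together with two vectors for each projective zero of $q\bmod 3$ in $\mathbb{P}^2(\F_3)$.

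The remaining step is a finite verification. Modulo $3$ one has $q_A\equiv c^4+ab^3+bc^3+2a^3c$ and $q_C\equiv c^4+2ab^3+bc^3+2a^3c$, and evaluating these at the thirteen points of $\mathbb{P}^2(\F_3)$ shows that $q_A$ vanishes at exactly four of them while $q_C$ vanishes at exactly seven. (As functions on $(\Z/3)^3$ these quartics coincide with quadratic forms, $q_A$ nondegenerate and $q_C$ of rank two, so their projective zero loci are a smooth conic and a pair of lines.) Hence $\#S(A/(3))=9\neq 15=\#S(C/(3))$, so no graded ring isomorphism $A/(3)\to C/(3)$ exists, and therefore $A$ and $C$ are not isomorphic.

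I expect this argument to be mostly bookkeeping once the invariant is chosen: the only delicate points are transcribing $q$ correctly from Table 1 and enumerating the zeros over $\mathbb{P}^2(\F_3)$, both routine. What is genuinely new compared with the previous lemmas is the invariant itself — here $A$ and $C$ share Betti numbers and even agree on which of $X$, $Y$, $Z$ have vanishing fourth power, so one is forced to count all degree-two classes killed by the fourth power rather than just the coordinate ones.
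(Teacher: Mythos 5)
Your argument is correct, and it is genuinely different from (and cleaner than) the paper's proof. You compute the same quartic forms $q_A$ and $q_C$ that implicitly underlie the paper's argument — indeed, the paper's list of seven vectors $v_i$ up to sign (plus the origin) is exactly your $15$ affine zeros of $q_C$ — but where the paper then launches a case-by-case analysis of the possible images of $X$, $Y$, $Z$ under a putative isomorphism (using $\phi(X^4)=\phi(Y^4)=\phi(X^3Y)=\cdots=0$ and the linear independence of the $v_i$ to run each case to contradiction), you instead observe that the cardinality of $S(R)=\{w\in R^2\mid w^4=0\}$ is itself a graded-ring invariant, so the inequality $9\neq 15$ already finishes the job. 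Your approach avoids the case analysis entirely and is conceptually tidier; the paper's is more elementary in the sense of not requiring the reader to recognize $\#S$ as an invariant, but it costs more bookkeeping. The one point worth making explicit is that $\#S(R/(3))$ is computable as the number of $\mathbb{F}_3$-points of a single quartic form precisely because $H^8$ is free of rank one (Poincar\'e duality for the oriented $8$-manifold), so $w^4=0$ in $R^8\otimes\mathbb{F}_3$ is detected by the vanishing of the coefficient of $[M]$; you use this tacitly when you "pass to the fundamental class." The arithmetic (via $t^3=t$ over $\mathbb{F}_3$, $q_A$ reducing to a nondegenerate quadratic with $9$ affine zeros and $q_C$ to a rank-two quadratic with $15$) checks out.
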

\begin{proof}
We prove that there exists no graded ring isomorphism from $A/(3)$ to $C/(3)$.
Let us assume that there exists a graded ring isomorphism $\phi\colon A/(3)\rightarrow C/(3)$.
With the same notations as the proof of the previous lemma, we obtain the equations $c_i^2+2a_ib_i+b_ic_i+2a_ic_i=0$ for $i=1,2$ from $\phi(X^4)=\phi(Y^4)=0$.
Then we see that $$v_i= \pm(0,1,0),\pm(0,1,2),\pm(1,0,0),\pm(1,0,1),\pm(1,1,1),\pm(1,1,2),\pm(1,2,1)$$ for $i=1,2$.
\begin{itemize}
 \item If $v_1=\pm (0,1,0)$, from $\phi(X^3Y)=0$ and the list above, we have $v_2=\pm (0,1,2)$.
 Then, from $\phi(Z^3X)=\phi(Y^3Z)=0$, we obtain $v_3=\pm (1,2,0)$,
 but this is a contradiction since $0= \phi(Z^2X^2)\neq 0$.
 This proof also works in the case that $v_1=\pm (1,1,1)$.
 \item If $v_1=\pm (0,1,2)$, from $\phi(X^3Y)=0$ and the list above, we have $v_2=\pm (1,1,1)$.
 Then there occurs a contradiction that $0=\phi(X^2Y^2)\neq 0$.
 This proof also works in the cases that $v_1=\pm (1,0,1),\ \pm (1,1,2)$.
 \item If $v_1=\pm (1,0,0)$, from $\phi(X^3Y)=0$ and the list above, we have $v_2=(0,b,0)$ for $b=\pm 1$.
 Then, from $\phi(Z^3X)=\phi(Y^3Z)=0$, we obtain $v_3=\pm (0,0,c)$ for $c=\pm 1$.
 Besides, from $Z^4=XY^3=YZ^3=2ZX^3$ in $A/(3)$, we have $1=2b=bc=c$.
 However, this equation has no root.
 \item If $v_1=\pm (1,2,1)$, from $\phi(X^3Y)=0$ and the list above, we have $v_2=\pm (1,2,1)$.
 This contradicts the linear independence of $v_1$ and $v_2$.
\end{itemize}
Hence we see that there exists no isomorphism $A/(3)\rightarrow C/(3)$.
\end{proof}

Thus we complete the topological classification of quasitoric manifolds over $C^4(7)^*$.

\begin{thm}\label{thm:qtmfd over C4(7) up to homeo}
Any quasitoric manifold over $C^4(7)^*$ is homeomorphic to $M(\lambda_9)$, $M(\lambda_{17})$, $M(\lambda_{16})$ or $M(\lambda_{15})$,
and their cohomology rings are not isomorphic.
\end{thm}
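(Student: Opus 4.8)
The plan is to combine the classification up to weakly equivariant homeomorphism with the three preceding lemmas. By Proposition \ref{prop:qtmfd over C4(7) up to w.e.homeo}, $\M_{C^4(7)^*}$ has exactly four elements, represented by $M(\lambda_9)$, $M(\lambda_{17})$, $M(\lambda_{16})$, $M(\lambda_{15})$; since the forgetful maps $\M_{C^4(7)^*}\to\M_{C^4(7)^*}^{\rm homeo}\to\M_{C^4(7)^*}^{\rm coh}$ are surjective, the latter two sets have at most four elements each. On the other hand, the three lemmas above say that $A=H^*(\lambda_9)$, $B=H^*(\lambda_{17})$, $C=H^*(\lambda_{16})$, $D=H^*(\lambda_{15})$ are pairwise non-isomorphic graded rings, so the four manifolds represent four distinct classes already in $\M_{C^4(7)^*}^{\rm coh}$. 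Hence all three cardinalities equal $4$ and both maps are bijections; in particular no two of the four listed manifolds are homeomorphic, and every quasitoric manifold over $C^4(7)^*$ is weakly equivariantly homeomorphic, hence homeomorphic, to one of them. Together with the non-isomorphism of the four cohomology rings, this is exactly the assertion.

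The real content is thus in the three lemmas, whose proofs I would organize around explicit models of the rings. By Theorem \ref{thm:cohomology,particular} with $X=v_5$, $Y=v_6$, $Z=v_7$, each of $A,B,C,D$ is a quotient of $\Z[X,Y,Z]$ by the ideal read off the missing faces of $K_{C^4(7)^*}$, with one-dimensional top piece spanned by $[M]=v_1v_5v_6v_7$ (the facets $F_1,F_5,F_6,F_7$ meet at a vertex). The essential bookkeeping is Table 1: the coefficient of $[M]$ in each of the fifteen degree-eight monomials in $X,Y,Z$, for each ring. A hypothetical graded isomorphism $\phi$ sends degree-two generators to integer linear combinations of $X,Y,Z$ and the top class to $\pm[M]$, and one plays the identities $\phi(w^4)=\phi(w)^4$ and $\phi(w^2w')=\phi(w)^2\phi(w')$ against this table. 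First I would isolate $B$: Table 1 shows that in $B$ one has $w^4\equiv 0\bmod 4$ for every $w=aX+bY+cZ$ (the surviving multinomial coefficients, and $Y^4$ itself, are all divisible by $4$), whereas in each of $A$, $C$, $D$ some generator has fourth power $\pm 2[M]$; hence $A,C,D\not\cong B$. Next, $D$ is separated from $A$ and from $C$ modulo $3$: the equations $\phi(X)^4=\phi(Y)^4=0$ in $D/(3)$ confine the images of $X$ and $Y$ to a short explicit list of vectors in $(\Z/3)^3$, after which $\phi(X^3Y)=\phi(Z^3X)=\phi(Y^3Z)=0$ together with $\phi(Z^4)\neq 0$ dispatch every case.

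The \textbf{main obstacle} is $A\not\cong C$. These two rings have equal graded dimensions (Theorem \ref{thm:Betti numbers}), and the simple invariants used above do not separate them — both contain a generator with fourth power $\pm 2[M]$ — so one is forced into a complete mod-$3$ case analysis: enumerate the possible images of $X,Y$ in $(\Z/3)^3$ from $\phi(X^4)=\phi(Y^4)=0$, and then use the relations $Z^4=XY^3=YZ^3=2ZX^3$ in $A/(3)$ against the non-vanishing of $Z^2X^2$ in $C/(3)$ to exhaust the finitely many remaining configurations. Once Table 1 is assembled, everything else is a finite, if tedious, computation.
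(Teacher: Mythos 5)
Your proposal matches the paper's proof essentially step for step: quote Proposition~\ref{prop:qtmfd over C4(7) up to w.e.homeo} for the four classes in $\M_{C^4(7)^*}$, observe that the surjections $\M_{C^4(7)^*}\to\M^{\rm homeo}_{C^4(7)^*}\to\M^{\rm coh}_{C^4(7)^*}$ become bijections once the four cohomology rings are shown pairwise non-isomorphic, and isolate $B$ by a mod-$4$ fourth-power argument, then $D$ from $A,C$ and finally $A$ from $C$ by mod-$3$ case analyses against Table~1. One small point where your phrasing is actually \emph{more} careful than the paper's: the paper's lemma asserts ``$X^4=Y^4=Z^4=X^2Y^2=Y^2Z^2=Z^2X^2=0$ in $B$'', but Table~1 gives $Y^4=-4[M]\neq 0$; what is true (and what you write) is that every relevant coefficient is divisible by $4$, so $w^4\equiv 0\bmod 4$ for all $w\in H^2(B)$ while $A,C,D$ each have a generator with fourth power $\pm 2[M]$. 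That clarification aside, this is the same argument as the paper's, not a different route.
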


%
%
%

\section{CLASSIFICATION FOR $C^5(8)^*$}
In Section 6, we showed that $\XX_{C^5(8)^*}$ has exactly two elements and they are represented by the real characteristic matrices below:
$$\left(
\begin{array}{ccccc|ccc}
1 &0 &0 &0 &0 &0 &0 &1  \\
0 &1 &0 &0 &0 &1 &0 &1  \\
0 &0 &1 &0 &0 &1 &1 &1  \\
0 &0 &0 &1 &0 &1 &1 &0  \\
0 &0 &0 &0 &1 &0 &1 &1 
\end{array}
\right),
\left(
\begin{array}{ccccc|ccc}
1 &0 &0 &0 &0 &0 &0 &1  \\
0 &1 &0 &0 &0 &1 &1 &0  \\
0 &0 &1 &0 &0 &0 &1 &1  \\
0 &0 &0 &1 &0 &1 &1 &1  \\
0 &0 &0 &0 &1 &1 &0 &1 
\end{array}
\right).$$
As in the previous section, we denote these matrices by $\overline{\lambda}$, $\overline{\lambda'}$
and the map $\X_{C^5(8)^*}\rightarrow \XX_{C^5(8)^*}$ by $p$.
It is shown in the same way as Lemma \ref{lem:Aut on C3(m)} that $\mathrm{Aut}\,(K_{C^5(8)^*})$ is generated by $\sigma$ and $\tau$ below:
$$\sigma=\left(
\begin{array}{cccccccc}
1 &2 &3 &4 &5 &6 &7 &8  \\
8 &2 &3 &4 &5 &6 &7 &1 
\end{array}
\right),\,
\tau=\left(
\begin{array}{cccccccc}
1 &2 &3 &4 &5 &6 &7 &8  \\
8 &7 &6 &5 &4 &3 &2 &1 
\end{array}
\right).$$
They act on $\XX_{C^5(8)^*}$ as
$\sigma(\overline{\lambda})=\overline{\lambda}$, $\sigma(\overline{\lambda'})=\overline{\lambda'}$ and $\tau(\overline{\lambda})=\overline{\lambda'}$.
Then we obtain the following proposition.

\begin{prop}\label{prop:small cover over C5(8)}
There exists only one small cover over $C^5(8)^*$ up to weakly equivariant homeomorphism.
\end{prop}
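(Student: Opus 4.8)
The plan is to read off the proposition from Proposition~\ref{prop:sc1} together with the description of $\XX_{C^5(8)^*}$ and of the $\mathrm{Aut}(K_{C^5(8)^*})$-action that precede the statement. By Proposition~\ref{prop:sc1} the map ${}_\R\overline{\phi}$ is a bijection from $\XX_{C^5(8)^*}/\mathrm{Aut}(K_{C^5(8)^*})$ onto $\MM_{C^5(8)^*}$, so it suffices to show that $\mathrm{Aut}(K_{C^5(8)^*})$ acts transitively on the set $\XX_{C^5(8)^*}$.

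Carrying this out: in Section~6 we established that $\XX_{C^5(8)^*}$ has exactly the two elements represented by $\overline{\lambda}$ and $\overline{\lambda'}$, and just above we recorded that $\mathrm{Aut}(K_{C^5(8)^*})$ is generated by $\sigma$ and $\tau$ with $\sigma(\overline{\lambda})=\overline{\lambda}$, $\sigma(\overline{\lambda'})=\overline{\lambda'}$, and $\tau(\overline{\lambda})=\overline{\lambda'}$. Thus $\tau$ interchanges the two classes, the orbit set $\XX_{C^5(8)^*}/\mathrm{Aut}(K_{C^5(8)^*})$ consists of a single point, and hence $\#\MM_{C^5(8)^*}=1$. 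The written-out proof can therefore be extremely short, of the form ``it follows from Proposition~\ref{prop:sc1} and the above'', exactly as for $C^4(7)^*$ in Proposition~\ref{prop:smlcvr over C4(7)}.

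The one non-formal ingredient, and the only real obstacle (a mild one), is the identity $\tau(\overline{\lambda})=\overline{\lambda'}$ in $\XX_{C^5(8)^*}$: after permuting the columns of $\overline{\lambda}$ by $\tau$ one must exhibit a matrix in $GL(5,\Z/2)$ transforming the result into $\overline{\lambda'}$. This is a finite linear-algebra verification over $\Z/2$ of the same nature as the enumeration of $\XX_{C^5(8)^*}$ in Section~6, so no genuine difficulty is anticipated; the remainder of the argument is purely formal bookkeeping via Proposition~\ref{prop:sc1}.
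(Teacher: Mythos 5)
Your proposal is correct and follows the paper's reasoning exactly: the paper derives the proposition from Proposition~\ref{prop:sc1} combined with the preceding observation that $\tau(\overline{\lambda})=\overline{\lambda'}$, so that $\mathrm{Aut}(K_{C^5(8)^*})$ acts transitively on the two-element set $\XX_{C^5(8)^*}$. No substantive difference from the paper's argument.
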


We also see that $p^{-1}(\overline{\lambda})/\langle\sigma\rangle$ corresponds with $\X_{C^5(8)^*}/\mathrm{Aut}\,(K_{C^5(8)^*})$.
Any element in $p^{-1}(\overline{\lambda})$ is represented by a characteristic matrix in the form
$$(\lambda_k;a,b):=
\arraycolsep5pt
\left(
\begin{array}{@{\,}c|ccccccc@{\,}}
1 &0 &0 &0 &0 &a &b &1\\
\hline
0&&&&&&&\\
0&\multicolumn{7}{c}{\raisebox{-10pt}[0pt][0pt]{\Huge $\lambda_k$}}\\
0&&&&&&&\\
0&&&&&&&\\
\end{array}
\right),$$
where $a$, $b$ are even integers and $\lambda_k\ (k=1,2,\ldots,28)$ is the characteristic matrix on $C^4(7)^*$ in the previous section.
There are sixty-four characteristic matrices in this form, and the action of $\sigma$ is illustrated as follows:

%
%
\[\xymatrix{&&&&(\lambda_1;0,2)\ar[d]^{\sigma}\\
(\lambda_1;0,0)\ar@(ur,ul)_{\sigma}&(\lambda_2;0,0)\ar@(ur,ul)_{\sigma}&(\lambda_3;0,0)\ar@(ur,ul)_{\sigma}&(\lambda_4;0,0)\ar@(ur,ul)_{\sigma}&
(\lambda_2;0,2)\ar[u]\\
(\lambda_1;-2,0)\ar[d]^{\sigma}&(\lambda_1;-2,-2)\ar[d]^{\sigma}&
(\lambda_2;2,2)\ar[d]^{\sigma}&(\lambda_2;2,4)\ar[d]^{\sigma}&(\lambda_3;0,-2)\ar[d]^{\sigma} \\
(\lambda_3;2,0)\ar[u]&(\lambda_4;2,2)\ar[u]&(\lambda_3;2,2)\ar[u]&(\lambda_4;2,4)\ar[u]&(\lambda_4;0,2)\ar[u]
}\]
\[\xymatrix{&&&&(\lambda_5;0,-2)\ar[d]^{\sigma}\\
(\lambda_5;0,0)\ar@(ur,ul)_{\sigma}&(\lambda_6;0,0)\ar@(ur,ul)_{\sigma}&(\lambda_7;0,0)\ar@(ur,ul)_{\sigma}&(\lambda_8;0,0)\ar@(ur,ul)_{\sigma}&
(\lambda_6;0,2)\ar[u]\\
(\lambda_5;2,0)\ar[d]^{\sigma}&(\lambda_5;2,2)\ar[d]^{\sigma}&
(\lambda_6;2,2)\ar[d]^{\sigma}&(\lambda_6;2,4)\ar[d]^{\sigma}&(\lambda_7;0,-2)\ar[d]^{\sigma} \\
(\lambda_7;2,0)\ar[u]&(\lambda_8;2,2)\ar[u]&(\lambda_7;2,2)\ar[u]&(\lambda_8;2,4)\ar[u]&(\lambda_8;0,2)\ar[u]&
}\]
\[\xymatrix{
(\lambda_9;0,0)\ar@(ur,ul)_{\sigma}&(\lambda_{10};0,0)\ar@(ur,ul)_{\sigma}&(\lambda_9;0,2)\ar[r]^{\sigma}&(\lambda_{10};0,2)\ar[l]
}\]
\[\xymatrix{
(\lambda_{11};0,0)\ar@(ur,ul)_{\sigma}&(\lambda_{12};0,0)\ar@(ur,ul)_{\sigma}&(\lambda_{11};-2,0)\ar[r]^{\sigma}&(\lambda_{12};2,0)\ar[l]
}\]
\[\xymatrix{
(\lambda_{13};0,0)\ar@(ur,ul)_{\sigma}&(\lambda_{14};0,0)\ar@(ur,ul)_{\sigma}&(\lambda_{13};2,0)\ar[r]^{\sigma}&(\lambda_{14};2,0)\ar[l]
}\]
\[\xymatrix{
(\lambda_{15};0,0)\ar@(ur,ul)_{\sigma}&(\lambda_{16};0,0)\ar@(ur,ul)_{\sigma}&(\lambda_{17};0,0)\ar@(ur,ul)_{\sigma}&(\lambda_{18};0,0)\ar@(ur,ul)_{\sigma}
}\]
\[\xymatrix{
(\lambda_{19};0,0)\ar@(ur,ul)_{\sigma}&(\lambda_{20};0,0)\ar@(ur,ul)_{\sigma}&(\lambda_{19};0,2)\ar[r]^{\sigma}&(\lambda_{20};0,2)\ar[l]
}\]
\[\xymatrix{
(\lambda_{21};0,0)\ar@(ur,ul)_{\sigma}&(\lambda_{22};0,0)\ar@(ur,ul)_{\sigma}&(\lambda_{21};0,2)\ar[r]^{\sigma}&(\lambda_{22};0,2)\ar[l]
}\]
\[\xymatrix{
(\lambda_{23};0,0)\ar@(ur,ul)_{\sigma}&(\lambda_{24};0,0)\ar@(ur,ul)_{\sigma}&(\lambda_{23};0,2)\ar[r]^{\sigma}&(\lambda_{24};0,2)\ar[l]
}\]
\[\xymatrix{
(\lambda_{25};0,0)\ar@(ur,ul)_{\sigma}&(\lambda_{26};0,0)\ar@(ur,ul)_{\sigma}&(\lambda_{27};0,0)\ar@(ur,ul)_{\sigma}&(\lambda_{28};0,0)\ar@(ur,ul)_{\sigma}
}\]

Hence we obtain the following proposition.

\begin{prop}\label{prop:qtmfd up to w.e.homeo over C5(8)}
There are exactly forty-six quasitoric manifolds over $C^5(8)^*$ up to weakly equivariant homeomorphism.
\end{prop}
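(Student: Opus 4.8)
The plan is to combine Davis and Januszkiewicz's classification with the data already assembled for $C^4(7)^*$, thereby reducing the statement to an orbit count on an explicit finite set.

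\emph{Step 1: reduction to a single fibre.} By Proposition \ref{prop:cls of qtmfds up to weh} together with the convention of Section 3, $\M_{C^5(8)^*}$ is identified with $\X_{C^5(8)^*}/\mathrm{Aut}(K_{C^5(8)^*})$. Modulo-$2$ reduction carries characteristic matrices to real characteristic matrices, hence induces an $\mathrm{Aut}(K_{C^5(8)^*})$-equivariant map $p\colon\X_{C^5(8)^*}\to\XX_{C^5(8)^*}$. We have already recorded that $\XX_{C^5(8)^*}=\{\overline{\lambda},\overline{\lambda'}\}$, that $\sigma$ fixes each of these two classes, and that $\tau$ interchanges them; since $\sigma$ and $\tau$ are commuting involutions, $\mathrm{Aut}(K_{C^5(8)^*})=\langle\sigma,\tau\rangle$ has order $4$, so the stabilizer of $\overline{\lambda}$ is exactly $\langle\sigma\rangle$, while $\tau$ restricts to a bijection $p^{-1}(\overline{\lambda})\to p^{-1}(\overline{\lambda'})$. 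Standard orbit bookkeeping then produces a bijection $p^{-1}(\overline{\lambda})/\langle\sigma\rangle\xrightarrow{\ \sim\ }\X_{C^5(8)^*}/\mathrm{Aut}(K_{C^5(8)^*})$, so it suffices to prove that $\langle\sigma\rangle$ has exactly $46$ orbits on $p^{-1}(\overline{\lambda})$.

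\emph{Step 2: describing the fibre.} Let $\mu$ be a characteristic matrix on $C^5(8)^*$ with $p([\mu])=\overline{\lambda}$. Since $GL(5,\Z)\to GL(5,\Z/2)$ is surjective we may assume $\overline{\mu}=\overline{\lambda}$ on the nose; since $F_1,\dots,F_5$ meet at a vertex of $C^5(8)^*$ (Theorem \ref{thm:GEC}) we have $\det\mu_{(1,\dots,5)}=\pm1$ with $\mu_{(1,\dots,5)}\equiv I\bmod 2$, so left multiplication by $\mu_{(1,\dots,5)}^{-1}\in GL(5,\Z)$ puts $\mu$ into the form $(I_5\mid *)$ without altering its reduction. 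By Lemma \ref{lem:(n,m) to (n+1,m+1)} the lower-right $4\times 7$ block is then a characteristic matrix on $C^4(7)^*$ reducing modulo $2$ to the matrix $\overline{\lambda}$ of the previous section, so after a sign normalization it becomes one of the twenty-eight matrices $\lambda_1,\dots,\lambda_{28}$ listed there, and $\mu$ takes the shape $(\lambda_k;a,b)$ with first row $(1,0,0,0,0,a,b,c)$. Running through the non-singular conditions for the maximal faces of $C^5(8)^*$ containing facet $1$ then forces $c=\pm1$ (normalize to $1$), forces $a$ and $b$ to be even, and for each $k$ restricts $(a,b)$ to the short explicit list appearing in the diagrams preceding the statement; checking in addition that the resulting sixty-four matrices are pairwise inequivalent identifies $p^{-1}(\overline{\lambda})$ with $\{(\lambda_k;a,b)\}$.

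\emph{Step 3: the $\sigma$-action and the count.} Because $\sigma=(1\,8)$ merely interchanges the facets $F_1$ and $F_8$, applying it to $(\lambda_k;a,b)$ and renormalizing back to the standard form yields another matrix $(\lambda_{k'};a',b')$, with $k'$ governed by the behaviour of $\lambda_1,\dots,\lambda_{28}$ under the automorphisms of $K_{C^4(7)^*}$ (already determined in the previous section) and $(a',b')$ obtained by elementary arithmetic; this is precisely the content of the displayed diagrams. Reading off those diagrams, the family of $\{\lambda_1,\dots,\lambda_4\}$ and the family of $\{\lambda_5,\dots,\lambda_8\}$ each consist of sixteen matrices splitting into four fixed points and six transpositions, i.e.\ ten orbits each; among the eight four-element families, the one on $\{\lambda_{15},\dots,\lambda_{18}\}$ and the one on $\{\lambda_{25},\dots,\lambda_{28}\}$ consist of four fixed points, while the remaining six split into three orbits each. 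Hence $p^{-1}(\overline{\lambda})/\langle\sigma\rangle$ has $2\cdot 10+2\cdot 4+6\cdot 3=46$ elements, which by Step 1 is the assertion.

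\emph{The main obstacle.} The conceptual content is modest; the labour lies in the two finite case analyses of Steps 2 and 3 — establishing that the list of sixty-four matrices $(\lambda_k;a,b)$ is both exhaustive and irredundant, and verifying the $\sigma$-table recorded in the diagrams. The $C^4(7)^*$ classification of the previous section carries most of the weight, so the genuinely new effort is the bookkeeping introduced by the additional facet $F_1$ and the two even parameters $a$ and $b$.
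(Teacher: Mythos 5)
Your proposal is correct and follows the same strategy the paper implicitly uses: reduce by the Davis--Januszkiewicz classification (Proposition \ref{prop:cls of qtmfds up to weh}) to counting $\langle\sigma\rangle$-orbits in the fibre $p^{-1}(\overline{\lambda})$, parametrize that fibre by the sixty-four matrices $(\lambda_k;a,b)$ using Lemma \ref{lem:(n,m) to (n+1,m+1)} and the $C^4(7)^*$ data, and read the $46$ orbits off the $\sigma$-action diagrams; your orbit count ($2\cdot 10 + 2\cdot 4 + 6\cdot 3$) agrees with the paper's tables.
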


We will show that the cohomology rings of these quasitoric manifolds are not isomorphic.

For a characteristic matrix $\xi=(\lambda_k;a,b)=(\lambda_{i,j})$ on $C^5(8)^*$,
let us denote $v'_{i_1}\cdots v'_{i_k}$ in Theorem \ref{thm:cohomology,particular} by $[i_1,\ldots,i_k]_\xi$ or simply by $[i_1,\ldots,i_k]$.
Put $X:=v_6,Y:=v_7,Z:=v_8$.
Then we obtain $H^*(\xi)\cong \Z[X,Y,Z]/(\I_\xi^6+\I_\xi^8)$,
where $\I_\xi^6$ and $\I_\xi^8$ are the ideals below:
\begin{align*}
\I_\xi^6 &=([2,4,6],[2,4,7],[2,5,7],[3,5,7]), \\
\I_\xi^8 &=([1,3,5,8],[1,3,6,8],[1,4,6,8]).
\end{align*}
Remark that $\I_\xi^6$ is determined by $\lambda_k$ (namely, independent of $a$ and $b$). Hence we write $\I_k^6=\I_\xi^6$.

Now we briefly explain the procedure for classifying the quasitoric manifolds over $C^5(8)^*$.
We will first show that there exist no more than a hundred automorphisms of $\Z[X,Y,Z]$ which map $\I^6_k$ into $\I^6_{k'}$ for some $k,k'=1,\ldots,28$.
Then we only have to verify that those automorphisms do not map $\I^8_{(\lambda_k;a,b)}$ into $\I^6_{k'}+\I^8_{(\lambda_{k'};a',b')}$
for any combination of even integers $a,b,a',b'$ by which $(\lambda_k;a,b)$ and $(\lambda_{k'};a',b')$ are characteristic matrices on $C^5(8)^*$.

For the convenience, we make the following definition.

\begin{df}
For $k,k'=1,2,\ldots,28$, we denote by $\A_{k,k'}$ the set of all graded ring automorphisms of $\Z[X,Y,Z]$ which maps $\I^6_k$ into $\I^6_{k'}$.
\end{df}

Let $\lambda=(\lambda_k;a,b)$ and $\lambda'=(\lambda_{k'};a',b')$ be characteristic matrices on $C^5(8)^*$ such that $\sigma(\lambda)=\lambda'$.
With the notations in Theorem \ref{thm:cohomology,particular},
the correspondence $v_i\mapsto v'_{\sigma(i)}\ (i=6,7,8)$ induces a graded ring automorphism $\phi_{(\sigma;\lambda,\lambda')}$ of $\Z[X,Y,Z]$
which maps $\I_\lambda$ into $\I_{\lambda'}$.
In particular, $\phi_{(\sigma;\lambda,\lambda')}$ maps $\I^6_k$ into $\I^6_{k'}$.
Then we have the following lemma.

\begin{lem}\label{lem:reducing of k,k'}
Let $\lambda_1=(\lambda_{k_1};a_1,b_1),\lambda'_1=(\lambda_{k'_1};a'_1,b'_1),
\lambda_2=(\lambda_{k_2};a_2,b_2),\lambda'_2=(\lambda_{k'_2};a'_2,b'_2)$ be characteristic matrices on $C^5(8)^*$.
If $\sigma(\lambda_i)=\lambda'_i$ for $i=1,2$, then the correspondence
$\A_{k_1,k_2}\rightarrow \A_{k'_1,k'_2}\colon \psi \mapsto \phi_{(\sigma;\lambda_2,\lambda'_2)}\circ\psi\circ\phi_{(\sigma;\lambda_1,\lambda'_1)}^{-1}$
is bijective.
\end{lem}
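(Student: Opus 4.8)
The plan is to exhibit an explicit two-sided inverse of the stated correspondence, which makes the lemma purely formal. I would abbreviate $\phi_i:=\phi_{(\sigma;\lambda_i,\lambda'_i)}$ for $i=1,2$; by the discussion preceding the lemma, each $\phi_i$ is a graded ring automorphism of $\Z[X,Y,Z]$ carrying $\I^6_{k_i}$ into $\I^6_{k'_i}$. The one point that needs care is the inverse of such a map: since $\phi_{(\sigma;\lambda,\lambda')}$ is induced by the correspondence $v_i\mapsto v'_{\sigma(i)}$, one checks directly — comparing the images of $X$, $Y$, $Z$ — that $\phi_{(\sigma;\lambda,\lambda')}^{-1}=\phi_{(\sigma^{-1};\lambda',\lambda)}$, which is legitimate because $\sigma^{-1}\in\mathrm{Aut}\,(K_{C^5(8)^*})$ and $\sigma^{-1}(\lambda')=\lambda$ (indeed, in this section $\sigma$ is the transposition $(1\ 8)$, so $\sigma^{-1}=\sigma$). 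Applying the discussion preceding the lemma a second time, now to the triple $(\sigma^{-1},\lambda'_i,\lambda_i)$, shows that $\phi_i^{-1}$ carries $\I^6_{k'_i}$ into $\I^6_{k_i}$.

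With this in hand, I would check that $\Phi\colon\psi\mapsto\phi_2\circ\psi\circ\phi_1^{-1}$ does land in $\A_{k'_1,k'_2}$: for $\psi\in\A_{k_1,k_2}$, the map $\Phi(\psi)$ is a composition of graded ring automorphisms of $\Z[X,Y,Z]$, hence itself one, and
\[
\Phi(\psi)(\I^6_{k'_1})=\phi_2\bigl(\psi(\phi_1^{-1}(\I^6_{k'_1}))\bigr)\subseteq\phi_2\bigl(\psi(\I^6_{k_1})\bigr)\subseteq\phi_2(\I^6_{k_2})\subseteq\I^6_{k'_2}.
\]
The mirror-image computation — using that $\phi_1$ carries $\I^6_{k_1}$ into $\I^6_{k'_1}$ and that $\phi_2^{-1}$ carries $\I^6_{k'_2}$ into $\I^6_{k_2}$ — shows in the same way that $\Psi\colon\psi'\mapsto\phi_2^{-1}\circ\psi'\circ\phi_1$ is a well-defined map $\A_{k'_1,k'_2}\to\A_{k_1,k_2}$. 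Finally $\Psi\circ\Phi=\mathrm{id}$ and $\Phi\circ\Psi=\mathrm{id}$ follow at once by cancelling $\phi_1^{-1}\phi_1$ and $\phi_2\phi_2^{-1}$, so $\Phi$ is the asserted bijection.

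I do not expect a genuine obstacle: the whole argument is conjugation-by-an-isomorphism together with index bookkeeping. The only real verification is the identification $\phi_{(\sigma;\lambda,\lambda')}^{-1}=\phi_{(\sigma^{-1};\lambda',\lambda)}$ flagged above, and the only thing demanding attention is keeping the four indices $k_1,k'_1,k_2,k'_2$ — and which of $\lambda_i$, $\lambda'_i$ accompanies each — straight throughout.
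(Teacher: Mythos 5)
The paper states this lemma without a proof, so there is no argument of the paper's to compare against; your proof is correct and is the natural one. The map $\Phi\colon\psi\mapsto\phi_2\circ\psi\circ\phi_1^{-1}$ is conjugation by the $\phi_i:=\phi_{(\sigma;\lambda_i,\lambda'_i)}$, and the one substantive point is that $\phi_i^{-1}$ carries $\I^6_{k'_i}$ into $\I^6_{k_i}$; you obtain this by noting that $\sigma$ is the transposition $(1\ 8)$, hence an involution, so that $\sigma^{-1}(\lambda'_i)=\lambda_i$ and the preamble to the lemma applies to $\phi_{(\sigma^{-1};\lambda'_i,\lambda_i)}=\phi_i^{-1}$. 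That identification is easy to check from the definition, as you say. For what it is worth, one can also avoid the explicit identity $\phi_{(\sigma;\lambda,\lambda')}^{-1}=\phi_{(\sigma^{-1};\lambda',\lambda)}$ entirely: $\phi_i$ descends to a graded ring isomorphism $H^*(\lambda_i)\to H^*(\lambda'_i)$, so it carries $\I_{\lambda_i}$ \emph{onto} $\I_{\lambda'_i}$, and since $\I^6_{k}$ is the ideal generated by the degree-$6$ part of $\I_\lambda$ and $\phi_i$ is degree-preserving, it follows that $\phi_i(\I^6_{k_i})=\I^6_{k'_i}$, whence $\phi_i^{-1}(\I^6_{k'_i})=\I^6_{k_i}$. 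Either way the two-sided inverse $\Psi\colon\psi'\mapsto\phi_2^{-1}\circ\psi'\circ\phi_1$ is well defined and the bijectivity follows by cancellation, as you wrote.
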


Let us define the equivalence relation $\sim_\sigma$ on $\{1,\ldots,28\}$ as follows:
$k\sim_\sigma k'$ if and only if $\sigma((\lambda_k;a,b))=(\lambda_{k'};a',b')$ for some $a,b,a',b'$
by which $(\lambda_k;a,b)$ and $(\lambda_{k'};a',b')$ are characteristic matrices on $C^5(8)^*$.
Then the equivalence classes by $\sim_\sigma$ are
$\{1,2,3,4\}$, $\{5,6,7,8\}$, $\{9,10\}$, $\{11,12\}$, $\{13,14\}$, $\{15\}$, $\{16\}$, $\{17\}$, $\{18\}$,
$\{19,20\}$, $\{21,22\}$, $\{23,24\}$, $\{25\}$, $\{26\}$, $\{27\}$, $\{28\}$.
Lemma \ref{lem:reducing of k,k'} implies that we only have to determine $\A_{k,k'}$ for
$$k,k'=1,5,9,11,13,15,16,17,18,19,21,23,25,26,27,28.$$

Hereafter we assume that a graded ring automorphism $\phi$ of $\Z[X,Y,Z]$ maps $\I^6_\lambda$ into $\I^6_{\lambda'}$,
where $\lambda$ and $\lambda'$ are characteristic matrices in the list. Write
$$\lambda=\left(
\begin{array}{ccccc|ccc}
1 &0 &0 &0 &0 &a &b &1  \\
0 &1 &0 &0 &0 &1 &p &q  \\
0 &0 &1 &0 &0 &x &y &1  \\
0 &0 &0 &1 &0 &1 &s &t  \\
0 &0 &0 &0 &1 &m &n &1 
\end{array}
\right),\,
\lambda'=\left(
\begin{array}{ccccc|ccc}
1 &0 &0 &0 &0 &a' &b' &1  \\
0 &1 &0 &0 &0 &1 &p' &q'  \\
0 &0 &1 &0 &0 &x' &y' &1  \\
0 &0 &0 &1 &0 &1 &s' &t'  \\
0 &0 &0 &0 &1 &m' &n' &1 
\end{array}
\right).$$
It is convenient to identify a homogeneous polynomial $f$ of degree $6$ in $\Z[X,Y,Z]$ with the row vector $u$ in such a way that
$$f=(X^3,Y^3,Z^3,X^2Y,XY^2,Y^2Z,YZ^2,Z^2X,ZX^2,XYZ){}^t u.$$
Then we see that ${}^t ([2,4,6]_\lambda,[2,4,7]_\lambda,[2,5,7]_\lambda,[3,5,7]_\lambda)$ is identified with the matrix below:
\begin{equation}\label{eq:generator}
\left(
\begin{array}{cccccccccc}
1 &0 &0 &p+s &ps &0 &0 &qt &q+t &pt+qs  \\
0 &ps &0 &1 &p+s &pt+qs &qt &0 &0 &t+q  \\
0 &pn &0 &m &pm+n &p+qn &q &0 &0 &1+qm  \\
0 &yn &0 &xm &ym+xn &y+n &1 &0 &0 &x+m 
\end{array}
\right),
\end{equation}
and this matrix is congruent with the following matrix modulo 2:
\begin{equation}\label{eq:gen,mod2}
\left(
\begin{array}{cccccccccc}
1 &0 &0 &1 &0 &0 &0 &0 &1 &1  \\
0 &0 &0 &1 &1 &1 &0 &0 &0 &1  \\
0 &0 &0 &0 &1 &1 &1 &0 &0 &1  \\
0 &1 &0 &0 &1 &0 &1 &0 &0 &1 
\end{array}
\right)
\end{equation}

As in Section 5, we denote $\phi$ by the matrix $\left(
\begin{array}{ccc}
a_1 &b_1 &c_1  \\
a_2 &b_2 &c_2  \\
a_3 &b_3 &c_3 
\end{array}
\right)$ such that $$\left(
\begin{array}{c}
\phi(X) \\
\phi(Y) \\
\phi(Z) 
\end{array}
\right)=\left(
\begin{array}{ccc}
a_1 &b_1 &c_1  \\
a_2 &b_2 &c_2  \\
a_3 &b_3 &c_3 
\end{array}
\right)\left(
\begin{array}{c}
X \\
Y \\
Z 
\end{array}
\right),$$
and put $a:=(a_1,a_2,a_3)$, $b:=(b_1,b_2,b_3)$, $c:=(c_1,c_2,c_3)$.

\begin{lem}\label{lem:phi mod2}
$\phi\equiv \left(
\begin{array}{ccc}
1 &0 &0  \\
0 &1 &0  \\
0 &0 &1 
\end{array}
\right) \bmod 2.$
\end{lem}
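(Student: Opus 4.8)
The plan is to work modulo $2$ throughout. Since $\phi$ is a graded ring automorphism of $\Z[X,Y,Z]$, its restriction to the degree-one part is a matrix in $GL(3,\Z)$, so the reduction $\bar\phi:=\phi\bmod 2$ is the ring automorphism of $\F_2[X,Y,Z]$ induced by the corresponding element of $GL(3,\F_2)$; in particular $\bar\phi$ is invertible in every degree. By hypothesis $\phi(\I^6_\lambda)\subseteq\I^6_{\lambda'}$. The key preliminary remark is that every characteristic matrix occurring here reduces modulo $2$ to the same matrix $\overline\lambda$ on $C^5(8)^*$ — indeed $a$ and $b$ are even and each $\lambda_k$ reduces modulo $2$ to the fixed matrix on $C^4(7)^*$ of Section 7 — so the degree-six components of $\I^6_\lambda$ and of $\I^6_{\lambda'}$ have the same reduction, namely the $\F_2$-span $W$ of the rows of the matrix (\ref{eq:gen,mod2}); explicitly $W=\langle g_1,g_2,g_3,g_4\rangle$, where $g_1=X(X+Z)(X+Y)$, $g_2=Y(X+Z)(X+Y)$, $g_3=Y(X+Z)(Y+Z)$ and $g_4=Y(X+Y+Z)(Y+Z)$, and these are linearly independent, so $\dim_{\F_2}W=4$. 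Taking degree-six components of $\phi(\I^6_\lambda)\subseteq\I^6_{\lambda'}$ and reducing mod $2$ gives $\bar\phi(W)\subseteq W$, hence $\bar\phi(W)=W$. It therefore suffices to prove that the identity is the only element of $GL(3,\F_2)$ stabilizing $W$.

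For that I would attach to each of the seven nonzero linear forms $\ell$ the integer $\nu(\ell):=\dim_{\F_2}\{\,w\in W:\ell\mid w\,\}$. Since $\bar\phi$ is a ring automorphism with $\bar\phi(W)=W$, and since $\ell\mid w$ if and only if $\bar\phi(\ell)\mid\bar\phi(w)$, $\bar\phi$ maps $\{w\in W:\ell\mid w\}$ isomorphically onto $\{w\in W:\bar\phi(\ell)\mid w\}$; hence $\nu(\bar\phi(\ell))=\nu(\ell)$, i.e.\ the permutation of the linear forms induced by $\bar\phi$ preserves $\nu$. A direct computation — for each $\ell$ substitute the corresponding relation (for instance $Y\mapsto X$ to test divisibility by $X+Y$, and $Z\mapsto X+Y$ to test $X+Y+Z$) into $g_1,\dots,g_4$ and solve the resulting homogeneous linear system over $\F_2$ — gives $\nu(X)=1$, $\nu(Y)=3$, $\nu(Z)=0$, $\nu(X+Y)=\nu(Y+Z)=2$, $\nu(X+Z)=3$ and $\nu(X+Y+Z)=1$.

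Consequently $\bar\phi$ must fix $Z$ (the unique form with $\nu=0$) and must preserve each of the pairs $\{X,X+Y+Z\}$, $\{Y,X+Z\}$ and $\{X+Y,Y+Z\}$. Going through the four possibilities for $(\bar\phi(X),\bar\phi(Y))$ and imposing that $\bar\phi(X+Y)=\bar\phi(X)+\bar\phi(Y)$ again have $\nu$-value $2$, one finds that only $\bar\phi(X)=X$, $\bar\phi(Y)=Y$ survives; together with $\bar\phi(Z)=Z$ this forces $\bar\phi=\mathrm{id}$, that is, $\phi\equiv I\bmod 2$.

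The only genuine labour is the computation of $\nu$ — equivalently, reading $W$ correctly off (\ref{eq:gen,mod2}): one must check that $W$ is indeed independent of the parameters $p,q,s,t,m,n,x,y$ (this is where the hypothesis that $\lambda,\lambda'$ lie in the list is used) and carry out the divisibility computations over $\F_2$ without slips. Everything afterwards is a two-line case check. If one preferred not to introduce $\nu$ at all, it would already suffice to observe that $Z$ is the unique linear form dividing no nonzero element of $W$ and then to inspect by hand the residual cases in $GL(3,\F_2)$ fixing $Z$; the function $\nu$ merely organizes this bookkeeping.
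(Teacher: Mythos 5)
Your argument is correct and takes a genuinely different route from the paper. The paper's proof of this lemma works directly with the reduced matrix~(\ref{eq:gen,mod2}), computes coefficients of $\phi([2,4,6])$, $\phi([2,4,7])$, $\phi([2,5,7])$, $\phi([3,5,7])$ modulo~$2$, and then performs a four-way case analysis on the parity pattern of $(c_1,c_2,c_3)$ (and within the surviving branch, on $a$ and $b$), eliminating three cases by contradiction and forcing the identity in the fourth. You instead read off the factored forms $g_1=X(X+Y)(X+Z)$, $g_2=Y(X+Y)(X+Z)$, $g_3=Y(Y+Z)(X+Z)$, $g_4=Y(X+Y+Z)(Y+Z)$ from~(\ref{eq:gen,mod2}), observe that $\bar\phi$ stabilizes the four-dimensional space $W$ they span in $\F_2[X,Y,Z]_6$, and introduce the divisibility count $\nu(\ell)=\dim_{\F_2}\{w\in W:\ell\mid w\}$, which is automatically a $\bar\phi$-invariant of linear forms. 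I checked all seven values and they are as you state: $\nu(Z)=0$, $\nu(X)=\nu(X+Y+Z)=1$, $\nu(X+Y)=\nu(Y+Z)=2$, $\nu(Y)=\nu(X+Z)=3$. This forces $\bar\phi(Z)=Z$ and restricts $\bar\phi(X)\in\{X,X+Y+Z\}$, $\bar\phi(Y)\in\{Y,X+Z\}$; only the pair $(X,Y)$ is compatible with $\bar\phi(X+Y)\in\{X+Y,Y+Z\}$. Your preliminary observation — that all $\lambda_k$ reduce mod~$2$ to $\overline{\lambda}$, so $\I^6_k\bmod 2$ is a single space $W$ for every $k$ — is exactly what the paper also uses (it is the content of the remark ``this matrix is congruent with the following matrix modulo~2''), so no ground is lost there. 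The upshot is that the $\nu$-invariant replaces a long ad~hoc coefficient chase with a short structural argument; the only cost is that you must first verify the factorizations and the seven divisibility computations, but these are mechanical and error-resistant compared with the paper's nested case analysis.
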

\begin{proof}
By row operations, we transform the matrix (\ref{eq:gen,mod2}) into
\begin{equation}\label{eq:gen,mod2,new}
\left(
\begin{array}{cccccccccc}
1 &0 &0 &0 &0 &0 &1 &0 &1 &1  \\
0 &0 &0 &1 &0 &0 &1 &0 &0 &0  \\
0 &0 &0 &0 &1 &1 &1 &0 &0 &1  \\
0 &1 &0 &0 &0 &1 &0 &0 &0 &0 
\end{array}
\right).
\end{equation}
Since the coefficients of $Z^3$ in $\phi([2,4,6])$, $\phi([2,4,7])$, $\phi([2,5,7])$, $\phi([3,5,7])$ are equal to zero (see the matrix (\ref{eq:generator})),
we obtain
\begin{equation}\label{eq:Z^3}
\left(
\begin{array}{c}
(c_1+pc_2+qc_3)(c_1+sc_2+tc_3)c_1 \\
(c_1+pc_2+qc_3)(c_1+sc_2+tc_3)c_2 \\
(c_1+pc_2+qc_3)(mc_1+nc_2+c_3)c_2 \\
(xc_1+yc_2+c_3)(mc_1+nc_2+c_3)c_2 
\end{array}
\right) \\=0.
\end{equation}
\begin{itemize}
 \item If $c_1\equiv 1,c_2\equiv 0 \bmod 2$, we have $c_3\equiv 1 \bmod 2$.
 Since the coefficient of $Z^2X$ in $\phi([2,4,6])$ is congruent to zero modulo $2$, we obtain $a_1+a_3\equiv 0 \bmod 2$.
 Due to the linear independence of $a,b,c$, we have $a_2\equiv 1,b_1+b_3\equiv 1 \bmod 2$.
 Besides, since the coefficients of $X^3$ and $ZX^2$ in $\phi([3,5,7])$ are congruent modulo 2,
 we have $a_1\equiv 0 \bmod 2$, so $a\equiv (0,1,0) \bmod 2$.
 Then
 \begin{align*}
 \phi([2,4,6]) &\equiv Y(X+(b_1+b_2)Y)(b_1Y+Z) \\
  &\equiv (0,b_1(b_1+b_2),0,0,b_1,b_2,1,0,0,1) \bmod 2
 \end{align*}
 has zero in the coefficient of $X^3$, so the coefficients of $XY^2$ and $XYZ$ are congruent modulo 2.
 Hence we have $b_1\equiv 1 \bmod 2$, and then $b_3\equiv 0 \bmod 2$.
 However,
 \begin{align*}
 \phi([2,5,7]) &\equiv Y(X+b_2Y+Z)(X+b_2Z) \\
  &\equiv (0,b_2,0,1,0,b_2,0,0,0,1) \bmod 2
 \end{align*}
 is not a linear combination of $[2,4,6],[2,4,7],[2,5,7],[3,5,7]$.
 \item If $c_1\equiv 0,c_2\equiv 1 \bmod 2$,
 the second and fourth components of the left hand side of (\ref{eq:Z^3}) can not be congruent to zero modulo 2 simultaneously.
 \item If $c_1\equiv c_2\equiv 1 \bmod 2$, due to the equation (\ref{eq:Z^3}), we have $c_3\equiv 1 \bmod 2$.
 From the coefficient of $Z^2X$ in $\phi([3,5,7])$, we obtain $a_2\equiv a_3 \bmod 2$.
 Then $a_1\not\equiv a_2 \bmod 2$ due to the linear independence of $a$ and $c$.
 Similarly, from the coefficients of $X^3$ and $ZX^2$ in $\phi([2,4,6])$ and $\phi([2,4,7])$, we obtain $a_1\equiv a_2\equiv 1 \bmod 2$.
 This is a contradiction.
 \item If $c_1\equiv c_2\equiv 0 \bmod 2$, then we have $c_3\equiv 1 \bmod 2$.
 From the coefficients of $Z^2X$ in $\phi([2,5,7])$, we obtain $a_2\equiv 0 \bmod 2$.
 Besides, from $\det \phi\equiv 1 \bmod 2$, we obtain $a_1\equiv b_2 \equiv 1 \bmod 2$.
 Comparing the coefficients of $X^3$ and $ZX^2$ in $\phi([2,4,6])$, we obtain $a_3\equiv 0 \bmod 2$.
 Then
 \begin{align*}
 \phi([2,4,7]) &\equiv (X+(b_1+b_3)Y+Z)(X+(b_1+1)Y)Y \\
  &\equiv (0,(b_1+b_3)(b_1+1),0,1,b_3+1,b_1+1,0,0,0,1) \bmod 2
 \end{align*}
 has zero in the coefficient of $X^3$, so the coefficient of $XY^2$ and $XYZ$ in $\phi([2,4,7])$ are congruent.
 Hence we have $b_3\equiv 0 \bmod 2$, and then $\phi([2,4,7])$ is needed to be congruent to $(0,0,0,1,1,1,0,0,0,1)$ modulo 2,
 so we obtain $b_1\equiv 0 \bmod 2$.
\end{itemize}
Thus the proof is completed.
\end{proof}

By Lemma \ref{lem:phi mod2}, $(xc_1+yc_2+c_3)(mc_1+nc_2+c_3) \equiv 1 \bmod 2$.
Moreover, since $q \equiv 1 \bmod 2$, we have $c_1+pc_2+qc_3\equiv 1 \bmod 2$.
Hence we obtain the following corollary from the equation (\ref{eq:Z^3}).

\begin{cor}\label{cor:c2=0}
$c_2=(c_1+tc_3)c_1=0$.
\end{cor}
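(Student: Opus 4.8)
The plan is to extract both equalities directly from the four scalar equations packaged in (\ref{eq:Z^3}), together with the parity information recorded just above the statement. By Lemma \ref{lem:phi mod2} we have $c_1\equiv c_2\equiv 0$ and $c_3\equiv 1\bmod 2$, and, as already noted, the scalars $(xc_1+yc_2+c_3)(mc_1+nc_2+c_3)$ and $c_1+pc_2+qc_3$ are both $\equiv 1\bmod 2$; in particular each of them is a nonzero integer.

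First I would read off the fourth component of (\ref{eq:Z^3}), namely $(xc_1+yc_2+c_3)(mc_1+nc_2+c_3)\,c_2=0$. Since the coefficient multiplying $c_2$ is a nonzero integer, this forces $c_2=0$.

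Then, substituting $c_2=0$ into the first component of (\ref{eq:Z^3}) gives $(c_1+qc_3)(c_1+tc_3)\,c_1=0$. Here $c_1+qc_3=c_1+pc_2+qc_3$ is again a nonzero integer, so cancelling it yields $(c_1+tc_3)c_1=0$, which is the remaining assertion.

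There is essentially no obstacle: the corollary is a two-line consequence of the vector identity (\ref{eq:Z^3}) already in hand. The only point needing attention is the distinction between ``nonzero modulo $2$'' and ``nonzero as an integer'' — which is precisely why one first invokes Lemma \ref{lem:phi mod2} to pin down the relevant scalar factors as odd integers before dividing through by them.
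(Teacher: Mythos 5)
Your proof is correct and is essentially identical to the argument the paper intends (the paper simply says ``Hence we obtain the following corollary from the equation (\ref{eq:Z^3})'' after establishing that $(xc_1+yc_2+c_3)(mc_1+nc_2+c_3)$ and $c_1+pc_2+qc_3$ are odd). Reading off the fourth component to kill $c_2$ and then cancelling the odd factor $c_1+qc_3$ in the first component is exactly the intended two-step deduction.
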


Comparing the coefficients of $X^3$ and $Z^2X$ in $\phi([2,4,6])$, $\phi([2,4,7])$, $\phi([2,5,7])$, $\phi([3,5,7])$, we obtain
\begin{equation}\label{eq:X^3 Z^2X}
\begin{aligned}
q't'&\left(
\begin{array}{c}
(a_1+pa_2+qa_3)(a_1+sa_2+ta_3)a_1 \\
(a_1+pa_2+qa_3)(a_1+sa_2+ta_3)a_2 \\
(a_1+pa_2+qa_3)(ma_1+na_2+a_3)a_2 \\
(xa_1+ya_2+a_3)(ma_1+na_2+a_3)a_2 
\end{array}
\right) \\
&=\left(
\begin{array}{c}
(c_1+qc_3)\{(c_1+tc_3)a_1+(a_1+sa_2+ta_3)c_1\} \\
(c_1+qc_3)(c_1+tc_3)a_2 \\
(c_1+qc_3)(mc_1+c_3)a_2 \\
(xc_1+c_3)(mc_1+c_3)a_2 
\end{array}
\right).
\end{aligned}
\end{equation}
If we assume $a_2\neq 0$, from the equation (\ref{eq:X^3 Z^2X}), we obtain $q't'(xa_1+ya_2+a_3)(ma_1+na_2+a_3)=(xc_1+c_3)(mc_1+c_3)$.
The left hand side of this equation is even but the right hand side is odd.
Hence we have the following lemma.

\begin{lem}\label{lem:a2=0}
$a_2=0.$
\end{lem}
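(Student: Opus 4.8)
The plan is to argue by contradiction: assume $a_2\neq 0$ and extract a parity obstruction from the fourth component of the vector identity (\ref{eq:X^3 Z^2X}), whose right-hand side has already been put into its present form using $c_2=0$ from Corollary \ref{cor:c2=0}.

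Concretely, the fourth component of (\ref{eq:X^3 Z^2X}) reads
$$q't'\,(xa_1+ya_2+a_3)(ma_1+na_2+a_3)\,a_2=(xc_1+c_3)(mc_1+c_3)\,a_2 .$$
Under the assumption $a_2\neq 0$ I would cancel the common factor $a_2$ to get
$$q't'\,(xa_1+ya_2+a_3)(ma_1+na_2+a_3)=(xc_1+c_3)(mc_1+c_3),$$
and then compare the two sides modulo $2$. By Lemma \ref{lem:phi mod2} we have $c\equiv(0,0,1)\bmod 2$, so $c_1$ is even and $c_3$ is odd; hence $xc_1+c_3$ and $mc_1+c_3$ are both odd and the right-hand side is odd. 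For the left-hand side, $t'$ is the $(4,8)$-entry of $\lambda'$, and since $\lambda'$ lies in $p^{-1}(\overline{\lambda})$ it reduces modulo $2$ to the fixed representative $\overline{\lambda}$, whose $(4,8)$-entry equals $0$; thus $t'$ is even, so the left-hand side is even. This contradicts the displayed equality, and therefore $a_2=0$.

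I do not anticipate a real obstacle here: this is the same mechanism used repeatedly in this section, namely that the parity of an entry of the normal form of $\lambda'$ forces a coefficient of $\phi$ to vanish. The only care required is to select the correct component of (\ref{eq:X^3 Z^2X}) and to read off the parities of $c$ (from Lemma \ref{lem:phi mod2}) and of $t'$ (from the shape of the representatives lying in $p^{-1}(\overline{\lambda})$) correctly.
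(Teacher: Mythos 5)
Your proof is correct and matches the paper's argument exactly: the paper also isolates the fourth component of the identity (\ref{eq:X^3 Z^2X}), cancels $a_2$, and derives the contradiction by noting the right-hand side is odd (via Lemma \ref{lem:phi mod2}) while the left-hand side is even (via the evenness of $t'$). Your explicit accounting of the parities of $c_1,c_3$ and of $t'$ from the mod-$2$ reduction to $\overline{\lambda}$ fills in details that the paper states more tersely, but the mechanism is identical.
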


By Corollary \ref{cor:c2=0} and Lemma \ref{lem:a2=0}, we obtain the following proposition.

\begin{prop}
$(a_2,b_2,c_2)=\pm (0,1,0)$.
\end{prop}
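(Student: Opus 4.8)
The plan is simply to assemble the three preceding results. By Lemma \ref{lem:a2=0} we already have $a_2=0$, and by Corollary \ref{cor:c2=0} we already have $c_2=0$, so the middle row of the matrix representing $\phi$ is $(a_2,b_2,c_2)=(0,b_2,0)$, and the only thing left to pin down is that $b_2=\pm 1$.

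First I would observe that, being a graded ring automorphism of $\Z[X,Y,Z]$, the map $\phi$ restricts to a $\Z$-linear automorphism of the $\Z$-span of $X,Y,Z$ (the degree-two part), so its matrix $({}^ta,{}^tb,{}^tc)$ lies in $GL(3,\Z)$ and hence has determinant $\pm 1$. Expanding this determinant along the middle row $(0,b_2,0)$ gives $\det\phi = b_2(a_1c_3-a_3c_1)$, a product of two integers whose value is $\pm 1$; therefore $b_2\mid 1$ in $\Z$, i.e. $b_2=\pm 1$. Combining this with $a_2=c_2=0$ yields $(a_2,b_2,c_2)=(0,\pm 1,0)=\pm(0,1,0)$, as claimed. (This is of course consistent with Lemma \ref{lem:phi mod2}, which already forces $b_2$ to be odd.)

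There is essentially no obstacle remaining at this stage: the genuine work --- the $\bmod\,2$ analysis of the generators of $\I^6_\lambda$ and the case distinction on $a_2$ --- was carried out in Lemma \ref{lem:phi mod2}, Corollary \ref{cor:c2=0} and Lemma \ref{lem:a2=0}. The only point that requires a moment's care is setting up the cofactor expansion along the middle row correctly and noting that one really does obtain a factorization of $\pm 1$ into integers, which then forces $b_2=\pm 1$ at once.
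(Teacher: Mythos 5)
Your argument is correct and matches the paper's (implicit) proof: the paper merely cites Corollary \ref{cor:c2=0} and Lemma \ref{lem:a2=0}, which give $c_2=0$ and $a_2=0$, and the remaining step $b_2=\pm 1$ is the automatic consequence of $\det\phi=\pm 1$ that you spell out via the cofactor expansion along the middle row.
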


Considering $-\phi$ if necessary, we can assume $b_2=1$.

First, we consider the case that $c_1\neq 0$.
From the equation (\ref{eq:Z^3}), we have $c_1+tc_3=0$.
Hence $t\neq 0$ (remark that, due to the list of characteristic matrices, this means $t=2$)
and $c=c_3(-2,0,1)$, where $c_3=\pm 1$.
From the equation (\ref{eq:X^3 Z^2X}), we have $$q't'(a_1+qa_3)(a_1+2a_3)a_1=-2(q-2)(a_1+2a_3).$$
Since $a_1+2a_3\neq 0$ by Lemma \ref{lem:phi mod2}, this equation reduces to $$q't'(a_1+qa_3)a_1=-2(q-2).$$
Moreover, since the right hand of this equation is not zero, we have $t'=t=2$ and
\begin{equation}\label{eq:C'}
q'(a_1+qa_3)a_1=2-q.
\end{equation}
Comparing the coefficients of $X^3$ and $ZX^2$ in $\phi([2,4,6])$, we obtain
$$(q'+2)(a_1+qa_3)(a_1+2a_3)a_1=(a_1+2a_3)\{ (a_1+qa_3)(-2c_3)+(q-2)c_3a_1 \}.$$
Since $a_1+2a_3\neq 0$, this equation can be shortened to
\begin{equation}\label{eq:D'}
(q'+2)(a_1+qa_3)a_1=\{-2(a_1+qa_3)+(q-2)a_1\}c_3.
\end{equation}
Since $\det \phi=\pm 1$, we obtain $c_3(a_1+2a_3)=\pm 1$.
Putting $\epsilon=c_3 \det \phi$, we have $a_1+2a_3=\epsilon$.
Then the equation (\ref{eq:C'}) is rewritten as
$$(\epsilon+(q-2)a_3)(\epsilon-2a_3)q'=2-q.$$
\begin{itemize}
 \item If $q=1$ or $3$, $\epsilon-2a_3$ divides $1$. Since $a_3$ is even, we obtain $a_3=0$.
 \item If $q=-1$, $\epsilon-3a_3$ divides $3$. Hence we obtain $a_3=0$.
\end{itemize}
In any case, we obtain $a_3=0$, implying $q'=2-q$.
From the equation (\ref{eq:D'}), we have $q'+2=-(q'+2)a_1c_3$, namely, $a_1c_3=-1$.
Thus we obtain the following lemma.

\begin{lem}
If $c_1\neq 0$, we have $t'=t=2$, $q'=2-q$ and
$$\phi=\pm\left(
\begin{array}{ccc}
a_1 &b_1 &2a_1  \\
0 &1 &0  \\
0 &b_3 &-a_1 
\end{array}
\right).$$
\end{lem}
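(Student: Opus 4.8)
The plan is to remain in the case $c_1\neq 0$ with the normalization $(a_2,b_2,c_2)=(0,1,0)$ already secured (in particular $b_2=1$, after replacing $\phi$ by $-\phi$ if necessary), and to pin down the remaining entries of $\phi$ by repeatedly matching monomial coefficients of the four images $\phi([2,4,6])$, $\phi([2,4,7])$, $\phi([2,5,7])$, $\phi([3,5,7])$ against the spanning set of $\I^6_{\lambda'}$ recorded in (\ref{eq:generator}) with the parameters primed. Each comparison is a polynomial identity in the entries of $\phi$, $\lambda$ and $\lambda'$, and the argument is a short finite chain of them.

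First I would use the vanishing of the $Z^3$-coefficients of the four images, i.e.\ equation (\ref{eq:Z^3}), together with $c_1\neq 0$, to deduce $c_1+tc_3=0$; since the entries of the $\lambda_k$ leave only $t\in\{0,2\}$, this gives $t=2$ and $c=c_3(-2,0,1)$ with $c_3=\pm1$. Next, matching the $X^3$- and $Z^2X$-coefficients across all four images and cancelling the factor $a_1+2a_3$ --- nonzero because $a_1+2a_3\equiv a_1\equiv 1\bmod 2$ by Lemma \ref{lem:phi mod2} --- reduces to $q't'(a_1+qa_3)a_1=-2(q-2)$; the right side is nonzero since $q\neq 2$ for these $\lambda_k$, so $t'=t=2$ and we obtain the relation $q'(a_1+qa_3)a_1=2-q$. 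A parallel comparison of the $X^3$- and $ZX^2$-coefficients inside $\phi([2,4,6])$ alone, again after cancelling $a_1+2a_3$, yields a second relation $(q'+2)(a_1+qa_3)a_1=\{-2(a_1+qa_3)+(q-2)a_1\}c_3$.

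To finish, I would bring in $\det\phi=\pm1$: with $(a_2,b_2,c_2)=(0,1,0)$ and $c=c_3(-2,0,1)$ the expansion gives $c_3(a_1+2a_3)=\pm1$, so setting $\varepsilon:=c_3\det\phi$ we have $a_1+2a_3=\varepsilon$. Substituting $a_1+qa_3=\varepsilon+(q-2)a_3$ and $a_1=\varepsilon-2a_3$ into the first relation turns it into $(\varepsilon+(q-2)a_3)(\varepsilon-2a_3)q'=2-q$; the only values of $q$ occurring in the list of the $\lambda_k$ are $q\in\{-1,1,3\}$, and for $q\in\{1,3\}$ this forces $\varepsilon-2a_3\mid 1$ while for $q=-1$ it forces $\varepsilon-3a_3\mid 3$, so since $\varepsilon=\pm1$ and $a_3$ is even by Lemma \ref{lem:phi mod2} we get $a_3=0$ in every case, hence $q'=2-q$. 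Plugging $a_3=0$ back into the second relation collapses it to $a_1c_3=-1$, whence $a_1=\pm1$, $c_3=-a_1$ and $c_1=-2c_3=2a_1$. Reading off the columns $a=(a_1,0,0)$, $b=(b_1,1,b_3)$, $c=(2a_1,0,-a_1)$ of $\phi$ gives exactly the displayed matrix (with $a_1=\pm1$, which also accounts for the outer sign in the statement).

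The only real difficulty here is bookkeeping: one must organise the bilinear identities (\ref{eq:Z^3}), the $X^3/Z^2X$ comparison and the $\phi([2,4,6])$ comparison so that each cancellation of the nonvanishing factors $a_1+2a_3$ and $a_1+qa_3$ is legitimately justified (the former from Lemma \ref{lem:phi mod2}, the latter because $2-q\neq 0$), and then carry out the tiny case check on $q$ without slips. Nothing is conceptually hard, but since every cancellation silently uses a nonvanishing hypothesis the steps have to be presented in the right order.
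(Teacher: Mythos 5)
Your argument reproduces the paper's proof essentially verbatim: the same chain of coefficient comparisons (the $Z^3$ vanishing to get $t=2$ and $c=c_3(-2,0,1)$; the $X^3$/$Z^2X$ comparison across all four generators to get $q't'(a_1+qa_3)a_1=-2(q-2)$ and hence $t'=2$; the $X^3$/$ZX^2$ comparison inside $\phi([2,4,6])$ to get the second relation; the determinant to set $a_1+2a_3=\varepsilon$; the small divisibility case-check on $q\in\{-1,1,3\}$ to force $a_3=0$; and finally $a_1c_3=-1$), with the nonvanishing of the cancelled factors justified exactly as the paper does via Lemma \ref{lem:phi mod2}. This is correct and matches the paper's route.
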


Then we consider the cases (1) $(q,q')=(3,-1)$, (2) $(q,q')=(-1,3)$ and (3) $(q,q')=(1,1)$ separately.

\begin{enumerate}
 \item[(1)] If $q=3$ and $q'=-1$, then $\lambda=(\lambda_{28};0,0)$ and $\lambda'=(\lambda_{26};0,0),(\lambda_{27};0,0)$. 
 The matrix (\ref{eq:generator}) for $\lambda'$ reduces to
 $$\left(
\begin{array}{cccccccccc}
1 &0 &0 &1 &0 &0 &0 &-2 &1 &-1  \\
0 &0 &0 &1 &1 &-1 &-2 &0 &0 &1  \\
0 &0 &0 &m' &1 &-1 &-1 &0 &0 &1-m'  \\
0 &1 &0 &m' &m'+1 &2 &1 &0 &0 &1+m' 
\end{array}
\right).$$
 Since $\phi([2,4,6]-b_1[2,4,7])$ has zero in the coefficient of $Y^3$,
 the coefficients of $XY^2$ and $Y^2Z$ in $\phi([2,4,6]-b_1[2,4,7])$ are negatives of each other.
 Then we have $$a_1(b_1+3b_3+2)(b_1+2b_3+1)=-2a_1(b_1+3b_3+2)(b_1+2b_3+1).$$
 Since $a_1,b_1+2b_3+1\equiv 1\bmod 2$, this equation reduces to $b_1+3b_3+2=0$.
 Then
 \begin{align*}
 &\phi([2,4,6]-b_1[2,4,7]) \\
 &=(X-Z)(a_1X-(b_3+1)Y)(X+2Z) \\
 &=(a_1,0,0,-b_3-1,0,0,2(b_3+1),-2a_1,a_1,-b_3-1) \\
 &\equiv (0,0,0,-a_1-b_3-1,0,0,2(b_3+1),0,0,a_1-b_3-1) \bmod \I^6_{\lambda'}.
 \end{align*}
 Hence there exists an integer $\alpha$ such that $$\alpha (1-m')=-a_1-b_3-1,-\alpha=2(b_3+1),\alpha m'=a_1-b_3-1.$$
 We obtain $(b_3+1)(1-2m')=a_1=\pm 1$ from these equations. Then we have $m'=0,\ b_3=a_1-1$.
 Therefore we see that $$\phi=\left(
\begin{array}{ccc}
1 &-2 &2  \\
0 &1 &0  \\
0 &0 &-1 
\end{array}
\right)\,\mathrm{or}\,\left(
\begin{array}{ccc}
-1 &4 &-2  \\
0 &1 &0  \\
0 &-2 &1 
\end{array}
\right).$$
 In each case,
 \begin{align*}
 \phi([2,5,7]) &=(X-Z)(Y-Z)Y \\
 &=(0,0,0,0,1,-1,1,0,0,-1)\notin \I^6_{\lambda'}.
\end{align*}
 Hence we see that there exists no graded ring isomorphism from $H^*(\lambda)$ to $H^*(\lambda')$ in this case.
 \item[(2)] If $q=-1$ and $q'=3$, from the conclusion of the previous case, we see that there exists no graded ring isomorphism from $H^*(\lambda)$ to $H^*(\lambda')$ 
 \item[(3)] If $q=q'=1$, then we can assume $\lambda,\lambda'=(\lambda_{19};0,0)$, $(\lambda_{21};0,0)$, $(\lambda_{23};0,0)$, $(\lambda_{25};0,0)$.
 In the way similar to the case (1),
 we can show that all graded ring automorphisms such that they map $\I^6_\lambda$ into $\I^6_{\lambda'}$ and $c_1\neq 0$ are in the illustration below.
 \[\xymatrix {
 (\lambda_{19};0,0)\ar@(ur,ul)_{\pm\phi_1} & (\lambda_{21};0,0)\ar@(ur,ul)_{\pm\phi_1,\pm\phi_2}\ar[rr]^{\pm\phi_1,\pm\phi_2}&&
  (\lambda_{23};0,0)\ar@(ur,ul)_{\pm\phi_1,\pm\phi_2}\ar[ll]
 }\]
 Here $\phi_1=\left(
\begin{array}{ccc}
1 &2 &2  \\
0 &1 &0  \\
0 &-2 &-1 
\end{array}
\right)$ and $\phi_2=\left(
\begin{array}{ccc}
-1 &0 &-2  \\
0 &1 &0  \\
0 &0 &1 
\end{array}
\right)$.
\end{enumerate}

Next, in the case that $c_1=0$, by the same method as above,
we can show that all non-trivial graded ring automorphisms such that they map $\I^6_\lambda$ into $\I^6_{\lambda'}$ and $c_1=0$ are in the illustration below.
\[\xymatrix {
(\lambda_1;0,0)\ar@(ur,ul)_{\pm\phi_4}\ar[d]^{\pm \phi_5,\pm \phi_6} & (\lambda_{11};0,0)\ar[d]^{\pm\phi_5} &
 (\lambda_{23};0,0)\ar@(ur,ul)_{\pm\phi_1}\ar[d]^{\pm \mathrm{id},\pm\phi_7} \\
(\lambda_5;0,0)\ar@(dl,dr)_{\pm\phi_3}\ar[u] & (\lambda_{13};0,0)\ar[u] &
 (\lambda_{25};0,0)\ar@(dl,dr)_{\pm\phi_7}\ar[u]
}\]
Here $\phi_3,\ldots,\phi_7$ denote the matrices below:
$$\phi_3=\left(
\begin{array}{ccc}
-1 &-2 &0  \\
0 &1 &0  \\
0 &2 &-1 
\end{array}
\right),\phi_4=\left(
\begin{array}{ccc}
-1 &-2 &0  \\
0 &1 &0  \\
0 &-2 &-1 
\end{array}
\right),\phi_5=\left(
\begin{array}{ccc}
1 &0 &0  \\
0 &1 &0  \\
0 &0 &-1 
\end{array}
\right),
$$
$$\phi_6=\left(
\begin{array}{ccc}
-1 &-2 &0  \\
0 &1 &0  \\
0 &2 &1 
\end{array}
\right),\phi_7=\left(
\begin{array}{ccc}
-1 &2 &0  \\
0 &1 &0  \\
0 &-2 &-1 
\end{array}
\right).
$$

Now we have all graded ring automorphisms of $\Z[X,Y,Z]$ which map $\I_{\xi}^6$ into $\I_{\xi'}^6$
with respect to some characteristic matrices $\xi$ and $\xi'$ on $C^5(8)^*$, where $\xi$ and $\xi'$ are not equivalent.
In fact, none of these automorphisms maps $\I_{\xi}^8$ into $\I_{\xi'}$.
One can verify it by a computer-assisted calculation.
Thus we have the topological classification of quasitoric manifolds over $C^5(8)^*$.

\begin{thm}\label{thm:qtmfd over C5(8) up to homeo}
There are exactly forty-six quasitoric manifolds over $C^5(8)^*$ up to homeomorphism, and they are distinguished by their cohomology rings.
\end{thm}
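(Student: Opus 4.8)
The plan is to upgrade the weakly equivariant classification to a topological one by separating the forty-six classes cohomologically. We have the canonical surjections $\M_{C^5(8)^*}\to\M_{C^5(8)^*}^{\rm homeo}\to\M_{C^5(8)^*}^{\rm coh}$, and by Proposition \ref{prop:qtmfd up to w.e.homeo over C5(8)} the domain has exactly forty-six elements; hence it suffices to prove that the target $\M_{C^5(8)^*}^{\rm coh}$ has at least forty-six elements, for then all three counts equal forty-six and both maps are bijections. So I fix the forty-six characteristic matrices $\xi$ singled out as orbit representatives in the preceding $\sigma$-diagrams, and I must show that $H^*(\xi)$ and $H^*(\xi')$ are non-isomorphic graded rings whenever $\xi$ and $\xi'$ are inequivalent.

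First I note that, since the relations in $H^*(\xi)$ start in degree six, $H^2(\xi)=\langle X,Y,Z\rangle$ is free of rank three, so a graded ring isomorphism $H^*(\xi)\to H^*(\xi')$ restricts in degree two to an automorphism of $\langle X,Y,Z\rangle$ and thus extends to a graded ring automorphism $\phi$ of $\Z[X,Y,Z]$ with $\phi(\I_\xi)\subseteq\I_{\xi'}$; comparing degree-six components gives $\phi(\I_\xi^6)\subseteq\I_{\xi'}^6$, i.e. $\phi\in\A_{k,k'}$ where $\lambda_k,\lambda_{k'}$ are the underlying matrices on $C^4(7)^*$. Lemma \ref{lem:reducing of k,k'} then reduces the problem to the sixteen values $k,k'=1,5,9,11,13,15,16,17,18,19,21,23,25,26,27,28$. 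Next I feed in the structural results already established: Lemma \ref{lem:phi mod2} forces $\phi\equiv I\bmod 2$; Corollary \ref{cor:c2=0} and Lemma \ref{lem:a2=0} give $c_2=(c_1+tc_3)c_1=a_2=0$, so $(a_2,b_2,c_2)=\pm(0,1,0)$ and, replacing $\phi$ by $-\phi$ if needed, $b_2=1$. Splitting according to $c_1\neq 0$ or $c_1=0$ and running the elimination illustrated in the worked cases $(q,q')=(3,-1),(-1,3),(1,1)$, I obtain the explicit finite lists of candidate automorphisms recorded in the two orbit diagrams, in all fewer than a hundred maps, each attached to a particular inequivalent pair $(\xi,\xi')$.

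Finally I would check, for each surviving candidate $\phi$ and each admissible choice of the parameters $a,b,a',b'$, that $\phi$ does not send $\I_\xi^8$ into $\I_{\xi'}=\I_{\xi'}^6+\I_{\xi'}^8$: concretely, evaluate $\phi$ on the three generators $[1,3,5,8],[1,3,6,8],[1,4,6,8]$ of $\I_\xi^8$ and reduce modulo $\I_{\xi'}$. This is a bounded and mechanical computation, which I would delegate to the computer-assisted verification referred to in the text. Since no isomorphism survives, the forty-six rings $H^*(\xi)$ are pairwise non-isomorphic, so $\#\M_{C^5(8)^*}^{\rm coh}\geq 46$; equality then propagates back along the surjections, and in particular $\M_{C^5(8)^*}^{\rm homeo}\to\M_{C^5(8)^*}^{\rm coh}$ is a bijection, which yields both claims of the theorem.

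I expect the real difficulty to be the middle step: pruning the a priori infinite family of graded ring automorphisms $\phi$ (and of parameter quadruples $a,b,a',b'$) to a finite list. The engine that makes this feasible is the mod-$2$ rigidity of Lemma \ref{lem:phi mod2}, which pins $\phi$ near the identity and tightly constrains its off-diagonal entries; the rest is a patient but routine case analysis, capped by the finite check that the degree-eight relations are not preserved.
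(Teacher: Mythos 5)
Your proposal follows the paper's own proof step for step: the surjection chain from $\M_{C^5(8)^*}$ to $\M_{C^5(8)^*}^{\rm coh}$ reduces the claim to pairwise non-isomorphism of the forty-six cohomology rings, a graded isomorphism lifts to an automorphism $\phi$ of $\Z[X,Y,Z]$ preserving the ideals, the degree-six comparison lands $\phi$ in $\A_{k,k'}$ where Lemma~\ref{lem:reducing of k,k'}, Lemma~\ref{lem:phi mod2}, Corollary~\ref{cor:c2=0} and Lemma~\ref{lem:a2=0} prune the candidates to the finite lists in the two orbit diagrams, and the degree-eight obstruction is then dispatched by the same computer-assisted check the paper invokes. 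This is the paper's argument; the proposal is correct.
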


%
%

\section*{ACKNOWLEDGEMENT}

The author is deeply grateful to Daisuke Kishimoto for his support and valuable advice.

\end{document}